\documentclass[12pt,reqno]{amsart}
\usepackage{latexsym,amsmath,amsfonts,amssymb,amsthm,xcolor,enumitem,comment,hyperref,chngcntr,cleveref,booktabs,tabularx,makecell,thmtools,thm-restate,placeins,bm}

\allowdisplaybreaks

\newcommand*{\red}{\textcolor{red}}

\newcommand{\msubsection}[1]{\subsection{\texorpdfstring{#1}{e}}}
\newcommand{\msection}[1]{\section{\texorpdfstring{#1}{e}}}

\crefname{subsection}{subsection}{subsections}
\Crefname{subsection}{Subsection}{Subsections}

\def\pmod #1{\ ({\rm{mod}}\ #1)}
\def\Z{\mathbb Z}

\def\N{\mathbb N}
\def\M{\mathfrak M}
\def\m{\mathfrak m}
\def\L{\mathfrak L}

\def\R{\mathbb R}
\def\Q{\mathbb Q}

\def\a{{\mathfrak a}}
\def\b{{\mathfrak b}}

\def\pmod #1{\ ({\rm{mod}}\ #1)}

\def\1{{\bf{1}}}

\newcommand{\ci}[1]{\left[\!\left[#1\right]\!\right]}

\newtheorem{theorem}{Theorem}[section]
\newtheorem{lemma}[theorem]{Lemma}

\newtheorem{claim}[theorem]{Claim}
\newtheorem{corollary}[theorem]{Corollary}

\newtheorem*{HardysCloseIntro}{\protect\Cref{HardysCloseToPolysDensityResult}}
\newtheorem*{PolynomialDensityIntro}{Theorem~\ref{eroidfklsjoifks}}
\theoremstyle{definition}

\newtheorem*{acknowledgment}{Acknowledgment}
\theoremstyle{remark}
\newtheorem{remark}[theorem]{Remark}
\begin{document}
\title{An ergodic approach to equations of the form $\bm{x+y=\lfloor\alpha(n)\rfloor}$.}
\author{Vitaly Bergelson}
\address{Department of Mathematics, Ohio State University, Columbus, OH 43210, USA}
\email{vitaly@math.ohio-state.edu}
\author{Hao Pan}
\address{School of Applied Mathematics, Nanjing University of Finance and Economics,
Nanjing 210023, People’s Republic of China}
\email{haopan79@zoho.com}
\author{Saúl Rodríguez-Martín}
\address{Department of Mathematics, Ohio State University, Columbus, OH 43210, USA}
\email{rodriguezmartin.1@osu.edu}
\keywords{sub-polynomial; the Khalfalah-Szemer\'edi theorem}
\subjclass[2020]{Primary 05D10; Secondary 37A44, 11P55, 11P70}
\thanks{}

\begin{abstract}
Motivated by questions and results contained in \cite{ESS89,KS06}, we introduce an ergodic approach to additive problems concerning solutions of equations of the form
\[
x+y=\lfloor\alpha(n)\rfloor,\quad x,y\in A,n\in\N
\]
where \(\alpha(t)\) is a sufficiently regular function, such as a polynomial or a logarithmico-exponential function of polynomial growth (e.g. \(\alpha(t)=t^{5/2}+t^2\log t\)), and $A$ is either a given set of positive upper density, or a cell of a prescribed finite partition of $\N$. We give a complete description of the Hardy field functions of polynomial growth for which these partition and density problems are always solvable.

The ergodic approach also applies to the more general problem of finding $x\in A_1,y\in A_2$, where $A_1,A_2$ are two given sets of positive density.
This in turn allows us to treat equations of the form $kx+ly=\lfloor\alpha(n)\rfloor$, where $k,l\in\Z$. 

Some of our results also hold true when $A$ is a subset of the primes with positive relative upper density.
\end{abstract}

\maketitle

\makeatletter
\renewcommand{\l@subsection}{%
  \@tocline{2}{0pt}{2em}{5em}{}%
}
\makeatother
\tableofcontents

\msection{Introduction}
\label{SecIntro}

For a subset $A$ of $\N=\{1,2,\dots\}$, define its \emph{upper density} by
$$
\overline{d}(A):=\limsup_{N\to\infty}\frac{|A\cap[1,N]|}{N}.
$$
The classical Furstenberg-S\'ark\"ozy theorem \cite{Fu77,S78a,S78b,F81b} 
asserts that if $A$ is a subset of $\N$ with $\overline{d}(A)>0$ and $p\in\mathbb{Z}[x]$ satisfies $p(0)=0$, 
then there exist distinct $x,y\in A$ and $n\in\N$ such that 
$$x-y=p(n).$$

In \cite[Theorem 3 and Problem 2]{ESS89}, Erd\H os, S\'ark\"ozy and S\'os considered a variant of the Furstenberg-S\'ark\"ozy theorem which deals with the equation $x+y=p(n)$. They conjectured that for every finite coloring of the positive integers and every polynomial $p(n)\in\mathbb{Z}[x]$ which takes even values, there exist distinct $x$ and $y$ of the same color and $n\in\mathbb{N}$ such that $x+y=p(n)$.
This conjecture was confirmed by  Khalfalah and Szemer\'edi:
\begin{theorem}[{cf. \cite[Theorem 1.4]{KS06}}]\label{KSt}
Let $p(x)=a_dx^d+a_{d-1}x^{d-1}+\cdots+a_0$ be a nonconstant polynomial with $a_i\in\Z$ and $a_d>0$. 
Suppose that $p(n)$ is even for some $n\in\N$. Then for every finite coloring of $\N$, there exist monochromatic distinct $x$ and $y$ such that $x+y=p(n)$ for some $n\in\N$.
\end{theorem}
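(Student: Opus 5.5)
We will deduce the partition statement from a density statement proved by ergodic means. A density version is the natural target: for every $A\subset\N$ of positive upper density there are $x,y\in A$ with $x+y=p(n)$. By pigeonhole some colour class has positive upper density, so such a density result would immediately give monochromatic $x,y$. Distinctness will follow because solutions with $x=y$, i.e.\ $2x=p(n)$, contribute negligibly.

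The density version is false as stated, however. For $A=\{x\equiv 1 \pmod 4\}$ every sum $x+y$ is $\equiv 2\pmod 4$, while $p(n)=n^2$ only takes the values $0,1\pmod 4$. The argument must therefore first reduce to a colour class with good local structure. The plan has four steps.

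\textbf{Step 1 (the even part).} Restrict attention to those $n$ in a residue class $n\equiv n_0\pmod{2}$ with $p(n_0)$ even. Replacing $p$ by $q(m)=p(n_0+2m)$, we may assume $q(m)$ is even for all $m$.

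\textbf{Step 2 (local obstructions).} Fix a large modulus $Q=\prod_{j\le J}p_j^{k_j}$. Refine the colouring by residues mod $Q$, and pick $m\equiv m_0\pmod Q$ so that $q(m)\equiv 2r\pmod Q$. The aim is to find a colour $C$ and a residue $s$ with $C\cap(s+Q\N)$ of positive upper density and $2s\equiv q(m_0)\pmod Q$. This is exactly where the partition hypothesis, as opposed to a density hypothesis, is used. Consider the involution $s\mapsto q(m_0)-s$ on $\Z/Q$. Every colour class of positive density lives on some residue $s$, and we need a colour that has positive density on both $s$ and $q(m_0)-s$. Choosing $s=r$ (the self-paired residue, available since $q(m_0)=2r$ is even), some colour has positive relative density on the class $r+Q\N$. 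This yields a single colour $C$ with $\overline d(C\cap(r+Q\N))>0$.

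\textbf{Step 3 (ergodic count).} Write $C\cap(r+Q\N)=\{r+Qu:u\in B\}$. We need $u,v\in B$ with $Q(u+v)+2r=q(m_0+Qm)$, i.e.\ $u+v=\tilde q(m):=(q(m_0+Qm)-2r)/Q$, an integer polynomial in $m$. Apply the Furstenberg correspondence to $B$, obtaining $(X,\mu,T)$ and a set $E$ with $\mu(E)>0$. The count of solutions $u+v=\tilde q(m)$ with $u,v\in B\cap[1,N]$ becomes, on average over $m$, the correlation $\sum_u 1_B(u)1_B(\tilde q(m)-u)$. Its Fourier form is $\int |\hat 1_B(\theta)|^2 e(\tilde q(m)\theta)\,d\theta$-type expressions, spectrally $\int e(\tilde q(m)\theta)\,d\sigma(\theta)$ paired with a "reflected" measure. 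Averaging over $m$, the Weyl equidistribution theorem kills all irrational $\theta$. Rational $\theta=a/b$ survive, and those with $b\mid Q$ and $J$ large contribute positively, since on them $\tilde q(m)$ is constant mod $b$ in the right way. The main obstacle is that the equation $x+y$ is not translation invariant, so no measure-preserving recurrence applies directly. The plan is to handle it via a $U^2$/Fourier decomposition of $1_B$ on $[1,N]$ into a rational (Kronecker, low-denominator) part, a minor-arc part, and an error. The minor-arc contribution is bounded by Weyl sums in $m$. The major-arc part is controlled by choosing $Q$ to absorb all small denominators (a $W$-trick), so that the structured part of $1_B$ is essentially constant, of positive average $\delta$. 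This gives a main term $\gg \delta^2 N\cdot\#\{m:\tilde q(m)\le N\}$.

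\textbf{Step 4 (conclusion).} The $x=y$ solutions number at most $\#\{m\}$, which is $o$ of the main term. Hence distinct monochromatic $x,y$ exist.
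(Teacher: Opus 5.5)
\textbf{There is a genuine gap in Steps 2--3.} The local obstruction you identified at the start reappears after your change of variables, and the Fourier heuristic hides it.

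Expanding $q(m_0+Qm)=q(m_0)+\sum_{j\ge1}\frac{q^{(j)}(m_0)}{j!}Q^jm^j$ gives $\tilde q(m)=q'(m_0)m+Q(\cdots)$. Its linear coefficient $q'(m_0)=2p'(n_0+2m_0)$ generally shares prime factors with $Q$, and for every $b>1$ dividing both you get $\tilde q(m)\equiv0\pmod b$ for all $m$.

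For example, take $p(n)=n^2$, so $n_0$ is even. Then $\tilde q(m)=4(n_0+2m_0)m+4Qm^2\equiv0\pmod 4$, whereas any $B\subseteq1+4\N$ has $u+v\equiv2\pmod 4$. Colouring $r+Q\N$ according to $u\bmod 4$ produces such colour classes, of relative density $1/4$. So Step 3 is false for an arbitrary colour of positive relative density on $r+Q\N$, which is all that Step 2 hands you.

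The heuristic in Step 3 fails at two points.
\begin{enumerate}
\item The count $\sum_m\sum_u1_B(u)1_B(\tilde q(m)-u)$ is governed by $\widehat{1_B}(\theta)^2$, not $|\widehat{1_B}(\theta)|^2$. On an arc $a/b$ where $\tilde q$ is constant mod $b$, the contribution is $2\,\mathrm{Re}\,\widehat{1_B}(a/b)^2$, which has no sign. In the example above, the arcs at $1/4$ and $3/4$ cancel the main term exactly. Where the linear coefficient is a unit mod $b$, $\tilde q$ is equidistributed mod $b$ rather than constant, so those arcs contribute nothing, not something positive.
\item A $W$-trick acts on the polynomial, not on the set. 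Restricting to $r+Q\N$ does not stop a colour class from correlating with residues of $u$ modulo small numbers, so its structured part need not be constant.
\end{enumerate}

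\textbf{The missing idea is the paper's normalization.} Shift so that $p(a)$ is even and $h:=p'(a)>0$. Take $x,y\in p(a)/2+h^2W\N$ and $n\in a+hW\N$, with $W=\mathrm{lcm}(1,\dots,w)$. Then $p(a+hWm)-p(a)=h^2W\bigl(m+Wm^2g(m)\bigr)$ with $g\in\Z[x]$. The reduced polynomial therefore has linear coefficient $1$ and is $\equiv m$ modulo every divisor of $W$. Note that the $x$-modulus must be $h$ times the $n$-modulus; you took it the other way round.

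With this normalization, the complete sums vanish for $1<b\mid W$. For $b\nmid W$ they are $\ll b^{-1/d+\varepsilon}\le w^{-1/d+\varepsilon}$ by \Cref{Va97Thm7.1}, and Weyl's inequality handles large denominators. This shows, uniformly in $\theta$, that the weighted polynomial average is within $O(w^{-1/d+\varepsilon})$ of the linear average over multiples of $W$. By the spectral theorem the same comparison holds in every measure-preserving system (\Cref{ErgodicThmIntegerPolys}).

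The paper then applies this in $\Z/L\Z$ to $\overline C$ and the reflected set $-\overline D$, which is how it handles the non-invariance of $x+y$. There the linear average is computed exactly by double counting as $|C|\,|D|$. Positivity is therefore automatic and no structure of the colour class is needed. The partition hypothesis enters only through pigeonhole on the progression and through choosing $w$ large in terms of the number of colours.
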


The equation $x+y=p(n)$ appearing in \Cref{KSt} is an example of an \emph{additive problem} where, given a sufficiently regular function $\alpha(x)$ (such as a real polynomial or, say, $x^2\log(x)$) and a subset $A\subseteq\mathbb{N}$, one looks for solutions of the equation 
\begin{equation}
\label{x+yisalphaofn}
x+y=\lfloor\alpha(n)\rfloor,
\end{equation} 
where $x,y\in A$, $n\in\N$.
In this paper, we develop an ergodic approach to problems of this kind. In particular, we will show that for a rather rich family of functions $\alpha(n)$, \Cref{x+yisalphaofn} can be solved for $x,y$ belonging to any given in advance set $A$ with $\overline{d}(A)>0$. As is often the case with Diophantine problems of a Ramsey-theoretic nature, one must account for the so-called \emph{local obstructions}.
For example, the divisibility condition in \Cref{KSt} is necessary, and the “coloristic” \Cref{KSt} does not admit a density version (for example, consider $p(n)=n^2$ and $A=\{3n+1;n\in\N\}$).
This being said, it is worth mentioning that our methods are versatile enough to produce a density result, \Cref{eroidfklsjoifks}, which has the coloristic \Cref{KSt} as its corollary.

We would also like to observe that the issue with the local obstruction disappears if, instead of integer polynomials, one considers real polynomials 
\begin{equation*}
p(x)=a_dx^d+a_{d-1}x^{d-1}+\cdots+a_0\in\R[x]
\end{equation*}
such that $a_d>0$ and $\frac{a_i}{a_d}\not\in\Q$ for some $1\leq i\leq d$. In that case, for all $A\subseteq\N$ such that $\overline{d}(A)>0$ there exist $x,y\in A,n\in\N$ such that $x+y=\lfloor p(n)\rfloor$ (this is a special case of \Cref{maindc} below).

We now introduce the class of functions $\alpha(x)$ considered throughout the paper; further background and references are provided in \Cref{SecHardy}. Let $\mathcal G$ be the ring of germs at $+\infty$ of continuous real-valued functions, with addition and multiplication defined pointwise.
By abuse of language, we will not distinguish between functions and their germs in $\mathcal G$.
A \emph{Hardy field} is a subfield of $\mathcal G$ that is closed under differentiation, and we denote the union of all Hardy fields by $\mathbf U$. A classical example is the Hardy field $\mathbf L$ of logarithmico-exponential functions introduced by Hardy in \cite{Har1,Har2}. It consists of the germs at $+\infty$ of real-valued functions obtained from real constants, $\exp x$, and $\log x$ by finitely many applications of addition, multiplication, division, and composition.

Examples of functions in Hardy fields include
$$
x^{\sqrt{2}},\ \pi x^2+e,\ x\log x+\sqrt{x},\ \log(e^x+x).
$$
It is a classical fact that every $\alpha\in\mathbf U$ has a limit in the extended real line $\mathbb R\cup\{-\infty,+\infty\}$ as $x\to+\infty$. Consequently, no nonconstant periodic function, such as $\sin x$ or $\cos x$, belongs to $\mathbf U$. Moreover, if $u_1$ and $u_2$ belong to the same Hardy field and $u_2\neq0$, then both $u_1-u_2$ and $u_1/u_2$ have limits in the extended real line. Finally, $\mathbf U$ is closed under differentiation and taking antiderivatives.

In this paper, we work with Hardy field functions \(\alpha(x)\in\mathbf U\) satisfying
\[
\lim_{x\to+\infty}\frac{\alpha(x)}{x^l}=0
\]for some \(l>0\). We will refer to such functions simply as \emph{sub-polynomials}.

Sub-polynomials appear naturally in various papers which deal with uniform distribution, additive combinatorics, and ergodic Ramsey theory (see, for example, \cite{B94, FW09, CKW10,BMR1,BMR2}).

Throughout this paper, we will be using the following notation:
\begin{align*}
\lfloor x\rfloor &:= \max\{k\in\Z:k\leq x\}
    &&\text{for the floor function},\\
\lceil x\rceil &:= \min\{k\in\Z:k\geq x\}
    &&\text{for the ceiling function},\\
[\![x]\!] &:= \left\lfloor x+\frac12\right\rfloor
    &&\text{for the nearest-integer function},\\
\{x\} &:= x-\lfloor x\rfloor
    &&\text{for the fractional part of \(x\)}.
\end{align*}

We start the presentation of our results with the following theorem, proved in \Cref{ProofOftexp,ProofOfmainp}, which naturally complements \Cref{KSt}.

\begin{theorem}\label{mainp}
Let \(\alpha(x)\) be a sub-polynomial such that
$\lim_{x\to+\infty}\alpha(x)=+\infty$. Suppose also that for all $p(x)\in\Q[x]$,
\begin{equation}\label{fginfty}
\lim_{x\to+\infty}|\alpha(x)-p(x)|=+\infty.
\end{equation}
Then for every finite coloring of $\mathbb{N}$, there are distinct $x,y$ of the same color and $n\in\mathbb{N}$ such that $x+y=\lfloor\alpha(n)\rfloor$.
\end{theorem}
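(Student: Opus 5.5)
Our plan is to derive \Cref{mainp} from a density statement: we will show that for every $A\subseteq\N$ with $\overline{d}(A)>0$ there are $x\neq y$ in $A$ and $n\in\N$ with $x+y=\lfloor\alpha(n)\rfloor$. Since some cell of any finite coloring has positive upper density, the coloring result follows. The hypothesis \eqref{fginfty} is exactly what removes local obstructions. For integer polynomials a density version fails (e.g.\ $A=3\N+1$ with $n^2$). Here $\alpha$ stays unboundedly far from every rational polynomial, and we expect $\lfloor\alpha(n)\rfloor$ to be equidistributed along arithmetic progressions of $\Z$ and to interact correctly with the Kronecker (eigenfunction) factor. We will split into two cases. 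In the first, $\alpha$ differs from some real polynomial $p(x)=\sum a_ix^i$ by a bounded amount, and then $p$ has an irrational coefficient ratio. In the second, $|\alpha(x)-p(x)|\to\infty$ for every $p\in\R[x]$.

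For the density statement we will use a Furstenberg-type correspondence adapted to the sum equation. The relevant quantity is
\[
\frac{1}{N}\sum_{n\le N}\,\bigl|\{x\in A: \lfloor\alpha(n)\rfloor-x\in A\}\bigr| .
\]
We will rewrite it via the reflection $x\mapsto m-x$, that is, as the correlation of $1_A$ with its reflected translate $1_A(\lfloor\alpha(n)\rfloor-\cdot)$. Passing to Fourier/spectral language, the count for a fixed $m$ is $\int \widehat{1_A}(\theta)^2 e(m\theta)\,d\theta$ on finite windows. Averaging over $m=\lfloor\alpha(n)\rfloor$ produces
\[
\int \widehat{1_A}(\theta)^2\ \frac1N\sum_{n\le N} e\bigl(\lfloor\alpha(n)\rfloor\theta\bigr)\,d\theta .
\]
The key input is then an equidistribution lemma for Hardy field sub-polynomials satisfying \eqref{fginfty}. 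The averages $\frac1N\sum e(\lfloor\alpha(n)\rfloor\theta)$ tend to $0$ for every $\theta\notin\Z$, including rational $\theta$; the rational case is where \eqref{fginfty} with $\Q[x]$ is used. Consequently only $\theta=0$ survives, and its contribution is $\approx(\overline d(A))^2\cdot(\text{scale})$, which is positive. We will prove the lemma via Boshernitzan's criterion: $(\alpha(n)\theta, \alpha(n))$ is handled by the Weyl/van der Corput method for the polynomial-close case and by Boshernitzan's uniform distribution theorem for Hardy functions otherwise. One also needs the joint distribution of $\alpha(n)$ mod $1$ together with $\theta\lfloor\alpha(n)\rfloor$.

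The main obstacle is making the spectral argument rigorous: $\widehat{1_A}$ is not a measure on finite windows. The scale of $\lfloor\alpha(n)\rfloor$ relative to $N$ also matters, since $x,y\le\lfloor\alpha(n)\rfloor$ forces $A$ to be dense at scale $\alpha(n)$ rather than at scale $n$. Our approach is to work in an ergodic model. Take a measure-preserving system $(X,\mu,T)$ and a set $E$ with $\mu(E)>0$ from the correspondence principle, using a sequence of windows $[1,M_k]$ realizing the upper density. We then restrict $n$ to those with $\alpha(n)\in[M_k,2M_k]$, which is a long interval since $\alpha$ grows slowly relative to its inverse. The count becomes an average of $\int 1_E\cdot R_m1_E$ for a reflected operator. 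Passing to the spectral measure $\sigma$ of $1_E$, the limit equals $\sigma(\{0\})\ge\mu(E)^2>0$. The diagonal $x=y$ contributes only $O(|A\cap[1,M]|)$ to a count of order $M^2$-weighted terms, so distinctness comes for free. The step I would attack first is the equidistribution lemma for rational $\theta$ in the case where $\alpha$ is bounded distance from an irrational-coefficient real polynomial. There one must check that $\lfloor p(n)+o(1)\rfloor$ is equidistributed mod $q$; this reduces to Weyl equidistribution of $p(n)/q$ mod $1$, together with care with the error term crossing integer boundaries.
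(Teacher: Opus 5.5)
\textbf{There is a genuine gap.} You reduce the theorem to a density statement, but under the hypotheses of \Cref{mainp} that density statement is false, and so is the equidistribution lemma meant to prove it.

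\emph{The density statement fails.} Take $\alpha(x)=\pi x^2$, which satisfies \eqref{fginfty}. Let $A=\{x\in\N:\ |x-\pi-3\pi k|<\tfrac12\text{ for some }k\in\Z\}$, a set of density $1/(3\pi)$. For $x,y\in A$ we have $x+y\equiv 2\pi+e\pmod{3\pi}$ with $|e|<1$. On the other hand $\pi n^2\equiv 0$ or $\pi\pmod{3\pi}$, because $n^2\equiv 0,1\pmod 3$. Hence $x+y-\pi n^2$ stays at distance more than $\pi-1>1$ from $3\pi\Z$. But $x+y=\lfloor\pi n^2\rfloor$ would force $x+y-\pi n^2\in(-1,0]$, so there are no solutions in $A$.

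\emph{Your key lemma fails at irrational frequencies.} For $\theta=1/\pi$ one has $e(\lfloor\pi n^2\rfloor\theta)=e(-\{\pi n^2\}/\pi)$, whose averages tend to $\int_0^1e(-t/\pi)\,dt\neq0$.

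\emph{It also fails at rational frequencies.} Take $\alpha(x)=x^2+\log\log x$, which again satisfies \eqref{fginfty}. The averages of $e(\lfloor\alpha(n)\rfloor/3)$ equal $\frac1N\sum_{n\le N}e(n^2/3)\,e(\lfloor\log\log n\rfloor/3)$ and stay bounded away from $0$. Indeed, $e(n^2/3)$ has mean $i/\sqrt3$, and $\lfloor\log\log n\rfloor$ takes only two consecutive values on all but $o(N)$ of the $n\le N$. Boshernitzan's criterion gives nothing here, since $|\alpha(x)-x^2|\ll\log x$.

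Your first case is also misdescribed: $\pi x^2$ and $\pi x^2+\pi x$ are polynomials with no irrational coefficient ratio. So \eqref{fginfty} does not remove local obstructions; it removes them only up to a dilation $c$ and a slowly varying shift $r\ll\log x$. This is exactly \Cref{HardysCloseToPolysDensityResult}: the density version fails for every $\alpha=cp+r$ with $p\in\Z[x]$ of degree at least $2$ and $r\ll\log x$. The right density hypothesis is \eqref{fglog}.

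\textbf{What the proof needs instead is a split.}

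Under \eqref{fglog} (types (I) and (II)) a density argument does work, but not through a correspondence principle. The relation $x+y=m$ is not shift-invariant, so there is no ``reflected operator'' on an abstract system. Also, $\widehat{1_A}(\theta)^2\,d\theta$ is not a positive measure, and its main contribution comes from the whole range $|\theta|=O(1/M)$, not just $\theta=0$. So a pointwise statement that only $\theta=0$ survives would not suffice. The paper instead proves an estimate uniform in $\theta$ (\Cref{texp}). It compares $\frac1{\alpha(N)}\sum_{n<N}\alpha'(n)e(\lfloor\alpha(n)\rfloor\theta)$ with $\frac1{\alpha(N)}\sum_{m<\alpha(N)}e(m\theta)$. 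It then applies this in $\Z/P\Z$ to $A_1$ and $-A_2$, where the second average counts exactly $|A_1|\,|A_2|$.

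For the remaining functions, $\alpha=c(p+r)$ with $p\in\Q[x]$ and $r\ll\log x$, you must use the coloring itself rather than a single color class of positive upper density. The paper proceeds as follows.
\begin{enumerate}
\item It builds a finite set $F$ of integers close to $c\cdot(W\Z\cap I)$. Here $W=\mathrm{lcm}(1,\dots,w)$ and $I$ is a window on which $r$ is nearly constant and close to a multiple of $W$.
\item It shows via \Cref{ErgodicThmIntegerPolys} that averages along $p(Wn)$ match averages along $W\Z$ up to $O(w^{-1/d+\varepsilon})$. Consequently any two subsets of $F$ of relative density $\rho$ contain distinct $x,y$ with $x+y=\lfloor\alpha(n)\rfloor$.
\item It uses pigeonhole to find a color class of relative density at least $1/k\ge\rho$ in $F$.
\end{enumerate}
A set of merely positive upper density, such as $A$ above, need not have that property.
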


We note that the density version of \Cref{mainp} does not hold in general (see \Cref{HardysCloseToPolysDensityResult}).

Our proof of \Cref{mainp} proceeds through two finitistic density results, \Cref{QuantitativeErgToComb} and \Cref{ColoringThmForClosestInteger}. 
In fact, as explained in \Cref{SecColorralphaSmall}, a minor adaptation of the same argument also yields the density result \Cref{eroidfklsjoifks}, stated below, which implies \Cref{KSt}.
\Cref{mainp} also holds true if one replaces $\lfloor\cdot\rfloor$ by either $\lceil\cdot\rceil$ or $[\![\cdot]\!]$, see \Cref{ColoringThmForClosestInteger}.

A necessary condition for the conclusion of \Cref{mainp} to hold is that $\lfloor\alpha(n)\rfloor$ be even for infinitely many $n\in\N$. 
Indeed, suppose that, for some \(M\in\N\), the sequence \(\lfloor\alpha(n)\rfloor\) takes no even values greater than \(M\). Color the integers greater than \(M\) according to their parity, and assign a distinct color to each integer in \([1,M]\). Then the equation
\begin{equation}
\label{r9weoifsldkiofedlsk}
x+y=\lfloor\alpha(n)\rfloor,\qquad x\neq y
\end{equation}
has no solution with $x,y$ having the same color and $n\in\mathbb{N}$. Note that this necessary condition is also satisfied under the hypotheses of \Cref{KSt}, since, if $p\in\Z[x]$ and $p(n)$ is even for some $n$, then $p(n+2k)$ is even for all $k\in\N$.

Conversely, combining \Cref{mainp,KSt} shows that this necessary condition is also sufficient for sub-polynomials; we include the details of this deduction at the end of \Cref{SecColorralphaSmall}, after the proofs of both results.
\begin{theorem}\label{mainp1}
Let $\alpha(x)$ be a sub-polynomial such that $\lim_{x\to+\infty}\alpha(x)=+\infty$. Then the following are equivalent:
\begin{enumerate}
    \item\label{mainp11} $\lfloor\alpha(n)\rfloor$ is even for infinitely many $n\in\N$.
    \item\label{mainp12}
    For every finite coloring of $\N$, there are distinct $x,y$ of the same color and \(n\in\N\) such that
\[
x+y=\lfloor\alpha(n)\rfloor.
\]
\end{enumerate}
\end{theorem}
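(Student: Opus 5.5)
The implication (\ref{mainp12})$\Rightarrow$(\ref{mainp11}) has already been shown in the discussion preceding the statement: if $\lfloor\alpha(n)\rfloor$ takes only finitely many even values, color $\N\cap(M,\infty)$ by parity and give each element of $[1,M]$ its own color. So the plan concerns only (\ref{mainp11})$\Rightarrow$(\ref{mainp12}). We split according to whether $\alpha$ stays at bounded distance from some rational polynomial.

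\emph{Case 1: for every $p\in\Q[x]$, $|\alpha(x)-p(x)|\to+\infty$.} Then \Cref{mainp} applies directly, and (\ref{mainp11}) is not even needed.

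\emph{Case 2: there is $p\in\Q[x]$ with $|\alpha(x)-p(x)|\not\to\infty$.} Since $\alpha-p\in\mathbf U$, it has a limit in the extended reals, so $\alpha(x)-p(x)\to c$ for some $c\in\R$. Because $\alpha\to+\infty$, $p$ is nonconstant with positive leading coefficient. We absorb $c$: put $q=p+c\in\R[x]$ and $\varepsilon(x)=\alpha(x)-q(x)\to0$, with $\varepsilon\in\mathbf U$, so $\varepsilon$ is eventually of constant sign or eventually zero. Choose $D\in\N$ with $Dp\in\Z[x]$. Along each residue class $n\equiv r\pmod D$, the fractional part $\{q(n)\}$ depends only on $r$, since $p(n+Dk)-p(n)\in\Z$. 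Hence on each class $r$ we have $\lfloor\alpha(n)\rfloor=\lfloor q(n)\rfloor+\delta_r$ for all large $n$, where $\delta_r\in\{0,-1\}$ is constant: it is $-1$ exactly when $q(n)$ is an integer on that class and $\varepsilon<0$ eventually. Thus, for all large $n\equiv r\pmod D$, $\lfloor\alpha(n)\rfloor=P_r(n)$ with $P_r(x)=q(x)-\{q(r)\}+\delta_r\in\Q[x]$. Writing $n=Dm+r$, the polynomial $Q_r(m)=P_r(Dm+r)$ takes integer values on $\N$, though it need not have integer coefficients.

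Now (\ref{mainp11}) gives a class $r$ with $Q_r(m)$ even for infinitely many $m$. We want to apply \Cref{KSt}, which requires integer coefficients. Since $Q_r$ is integer-valued, there is $L$ (for instance $L=2\cdot d!$ with $d=\deg Q_r$) such that $Q_r(m+Lk)\equiv Q_r(m)\pmod 2$ for all $m,k$. Pick $m_0$ large with $Q_r(m_0)$ even. Then $R(k)=Q_r(m_0+Lk)$ is integer-valued, always even, and has positive leading coefficient. Take $L$ a multiple of $d!$ as well, so that $R\in\Z[x]$. Applying \Cref{KSt} to $R$ yields, for any finite coloring, distinct monochromatic $x,y$ with $x+y=R(k)=\lfloor\alpha(D(m_0+Lk)+r)\rfloor$, as long as $m_0$ is large enough that the formula for $\lfloor\alpha\rfloor$ holds along the progression.

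The main obstacle is this reduction to the integer-coefficient setting of \Cref{KSt}. It needs care on two points: handling the sign of $\varepsilon$ when $q(n)\in\Z$, and showing that restricting to a suitable arithmetic progression turns an integer-valued rational polynomial into one with integer coefficients. For the latter, the plan is to write $Q_r(m_0+Lk)$ in the binomial basis $\binom{m_0+Lk}{j}$ and note that $L^j/j!\in\Z$ once $d!\mid L$.
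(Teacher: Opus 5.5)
Your proposal is correct and follows essentially the same route as the paper: apply \Cref{mainp} unless $\alpha=p+C+o(1)$ with $p\in\Q[x]$. In that remaining case, pass to an arithmetic progression along which $\lfloor\alpha(n)\rfloor$ is an integer-coefficient polynomial taking an even value, and then invoke \Cref{KSt}.

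The differences are only cosmetic. The paper absorbs the $o(1)$ term with a single constant $C_0$ close to $C$ instead of your per-class $\delta_r$. Your second restriction with step $L=2\cdot d!$ is unnecessary: $Dp\in\Z[x]$ already gives $p(Dm+r)-p(r)\in\Z[m]$, so $Q_r$ has integer coefficients, contrary to your remark. The extra step is harmless, though.
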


We note that if $\lim_{x\to+\infty}|\alpha(x)-p(x)|=+\infty$ for all $p(x)\in\Q[x]$, then \(\alpha\) necessarily satisfies condition \ref{mainp11} of \Cref{mainp1}. Indeed, by \cite[Theorem 1.4]{B94}, the sequence $$
\left(\left\{\frac{\alpha(n)}{2}\right\}\right)_{n\in\N}$$ is dense in \([0,1)\). Therefore, there are infinitely many \(n\in\N\) for which $
\left\{\frac{\alpha(n)}{2}\right\}<\frac12$, and for every such \(n\), \(\lfloor\alpha(n)\rfloor\) is even.

We now turn to density analogues of \Cref{mainp1}.
We have already discussed above an example of local obstruction to the density version of \Cref{KSt}, involving the function $\alpha(n)=n^2$. The following theorem may be viewed as a far-reaching generalization of that example. 
Moreover, as we shall see in \Cref{maindc}, among sub-polynomials satisfying \(\alpha(x)/x\to+\infty\), \Cref{HardysCloseToPolysDensityResult} describes precisely those for which the corresponding density conclusion fails. 
\begin{theorem}
\label{HardysCloseToPolysDensityResult}
Let $\alpha(x)$ be an eventually positive sub-polynomial of the form 
\begin{equation}
\label{r9wedoslcdsds}
\alpha(x)=c p(x)+r(x),
\end{equation}
where $c>0$, $p\in\mathbb{Z}[x]$ has degree $\geq2$, and $\lim_{x\to+\infty}\frac{|r(x)|}{\log(x)}<+\infty$. Then there exists $A\subseteq\mathbb{N}$ such that $\overline{d}(A)>0$ and there are no $x,y\in A,n\in\mathbb{N}$ such that $x+y=\lfloor\alpha(n)\rfloor$.
\end{theorem}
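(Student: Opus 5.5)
The plan is to build $A$ as a union of very sparse blocks. Inside each block, $A$ is a Bohr-type set adapted to $c$ and to a modulus $M$ at which $p$ has a long run of missing residues. The blocks are chosen so that every sum $x+y$ that can possibly equal some $\lfloor\alpha(n)\rfloor$ lands in a region of $\R$ that $\lfloor\alpha(n)\rfloor$ never visits. The hypotheses enter as follows:
\begin{itemize}
\item $\deg p\ge2$ produces local obstructions for $p$ modulo suitable integers;
\item $|r(x)|\ll\log x$ makes $r$ essentially constant (up to a bounded error) on each dyadic scale;
\item positive upper density only requires dense blocks along a lacunary sequence of scales.
\end{itemize}

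\textbf{Step 1 (long gaps in $p(\Z)\bmod M$).} Since $\deg p\ge2$, for infinitely many primes $\ell$ the reduction $p\bmod\ell$ is not surjective. I would argue this via Chebotarev, or via the classical fact that a polynomial permuting $\Z/\ell$ for infinitely many $\ell$ is a composition of linear maps and Dickson/monomial maps, which for nonlinear ones still fails surjectivity on a positive proportion of primes. On those primes the image misses a fixed positive proportion $\delta_\ell\ge 1/d$ or so. Taking $M=\ell_1\cdots\ell_s$ over $s$ such primes and using CRT, the image of $p$ modulo $M$ has size at most $M\prod(1-\delta_{\ell_i})\le M/L$, where $L$ is as large as we like. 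By pigeonhole, some run of $L$ consecutive residues $b+1,\dots,b+L \pmod M$ avoids $p(\Z)$.

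\textbf{Step 2 (freezing $r$ on a scale).} Since $r\in\mathbf U$ and $\limsup|r|/\log<\infty$, we have $|r(x)|\le C\log x$ and (Hardy field) $|xr'(x)|\le C'$ eventually. Fix a scale $N$ and consider the $n$ with $\alpha(n)\in[N,5N]$. Since $\alpha$ grows like $n^{\deg p}$, these $n$ lie in an interval $[n_0,Kn_0]$ with $K$ depending only on $\deg p$. Hence $r(n)\in[\rho_N-B,\rho_N+B]$ for a constant $B$ independent of $N$. Consequently, for such $n$, $\lfloor\alpha(n)\rfloor/c$ lies within $(B+1)/c$ of $p(n)+\rho_N/c\in \Z+\rho_N/c$. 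Reducing mod $M$, it lies within $(B+1)/c$ of a value $\equiv p(n)\pmod M$ shifted by $\rho_N/c$. Choose $L>2(B+1)/c+4$. Then the set
\[
W_N=\{m\in\N:\ m/c-\rho_N/c \in b+\tfrac L2+[-\tfrac L4,\tfrac L4]+M\Z\}
\]
contains no $\lfloor\alpha(n)\rfloor$ with $\lfloor\alpha(n)\rfloor\in[N,5N]$.

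\textbf{Step 3 (the set $A$).} Choose a rapidly increasing sequence $N_k$ with $N_{k+1}>100N_k$. Set
\[
A\cap[N_k,2N_k]=\{x:\ 2x\in W_{N_k}\text{ (tightened by a factor 2)}\},
\]
that is, $x/c\in \tfrac12(\rho_{N_k}/c+b+L/2)+[-L/16,L/16]+\tfrac M2\Z$. By equidistribution of $x/(cM)$ (or simply by counting if $c$ is rational), this has density $\asymp L/M>0$ in $[N_k,2N_k]$, uniformly in $k$, so $\overline d(A)>0$. If $x,y$ lie in the same block, then $x+y\in[2N_k,4N_k]$ and $(x+y)/c$ lies in the central window of $W_{N_k}$, so $x+y\neq\lfloor\alpha(n)\rfloor$. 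If $x\in$ block $k$ and $y\in$ block $j<k$, then $x+y\in[N_k,2N_k+2N_j]$. This mixed case is the main obstacle, because the residue of $y$ is uncontrolled relative to block $k$. I would remove it by thinning: keep in block $k$ only those $x$ with $x\in[N_k,2N_k]$ but require $A\cap[1,N_k^{1/2}]$-sums to be negligible. More cleanly, I would shrink each block to $[N_k,(1+\eta)N_k]$ and additionally require $x\equiv$ the same residue modulo a large integer $Q_k$ that makes $y<N_{k-1}$ too small to matter. Concretely, the window of Step 2 has slack $L/4\cdot c$, so any perturbation $y<cL/8$ keeps the sum inside. Since $y$ is not small, the real fix is to choose the windows for all blocks compatibly: I would try to choose $\rho$-independent windows by making $M$ depend on $k$ through a common multiple. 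Alternatively, I would simply observe that mixed sums lie in $[N_k,N_k+2N_{k-1}]\subset[N_k,1.1N_k]$ and exclude from block $k$ the interval $[N_k,1.1N_k]$ minus a set of the allowed residues for $x$ relative to every possible $y$. Resolving this interaction while keeping positive density is where I expect most of the work to lie.
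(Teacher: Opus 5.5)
Your proposal has a genuine gap, and it is exactly the one you flag: sums $x+y$ with $x$ and $y$ in different blocks are never handled. None of the patches you sketch works as stated. Letting $M$ depend on $k$ destroys the single modulus on which the construction rests, and $y\in[N_j,2N_j]$ is nowhere near small compared with the slack $cL/8$. The trouble comes from translating $W_{N_k}$ by $\rho_{N_k}$, which drifts modulo $cM$ from scale to scale, so different blocks sit in unrelated residue windows.

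The missing idea is to \emph{align the phases}. By compactness of $\mathbb{R}/cM\Z$ (pigeonhole over finitely many arcs of length $\eta$), pass to a lacunary subsequence of scales $N_k$ along which $\rho_{N_k}\bmod cM$ stays within $\eta$ of a fixed $\rho^*$. Then one Bohr set $\{x\in\N:\ x/c\in a+[-L/16,L/16]+M\Z\}$, with $2a\equiv\rho^*/c+b+L/2\pmod M$, serves every block. Every sum $x+y$, same-block or mixed, lies in one fixed window modulo $cM$. If $x\in[N_k,2N_k]$ is the larger summand, then $x+y\in[N_k,4N_k]$, so the only $n$ with $\lfloor\alpha(n)\rfloor=x+y$ are of scale $N_k$, where $r(n)\equiv\rho^*+O(B+\eta)\pmod{cM}$. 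The mixed case then disappears with no thinning at all.

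This is precisely what the paper does. It takes $C=cN$ with $R(C,p)$ sparse and chooses scales $[k_j,2k_j]$ on which $r(n)$ stays within a bounded distance of $0$ modulo $C$. This is possible since $r(n+1)-r(n)\to0$ and $r(2x)-r(x)$ converges; if $r$ converges, the phase is eventually constant anyway. It then sets $A=\{n:\ d_{\mathbb{T}_C}(n,n_0)<1\}\cap\bigcup_j[\alpha(k_j),1.1\alpha(k_j)]$, i.e.\ a \emph{fixed} Bohr set intersected with blocks.

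Separately, Step 3 is wrong even for same-block sums. As your formula with $+\frac M2\Z$ shows, $\{x:\ 2x\in W_{N_k}\}$ is the union of two arcs modulo $cM$ that differ by $cM/2$. If $x,y$ come from different arcs, then $(x+y)/c$ is displaced by $M/2$ from the centre of the gap and can land on $p(\Z)\bmod M$. You must keep a single coset, as above.

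Smaller points:
\begin{itemize}
\item For $W_N$ to avoid $\lfloor\alpha(n)\rfloor$ you need $L/4>(B+1)/c$, plus the phase error $\eta/c$ once you align. Your condition $L>2(B+1)/c+4$ does not guarantee this.
\item When $c$ is rational, positive density of the Bohr set needs each arc of length $cL/8$ to contain integers, e.g.\ $cL\ge8$. The paper avoids this by centring its Bohr set at an integer $n_0$ with $2n_0$ far from $R(C,p)$.
\item Your Step 1 claim that a non-surjective reduction misses $\gg\ell/d$ residues is a true but nontrivial value-set bound for non-permutation polynomials (due to Wan) and would need a citation. The paper instead uses only one missing residue per prime and compensates with a positive relative density of such primes, via Turnwald and Dirichlet.
\end{itemize}
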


\begin{remark}
The variant of \Cref{HardysCloseToPolysDensityResult} for polynomials $p(x)$ of degree at most $1$ has a more cumbersome formulation, and is treated in \Cref{SecObstructionsDensityTheorem} (see \Cref{Alphaclosetolinear}).
\end{remark}

The following theorem, which is a corollary of an ergodic result to be proved in \Cref{SecExpPolyRat}, demonstrates that one can obtain a density amplification of \Cref{KSt} by requiring $A$ to have positive density within a suitable arithmetic progression. 

\begin{theorem}\label{eroidfklsjoifks}
Let $\delta>0$ and let $p\in\mathbb{Z}[x]$ satisfy $\lim_{x\to+\infty}p(x)=+\infty$. Suppose that $p(n)$ is even for some $n\in\N$. Then there exist $b,d\in\N$ such that, for every $A\subseteq\mathbb{N}$ satisfying
\begin{equation*}
\overline{d}_{b+d\N}(A):=\limsup_N\frac{\left|A\cap\{b+d,b+2d,\dots,b+Nd\}\right|}{N}>\delta,
\end{equation*}
there exist distinct $x,y\in A$ such that $x+y=p(n)$ for some $n\in\mathbb{N}$.
\end{theorem}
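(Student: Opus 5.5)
We plan to reduce \Cref{eroidfklsjoifks} to a statement about sums of two elements of a single set of positive density inside an arithmetic progression, and to prove that statement with a Furstenberg correspondence and a spectral (van der Corput/Weyl) argument.

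\textbf{Choice of $b,d$.} Fix $n_0$ with $p(n_0)$ even. For a large modulus $Q$ (a factorial $Q=L!$, with $L$ chosen depending on $\delta$ and $p$), consider the subsequence $n=n_0+Qm$. Then $p(n_0+Qm)\equiv p(n_0)\pmod{Q}$. Choose $d=Q$ and $b$ with $2b\equiv p(n_0)\pmod Q$. Such a $b$ exists modulo $Q$, possibly after replacing $Q$ by $Q/2$ on the $2$-part, using that $p(n_0)$ is even. With $x=b+dk$ and $y=b+dl$, the equation $x+y=p(n_0+Qm)$ becomes
\[
k+l=q(m):=\frac{p(n_0+Qm)-2b}{d}-2,
\]
up to a harmless shift, and $q\in\Z[x]$. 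The point of taking $Q$ highly divisible is that $q$ then has no local obstructions up to scale $L$: the values $q(m)$ are roughly equidistributed in residue classes modulo every $r\le L$. More precisely, we will arrange $q(m)=q(0)+Q'\,\tilde q(m)$, where $\tilde q(0)=0$. This mirrors the Furstenberg--S\'ark\"ozy setup, which is why $p(0)=0$-type normalisations are available.

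\textbf{Ergodic reduction.} Let $B=\{k: b+dk\in A\}$, so $\overline d(B)>\delta$. We need distinct $k,l\in B$ with $k+l=q(m)$. By the Furstenberg correspondence (in the two-set form used later for $A_1,A_2$), it suffices to show that for a measure-preserving system $(X,\mu,T)$ and $E$ with $\mu(E)>\delta$,
\[
\lim_{M\to\infty}\frac1M\sum_{m=1}^M \sum_k \mathbf 1_B(k)\mathbf 1_B(q(m)-k)\ \text{is large}.
\]
The reflection $l=q(m)-k$ is not measure-preserving-compatible directly. So we pass to the finitary route the paper announces (\Cref{QuantitativeErgToComb}) and count
\[
\sum_{k+l=N}\mathbf 1_B(k)\mathbf 1_B(l)=\int_0^1 \widehat{\mathbf 1_B}(\theta)^2 e(-N\theta)\,d\theta,
\]
averaged over $N=q(m)$. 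By Weyl's theorem, the average of $e(-q(m)\theta)$ vanishes for irrational $\theta$ and for rationals with denominator not dividing the structured modulus. Averaged over $m$, the count therefore reduces to the major-arc contribution at $\theta=0$, plus rationals with small denominators, which $Q$ kills. This contribution is $\ge c\,\delta^2 N$ by Cauchy--Schwarz, since $B$ has upper density $>\delta$ in windows of length comparable to $q(m)$. We choose windows $[1,N]$ along which $B$ has density $>\delta$ and take $m$ with $q(m)\approx 2N\cdot$const. Diagonal solutions $k=l$ contribute $O(1)$ per $m$, so distinct solutions exist.

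\textbf{Main obstacle.} The difficulty is the uniformity: the minor-arc and large-denominator terms must be small uniformly for $m$ with $q(m)$ in a range matched to a density-witnessing window $N$. Since $B$ is only of positive upper density, we must restrict to an interval $[N/C,N]$ of $m$-values and control $\sum_m e(q(m)\theta)$ there quantitatively, via Weyl's inequality with an explicit degree-dependent saving. Equally, the "rational with denominator $\le L$" terms must be shown to be exactly handled by the divisibility built into $Q$, with the truncation level $L=L(\delta)$ chosen so that the tail of the major arcs contributes less than $\delta^2/2$. This is where $b,d$ genuinely depend on $\delta$. I would first try to prove it with the Green--Tao style arithmetic regularity decomposition $\mathbf 1_B=f_{\rm str}+f_{\rm unif}$ at level $L$.
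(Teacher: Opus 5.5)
Your circle-method skeleton matches the paper's mechanism in \Cref{RatEstimateBigSumqsmall,betaaqRat}: a $W$-trick on $n$ to kill the small-denominator major arcs, complete-sum bounds at the other rationals, and Weyl on the minor arcs. However, there is a genuine gap: the reduction you build it on fails, and the reduced statement is false. With $n=n_0+Qm$ and modulus $d=Q$ you get
\[
q(m)=\frac{p(n_0)-2b}{Q}+p'(n_0)\,m+\sum_{j\ge 2}\frac{p^{(j)}(n_0)}{j!}\,Q^{j-1}m^{j}.
\]
So modulo $r\mid Q$ you only have $q(m)\equiv c_0+p'(n_0)m$. This is equidistributed if and only if $\gcd(p'(n_0),r)=1$, and nothing in your construction forces that.

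For instance, take $p(x)=x^2$. Evenness of $p(n_0)$ forces $n_0$ even, so $4\mid p'(n_0)=2n_0$. Once $4\mid Q$, this gives $q(m)\equiv q(0)\pmod 4$ for every $m$. Then $B=s+4\N$ with $2s\not\equiv q(0)\pmod 4$ has density $1/4$ and no solutions of $k+l=q(m)$, however large $L$ is. So the statement you reduce to is false for $\delta<1/4$.

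In Fourier terms, at $\theta=a/r$ with $r\mid\gcd(p'(n_0),Q)$ the average of $e(-q(m)\theta)$ has modulus $1$. These major arcs are as large as the one at $\theta=0$, not killed by $Q$. For $r\nmid Q$ the complete sums need not vanish at all. They are only small via \Cref{Va97Thm7.1}, which needs the coefficients of $q$ to be jointly coprime to $r$. Also, writing $q(m)=q(0)+Q'\tilde q(m)$ with $Q'>1$ would make $q$ constant modulo $Q'$, the opposite of what you want.

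The missing idea is to normalise the linear coefficient to a unit before the $W$-trick, as the paper does.
\begin{enumerate}
\item Shift $n$ so that $p(0)$ is even and $h:=p'(0)>0$, and put $b=p(0)/2$.
\item Replace $p$ by $\bigl(p(hx)-p(0)\bigr)/h^2$. This has integer coefficients and linear coefficient $1$.
\item Restrict $n$ to $hW\N$ and use the modulus $h^2W$.
\end{enumerate}
The equation then becomes $k+l=m+W(\cdots)$: unit linear coefficient, all higher coefficients divisible by $W$. The complete sums vanish exactly for $1<r\mid W$, and for $r\nmid W$ (so $r>w$) they are $\ll w^{-1/\deg+\varepsilon}$. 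Note that the modulus $h^2W$ is $h$ times the step $hW$ of the $n$-progression. Your choice of modulus equal to the step cannot produce a unit linear coefficient in general.

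A secondary gap is the main term. For a window $[1,N]$ and values $q(m)$ near a single scale $2N\cdot\mathrm{const}$, the $\theta=0$ contribution is not $\gg\delta^2N$. For example, $B\cap[1,N]$ may lie in $[1,N/2]$, and then no such $t$ is represented. You need to restrict $B$ to a window $[T/2,T)$ whose pairwise sums all land in the range of $q(m)$. You also need to weight by $q'(m)$, so that the $m$-average becomes the uniform average over that range. This is the role of the paper's $F=I\cap W\Z$, $I=[P(Y)/2,P(N)/2)$, with weights $P'(n)$; the main term then counts exactly $|C|\,|D|$ pairs.
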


Note that \Cref{KSt} follows immediately from \Cref{eroidfklsjoifks}. Indeed, it suffices to observe that given a $k$-coloring of $\mathbb N$ and $b,d\in\N$, some color class $A$ must satisfy $\overline{d}_{b+d\N}(A)\geq\frac{1}{k}$.

An analogue of \Cref{eroidfklsjoifks} also holds for every sub-polynomial satisfying the equivalent conditions in \Cref{mainp1}, although its precise formulation is more involved and is omitted. 

When $\alpha$ remains sufficiently far from every scalar multiple of a rational polynomial, we have the following density version of \Cref{mainp}.

\begin{theorem}\label{maindc}
Let $\alpha(x)$ be a sub-polynomial such that $
\lim_{x\to+\infty}\alpha(x)=+\infty$.
Suppose that, for every $p\in\Q[x]$ and every $c\in\R$, 
\begin{equation}\label{fglog}
\lim_{x\to+\infty}
\frac{|\alpha(x)-c p(x)|}{\log x}
=+\infty.
\end{equation}
Then, for every \(A\subseteq\N\) with \(\overline{d}(A)>0\), there exist distinct \(x,y\in A\) and some \(n\in\N\) such that
\[
x+y=\lfloor\alpha(n)\rfloor.
\]
\end{theorem}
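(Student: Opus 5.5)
We reduce \Cref{maindc} to an ergodic statement via Furstenberg's correspondence principle, and then show that the averaged correlation along $\lfloor\alpha(n)\rfloor$ is positive by splitting the measure-preserving system into a discrete-spectrum (Kronecker) part and a weak-mixing part.

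\textbf{Step 1: correspondence.} Let $\overline{d}(A)>0$. We look for $x,y\in A$ with $x+y=m:=\lfloor\alpha(n)\rfloor$, i.e.\ $x\in A\cap(m-A)$. Since $m-A$ is a reflection, the problem is not a shift-recurrence problem in the usual sense. Following the standard trick for sums, we count instead
\[
S_N(n)=\sum_{x}\1_A(x)\1_A(m-x).
\]
For a Følner sequence $[1,N_k]$ along which $A$ has density $\delta>0$, $S_N(n)/N$ is essentially a correlation $\int f\cdot \tilde f\circ T^{m}$, where $\tilde f$ comes from the reflected set. To make this rigorous we will use a finitistic version, in the spirit of \Cref{QuantitativeErgToComb}. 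Work in $\Z/N\Z$ (or $[1,N]$ with $N$ comparable to values of $\alpha$), and write
\[
\sum_{x+y=m}\1_A(x)\1_A(y)=\sum_{\xi}\widehat{\1_A}(\xi)^2 e(m\xi/N).
\]
Averaging over $n$ with $\alpha(n)\le N$ gives
\[
\sum_\xi \widehat{\1_A}(\xi)^2\,\mathbb{E}_n\, e\bigl(\lfloor\alpha(n)\rfloor\xi/N\bigr).
\]

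\textbf{Step 2: equidistribution input.} The hypothesis \eqref{fglog} is exactly what guarantees, via Boshernitzan's criterion \cite{B94}, that for every irrational $\theta$ (and in a quantitative, uniform-in-$\theta$ sense for $\theta$ away from rationals with small denominator) the sequence $\lfloor\alpha(n)\rfloor\theta$ is equidistributed mod $1$. The $\log x$ growth condition, as opposed to mere divergence, is what makes $\lfloor\alpha(n)\rfloor\theta$ equidistributed even for rational multiples that $\alpha$ could otherwise mimic, namely $c p(x)$. Consequently, the sequence $\lfloor\alpha(n)\rfloor$ is equidistributed in every residue class mod $q$ and along every $\Z/q$-Bohr set. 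Hence
\[
\mathbb{E}_n\, e\bigl(\lfloor\alpha(n)\rfloor\xi\bigr)\to \1_{\xi=0}.
\]
This holds for all $\xi$ in the circle, including rationals: equidistribution mod $q$ of $\lfloor\alpha(n)\rfloor$ is itself a consequence of \eqref{fglog} with $c=1/q$.

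\textbf{Step 3: positivity.} In the limit, the average of the number of representations is dominated by the zero frequency, $\widehat{\1_A}(0)^2\approx\delta^2$. The remaining terms are controlled by Cauchy--Schwarz and Parseval: frequencies near rationals of small denominator contribute little by equidistribution in residue classes, and the rest are handled by a uniform bound. Thus
\[
\mathbb{E}_{n}\,\#\{x+y=\lfloor\alpha(n)\rfloor\}\gtrsim \delta^2 N,
\]
whereas solutions with $x=y$ number at most $1$ per $n$. This yields distinct $x,y$.

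\textbf{Main obstacle.} The delicate step is the uniformity in Step 2. We need the error $\mathbb{E}_n e(\lfloor\alpha(n)\rfloor\xi)$ to be small for all $\xi$ outside a small neighbourhood of $0$ simultaneously at scale $N$, not merely pointwise. Pointwise equidistribution does not suffice, because the frequencies $\xi/N$ vary with $N$. I would first try to avoid the issue by passing to an infinitary setting: a measure-preserving system with spectral measure $\sigma$ of $\1_A$. The quantity then becomes $\int \mathbb{E}_n e(\lfloor\alpha(n)\rfloor t)\,d\sigma_\ast(t)$, and dominated convergence together with pointwise convergence to $\1_{\{0\}}$ gives the limit $\sigma_\ast(\{0\})>0$. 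The remaining difficulty is the reflection $y=m-x$, which does not come from a single $\Z$-action. To handle it I would express the counting function through the convolution $\1_A*\1_A$ along Følner sets and use a correspondence for $\Z$ in which this convolution becomes the Fourier transform of a positive measure $\nu$. For this $\nu$, one needs $\nu(\{0\})\ge\delta^2$, and proving that lower bound is the step I expect to require the most care.
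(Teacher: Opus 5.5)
Your proposal has a genuine gap. Step~3 and the infinitary fallback do not work, and the uniform estimate you assume in Step~2 is essentially the whole content of the proof.

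\textbf{Step 3 is false at the finite scale.} The frequencies in Step~1 are $\xi/N$. For small nonzero $\xi$ the average $\mathbb{E}_n e(\lfloor\alpha(n)\rfloor\xi/N)$ is not small. The points $\lfloor\alpha(n)\rfloor/N$ follow a non-uniform law on $[0,1]$; for $\alpha(x)=x^{3/2}$ it is the law of $s^{3/2}$ with $s$ uniform. Its Fourier coefficients are of order $1$ for bounded $\xi$ and decay only like a power of $|\xi|$. Meanwhile $\widehat{\1_A}(\xi)^2$ can be of the same order $\delta^2$ as the zero-frequency term; take $A$ an interval. So Cauchy--Schwarz and Parseval cannot make these terms negligible, and they need not have a favourable sign. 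Reducing modulo $N$ is also a problem in itself. If $A\subseteq[N/2,N]$, the congruence $x+y\equiv m\pmod N$ has many solutions with $x+y=m+N$, so the modular count does not detect integer solutions. You need $A\subseteq[1,M/2)$ inside a cyclic group of order larger than $M\approx\alpha(N)$.

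\textbf{The infinitary fallback cannot be repaired.} For fixed $m$ one has $\#\{x\le N: x\in A,\ m-x\in A\}\le m$, so any correspondence-principle limit of these counts is $0$. Sums are not shift-invariant correlations, so no positive measure $\nu$ with $\nu(\{0\})\ge\delta^2$ arises. The problem lives at a scale tied to $\alpha(N)$. Pointwise equidistribution plus dominated convergence is therefore not enough; you need an estimate uniform in the frequency, and Step~2 only asserts this.

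\textbf{What the paper does instead.} \Cref{texp} shows, uniformly in $\theta$, that the weighted average $\frac{1}{\alpha(N)}\sum_{n<N}\alpha'(n)e(\lfloor\alpha(n)\rfloor\theta)$ is within $w(N)\to0$ of the Ces\`aro average $\frac{1}{\alpha(N)}\sum_{m<\alpha(N)}e(m\theta)$. The comparison is with this average, not with $\1_{\theta=0}$. The weights $\alpha'(n)$ make $\lfloor\alpha(n)\rfloor$ look uniformly distributed on $[1,\alpha(N)]$ at the archimedean scale, so the low frequencies that break your Step~3 are absorbed into the comparison average. Via Herglotz this becomes \Cref{tergodic}, uniform over all measure-preserving systems. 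Then, in $\Z/P\Z$ with $P>\alpha(N)$ and $A_1,A_2\subseteq[1,\alpha(N)/2)$, the Ces\`aro average of $|A_1\cap(m-A_2)|$ equals $|A_1||A_2|/\alpha(N)$ exactly by double counting (\Cref{QuantitativeErgToComb}). This gives positivity once $w(N)$ is below the density.

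Proving \Cref{texp} is the real work. It needs van der Corput-type bounds (\Cref{DerivEstimates}), the passage from $\lfloor\alpha(n)\rfloor$ to $\alpha(n)$ (\Cref{fit}), and a major-arc comparison (\Cref{major}). When $r_\alpha\ll\log x$ it also needs Weyl-type estimates exploiting an irrational ratio of coefficients of $p_\alpha$. Your proposal contains none of this, and the Kronecker/weak-mixing splitting announced at the start is never used.
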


Combining \Cref{HardysCloseToPolysDensityResult} and \Cref{maindc}, we obtain:

\begin{theorem}
For a sub-polynomial $\alpha(x)$  such that $
\lim_{x\to+\infty}\frac{\alpha(x)}{x}=+\infty$, the following are equivalent:
\begin{itemize}
    \item For every $p\in\Q[x]$ and every $c\in\R$, 
\begin{equation*}
\lim_{x\to+\infty}
\frac{|\alpha(x)-c p(x)|}{\log x}
=+\infty.
\end{equation*}
\item For every \(A\subseteq\N\) with \(\overline{d}(A)>0\), there exist distinct \(x,y\in A\) and some \(n\in\N\) such that
\[
x+y=\lfloor\alpha(n)\rfloor.
\]
\end{itemize}
\end{theorem}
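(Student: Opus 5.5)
The equivalence follows by combining \Cref{maindc} and \Cref{HardysCloseToPolysDensityResult}. The main work is the negated direction, where we must show that failure of the logarithmic condition forces $\alpha$ into the form \eqref{r9wedoslcdsds}.

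\emph{First bullet implies second.} Since $\alpha(x)/x\to+\infty$, we have $\alpha(x)\to+\infty$. So \Cref{maindc} applies directly and gives distinct $x,y\in A$ and $n\in\N$ with $x+y=\lfloor\alpha(n)\rfloor$ for every $A$ with $\overline{d}(A)>0$.

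\emph{Second bullet implies first.} We argue by contraposition. Suppose there exist $p\in\Q[x]$ and $c\in\R$ with
\[
\liminf_{x\to+\infty}\frac{|\alpha(x)-cp(x)|}{\log x}<+\infty .
\]
Set $r(x)=\alpha(x)-cp(x)$. Then $r$ lies in the same Hardy field as $\alpha$, since Hardy fields can be extended to contain $\R(x)$. Hence $|r(x)|/\log x$ has a limit in $[0,+\infty]$, and by assumption this limit is finite.

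Next we normalize $p$. Clearing denominators, write $p=q/m$ with $q\in\Z[x]$ and $m\in\N$, and replace $c$ by $c/m$; so we may assume $p\in\Z[x]$. Now $\alpha(x)/x\to+\infty$ while $r=O(\log x)$, so $cp(x)/x\to+\infty$. Therefore $c\neq0$ and $\deg p\geq 2$. Replacing $(c,p)$ by $(-c,-p)$ if necessary, we may take $c>0$.

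Finally, $\alpha$ is eventually positive because $\alpha\to+\infty$. So $\alpha$ satisfies every hypothesis of \Cref{HardysCloseToPolysDensityResult}. That result yields $A$ with $\overline{d}(A)>0$ admitting no solutions to $x+y=\lfloor\alpha(n)\rfloor$, so the second bullet fails.

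\emph{Main obstacle.} The only delicate point is the interpretation of the limit in \eqref{fglog}. We need the Hardy-field fact that $|\alpha-cp|/\log x$ always has a limit in the extended reals. This is what lets "not $+\infty$" be read as "finite", which is exactly the hypothesis $\lim |r|/\log x<+\infty$ in \Cref{HardysCloseToPolysDensityResult}. For this we use that $\alpha$ and $\R(x)$, together with $\log x$, lie in a common Hardy field. That is standard for $\mathbf U$, since any Hardy field extends to one containing $\log x$ and the rational functions.
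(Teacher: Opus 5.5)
Your proposal is correct and follows the paper's own route: the paper gets this equivalence by combining \Cref{maindc} for the forward direction with \Cref{HardysCloseToPolysDensityResult} for the converse. The extra bookkeeping is sound and supplies details the paper leaves implicit. Working in a maximal Hardy field containing $\alpha$, $\R[x]$ and $\log x$ ensures that $|\alpha-cp|/\log x$ has a limit, so a non-infinite limit is a finite one. Clearing denominators and using $\alpha(x)/x\to+\infty$ then forces $c\neq0$ and $\deg p\geq 2$, and flipping signs gives $c>0$.
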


\begin{remark}
\label{4refsdpossdlfoxc}
By a classical result of Boshernitzan \cite[Theorem 1.3]{B94}, if $\alpha$ is a sub-polynomial, then the sequence $
\bigl(\{\alpha(n)\}\bigr)_{n\in\N}$
is uniformly distributed modulo \(1\) (u.d.\ mod \(1\)) if and only if 
\[
\lim_{x\to+\infty}
\frac{|\alpha(x)-p(x)|}{\log x}
=+\infty
\]
for every $p\in\Q[x]$. Consequently, $\alpha$ satisfies \eqref{fglog} if and only if, for every $\kappa>0$, the sequence $
\bigl(\kappa\alpha(n)\bigr)_{n\in\N}$
is u.d. mod \(1\).
\end{remark}

\Cref{maindc} forms a natural counterpart of a result of Frantzikinakis and Wierdl, who in \cite[Theorem C]{FW09} established a variant of the Furstenberg--Sárközy theorem for sub-polynomials. More precisely, they proved the following.
\begin{theorem}
\label{FW09ThmC}
If a sub-polynomial $\alpha(x)$ satisfies
\begin{equation}\label{fgfw}
\lim_{x\to+\infty}|\alpha(x)-c p(x)|=+\infty
\end{equation}
for every $p\in\Q[x]$ and every $c\in\R$, then, for every set $A\subseteq\N$ with $\overline{d}(A)>0$, there exist distinct $x,y\in A$ and some $n\in\N$ such that
\begin{equation}
x-y=\lfloor\alpha(n)\rfloor.
\end{equation}
\end{theorem}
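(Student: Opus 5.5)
The statement immediately above is \Cref{FW09ThmC}, which is the theorem of Frantzikinakis and Wierdl. I would prove it along their lines, via the Furstenberg correspondence principle and a spectral argument.

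Let $A$ have positive upper density. The correspondence principle gives an invertible measure preserving system $(X,\mathcal B,\mu,T)$ and a set $E$ with $\mu(E)=\overline{d}(A)>0$ such that
\[
\overline{d}\bigl(A\cap(A-m)\bigr)\ge\mu(E\cap T^{-m}E)
\]
for every $m$. It therefore suffices to show that
\[
\liminf_{N\to\infty}\frac1N\sum_{n=1}^N\mu\bigl(E\cap T^{-\lfloor\alpha(n)\rfloor}E\bigr)>0.
\]
Since $\lfloor\alpha(n)\rfloor\to\infty$ when $\alpha\to\infty$, positivity of this average yields $n$ with $\lfloor\alpha(n)\rfloor\neq0$, and hence distinct $x,y\in A$ with $x-y=\lfloor\alpha(n)\rfloor$. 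If $\alpha$ is bounded or tends to $-\infty$, condition \eqref{fgfw} with $c=0$ excludes the bounded case, and the case $\alpha\to-\infty$ is symmetric.

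By the spectral theorem, $\mu(E\cap T^{-m}E)=\int_{\mathbb T}e(mt)\,d\sigma(t)$, where $\sigma$ is the spectral measure of $\1_E$. The key step is to show that for every $t\in\mathbb T\setminus\Q$,
\[
\frac1N\sum_{n\le N}e\bigl(\lfloor\alpha(n)\rfloor t\bigr)\to0 .
\]
Writing $\lfloor\alpha(n)\rfloor=\alpha(n)-\{\alpha(n)\}$, this reduces to the equidistribution of the pair $(\alpha(n)t,\alpha(n))$ modulo $\Z^2$. By Weyl's criterion we need $(k t+\ell)\alpha(n)$ to be u.d.\ mod $1$ for all $(k,\ell)\neq(0,0)$. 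Here $kt+\ell\neq0$ because $t$ is irrational, so this is exactly the u.d.\ of $c\,\alpha(n)$ for $c\neq0$.

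\textbf{Main obstacle.} Hypothesis \eqref{fgfw} only gives $|\alpha-cp|\to\infty$, not the $\log$-growth separation that Boshernitzan's criterion (\Cref{4refsdpossdlfoxc}) needs for u.d. I would split into two cases. In the first case $\alpha$ is $\log$-far from every $cp$, and u.d.\ holds. In the second case $\alpha=cp+r$ with $|r|\to\infty$ but $r\ll\log x$. Then $r$ is slowly varying, and I would average over long blocks on which $r$ is nearly constant. Frantzikinakis and Wierdl handle this case with Cesàro averages replaced by suitably weighted averages, or by passing to subsequences $n$ in intervals where $r(n)$ is close to prescribed values. I would use weighted averages with weights $1/(n\log n)$-type or restrict to intervals to recover equidistribution of $c\,\alpha(n)$ modulo $1$ in a weaker averaged sense, which is enough for the argument.

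For rational $t=a/q$, I would use $\{\alpha(n)/q\}$, or its weighted analogue: by the same equidistribution, $\lfloor\alpha(n)\rfloor$ is equidistributed among residues mod $q$. Hence the contribution of atoms at rationals becomes an average over all $m$ modulo $q$, rather than concentrating on $m\equiv0$.

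\textbf{Conclusion.} Altogether the limit equals
\[
\lim_{N\to\infty}\frac1N\sum_{m=1}^{N}\mu(E\cap T^{-m}E)=\|P\1_E\|^2\ge\mu(E)^2>0,
\]
where $P$ is the projection onto the rational Kronecker factor. At rational $t=a/q$, averaging $e(mt)$ over residues kills the nonzero rationals, and only the atom at $0$ survives. Its mass $\sigma(\{0\})=\|P_{\mathrm{inv}}\1_E\|^2\ge\mu(E)^2$ by Cauchy–Schwarz.

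I expect the main difficulty to be the slowly varying case $r\ll\log$. The plan there is to replace uniform Cesàro averages by averages along intervals $[N,N(1+\varepsilon)]$ chosen so that $r$ is nearly constant on each interval.
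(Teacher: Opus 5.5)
The paper does not prove this statement. It quotes it from \cite[Theorem C]{FW09} as a counterpart to \Cref{maindc}, so I am judging your argument on its own.

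The case in which $\kappa\alpha(n)$ is u.d.\ mod $1$ for every $\kappa>0$ (equivalently, \eqref{fglog} holds) is handled correctly. You use the correspondence principle, the spectral measure, and pointwise convergence of $\frac1N\sum_{n\le N}e(\lfloor\alpha(n)\rfloor t)$ to $1_{\{0\}}(t)$. For irrational $t$ this comes from joint u.d.\ of $(t\alpha(n),\alpha(n))$ with the Riemann-integrable test function $e(u-t\{v\})$. For $t=a/q$ it comes from equidistribution of $\lfloor\alpha(n)\rfloor$ mod $q$. Dominated convergence then gives the limit $\sigma(\{0\})=\|P_{\mathrm{inv}}1_E\|^2\ge\mu(E)^2$. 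The reference to the rational Kronecker factor in your conclusion is a slip: the Ces\`aro limit of $\mu(E\cap T^{-m}E)$ is $\|P_{\mathrm{inv}}1_E\|^2$, as your last sentence says.

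The gap is the remaining case $\alpha=cp+r$ with $p\in\Q[x]$ and $1\prec|r|\ll\log x$. This is exactly the extra generality of \eqref{fgfw} over \eqref{fglog}, and you leave it as two alternative suggestions, neither of which works as stated.

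\begin{itemize}
\item Ces\`aro averages genuinely fail here. If $\gamma\neq0$ and $\gamma cp\in\Q[x]+\R$, then $\gamma\alpha(n)$ mod $1$ is a periodic sequence plus the slowly varying $\gamma r(n)$. For example, with $\alpha(n)=\sqrt2\,n^2+\sqrt2\log\log n$ and $t=1/\sqrt2$, the averages $\frac1N\sum_{n\le N}e(\lfloor\alpha(n)\rfloor t)$ behave like $e(\log\log N)\int_0^1e(-v/\sqrt2)\,dv$ and do not tend to $0$.
\item Averaging over intervals $[N,N(1+\varepsilon)]$ on which $r$ is nearly constant does not help. On such an interval $\gamma\alpha(n)$ is essentially periodic mod $1$, so equidistribution is lost rather than recovered.
\item Weighting does work, but the weights must be adapted to $r$. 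The choice $1/(n\log n)$ already fails for $r=\log\log\log x$: with $u=\log\log n$, the weighted sum of $e(hr(n))$ is about $\int_1^U e(h\log u)\,du$, which is not $o(U)$.
\end{itemize}

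The fix is to take $w_n=|r'(n)|$, so that $\sum_{n\le N}w_n\sim|r(N)|\to\infty$. You must then prove that $\gamma\alpha(n)$ is u.d.\ with respect to $(w_n)$ for every $\gamma\ne0$, in two cases.

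\begin{itemize}
\item If $\gamma cp$ has an irrational non-constant coefficient, then $|\gamma\alpha-q|\gg x$ for all $q\in\Q[x]$. So $\gamma\alpha(n)$ is Ces\`aro u.d.\ by Boshernitzan's criterion. Partial summation transfers this to the weights, because $|w_n-w_{n+1}|\ll w_n/n$ for Hardy $r$.
\item Otherwise $\gamma cp(n)$ mod $1$ is $Q$-periodic for some $Q$. On each residue class mod $Q$ one has $\sum_{n\le N}w_n e(h\gamma r(n))=\frac1Q\int^N|r'|\,e(h\gamma r)+O(1)=O(1)$.
\end{itemize}

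With this lemma, your spectral argument goes through verbatim for the $w$-weighted averages. Positivity of a weighted average is all the correspondence principle needs.

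This is also exactly the regime in which the sum version fails (\Cref{HardysCloseToPolysDensityResult}). The correlation $\mu(E\cap T^{-m}E)$ does not see where $x,y$ lie, so one may average over all values of $r$. For $x+y=\lfloor\alpha(n)\rfloor$, by contrast, the size of $x,y$ fixes the range of $n$.
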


Thus, for sub-polynomials satisfying \(\alpha(x)/x\to+\infty\), condition \eqref{fgfw} is sufficient (but not necessary) for the equation \(x-y=\lfloor\alpha(n)\rfloor\) to have a solution in every set of positive upper density. By contrast, condition \eqref{fglog} is necessary and sufficient for the same conclusion to hold for \(x+y=\lfloor\alpha(n)\rfloor\).

We actually establish an extended version of \Cref{maindc}, which we state next. For \(A_1,A_2\subseteq\N\), define their \emph{joint upper density} by
\[
\overline{d}(A_1,A_2)
:=
\limsup_{N\to\infty}
\min_{1\leq i\leq 2}
\frac{|A_i\cap[1,N]|}{N}.
\]
Equivalently, \(\overline{d}(A_1,A_2)>0\) if and only if there exists a sequence of natural numbers \((N_j)_{j\in\N}\) tending to \(\infty\) such that
\[
\liminf_{j\to\infty}\frac{|A_1\cap[1,N_j]|}{N_j}>0
\qquad\text{and}\qquad
\liminf_{j\to\infty}\frac{|A_2\cap[1,N_j]|}{N_j}>0.
\]
Clearly, $
\overline{d}(A_1,A_2)>0$ implies that both \(\overline{d}(A_1)>0\) and \(\overline{d}(A_2)>0\). But the converse need not hold: for example, the sets 
\begin{equation*}
\textstyle A_1=\N\cap\bigcup_N[(4N)!,(4N+1)!],\qquad A_2=\N\cap\bigcup_N[(4N+2)!,(4N+3)!]
\end{equation*}
satisfy $\overline{d}(A_1)=\overline{d}(A_2)=1$, but $\overline{d}(A_1,A_2)=0$.
Still, if $A_1$ has positive lower density, i.e.
$$
\underline{d}(A_1):=\liminf_{N\to\infty}\frac{|A_1\cap[1,N]|}{N}>0,
$$
then for all $A_2$ with positive upper density we have $\overline{d}(A_1,A_2)>0$. 

\begin{theorem}\label{maind}
Let \(\alpha(x)\) be a sub-polynomial satisfying the hypotheses of \Cref{maindc}.
Then, for any $A_1,A_2\subseteq\N$ satisfying $
\overline{d}(A_1,A_2)>0$, 
there exist $n\in\N$ and distinct $x\in A_1$ and $y\in A_2$ such that
\[
x+y=\lfloor\alpha(n)\rfloor.
\]
\end{theorem}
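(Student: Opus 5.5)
The plan is to transfer the problem to a measure-preserving system via a Furstenberg-type correspondence principle for pairs of sets, and then prove a positivity statement for a correlation along the sequence $\lfloor\alpha(n)\rfloor$. Write $A_2'=-A_2$, so that $x+y=m$ with $x\in A_1$, $y\in A_2$ becomes $x-(-y)=m$. Fix $N_j\to\infty$ along which both $A_1,A_2$ have density at least $\delta>0$ in $[1,N_j]$. It is convenient to work in the cyclic group $\Z_{M}$ with $M\approx 2N_j$, or equivalently to use a \emph{reflection} on the interval. The key finitistic quantity is
\[
S_j(n)=\frac{1}{N_j}\sum_{x}1_{A_1}(x)\,1_{A_2}(\lfloor\alpha(n)\rfloor-x),
\]
and we want $\frac1{|I|}\sum_{n\in I}S_j(n)$ to be bounded below for a suitable range $I$ of $n$ with $\lfloor\alpha(n)\rfloor\approx N_j$ (say $\alpha(n)\in[N_j,2N_j]$). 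Since $\alpha$ is a Hardy function of polynomial growth tending to infinity, such $n$ form a long interval, of length $\to\infty$.

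First, I would decompose $1_{A_1}$ and $1_{A_2}$ (restricted to $[1,N_j]$) into a structured part plus a part with small Fourier (or Gowers $U^2$) norm. The map $(f,g)\mapsto\sum_x f(x)g(m-x)$ is a convolution, so averaging over $m=\lfloor\alpha(n)\rfloor$ is controlled by the Fourier coefficients $\hat f(\theta)\hat g(\theta)$ weighted by $\frac1{|I|}\sum_{n\in I}e(\theta\lfloor\alpha(n)\rfloor)$. The hypothesis \eqref{fglog}, via \Cref{4refsdpossdlfoxc}, says that $\kappa\alpha(n)$ is u.d.\ mod 1 for every $\kappa>0$. This will be used, uniformly in $\theta$ away from $0$ at scale $1/N_j$, to show these exponential averages are small. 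Hence only the low-frequency part $|\theta|\lesssim 1/N_j$ survives. Note that no rational frequencies survive here: that is exactly why the density conclusion holds without the local obstructions of \Cref{HardysCloseToPolysDensityResult}.

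Second, the low-frequency contribution is essentially $\int \tilde f(t)\tilde g(s-t)\,dt$ averaged over $s$ distributed like $\alpha(n)/N_j$ on $[1,2]$. Here $\tilde f,\tilde g$ are smoothed versions of the densities on $[0,1]$, which have mass at least $\delta$ each. The distribution of $\alpha(n)/N_j$ for $n\in I$ converges to an absolutely continuous measure on $[1,2]$, because $\alpha$ is regularly varying-like for Hardy functions. Then $\iint \tilde f(t)\tilde g(s-t)\,dt\,ds\ge$ (an explicit multiple of) $\delta^2$, since $t+u$ ranges over $[0,2]$ as $t,u$ range over $[0,1]$. One has to ensure that the density weight in $s$ covers $[1,2]$ fully, or choose the range of $s$ to be $[0,2]$ by taking $I$ with $\alpha(n)\in[1,2N_j]$ and a suitable weight. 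Distinctness $x\neq y$ is free, since the diagonal contributes $O(1/N_j)$.

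The main obstacle is the uniform equidistribution step: I need $\sup_{\varepsilon N_j^{-1}\le|\theta|\le1/2}\bigl|\frac1{|I|}\sum_{n\in I}e(\theta\lfloor\alpha(n)\rfloor)\bigr|\to0$. This is quantitative, unlike the qualitative u.d.\ in \Cref{4refsdpossdlfoxc}, because $\theta$ varies with $j$. I would handle $\theta$ at scale comparable to $1/N_j$ by the continuous-distribution argument above, which treats it as part of the structured term. For $\theta$ bounded away from rationals with small denominators, I would use van der Corput and Hardy-field derivative estimates. Near rationals $a/q$, I would write $e(\theta\lfloor\alpha\rfloor)$ via $\{\alpha(n)\}$ and the u.d.\ of $\alpha(n)/q$ (from \eqref{fglog} with $\kappa=1/q$). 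Making this uniform across all $\theta$ is where the $\log x$ growth condition must be used sharply, in line with Boshernitzan's criterion.
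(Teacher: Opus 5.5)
Your overall architecture matches the paper's: compare averages of $e(\theta\lfloor\alpha(n)\rfloor)$ with averages of $e(\theta m)$ uniformly in $\theta$, then count in a cyclic group. But there is a genuine gap, in two places. First, the way you average over $n$ breaks the main-term lower bound. With unweighted averages over $n\in I$, the law of $\alpha(n)/N_j$ need not have an absolutely continuous limit. The hypotheses allow $\deg\alpha=0$ (e.g.\ $\alpha(x)=(\log x)^2$, of type (I)), and then \Cref{LemmaLogHardyBehaviour} gives $\alpha(\lambda X)/\alpha(X)\to1$ for every fixed $\lambda>0$. So for $n$ uniform in $[1,X]$, $\alpha(n)$ lies in $[(1-o(1))\alpha(X),\alpha(X)]$ with probability $1-o(1)$. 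Whether you use $\alpha(n)\in[N_j,2N_j]$ or $[1,2N_j]$, your main term then only sees $1_{A_1}*1_{A_2}$ on a window of length $o(N_j)$ just below $2N_j$. That means pairs with $x,y\in[(1-o(1))N_j,N_j]$, which density $\geq\delta$ in $[1,N_j]$ does not supply.

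The missing device is the weight $\alpha'(n)$. The measure $\sum_{n<N}\alpha'(n)\delta_{\alpha(n)}$ is, at macroscopic scales, Lebesgue measure on $[1,\alpha(N)]$ for every growth rate. So \Cref{texp} compares with the flat average over $1\le m<\alpha(N)$ for \emph{all} $\theta$, including $\theta$ near $0$. After restricting $A_i$ to $[1,\alpha(N)/2)$, the main term is exactly $|A_1||A_2|/\alpha(N)$ by double counting (\Cref{QuantitativeErgToComb}). No low/high-frequency splitting or positivity argument for a smoothed convolution is needed.

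Second, the uniform estimate you defer as ``the main obstacle'' is the bulk of the proof (\Cref{ProofOftexp}), and the tools you name do not give it. Qualitative u.d.\ of $\alpha(n)/q$ for each fixed $q$ gives no rate and nothing uniform for $\theta$ near $a/q$ with $q$ growing with $N_j$. For type (I) functions a van der Corput route does work, applied to $e((h+\theta)\alpha(n))$ after expanding $e(-\theta\{\alpha(n)\})$ as in \Cref{fit}. There a derivative of order exceeding $\deg p_\alpha$ kills $p_\alpha$ and is controlled by $r_\alpha\succ\log x$.

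For type (II) functions such as $x^2+\sqrt2\,x$, however, the special part is $O(\log x)$ and yields no cancellation. One needs a genuine major/minor-arc analysis of $\theta p_\alpha$. The irrationality of some ratio $\a_{j_0}/\a_1$ forces one of two outcomes. Either some coefficient of $\theta p_\alpha$ lies on a minor arc, handled by Weyl's inequality with Wooley's transference (\Cref{pcq}). Or all the major-arc approximations have a large common denominator $Q$, handled by the complete-sum bound $Q^{-1/d+\varepsilon}$. The resulting rate $\lambda(N)$ depends on how well $\a_{j_0}/\a_1$ is approximable by rationals. One also needs \Cref{pff} to replace $\lfloor p_\alpha(n)+r_\alpha(n)\rfloor$ by $\lfloor p_\alpha(n)+\b_0\rfloor$ on short intervals. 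Without an argument of this kind, the proposal does not prove the theorem.
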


\begin{remark}
In \Cref{maind} the assumption $
        \overline{d}(A_1,A_2)>0$
        cannot in general be replaced by the two separate assumptions $
        \overline{d}(A_1)>0$ and $\overline{d}(A_2)>0$ (see \Cref{RemarkMaind1}).
\end{remark}
\begin{remark}
One can show that, under the assumptions of \Cref{maind}, the set
        \[
        \bigl\{n\in\N:\text{there exist distinct }x\in A_1
        \text{ and }y\in A_2\text{ such that }
        x+y=\lfloor\alpha(n)\rfloor\bigr\}
        \]
        has positive upper density; see \Cref{GoodSumsHavePositiveDensity}. Moreover, given any $a<b$, one may replace the conclusion $
        x+y=\lfloor\alpha(n)\rfloor$
        by
        \[
        \alpha(n)-(x+y)\in(a,b);
        \]
        see \Cref{MorePreciseVersionOfmaind}. In particular, \Cref{maind} remains valid when $\lfloor\cdot\rfloor$ is replaced by either $\lceil\cdot\rceil$ or $[\![\cdot]\!]$.
\end{remark}

Note that, if $A\subseteq\N$ has positive upper density, then 
$\overline d(kA,\ell A)>0$
for every $k,\ell\in\N$, where $kA:=\{kx:x\in A\}$. Therefore, \Cref{maind} has the following immediate consequence.

\begin{theorem}
\label{LinearEquationCorollary}
Let $\alpha(x)$ be a sub-polynomial satisfying the hypotheses of \Cref{maindc}, let $k,l\in\N$, and let $A\subseteq\N$ have positive upper density. Then there exist $x,y\in A$ and $n\in\N$ such that
\[
kx+ly=\lfloor\alpha(n)\rfloor.
\]
\end{theorem}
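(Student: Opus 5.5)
We deduce \Cref{LinearEquationCorollary} from \Cref{maind} with $A_1=kA$ and $A_2=\ell A$. If $x'\in kA$ and $y'\in \ell A$ satisfy $x'+y'=\lfloor\alpha(n)\rfloor$, write $x'=kx$ and $y'=ly$ with $x,y\in A$. Then $kx+ly=\lfloor\alpha(n)\rfloor$, which is the required conclusion. Distinctness of $x'$ and $y'$ is not needed here, since the statement does not ask for $x\neq y$. So the only thing to verify is the hypothesis $\overline{d}(kA,\ell A)>0$.

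Let $\delta=\overline{d}(A)>0$ and choose $N_j\to\infty$ with $|A\cap[1,N_j]|\geq \delta N_j/2$. Set $M_j:=k\ell N_j$. Since $x\mapsto kx$ maps $A\cap[1,\ell N_j]$ injectively into $kA\cap[1,M_j]$, we get
\[
|kA\cap[1,M_j]|=|A\cap[1,M_j/k]|=|A\cap[1,\ell N_j]|\geq |A\cap[1,N_j]|\geq \tfrac{\delta}{2}N_j=\tfrac{\delta}{2k\ell}M_j .
\]
Symmetrically, $|\ell A\cap[1,M_j]|=|A\cap[1,kN_j]|\geq \tfrac{\delta}{2k\ell}M_j$. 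Both ratios are therefore bounded below by $\delta/(2k\ell)$ along the same sequence $M_j\to\infty$. By the equivalent formulation of joint upper density given before \Cref{maind}, this means $\overline{d}(kA,\ell A)>0$.

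The key point, and the only place where care is needed, is using a single common scale $M_j$ that works for both dilates simultaneously; this is exactly what the joint density requires. After that, the hypotheses of \Cref{maindc} on $\alpha$ carry over unchanged to \Cref{maind}, and the corollary follows.
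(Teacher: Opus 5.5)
Your proof is correct and follows the paper's argument exactly: the paper observes that $\overline{d}(kA,\ell A)>0$ whenever $\overline{d}(A)>0$, and then applies \Cref{maind} with $A_1=kA$ and $A_2=\ell A$. Your computation along the common scale $M_j=k\ell N_j$ supplies the details of that observation, which the paper leaves implicit.
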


It is natural to ask whether analogous density results hold for equations of the form $
kx-\ell y=\lfloor\alpha(n)\rfloor$.
The methods used to prove \Cref{maind} also give the following result, proved in \Cref{SecErgThm}.

\begin{restatable}{theorem}{LinearDifferenceEquationCorollary}
\label{LinearDifferenceEquationCorollary}
Let $\alpha(x)$ be a sub-polynomial satisfying the hypotheses of \Cref{maindc}, let $k,\ell\in\N$ satisfy $k\geq \ell$, and let $A\subseteq\N$ have positive upper density. Then there exist $x,y\in A$ and $n\in\N$ such that
\[
kx-\ell y=\lfloor\alpha(n)\rfloor.
\]
\end{restatable}

When $k=\ell$, \Cref{LinearDifferenceEquationCorollary} is a special case of \Cref{FW09ThmC}. One can adapt the proof of \Cref{LinearDifferenceEquationCorollary} to check that the conclusion of \Cref{LinearDifferenceEquationCorollary} holds for $k<\ell$ if $A$ has positive lower density. But the inequality $k\geq \ell$ is needed in general: if $k<\ell$ and $\alpha(x)\succ x$, one can find a set $A\subseteq\N$ with $\overline d(A)>0$ for which no $x,y\in A$ and $n\in\N$ satisfy $kx-\ell y=\lfloor\alpha(n)\rfloor$ (see \Cref{LinearDifferenceCounterexample}).

We now turn to the ergodic-theoretic result underlying \Cref{maind}. 

Let $(X,\mathcal{B},\mu,T)$ be a measure-preserving system (m.p.s.\footnote{A measure-preserving system (m.p.s.) is a quadruple $(X,\mathcal{B},\mu,T)$, where $(X,\mathcal{B},\mu)$ is a probability space and $T:X\to X$ is an invertible measure-preserving map, meaning that $T$ is measurable, has measurable inverse and $\mu(A)=\mu(T(A))$ for all $A\in\mathcal{B}$.}). The invertible measure-preserving map $T:X\to X$ induces a unitary operator $U_T:L^2(X,\mu)\to L^2(X,\mu)$ via the formula $
U_Tf:=f\circ T$. By the classical von Neumann ergodic theorem, for any $f\in L^2(X,\mu)$, we have
\begin{equation}\label{ergodicconverg}
\lim_{N\to\infty}\frac{1}{N}\sum_{n=1}^{N}U_T^nf
=P_T(f),
\qquad\text{ in }L^2(X,\mu),
\end{equation}
where $P_T:L^2(X,\mu)\to L^2(X,\mu)$ is the orthogonal projection onto the subspace of $U_T$-invariant elements of $L^2(X,\mu)$.

Let us observe that the convergence in \Cref{ergodicconverg} is not uniform over all m.p.s. and all functions in the unit ball of \(L^2\). In fact, for every $N_0\in\N$ and every standard\footnote{A m.p.s. is standard if $(X,\mathcal{B})$ is isomorphic to a compact metric space with its Borel $\sigma$-algebra.} aperiodic\footnote{A m.p.s. is aperiodic if $\mu(\{x\in X;T^nx= x\})=0$ for every $n\in\N$.} m.p.s. $(X,\mathcal{B},\mu,T)$, there is a set $A\in \mathcal{B}$ such that
\begin{equation}
\label{te9rordoifkfdjofd}
\left\|
\frac{1}{N}\sum_{n=1}^{N}1_{T^{-n}A}
-P_T(1_A)
\right\|_2
\geq 0.1
\end{equation}
for every $1\leq N\leq N_0$. 
Indeed, this follows from the Rohlin-tower construction in \Cref{RohlinTowersNonUniformity}.

On the other hand, as the following \Cref{tergodic} demonstrates, there is a strong uniformity if one considers the approximation of weighted ergodic averages along sub-polynomials satisfying \Cref{fglog} by the conventional ergodic averages.

\begin{theorem}\label{tergodic}
Let $\alpha(x)$ be a positive sub-polynomial such that, for every $\kappa>0$, the sequence $
\bigl(\kappa\alpha(n)\bigr)_{n\in\N}$
is u.d. mod $1$. Then there exists a sequence $w\colon\N\to(0,+\infty)$ with $w(N)\searrow0$ such that, for every $N\in\N$, every m.p.s. $(X,\mathcal{B},\mu,T)$, and every $f\in L^2(X)$, we have
\begin{equation}
\label{refdsolfdoslk}
\left\|
\frac{1}{\alpha(N)}\sum_{n=1}^N\alpha'(n)U_T^{\lfloor\alpha(n)\rfloor}f
-\frac{1}{\alpha(N)}\sum_{1\leq m\leq\alpha(N)}U_T^mf
\right\|_2
\leq w(N)\|f\|_2.
\end{equation}
\end{theorem}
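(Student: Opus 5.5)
By the spectral theorem, the statement reduces to a uniform estimate on a scalar trigonometric expression. For $\theta\in[0,1)$ write $e(x)=e^{2\pi i x}$ and set
\[
D_N(\theta):=\frac{1}{\alpha(N)}\sum_{n=1}^N\alpha'(n)e\bigl(\theta\lfloor\alpha(n)\rfloor\bigr)-\frac{1}{\alpha(N)}\sum_{1\leq m\leq\alpha(N)}e(\theta m).
\]
The operator inside the norm in \eqref{refdsolfdoslk} is $D_N(U_T)$. Let $\sigma_f$ be the spectral measure of $f$ on the circle. Then
\[
\|D_N(U_T)f\|_2^2=\int|D_N|^2\,d\sigma_f\le\sup_\theta|D_N(\theta)|^2\,\|f\|_2^2 .
\]
It therefore suffices to prove $\sup_{\theta}|D_N(\theta)|\to0$. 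One then defines
\[
w(N):=\frac1N+\sup_{M\ge N}\sup_\theta|D_M(\theta)|,
\]
which is positive, nonincreasing and tends to $0$. The estimate no longer involves any m.p.s., and this is where the uniformity comes from.

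To handle the floor, write $e(\theta\lfloor t\rfloor)=e(\theta t)\,h_\theta(\{t\})$ with $h_\theta(s)=e(-\theta s)$ on $[0,1)$. Then expand $h_\theta$ in a Fourier series, using a Fejér-smoothed truncation so the coefficients are summable uniformly in $\theta$ up to an error of size $\eta$. This expresses the first sum in $D_N(\theta)$, up to $O(\eta)$, as a combination $\sum_{|k|\le K}c_k(\theta)S_N(\theta+k)$, where
\[
S_N(\lambda):=\frac1{\alpha(N)}\sum_{n\le N}\alpha'(n)e(\lambda\alpha(n)).
\]
The second sum in $D_N(\theta)$ is handled the same way, with the integral $I_N(\lambda):=\frac1{\alpha(N)}\int_0^{\alpha(N)}e(\lambda u)\,du$ in place of $S_N(\lambda)$. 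Applying the identical expansion to $e(\theta\lfloor u\rfloor)$ under $\int du$ reproduces $\sum_m e(\theta m)$ up to $O(1/\alpha(N))$. Note also that $I_N(\lambda)=\frac1{\alpha(N)}\int_1^N\alpha'(t)e(\lambda\alpha(t))\,dt$ plus a negligible term. So the key claim is
\[
\sup_{|\lambda|\le K+1}\bigl|S_N(\lambda)-I_N(\lambda)\bigr|\longrightarrow0 .
\]

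I would prove this claim in two regimes.

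\emph{Small $\lambda$.} Here $|\lambda|\alpha'(n)$ is small on most of the range, after splitting $[1,N]$ into blocks on which $\alpha'$ is nearly constant (Hardy-field regularity: $\alpha''=o(\alpha'/t)$ up to logarithmic factors, since $\alpha$ is sub-polynomial). On such blocks the sum is a Riemann sum for the integral, and Euler–Maclaurin, with error controlled by the variation of $\alpha'(t)e(\lambda\alpha(t))$, gives $S_N\approx I_N$.

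\emph{$\lambda$ bounded away from $0$.} Here $I_N(\lambda)=O(1/(|\lambda|\alpha(N)))$, so it suffices to show $S_N(\lambda)\to0$. Pointwise in $\lambda$ this follows by partial summation from the u.d.\ of $(\lambda\alpha(n))$, which holds for every $\lambda\neq0$ by the hypothesis together with \Cref{4refsdpossdlfoxc}.

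The main obstacle is uniformity in $\lambda$ on compact sets. The map $\lambda\mapsto S_N(\lambda)$ has Lipschitz constant of order $\alpha(N)$, so pointwise convergence does not upgrade by equicontinuity. My first attempt would bypass pointwise u.d.\ and run a quantitative van der Corput/Kusmin–Landau argument on dyadic blocks $[M,2M]$. By Boshernitzan's comparison of $\alpha$ with polynomials, some derivative $\alpha^{(j)}$ on the block is monotone and of size between $M^{-j}\log M$-type lower bounds and $o(1)$, and the resulting bound depends on $\lambda$ only through $|\lambda|\ge\varepsilon$. The intermediate range $|\lambda|\alpha'(n)\asymp1$ must be glued to the small-$\lambda$ Riemann-sum regime, and the case where $\alpha$ is close to $c\,p(x)$ on blocks must be excluded uniformly. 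Both of these use hypothesis \eqref{fglog}, quantitatively, rather than merely u.d.\ for each fixed $\kappa$, and I expect this to be the delicate part.
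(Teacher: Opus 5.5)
Your reduction is sound and matches the paper's. The spectral (Herglotz) step turns the statement into $\sup_\theta|D_N(\theta)|\to0$, which is the paper's \Cref{texp}, and removing the floor by Fourier-expanding $e(-\theta\{t\})$ is what \Cref{fit} does. Your remark that the error near the jump is controlled, uniformly in $\theta$, by equidistribution of the single sequence $(\alpha(n))$ is also correct.

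The gap is the key claim $\sup_{|\lambda|\le K+1}|S_N(\lambda)-I_N(\lambda)|\to0$. This is the whole content of the theorem, and you only sketch it, with a method that cannot work for sub-polynomials of type~\hyperlink{CaseII}{(II)} such as $\alpha(x)=x^2+\sqrt2x$. For these $\alpha^{(d)}=d!\,\a_1+o(1)$, so at frequencies $|\lambda|\asymp1$ the $d$-th derivative test gives only the trivial bound. Kuzmin--Landau also fails, since $\lambda\alpha'$ sweeps through integers on every block. Meanwhile $\alpha^{(j)}=r_\alpha^{(j)}=O_\alpha(x^{-j})$ for $j>d$ (for $x^2+\sqrt2x$ even $\alpha'''=0$), so the higher derivative tests are trivial too. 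The derivative you invoke, of size between $M^{-j}\log M$ and $o(1)$, does not exist here.

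In this case equidistribution of $(\lambda\alpha(n))$ comes from the irrationality of a ratio $\a_{j_0}/\a_1$ of polynomial coefficients. No estimate depending only on the sizes of derivatives can detect this, so uniformity in $\lambda$ is genuinely arithmetic; its rate in general depends on the Diophantine type of $\a_{j_0}/\a_1$, which is the paper's $\lambda(N)$. Your proviso that closeness of $\alpha$ to $c\,p(x)$ on blocks must be excluded does not fix this. A type~(II) function is close to a real polynomial on every block; what must be ruled out is closeness to $c\cdot\mathbb{Q}[x]$.

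The missing ingredient is the circle-method argument of \Cref{SecErgralphaSmall}. On windows $[N-N^{1-\varepsilon},N)$ one takes rational approximations $a_j/q_j$ of every coefficient $\a_j\theta$.
\begin{itemize}
\item If some denominator is large, Weyl's inequality (\Cref{weyl}, via Wooley's transference) gives a saving after sorting $n$ modulo the lcm of the earlier small denominators (\Cref{pcq}).
\item If all denominators are small, \Cref{ptheta} factors out the complete sum $\frac1Q\sum_{r\le Q}e(p(r)/Q)\ll Q^{-1/d+\varepsilon}$ (\Cref{Va97Thm7.1}). Here $Q$ is forced to be large because $\a_{j_0}\theta\approx\frac{a_1}{q_1}\cdot\frac{a_*}{q_*}$, with $a_*/q_*$ a good approximation of $\a_{j_0}/\a_1$ (\Cref{pfnbeta}).
\end{itemize}

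Separately, even for type~\hyperlink{CaseI}{(I)} your derivative-test bound is claimed only for $|\lambda|\ge\varepsilon$ with $\varepsilon$ fixed. The Riemann-sum regime reaches only $|\lambda|=o(1/\alpha'(N))$, and the range in between shrinks with $N$. You therefore need bounds with explicit dependence on $\lambda$. The paper gets these in \Cref{32ew8ido8er9wiodkNoFloors} by switching between the $d$-th and $(d+1)$-st derivative tests (Kuzmin--Landau when $d=1$) according to the size of the frequency relative to $\lambda_0(N)$. You call this gluing delicate, but it is a required step you have not supplied.
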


To prove \Cref{maind}, we will use the following corollary of \Cref{tergodic}, which is obtained by applying \Cref{refdsolfdoslk} to $1_B$ (note that $\|1_B\|_{2}=\sqrt{\mu(B)}$), taking the inner product with $1_A$, and applying the Cauchy--Schwarz inequality.

\begin{corollary}
\label{4rewiodkl}
Let $\alpha(x)$ be a positive sub-polynomial such that, for every $\kappa>0$, the sequence $\bigl(\kappa\alpha(n)\bigr)_{n\in\N}$ is u.d. mod $1$. Then there is a sequence $w\colon\N\to(0,+\infty)$ with $w(N)\searrow0$ such that, for every $N\in\N$, every m.p.s. $(X,\mathcal{B},\mu,T)$, and all $A,B\in\mathcal{B}$, we have
\begin{multline}
\left|
\frac{1}{\alpha(N)}\sum_{n=1}^N\alpha'(n)
\mu\bigl(A\cap T^{-\lfloor\alpha(n)\rfloor}B\bigr)
-\frac{1}{\alpha(N)}\sum_{1\leq m\leq\alpha(N)}
\mu\bigl(A\cap T^{-m}B\bigr)
\right|
\\
\leq w(N)\sqrt{\mu(A)\mu(B)}.
\end{multline}
\end{corollary}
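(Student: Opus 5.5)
The corollary follows directly from \Cref{tergodic}, applied to $f=1_B$, with the same sequence $w(N)$. Fix $N$, a m.p.s. $(X,\mathcal{B},\mu,T)$ and $A,B\in\mathcal{B}$, and let $w$ be the sequence supplied by \Cref{tergodic} for the given $\alpha$. This $w$ depends only on $\alpha$, not on the system or the sets, and that is exactly the uniformity the corollary asserts. Taking $f=1_B\in L^2(X)$, whose norm is $\|1_B\|_2=\sqrt{\mu(B)}$, \eqref{refdsolfdoslk} says that the function
\[
D_N:=\frac{1}{\alpha(N)}\sum_{n=1}^N\alpha'(n)U_T^{\lfloor\alpha(n)\rfloor}1_B-\frac{1}{\alpha(N)}\sum_{1\leq m\leq\alpha(N)}U_T^m1_B
\]
satisfies $\|D_N\|_2\leq w(N)\sqrt{\mu(B)}$.

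Next I take the inner product of $D_N$ with $1_A$. Since $U_T^k1_B=1_B\circ T^k=1_{T^{-k}B}$, we have $\langle 1_A,U_T^k1_B\rangle=\mu(A\cap T^{-k}B)$ for every integer $k$. The sums in $D_N$ are finite and the weights $\alpha'(n)/\alpha(N)$ are real, so linearity gives that $\langle 1_A,D_N\rangle$ is exactly the expression inside the absolute value in the corollary. Here one should check that $\lfloor\alpha(n)\rfloor$ is an integer, possibly nonpositive for small $n$. This causes no problem, because $T$ is invertible and the identity $U_T^k1_B=1_{T^{-k}B}$ holds for all $k\in\Z$.

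Finally, Cauchy--Schwarz gives
\[
|\langle 1_A,D_N\rangle|\leq\|1_A\|_2\|D_N\|_2\leq\sqrt{\mu(A)}\,w(N)\sqrt{\mu(B)},
\]
which is the claimed bound. There is no real obstacle here: all the analytic content sits in \Cref{tergodic}. The only points needing care are that $w$ is independent of the system and of $A,B$, and that the index manipulations remain valid for nonpositive exponents.
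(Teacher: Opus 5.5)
Your proposal is correct and follows the paper's argument exactly. The paper also applies \eqref{refdsolfdoslk} from \Cref{tergodic} to $f=1_B$, using $\|1_B\|_2=\sqrt{\mu(B)}$, takes the inner product with $1_A$, and applies Cauchy--Schwarz, keeping the same sequence $w(N)$.
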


As was already mentioned above, we also have an ergodic result, \Cref{ErgodicThmIntegerPolys}, which is similar in spirit to \Cref{tergodic} and allows us to prove \Cref{KSt,mainp}. Since its formulation is somewhat more involved, we defer it to \Cref{ProofOfmainp}.

In the proof of \Cref{tergodic}, we derive explicit upper bounds for \(w(N)\), particularly under the assumption that
\[
\lim_{x\to+\infty}\frac{|\alpha(x)-p(x)|}{\log x}=+\infty
\qquad\text{for every }p\in\mathbb R[x].
\]
These bounds yield quantitative results such as the following, which we prove in \Cref{SecErgThm}.

\begin{restatable}{theorem}{AlphaFarFromPolyExplicit}
\label{AlphaFarFromPolyExplicit1.4}
Let $c\geq1$ be a non-integer, and set $
\delta:=\frac{2}{c\cdot2^{c+8}}$. 
Suppose a Hardy field function $\alpha(x)$ of degree $c$ satisfies $\lim_{x\to+\infty}\frac{\alpha(x)}{x^l}\in\{0,+\infty\}$ for all $l\in\mathbb{N}$. Then, for every sufficiently large $M\in\N$ and every pair of sets $A,B\subseteq\{1,\dots,M\}$ satisfying
\begin{equation*}
|A|\cdot|B|\geq\frac{M^2}{M^{\delta}}=M^{2-\delta},
\end{equation*}
there exist distinct $x\in A,y\in B$ and $n\in\N$ such that
\begin{equation*}
x+y=\lfloor\alpha(n)\rfloor.
\end{equation*}
\end{restatable}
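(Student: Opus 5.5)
We will deduce the statement from a quantitative form of \Cref{tergodic}, applied to the finite cyclic system. Under the stated hypotheses ($\alpha$ of non-integer degree $c$, so $|\alpha(x)-p(x)|/\log x\to\infty$ for every real polynomial $p$), the proof of \Cref{tergodic} yields an explicit rate. We aim for $w(N)\ll N^{-\eta}$ with $\eta$ roughly $\frac{1}{2^{c+O(1)}}$. This rate should come from van der Corput/Weyl differencing applied to exponential sums $\sum_n e(\kappa\alpha(n))$. About $\lceil c\rceil$ differencings are needed to reach a function of degree in $(0,1)$, after which one uses the Kusmin--Landau or second-derivative test. Each differencing costs a square root, which is the source of the exponent $2^{-c-8}$. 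Uniformity in $\kappa$ over the relevant range must be tracked, since spectrally we need $\sup_\theta$ of $\bigl|\frac{1}{\alpha(N)}\sum_{n\le N}\alpha'(n)e(\lfloor\alpha(n)\rfloor\theta)-\frac{1}{\alpha(N)}\sum_{m\le\alpha(N)}e(m\theta)\bigr|$. Writing $e(\lfloor\alpha\rfloor\theta)$ via the Fourier expansion of the sawtooth in $\{\alpha(n)\}$ reduces this to sums $\sum_n\alpha'(n)e((\theta+h)\alpha(n))$ with $h\in\Z$. These are controlled by partial summation and the van der Corput estimates, uniformly in $\theta$ away from $0$; near $\theta=0$ both averages are close to $1$.

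Next we transfer to sets. Take $X=\Z/L\Z$ with $L$ slightly larger than $2M$ (say $L=4M$, so that there is no wraparound), $T$ the shift, $\mu$ normalized counting measure, and $A'=A$, $B'=-B+\text{shift}$ embedded appropriately. Concretely, count $x+y=m$ via $\mu(A\cap T^{-m}(-B))$ where $-B$ is viewed mod $L$. Choose $N$ with $\alpha(N)\approx 2M$; since $\alpha(N)\approx N^c$, we get $w(N)\ll M^{-\eta/c}$. \Cref{4rewiodkl} then gives
\[
\frac{1}{\alpha(N)}\sum_{n\le N}\alpha'(n)\,\#\{(x,y)\in A\times B:x+y=\lfloor\alpha(n)\rfloor\}\ \ge\ \frac{1}{\alpha(N)}\sum_{m\le \alpha(N)}\#\{x+y=m\}-w(N)L\sqrt{|A||B|}/L\cdot L.
\]
Since $\alpha(N)\ge 2M$, the main term equals $|A||B|/\alpha(N)\asymp|A||B|/M$. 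The error term is $\ll M^{-\eta/c}\sqrt{|A||B|}$ after normalization. The main term therefore dominates as soon as $\sqrt{|A||B|}\gg M^{1-\eta/c}$, i.e.\ $|A||B|\ge M^{2-2\eta/c}$. Matching with $\delta=\frac{2}{c2^{c+8}}$ fixes the target $\eta=2^{-c-8}$.

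We must also remove the solutions with $x=y$. Their contribution is at most $\sum_n\alpha'(n)\mathbf 1[\lfloor\alpha(n)\rfloor\in 2(A\cap B)]/\alpha(N)\ll \min(|A|,|B|)/M\cdot O(1)$, which is negligible against $|A||B|/M$ once $\max(|A|,|B|)\to\infty$; this is guaranteed by $|A||B|\ge M^{2-\delta}$.

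The main obstacle is the explicit exponential-sum bound with exponent $2^{-c-8}$, uniform in the frequency $\theta+h$. Small frequencies $|\theta+h|\lesssim N^{-c+\epsilon}$ must be matched against the ordinary average rather than bounded. Large $h$ needs the sawtooth truncation at height $H\approx N^{\eta}$. I would first carry out the differencing $q=\lceil c\rceil$ times with van der Corput's inequality, using $\alpha^{(j)}(x)\asymp x^{c-j}$. The condition $\alpha/x^l\to\{0,\infty\}$ guarantees these Hardy-field derivative asymptotics, and non-integrality of $c$ guarantees that the final derivative has degree strictly between $0$ and $1$ and is not degenerate.
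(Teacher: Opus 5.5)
Your plan follows the paper's route. You get an explicit rate in \Cref{tergodic} from van der Corput-type bounds for $\sum e(\beta\alpha(n))$, the sawtooth expansion of \Cref{fit}, a major arc matched against the ordinary average (\Cref{major}) and partial summation. You then transfer it to $\mathbb{Z}/L\mathbb{Z}$ exactly as in \Cref{QuantitativeErgToComb,QuantitativeErgToComb1set}. In the paper the rate comes from the independently proved estimate \eqref{4ref8dsi8e9roikl} with $\lambda_0(x)=x^{1/2}$ and $\sigma_0=2^{-(c+5)}$, which gives $w(N)\le N^{-2^{-(c+7)}}$. The proof of the theorem then only takes $\alpha(N-1)/2\le M<\alpha(N)/2$ and compares degrees.

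As written, however, your bookkeeping does not quite deliver the stated $\delta$.

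First, $\eta=2^{-c-8}$ is only the break-even exponent. At equality, $|A||B|\ge M^{2-\delta}$ does not give $\sqrt{|A||B|}>C\,\alpha(N)w(N)$, because nothing absorbs the implied constants. The same goes for the $\log^2H$ loss from truncating the sawtooth at $H\approx N^{\eta}$, and for the $N^{o(1)}$ factor in $\alpha(N)=N^{c+o(1)}$. You need some $\eta>2^{-c-8}$; the paper's $2^{-c-7}$ leaves a factor $2$ of room in the exponent, and it truncates at a fixed power, $H\approx N^{1/4}$.

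Second, the differencing count is off by one. It is $\lfloor c\rfloor$ differencings, not $\lceil c\rceil$, that bring the phase to degree in $(0,1)$. To stay uniform for frequencies up to a power of $N$ (needed for the $h$-sums), you should then use the second-derivative test rather than Kusmin--Landau. That combination is exactly the $k$-th derivative test with $k=\lceil c+1\rceil$ that the paper uses (\Cref{DerivEstimates}).

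Relatedly, the major/minor cutoff must be a fixed power, e.g.\ $\|\theta\|\ge N^{1/2}/\alpha(N)$. With $N^{-c+\epsilon}$ and $\epsilon$ small, the minor-arc bound just above the cutoff saves only a factor $N^{O(\epsilon 2^{-c})}$.

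Finally, your diagonal bound $\min(|A|,|B|)/M$ is not justified when $c>1$, since a single $n$ carries weight $\alpha'(n)\asymp\alpha(N)/N$. The fix is easy: each $n$ yields at most one pair with $x=y$. So the diagonal contributes at most $\alpha(N)^{-1}\sum_{n\le N}\alpha'(n)=O(1)$, which is negligible against $|A||B|/\alpha(N)\gg M^{1-\delta}$. The paper sidesteps this by passing to disjoint subsets in \Cref{QuantitativeErgToComb1set}.
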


See \cite[Theorem 3]{RS97} for a version of \Cref{AlphaFarFromPolyExplicit1.4} in the case $\alpha(x)=x^c$. In \Cref{SecErgThm} we deduce the following result from a variant of \Cref{AlphaFarFromPolyExplicit1.4}:

\begin{theorem}
\label{SumsOfPrimes}
Let $\alpha(x)$ be a positive sub-polynomial such that
\begin{equation}
\label{PrimeSubpolyCondition}
\lim_{x\to+\infty}\frac{|\alpha(x)-p(x)|}{(\log x)^m}=+\infty\textup{ for all }m>0,p\in\R[x].
\end{equation}
and let $\mathcal{P}\subseteq\N$ be the set of prime numbers. Suppose that $A,B\subseteq\mathcal{P}$ satisfy
\begin{equation}
\overline{d}_{\mathcal{P}}(A,B)
:=
\limsup_{N\to\infty}
\frac{\min(
|A\cap[1,N]|,
|B\cap[1,N]|)}{|\mathcal{P}\cap[1,N]|}
>0.
\end{equation}
Then there exist arbitrarily large $x\in A,y\in B$ and some $n\in\N$ such that
\[
x+y=\lfloor \alpha(n)\rfloor.
\]
\end{theorem}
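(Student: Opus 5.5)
We derive \Cref{SumsOfPrimes} from a quantitative, finitary statement in the spirit of \Cref{AlphaFarFromPolyExplicit1.4}, but with a power-of-logarithm loss in place of a power saving. Condition \eqref{PrimeSubpolyCondition} is exactly what lets the proof of \Cref{tergodic} produce an explicit rate $w(N)$ that beats every power of $\log$.

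\textbf{Step 1 (finitary counting statement).} Fix $M$ large and sets $A,B\subseteq[1,M]$, and put $N:=\alpha^{-1}(2M)$ (or the largest $N$ with $\alpha(N)\le 2M$). Apply the finitary analogue of \Cref{4rewiodkl} to the rotation $T x=x+1$ on $\Z/(4M)\Z$ with uniform measure, taking the sets $A$ and $-B+\text{shift}$, so that $\mu(A\cap T^{-m}B')$ counts solutions of $x+y=m$. The unweighted average
\[
\frac1{\alpha(N)}\sum_{m\le\alpha(N)}\#\{x\in A,y\in B:x+y=m\}
\]
equals $|A||B|/\alpha(N)\asymp |A||B|/M$, because every sum $x+y\le 2M$ is counted. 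The weighted average along $\lfloor\alpha(n)\rfloor$ then differs from this by at most $w(N)\sqrt{|A||B|}\cdot(\text{normalization})$. In normalized form, a solution exists as soon as
\[
\frac{|A||B|}{M^2}\gg w(N).
\]
The diagonal terms $x=y$ contribute at most $|A|/M$ times the normalization, which is negligible.

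\textbf{Step 2 (the rate).} We extract from the proof of \Cref{tergodic} an explicit bound of the form $w(N)\ll_m (\log N)^{-m}$ for every $m$, under \eqref{PrimeSubpolyCondition}. The route is the spectral theorem: the left-hand side of \eqref{refdsolfdoslk} is bounded by
\[
\sup_{\theta}\Bigl|\frac1{\alpha(N)}\sum_{n\le N}\alpha'(n)e(\lfloor\alpha(n)\rfloor\theta)-\frac1{\alpha(N)}\sum_{m\le\alpha(N)}e(m\theta)\Bigr|.
\]
For $\theta$ away from $0$ we treat the floor function by Fourier-expanding $\{\alpha(n)\}$, which reduces to exponential sums $\sum\alpha'(n)e(h\alpha(n)+k\alpha(n)\theta)$. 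These are controlled by van der Corput differencing applied to Hardy field functions. For $\theta$ near $0$ we compare directly via the weights $\alpha'(n)\approx\alpha(n+1)-\alpha(n)$. The hypothesis that $\alpha$ is $(\log x)^m$-far from every real polynomial is exactly what makes the relevant derivative of $\alpha$ large enough to give a saving of $(\log N)^{-m}$ uniformly in $\theta$. This is the main obstacle: the uniformity over $\theta$, and in particular over $\theta$ close to rationals combined with the floor, must be checked carefully. I would first try to reuse the Boshernitzan-type estimates from the paper's proof of \Cref{tergodic}, tracking constants.

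\textbf{Step 3 (primes).} Since $\overline d_{\mathcal P}(A,B)>0$, there are infinitely many $M$ with $|A\cap[1,M]|,|B\cap[1,M]|\ge c M/\log M$ for some fixed $c>0$. Then
\[
\frac{|A\cap[1,M]||B\cap[1,M]|}{M^2}\ge \frac{c^2}{(\log M)^2}.
\]
Since $\log N\asymp\log M$ ($\alpha$ has polynomial growth), Step 2 with $m=3$ shows that this exceeds $w(N)$ for large $M$. Hence a solution $x+y=\lfloor\alpha(n)\rfloor$ exists with $x\in A\cap[1,M]$ and $y\in B\cap[1,M]$.

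\textbf{Arbitrarily large solutions.} To force $x$ and $y$ to be large, we replace $A$ and $B$ by $A\cap(M_0,\infty)$ and $B\cap(M_0,\infty)$. Removing finitely many elements does not change $\overline d_{\mathcal P}$, so the argument applies to the truncated sets and yields solutions with $x,y>M_0$. Since $M_0$ is arbitrary, this gives arbitrarily large $x\in A$, $y\in B$.
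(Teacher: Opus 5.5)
Your proposal is essentially the paper's argument: the cyclic-group transfer of \Cref{QuantitativeErgToComb}, an explicit rate that is uniform in $\theta$, and the prime number theorem. In Step 1, take $N$ least with $\alpha(N)>2M$, not largest with $\alpha(N)\le 2M$, so that all sums $x+y$ are counted.

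The uniformity in $\theta$ that you flag as the main obstacle is already contained in the paper's bound $w(N)\ll\lambda_0(N)^{-\sigma_0/2}$ from \eqref{4ref8dsi8e9roikl}, which holds for $\alpha\gg x$. The only new check is that \eqref{PrimeSubpolyCondition} with $p=p_\alpha$ forces $\lambda_0(x)=\min\bigl(x^{1/2},x^{d+1}|r_\alpha^{(d+1)}(x)|\bigr)\succ(\log x)^l$ for every $l$, by L'H\^opital. For $\alpha\prec x$ the statement is trivial: $\alpha'\to0$, so $\lfloor\alpha(n)\rfloor$ eventually takes every integer value.
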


\begin{remark}
Let $k,l\in\N$ and assume that a positive sub-polynomial $\alpha(x)$ satisfies \Cref{PrimeSubpolyCondition}, and $A\subseteq\mathcal{P}$ satisfies $\overline{d}_{\mathcal{P}}(A)>0$. Then we can adapt the proof of \Cref{SumsOfPrimes} to prove that, similarly to \Cref{LinearEquationCorollary,LinearDifferenceEquationCorollary}, there are $x,y\in A,n\in\N$ such that
\begin{equation*}
kx+ly=\lfloor\alpha(n)\rfloor,
\end{equation*}
and if $k\geq l$, then there are $x,y\in A,n\in\N$ such that
\begin{equation*}
kx-ly=\lfloor\alpha(n)\rfloor.
\end{equation*}
\end{remark}

\paragraph{\textbf{Structure of the paper.}}
In \Cref{SecHardy}, we review the necessary background on Hardy fields and describe the decomposition and classification of sub-polynomials used throughout the paper. In \Cref{SecErgThm}, we obtain a quantitative version of \Cref{maind} from \Cref{tergodic}, which we use to derive \Cref{AlphaFarFromPolyExplicit1.4,LinearDifferenceEquationCorollary}. 
In \Cref{ProofOftexp}, we prove \Cref{texp}, an exponential sum estimate that we then use to prove \Cref{tergodic} via a standard spectral argument.
In \Cref{ProofOfmainp}, we establish the integer-polynomial ergodic approximation result \Cref{ErgodicThmIntegerPolys}. We then use it to prove \Cref{mainp} for the sub-polynomials not covered by \Cref{maind}. This section also contains a proof of \Cref{eroidfklsjoifks}, and hence of \Cref{KSt}.
In \Cref{SecRemarksmaind}, we present counterexamples showing that several hypotheses in \Cref{maind,LinearDifferenceEquationCorollary} cannot, in general, be weakened. Finally, in \Cref{SecObstructionsDensityTheorem}, we prove \Cref{HardysCloseToPolysDensityResult,Alphaclosetolinear}, thereby completing the characterization of the sub-polynomial Hardy functions $\alpha(x)$ that satisfy the conclusion of \Cref{maindc}.

\msection{Asymptotic notation and preliminaries on Hardy fields}
\label{SecHardy}

Throughout the article, we use standard asymptotic notation. Let \(D\subseteq\mathbb{R}^n\), and let \(f\colon D\to\mathbb{C}\) and \(g\colon D\to(0,+\infty)\). We write
\begin{equation*}
f(x_1,\dots,x_n)\ll g(x_1,\dots,x_n)\quad\textup{or}\quad f(x_1,\dots,x_n)=O( g(x_1,\dots,x_n)),
\end{equation*}
if there is a universal constant $C>0$ (called `implied constant') such that 
\begin{equation}
\label{rwedsiolkds}
|f(x_1,\dots,x_n)|\leq Cg(x_1,\dots,x_n)
\end{equation}
 for all $(x_1,\dots,x_n)\in D$. 
When the implied constant $C$ in \Cref{rwedsiolkds} depends on additional objects or parameters, such as a Hardy field function \(\alpha\), we indicate this dependence by writing
\begin{equation*}
f(x_1,\dots,x_n)\ll_\alpha g(x_1,\dots,x_n)\quad\textup{or}\quad f(x_1,\dots,x_n)=O_\alpha( g(x_1,\dots,x_n)).
\end{equation*}
Given $a\in\mathbb{R}$ and one variable functions $f,g:(a,+\infty)\to(0,+\infty)$,  we write 
\begin{equation*}
f(x)\prec g(x)\quad\textup{or}\quad f(x)=o(g(x)),
\end{equation*}
if
$\frac{|f(x)|}{g(x)}\to0$ as $x\to+\infty$.

We now introduce basic facts about Hardy fields that will be needed in this paper; more details and discussion may be found in \cite{B04,B81,R83}.

Let \(f\) and \(g\) be real-valued continuous functions defined, respectively, on intervals \((a_f,+\infty)\) and \((a_g,+\infty)\). We say that \(f\) and \(g\) are \emph{eventually equal}, and write \(f\sim g\), if \(f(x)=g(x)\) for all sufficiently large \(x\). The resulting equivalence classes are called \emph{germs at infinity}, and we will usually identify a function with its germ. Under pointwise addition and multiplication, the set \(\mathcal{G}\) of germs at infinity forms a ring.

A Hardy field $\mathcal{H}$ is a subfield of $\mathcal{G}$ that is closed under differentiation: for every $f\in\mathcal{H}$, the derivative $f'(x)$ is defined for all sufficiently large $x$ and $f'\in\mathcal{H}$.

Every nonzero $f$ in a Hardy field $\mathcal{H}$ is eventually either positive or negative, since otherwise it could not have a multiplicative inverse. Applying this observation to $f'$ shows that every $f\in\mathcal{H}$ is eventually monotone and therefore has a limit at infinity,
\begin{equation*}
\lim_{t\to+\infty}f(t)\in\mathbb{R}\cup\{-\infty,+\infty\}.
\end{equation*}

The intersection $\mathbf{E}$ of all maximal Hardy fields (i.e. Hardy fields not strictly contained in another Hardy field) is closed under differentiation and integration and contains every polynomial in $\mathbb{R}[x]$, as well as the functions $\log(x)$ and $e^x$, see \cite[Section 1]{B81}. A function $\alpha(x)$ in a Hardy field is called a sub-polynomial if there exists $l>0$ such that $\lim_{x\to+\infty}\frac{\alpha(x)}{x^l}=0$; this limit always exists or equals $\pm\infty$, since $x^l\in\mathbf{E}$. For a sub-polynomial $\alpha\neq0$, we define its degree by
\begin{equation}
\label{DefDegree}
\deg(\alpha):=\inf\left\{l\in\mathbb{R};\lim_{x\to+\infty}\frac{|\alpha(x)|}{x^l}=0\right\}\in\mathbb{R}\cup\{-\infty\}.
\end{equation}
Equivalently, 
\begin{equation}
\label{Degree2ndDef}
\deg(\alpha)=\lim_{x\to+\infty}\frac{\log(\alpha(x))}{\log(x)}.
\end{equation}
Clearly, if $\deg(f)<\deg(g)$, then $\lim_{x\to+\infty}\frac{f(x)}{g(x)}=0$. 

\begin{lemma}
\label{LemmaLogHardyBehaviour}
Let $\alpha(x)>0$ be a sub-polynomial with $\deg(\alpha)\in\mathbb{R}$. Then, for every $\lambda>0$,
\begin{equation}
\label{LogHardyBehaviour}
\lim_{x\to+\infty}\frac{\alpha(\lambda x)}{\alpha(x)}=\lambda^{\deg(\alpha)}.
\end{equation}
\end{lemma}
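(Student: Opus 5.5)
The plan is to pass to logarithms and use the logarithmic derivative. Set $d:=\deg(\alpha)$ and $L(x):=\log\alpha(x)$. Since $\alpha>0$ lies in a Hardy field $\mathcal H$, the germ $L$ lies in a Hardy field extending $\mathcal H$. Indeed, $\mathcal H(\log\alpha)$ is again a Hardy field, because $(\log\alpha)'=\alpha'/\alpha\in\mathcal H$; this is a standard Hardy field extension fact. Then
\[
\log\frac{\alpha(\lambda x)}{\alpha(x)}=L(\lambda x)-L(x)=\int_x^{\lambda x}L'(t)\,dt ,
\]
and it suffices to show that $tL'(t)\to d$ as $t\to+\infty$. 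Granting this, write $tL'(t)=d+\varepsilon(t)$ with $\varepsilon(t)\to0$. Then
\[
\int_x^{\lambda x}\frac{d+\varepsilon(t)}{t}\,dt=d\log\lambda+o(1),
\]
since $\bigl|\int_x^{\lambda x}\varepsilon(t)\,dt/t\bigr|\le \sup_{t\ge \min(x,\lambda x)}|\varepsilon(t)|\cdot|\log\lambda|$. Exponentiating gives \eqref{LogHardyBehaviour}. The argument handles $\lambda<1$ and $\lambda>1$ symmetrically.

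The main step is proving $tL'(t)\to d$. First, the function $tL'(t)=t\alpha'(t)/\alpha(t)$ belongs to a Hardy field, so it has a limit $\ell\in\mathbb R\cup\{\pm\infty\}$. Next, I would identify $\ell$ with $d$ using \eqref{Degree2ndDef}, namely $L(x)/\log x\to d$. Apply L'Hôpital's rule to $L(x)/\log x$, where $\log x\to\infty$. Because the ratio of derivatives $L'(x)/(1/x)=xL'(x)$ has a limit $\ell$, L'Hôpital forces $L(x)/\log x\to\ell$. Hence $\ell=d$, and in particular $\ell$ is finite because $d\in\mathbb R$ by hypothesis.

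The only delicate point is making sure the Hardy field membership used for the existence of $\lim tL'(t)$ is legitimate. The expression $t\alpha'(t)/\alpha(t)$ lies in $\mathcal H(x)$, and the field generated by $\mathcal H$ and $x$ is a Hardy field: $x$ belongs to every maximal Hardy field, since $\mathbf E$ contains all polynomials. So we may take $\mathcal H$ maximal, and then it contains $x$, $\alpha$, $\alpha'$ and hence $t\alpha'/\alpha$. This avoids even needing $\log\alpha$ inside a Hardy field, since only $L'=\alpha'/\alpha$ is used. I expect no serious obstacle beyond this bookkeeping.
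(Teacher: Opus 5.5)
Your proof is correct and is essentially the paper's argument. The paper writes $\log\alpha(x)=\deg(\alpha)\log x+r(x)$ with $r\prec\log x$ a Hardy field function, using Boshernitzan's theorem that $\log\alpha$ is again Hardy; it then deduces $r'(x)\prec 1/x$ and hence $r(\lambda x)-r(x)\to0$, which is your statement $x\alpha'(x)/\alpha(x)\to\deg(\alpha)$ followed by integration over $[x,\lambda x]$. Your version needs only $x\alpha'/\alpha$ to lie in a Hardy field, so it avoids that citation and spells out the L'H\^opital step that the paper leaves implicit.
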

\begin{proof}
By \cite[Theorem 5.3]{B81}, $\log(\alpha(x))$ is a Hardy field function. By \Cref{Degree2ndDef} we have
\begin{equation}
\label{LogsOfHardys}
\log(\alpha(x))=\deg(\alpha)\cdot\log(x)+r(x),
\end{equation}
for some Hardy field function $r(x)\prec\log(x)$. In particular, $r'(x)\prec\frac{1}{x}$, so $r(\lambda x)-r(x)\to0$ for every $\lambda>0$. Applying \Cref{LogsOfHardys} at $x$ and at $\lambda x$ yields
\begin{equation*}
\log(\alpha(\lambda x))-\log(\alpha(x))=\deg(\alpha)\log(\lambda)+o(1),
\end{equation*}
concluding the proof.
\end{proof}

\begin{lemma}
\label{LemDegreeDerivative}
Let $f$ be a positive sub-polynomial such that $\lim_{x\to+\infty}f(x)\in\{0,+\infty\}$. If $d:=\deg(f)\in\mathbb{R}$, then $\deg(f')=d-1$.
\end{lemma}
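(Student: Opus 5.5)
We pass to logarithms, mirroring the proof of \Cref{LemmaLogHardyBehaviour}. Write $g(x):=\log f(x)$. By \cite[Theorem 5.3]{B81}, $g$ is a Hardy field function, and $g'=f'/f$. Hence $f'$ lies in a Hardy field containing $f$ and $g'$, and
\[
\frac{f'(x)}{f(x)/x}=x\,g'(x).
\]
The claim $\deg(f')=d-1$ therefore reduces to showing that $\deg(x g'(x))=0$. Indeed, $f'=(f/x)\cdot xg'$, the function $f/x$ has degree $d-1$ by \Cref{Degree2ndDef}, and degrees add under products of sub-polynomials whose logarithms are in Hardy fields. This additivity is immediate from \Cref{Degree2ndDef}: $\log|f'|=\log f-\log x+\log|xg'|$, so dividing by $\log x$ and letting $x\to\infty$ gives the result once $\log|xg'(x)|/\log x\to0$.

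We split into two cases.

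\emph{Case $d\neq0$.} By \Cref{Degree2ndDef}, $g(x)/\log x\to d$. L'Hôpital's rule is valid here because $\log x\to\infty$. Moreover, $g'/(1/x)=xg'$ lies in a Hardy field together with $g$ and $\log x$, so the limit of $xg'$ exists in the extended reals. It must therefore equal $d$. Thus $xg'(x)\to d\neq0$, which gives $\log|xg'|/\log x\to0$, and we are done.

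\emph{Case $d=0$.} This is the main obstacle. Here $g=o(\log x)$ and $xg'\to0$, so we must rule out $xg'$ decaying like a power of $x$. This is exactly where the hypothesis $\lim f\in\{0,+\infty\}$ enters: it gives $|g(x)|\to\infty$.

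Suppose instead that $|xg'(x)|\le x^{-\varepsilon}$ for large $x$ and some $\varepsilon>0$. Then $|g'(x)|\le x^{-1-\varepsilon}$ is integrable at infinity, so $g$ converges to a finite limit, which is a contradiction. Hence $\liminf \log|xg'|/\log x\ge0$. Since $\log|xg'|/\log x$ has a limit (Hardy field), and $xg'\to0$ makes the limit at most $0$, the limit is exactly $0$. So $\deg(xg')=0$.

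The remaining care point is checking that all the functions involved ($g$, $xg'$, $\log|xg'|$) lie in a common Hardy field, so that the limits exist. We handle this by working in a Hardy field containing $f$ and $\log x$, extended by $\log$ via \cite[Theorem 5.3]{B81}, exactly as in \Cref{LemmaLogHardyBehaviour}. Finally, $xg'$ is eventually of constant sign, and $f'$ is eventually nonzero because $f$ is nonconstant; both facts are needed for the logarithms to make sense.
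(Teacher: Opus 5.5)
Your proof is correct, but it takes a different route from the paper's.

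The paper argues in one line. For each $l\neq0$, L'H\^opital's rule relates $f(x)/x^l$ to $f'(x)/(lx^{l-1})$. So the exponents $l$ with $f/x^l\to0$ correspond, after the shift $l\mapsto l-1$, to those with $f'/x^{l-1}\to0$, and the two infima match. The hypothesis $\lim f\in\{0,+\infty\}$ is what makes the rule applicable for $l<0$. When $f\to0$ it is a $0/0$ form. When $f\to+\infty$ one must apply the rule to the reciprocal $x^l/f$, which the paper leaves implicit.

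You instead pass to $g=\log f$, factor $f'=(f/x)\cdot xg'$, and show that the logarithmic-derivative factor $xf'/f$ has degree $0$. When $d\neq0$ you get this from L'H\^opital applied to $g/\log x$. When $d=0$ you use the integrability argument, which is exactly where the hypothesis enters, through $|g|\to\infty$.

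Your version costs an appeal to \cite[Theorem 5.3]{B81} to place $\log f$ in a Hardy field, plus a case split. In return it gives the sharper conclusion $xf'(x)/f(x)\to d$ for $d\neq0$ and isolates the role of the hypothesis. The paper's version uses only comparisons with powers $x^l$ and no logarithms.

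One point of order in your case $d=0$. Ruling out $|xg'(x)|\le x^{-\varepsilon}$ for all large $x$ yields $\liminf\log|xg'|/\log x\ge0$ only once you know this quotient has a limit; otherwise the negation gives the bound only along a sequence. So invoke the Hardy-field limit before the integrability step, not after. Alternatively, compare $xg'$ directly with $x^{-\varepsilon}$, since that ratio has a limit in a common Hardy field.
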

\begin{proof}
For every $l\neq0$, L'Hôpital's rule gives
\begin{equation*}
\lim_{x\to+\infty}
\frac{f(x)}{x^l}
=
\lim_{x\to+\infty}
\frac{f'(x)}{lx^{l-1}}.\qedhere
\end{equation*}
\end{proof}

The hypothesis $\lim_{x\to+\infty}f(x)\in\{0,+\infty\}$ is necessary; for example, consider $f(x)=1+x^{-2}$.

\begin{lemma}
\label{WeightedDensityToDensity}
Let $\alpha(x)\succ1$ be a positive sub-polynomial, and let $S\subseteq\N$ satisfy $\overline{d}(S)=0$. Then
\[
\lim_{N\to\infty}\frac{1}{\alpha(N)}
\sum_{\substack{1\leq n<N\\n\in S}}\alpha'(n)=0.
\]
\end{lemma}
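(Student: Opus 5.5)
The plan is a dyadic-block argument: replace the $\alpha'$-weights by a comparison with ordinary counting on each block $[2^{k},2^{k+1})$, then sum the blocks with the weights $\alpha(2^{k+1})-\alpha(2^k)$. Since $\alpha\succ1$ is positive and $\alpha$ is a sub-polynomial, $\alpha(x)\to+\infty$. Hardy field functions are eventually monotone, so $\alpha'$ is eventually positive and eventually monotone. Discarding finitely many $n$ changes the sum by $O(1)$, which is $o(\alpha(N))$, so we may assume $\alpha'>0$ and $\alpha'$ is monotone on the whole range. We split into two cases.

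\textbf{Case $d:=\deg(\alpha)>0$.} By \Cref{LemDegreeDerivative}, $\deg(\alpha')=d-1$, and $\alpha'$ is a positive Hardy function of real degree. Hence by \Cref{LemmaLogHardyBehaviour}, $\alpha'(2x)/\alpha'(x)\to 2^{d-1}$. Since $\alpha'$ is monotone, for $n\in I_k=[2^k,2^{k+1})$ we get $\alpha'(n)\le C\alpha'(2^k)$, and also $\alpha(2^{k+1})-\alpha(2^k)\ge c\,2^k\alpha'(2^k)$, for suitable constants $C,c>0$ and all large $k$. Write $\varepsilon_k:=|S\cap I_k|/2^k$; then $\varepsilon_k\to0$ because $\overline d(S)=0$. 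Therefore
\[
\sum_{n\in S\cap I_k}\alpha'(n)\le C\varepsilon_k 2^k\alpha'(2^k)\le (C/c)\,\varepsilon_k\bigl(\alpha(2^{k+1})-\alpha(2^k)\bigr).
\]
Summing over $k\le \log_2 N$, the quantity on the right is a weighted average of the $\varepsilon_k$, with nonnegative weights whose total is $\alpha(2^{K+1})-\alpha(1)$, where $K=\lfloor\log_2 N\rfloor$. These weights diverge and $\varepsilon_k\to0$, so by Cesàro/Toeplitz the sum is $o(\alpha(2^{K+1}))$. Finally $\alpha(2^{K+1})\le\alpha(2N)\ll\alpha(N)$ by \Cref{LemmaLogHardyBehaviour}.

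\textbf{Case $d=0$ (slowly growing $\alpha$, e.g.\ $\log x$).} Here $\alpha'$ is eventually decreasing, since $\alpha'\to0$. For decreasing weights we use summation by parts. With $F(n)=|S\cap[1,n]|$,
\[
\sum_{n<N,\,n\in S}\alpha'(n)=\sum_{n<N}\bigl(\alpha'(n)-\alpha'(n+1)\bigr)F(n)+O\bigl(\alpha'(N)F(N)\bigr).
\]
Write $F(n)=\eta_n n$ with $\eta_n\to0$. The sum $\sum_n(\alpha'(n)-\alpha'(n+1))\,n$ is, up to a bounded error, $\sum_n\alpha'(n)\approx\alpha(N)$ (again by summation by parts). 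So the whole expression is a Toeplitz average of the $\eta_n$ against diverging weights, plus the term $\alpha'(N)N\eta_N$. Since $x\alpha'(x)\prec\alpha(x)$ in degree $0$ (because $\deg(x\alpha')=0$ and $\alpha\succ1$), this term is $o(\alpha(N))$. In fact this summation-by-parts argument works whenever $\alpha'$ is decreasing, i.e.\ for $d\le1$, so the dyadic argument is really needed only when $\alpha'$ increases.

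The main obstacle is justifying the regularity comparisons between $\alpha'$, $x\alpha'$ and $\alpha$ on dyadic blocks, in particular when $d=1$ and $\alpha'$ is nearly constant. These comparisons follow from \Cref{LemmaLogHardyBehaviour} applied to $\alpha'$, and in the $d=0$ case from the relation $x\alpha'(x)=o(\alpha(x))$, obtained from L'Hôpital applied to $\log\alpha/\log x\to0$.
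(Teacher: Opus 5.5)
Your proof is correct, and your case $d>0$ is the paper's argument: compare $\alpha'$ on $[2^k,2^{k+1})$ with $\alpha'(2^k)$ using \Cref{LemmaLogHardyBehaviour} and monotonicity, bound each block sum by a multiple of $\varepsilon_k\bigl(\alpha(2^{k+1})-\alpha(2^k)\bigr)$, and sum via Stolz--Ces\`aro.

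The paper does not split into cases. \Cref{LemDegreeDerivative} gives $\deg(\alpha')=d-1\geq-1$, which is real also when $d=0$. So \Cref{LemmaLogHardyBehaviour} still applies to $\alpha'$ (now with $\alpha'(2x)/\alpha'(x)\to1/2$), the same block comparisons hold, and $\alpha(2N)/\alpha(N)\to1$ handles the passage from $2^{K+1}$ to $N$. Your summation-by-parts treatment of $d=0$ is a valid alternative, but it gains nothing here.

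It has two small slips. First, the total weight $\sum_{n<N}(\alpha'(n)-\alpha'(n+1))n$ is $\sum\alpha'(n)$ minus a nonnegative boundary term that need not be bounded (e.g.\ $\alpha=\log^2x$). So ``up to a bounded error'' should read ``at most''; an upper bound is all the Toeplitz step needs. Second, $x\alpha'(x)\prec\alpha(x)$ does not follow from both functions having degree $0$. It follows from your L'H\^opital argument, once you note that $x\alpha'/\alpha$ lies in a Hardy field and therefore has a limit.

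Separately, your side remark that $\alpha'$ is decreasing ``for $d\le1$'' fails for $\alpha(x)=x\log x$, though nothing in the proof depends on it.
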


\begin{proof}
By \Cref{LemDegreeDerivative}, $\deg(\alpha')\geq-1$. Since $\alpha'$ is eventually positive and monotone, \Cref{LogHardyBehaviour}, applied to $\alpha'$, gives a constant $K\geq1$ such that, for all sufficiently large $k\in\mathbb{N}$ and every $x\in[2^k,2^{k+1}]$,
\begin{equation*}
\frac{1}{K}\alpha'(2^k)\leq\alpha'(x)\leq K\alpha'(2^k).
\end{equation*}
Therefore, for every sufficiently large $k$ we have
\begin{align*}
\sum_{\substack{2^k\leq n<2^{k+1}\\n\in S}}\alpha'(n)
&\leq K\left|S\cap[2^k,2^{k+1})\right|\alpha'(2^k)\\
&\leq K^2\frac{\left|S\cap[2^k,2^{k+1})\right|}{2^k}
\sum_{2^k\leq n<2^{k+1}}\alpha'(n)\\
&\leq2K^2\frac{\left|S\cap[2^k,2^{k+1})\right|}{2^k}
\bigl(\alpha(2^{k+1})-\alpha(2^k)\bigr).
\end{align*}
Since $\overline{d}(S)=0$, it follows that
\begin{equation*}
\lim_{k\to\infty}
\frac{\displaystyle\sum_{\substack{2^k\leq n<2^{k+1}\\n\in S}}\alpha'(n)}
{\alpha(2^{k+1})-\alpha(2^k)}=0.
\end{equation*}
Summing the numerator and denominator for all $k\in\N$ and taking the limit when $k\to\infty$ gives
\begin{equation*}
\lim_{k\to\infty}
\frac{\displaystyle\sum_{\substack{1\leq n<2^k\\n\in S}}\alpha'(n)}
{\alpha(2^k)}=0,
\end{equation*}
and the result follows.
\end{proof}

We call a sub-polynomial $r(x)$ \textit{special} if
$$\lim_{x\to+\infty}\frac{|r(x)|}{x^l}\in\{0,+\infty\}\textup{ for all }l=1,2,\dots.$$

For example, $x\log x+x$ and $1+\frac1x$ are special, whereas $x+\frac{x}{\log x}$ is not.

The following decomposition of Hardy functions is closely related, though not identical, to that in \cite[Lemma 3.1]{BKQW07}. We include the proof for completeness.

\begin{lemma}
\label{PolyAndSpecialParts}
Every sub-polynomial $\alpha(x)\in\mathcal{H}$ is either special or admits a unique representation, for some integers $1\leq l\leq d$, of the form
\begin{align*}
\alpha(x)&=p_\alpha(x)+r_\alpha(x)\\
&=\a_dx^d+\a_{d-1}x^{d-1}+\cdots+\a_lx^l+r_\alpha(x),
\end{align*}
where $p_\alpha(x)\in\mathbb{R}[x]$, $\a_d,\a_l\neq0$, and $r_\alpha(x)\prec x^l$ is special. We call $p_\alpha$ and $r_\alpha$ the \textit{polynomial part} and the \textit{special part} of $\alpha$, respectively. If $\alpha$ is special, we set $p_\alpha=0$ and $r_\alpha=\alpha$.
\end{lemma}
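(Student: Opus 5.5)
The plan is to peel off the integer powers of $x$ one at a time, from the top degree down, until what remains is special. I would argue by induction on the integer $D:=\lceil\deg(\alpha)\rceil$, with existence and uniqueness handled separately. The case $\deg(\alpha)=-\infty$ is immediate, since then $\alpha/x^l\to0$ for every $l$, so $\alpha$ is special.

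\textbf{Existence.} Suppose $\alpha$ is not special. Then there is a least integer $d\geq1$ with $\lim_{x\to+\infty}\alpha(x)/x^d\notin\{0,\pm\infty\}$, because $x^d\in\mathbf{E}$ guarantees that each of these limits exists in the extended line.
\begin{itemize}
\item I would first choose $d$ to be the largest integer with this property. Such a largest one exists because $\alpha/x^l\to0$ for $l>\deg(\alpha)$.
\item I would then set $\a_d:=\lim_{x\to+\infty}\alpha(x)/x^d\neq0$ and $\alpha_1:=\alpha-\a_dx^d$. This lies in the same Hardy field, which contains $\mathbb{R}(x)$ after adjoining $x$ (a compositum with $\mathbf{E}$ is fine, as in \cite{B81}), and it satisfies $\alpha_1\prec x^d$.
\item The key observation is that for $l>d$ we have $\alpha_1/x^l\to0$, and for $l<d$ the limits of $\alpha_1/x^l$ coincide with those of $\alpha/x^l$ unless $\alpha/x^l$ itself is finite and nonzero.
\end{itemize}
Iterating this step strictly lowers the top degree at which the ratio $\cdot/x^l$ has a finite nonzero limit. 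After at most $d$ steps we reach a remainder $r$ such that $r/x^l\in\{0,\pm\infty\}$ for every integer $l\geq1$, that is, $r$ is special. Setting $\a_l$ to be the last nonzero coefficient removed and $p_\alpha$ the sum of the removed monomials, we have $r\prec x^l$ because $r=(\alpha-\text{previous terms})-\a_lx^l$ with the bracket asymptotic to $\a_lx^l$.

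One subtlety is that, with the statement phrased as above, later coefficients $\a_j$ with $l<j<d$ could vanish. The iteration only ever removes monomials with finite nonzero limit, so the $\a_j$ in between are allowed to be zero; only $\a_d$ and $\a_l$ are required to be nonzero, which matches the statement.

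\textbf{Uniqueness.} Suppose $p+r=q+s$, where both $r$ and $s$ are special. Then $p-q=s-r$. If $p\neq q$, let $m\geq1$ be the degree of $p-q$; this works because the representations have no constant term, as $l\geq1$. Then $(s-r)/x^m$ tends to a finite nonzero limit.

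The main obstacle is that a difference of two special functions need not be special. I would resolve it using the constraint $r\prec x^{l}$ together with specialness, which forces $r/x^j\to0$ for all integers $j\geq l$ and $r/x^j\to\pm\infty$ or $0$ for $j<l$. Comparing the top nonzero coefficient of $p-q$ then works as follows.
\begin{itemize}
\item If $m\geq\max(l_p,l_q)$, then $r/x^m\to0$ and $s/x^m\to0$, which is a contradiction.
\item Otherwise $m<l_p$, say. Then $p$ has no $x^m$ term, so $q$ does, which forces $l_q\leq m<l_p$. In that case $r\prec x^{l_p}$ and $r=q-p+s$, which I would analyze by evaluating $\lim r/x^m$ in the two cases $s/x^m\to0$ or $\pm\infty$, using that $s\prec x^{l_q}\preceq x^m$.
\end{itemize}
The second case gives $s/x^m\to0$. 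Hence $r/x^m\to$ the nonzero coefficient of $x^m$ in $q-p$, contradicting the specialness of $r$. This case analysis is where I expect the care to be needed.
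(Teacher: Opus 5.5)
Your proposal is correct and follows the paper's proof. Existence is by repeatedly peeling off $\a_dx^d$ with $\a_d=\lim_{x\to+\infty}\alpha(x)/x^d$. Uniqueness is by comparing the top-degree term of $p-q$ with the special remainders; your split into $m\geq\max(l_p,l_q)$ versus $l_q\leq m<l_p$ is a symmetric repackaging of the paper's dichotomy $r_\alpha\prec p_\alpha-p$ versus $p_\alpha-p\prec r_\alpha$.

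One slip: your ``key observation'' that for $l<d$ the limits of $\alpha_1/x^l$ and $\alpha/x^l$ coincide is false. For $\alpha=x^2+x$ and $l=1$ these limits are $1$ and $+\infty$. Nothing depends on it, though, since termination of the iteration only uses $\alpha_1\prec x^d$.
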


\begin{proof}
We first prove existence. If $\alpha$ is special, there is nothing to prove. Otherwise, for some positive integer $d$, we have $\a_d:=\lim_{x\to+\infty}\frac{\alpha(x)}{x^d}\in\mathbb{R}\setminus\{0\}$. Setting $r_d(x):=\alpha(x)-\a_dx^d$, we obtain
\begin{equation*}
\alpha(x)=\a_dx^d+r_d(x),
\end{equation*}
where $r_d(x)\prec x^d$. If $r_d$ is special, the construction is complete. Otherwise, there exist an integer $1\leq d_1<d$ and a nonzero coefficient $\a_{d_1}$ such that, after setting $r_{d_1}(x):=r_d(x)-\a_{d_1}x^{d_1}$, we have
\begin{equation*}
\alpha(x)=\a_dx^d+\a_{d_1}x^{d_1}+r_{d_1}(x),
\end{equation*}
where $r_{d_1}(x)\prec x^{d_1}$. Repeating this procedure produces a strictly decreasing sequence of positive integer exponents, so it terminates after at most $d$ steps, yielding the required polynomial--special decomposition of $\alpha$.

Uniqueness of the representation is immediate when $\alpha$ is special. If $\alpha$ is not special, let $\alpha(x)=p_\alpha(x)+r_\alpha(x)$ be a decomposition as in \Cref{PolyAndSpecialParts}, and let $k$ be the smallest natural number such that $r_\alpha(x)\prec x^k$. Given any $p\in\mathbb{R}[x]$ with $p\neq p_\alpha$, set $r(x):=\alpha(x)-p(x)$. There are two cases:
\begin{itemize}
    \item If $r_\alpha\prec p_\alpha-p$, then $r(x)=r_\alpha(x)+(p_\alpha-p)(x)$ is not special. Hence $\alpha(x)=p(x)+r(x)$ is not a decomposition of the form described in \Cref{PolyAndSpecialParts}.
    \item If $0\neq p_\alpha-p\prec r_\alpha$, then $\lim_{x\to+\infty}\frac{r(x)}{r_\alpha(x)}=1$. In the identity $p=p_\alpha+(p-p_\alpha)$, the polynomial $p_\alpha$ contains only terms of degree at least $k$, whereas $p-p_\alpha$ has degree less than $k$. Thus, the lowest-degree term of $p$ grows slower than $r(x)$, so again $\alpha(x)=p(x)+r(x)$ is not a decomposition of the required form.\qedhere
\end{itemize}
\end{proof}

For example, if $\alpha(x)=2x^2+x+x\log x$, then $p_\alpha(x)=2x^2$ and $r_\alpha(x)=x\log x+x$.
Every sub-polynomial $\alpha(x)$ belongs to one of the following three families, according to its polynomial and special parts:

\begin{table}[!htbp]
\centering
\begin{tabularx}{\linewidth}{
>{\bfseries}p{0.9cm}
>{\raggedright\arraybackslash}p{4.2cm}
>{\raggedright\arraybackslash}X}
Type & & Examples\\
\midrule
\raisebox{0pt}[0pt][0pt]{\hypertarget{CaseI}{}}(I)&
$r_\alpha(x)\succ\log x$
&
$x^2+\sqrt{x},\ x^{\sqrt2},\ (\log x)^2$
\\[0.6em]
\raisebox{0pt}[0pt][0pt]{\hypertarget{CaseII}{}}(II)&
\begin{tabular}[t]{@{}l@{}}
$r_\alpha(x)\ll_\alpha\log x$\\
$p_\alpha(x)\notin\bigcup_{c\in\mathbb R}c\cdot\Q[x]$
\end{tabular}
&
$x^2+\sqrt2\,x, \sqrt3\,x^3+ex+\exp\left(\sqrt{\log\log x}\right)$
\\[2em]
\raisebox{0pt}[0pt][0pt]{\hypertarget{CaseIII}{}}(III)&
\begin{tabular}[t]{@{}l@{}}
$r_\alpha(x)\ll_\alpha\log x$\\
$p_\alpha(x)\in\bigcup_{c\in\mathbb R}c\cdot\Q[x]$
\end{tabular}
&
$\pi x^3+\pi x,\ 2x^2+\sqrt{\log x},\ \log\log x,\ 23$
\end{tabularx}
\end{table}
\FloatBarrier

By \cite[Theorem 1.3]{B94}, $\alpha(x)$ of type ~\hyperlink{CaseI}{(I)} or type ~\hyperlink{CaseII}{(II)} if and only if $\{\kappa\cdot \alpha(x)\}$ is u.d. modulo $1$ for every $\kappa\in\R\setminus\{0\}$; equivalently, $\alpha(x)$ satisfies the hypotheses of \Cref{maind}.

\msection{An Ergodic Approach}\label{SecErgThm}
In this section, we show how the ergodic approximation result \Cref{tergodic} leads to the additive-combinatorial results stated in the introduction. We begin with a quantitative result, \Cref{QuantitativeErgToComb}, which turns \Cref{tergodic} into solutions of
\[
x+y=\lfloor\alpha(n)\rfloor.
\]
We then use it to deduce \Cref{maind}, its quantitative version \Cref{AlphaFarFromPolyExplicit1.4}, and the refinements \Cref{GoodSumsHavePositiveDensity,MorePreciseVersionOfmaind}. Finally, we explain how the same ergodic input gives the difference version \Cref{LinearDifferenceEquationCorollary}.

\begin{theorem}
\label{QuantitativeErgToComb}
Let $\alpha(x)\succ1$ be a positive sub-polynomial. Suppose that, for some $N\in\mathbb{N}$ and $\varepsilon>0$, the following estimate holds for every m.p.s. $(\mathcal X,\mathcal{B},\mu,T)$ and every $A,B\in\mathcal{B}$:
\begin{multline}
\label{r9eopfsdlds}
\left|
\frac{1}{\alpha(N)}\sum_{n=1}^N\alpha'(n)
\mu\bigl(A\cap T^{-\lfloor\alpha(n)\rfloor}B\bigr)
-\frac{1}{\alpha(N)}\sum_{1\leq m\leq\alpha(N)}
\mu\bigl(A\cap T^{-m}B\bigr)
\right|
\\
\leq \varepsilon\sqrt{\mu(A)\mu(B)}.
\end{multline}
Then, for any subsets $A_1,A_2\subseteq\left[1,\frac{\alpha(N)}{2}\right)\cap\mathbb{N}$ satisfying
\begin{equation}
\label{5trdgf9oopfd}
|A_1|\cdot|A_2|>\varepsilon^{2}\cdot\alpha(N)^2,
\end{equation}
there exist $x\in A_1$, $y\in A_2$, and an integer $n\in\{1,\dots,N\}$ such that $x+y=\lfloor\alpha(n)\rfloor$.
\end{theorem}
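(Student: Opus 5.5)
The plan is to encode the sumset problem in a finite cyclic rotation, apply \eqref{r9eopfsdlds} there, and argue by contradiction. Suppose that no $x\in A_1$, $y\in A_2$ and $n\in\{1,\dots,N\}$ satisfy $x+y=\lfloor\alpha(n)\rfloor$.

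First, set up the system. Choose an integer $L$ larger than $\alpha(N)+\max_{1\leq n\leq N}|\lfloor\alpha(n)\rfloor|+2$. Let $X=\Z/L\Z$ with normalized counting measure $\mu$, and let $T(z)=z-1$, which is an invertible measure-preserving map. Regard $A_1,A_2\subseteq[1,\alpha(N)/2)$ as subsets of $\Z/L\Z$; they embed injectively because $L>\alpha(N)$. Put $A:=A_1$ and $B:=-A_2=\{-y \bmod L: y\in A_2\}$. Then
\[
\mu(A\cap T^{-m}B)=\frac{1}{L}\,\#\{(x,y)\in A_1\times A_2:\ x+y\equiv m \pmod L\},
\]
since $z\in T^{-m}B$ means $z-m\equiv -y$ for some $y\in A_2$. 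We also have $\mu(A)\mu(B)=|A_1||A_2|/L^2$.

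Next, evaluate the two averages in \eqref{r9eopfsdlds}.

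\emph{First average.} For $1\leq n\leq N$, a pair with $x+y\equiv\lfloor\alpha(n)\rfloor\pmod L$ would force $x+y=\lfloor\alpha(n)\rfloor$. Indeed, $2\leq x+y<\alpha(N)$ and $|\lfloor\alpha(n)\rfloor|<L-\alpha(N)$, so the difference $x+y-\lfloor\alpha(n)\rfloor$ has absolute value less than $L$, and a multiple of $L$ of that size is $0$. Under our assumption every summand $\mu(A\cap T^{-\lfloor\alpha(n)\rfloor}B)$ therefore vanishes, whatever the signs of the weights $\alpha'(n)$, so the first average is $0$.

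\emph{Second average.} For $1\leq m\leq\alpha(N)$, the congruence $x+y\equiv m\pmod L$ is again equivalent to $x+y=m$, since both sides lie in $[1,L)$. Every pair $(x,y)\in A_1\times A_2$ has $x+y$ an integer in $[2,\alpha(N))$, so it is counted for exactly one $m\leq\alpha(N)$. Hence
\[
\frac{1}{\alpha(N)}\sum_{1\leq m\leq\alpha(N)}\mu(A\cap T^{-m}B)=\frac{|A_1||A_2|}{\alpha(N)L}.
\]

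Finally, derive the contradiction. Substituting both averages into \eqref{r9eopfsdlds} gives
\[
\frac{|A_1||A_2|}{\alpha(N)L}\leq\varepsilon\frac{\sqrt{|A_1||A_2|}}{L}.
\]
This means $\sqrt{|A_1||A_2|}\leq\varepsilon\alpha(N)$, i.e. $|A_1||A_2|\leq\varepsilon^2\alpha(N)^2$, contradicting \eqref{5trdgf9oopfd}.

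The only delicate point is choosing $L$ large enough that congruences mod $L$ coincide with genuine equalities, for both the values $\lfloor\alpha(n)\rfloor$ (which may be nonpositive or exceed $\alpha(N)$ for small $n$) and all $m\leq\alpha(N)$. With the choice of $L$ above, this is routine.
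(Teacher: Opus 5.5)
Your proof is correct and follows the paper's argument. It uses the same cyclic system $\mathbb{Z}/L\mathbb{Z}$ with $Tz=z-1$, $A=A_1$, $B=-A_2$, double counting for the ordinary average, and the hypothesis to force a nonzero summand in the weighted average; you phrase it by contradiction, while the paper gives the positive lower bound directly. Your choice of $L$ larger than $\alpha(N)+\max_{n\le N}|\lfloor\alpha(n)\rfloor|+2$ is slightly more careful than the paper's $P>\lceil\alpha(N)\rceil$, because it makes congruences with $\lfloor\alpha(n)\rfloor$ genuine equalities even for small $n$ where $\lfloor\alpha(n)\rfloor$ may lie outside $[1,\alpha(N)]$.
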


\begin{proof}
Let $P>\lceil\alpha(N)\rceil$ be a natural number.
We consider the m.p.s. $(\mathcal{X},\mathcal{B},\mu,T)$, where $\mathcal{X}=\mathbb{Z}/P\mathbb{Z}$, $\mathcal{B}=2^{\mathcal{X}}$, $\mu$ is the normalized counting measure, and $Tx=x-1$ for all $x$. For $n\in\mathbb{Z}$ and $A\subseteq\mathbb{Z}$, let $\overline{n}$ and $\overline{A}$ denote their respective images in $\mathbb{Z}/P\mathbb{Z}$. Note that for any fixed $1\leq m\leq\alpha(N)$, the sets $A_1$ and $m-A_2$ are contained in an interval of length smaller than $P$, which implies that
$$|A_1\cap(m-A_2)|=\left|\overline{A_1}\cap\overline{m-A_2}\right|.$$
Therefore,
\begin{align*}
\frac{1}{\alpha(N)}\sum_{1\leq m\leq\alpha(N)}
\mu\bigl(\overline{A_1}\cap T^{-m}\overline{-A_2}\bigr)
&=
\frac{1}{\alpha(N)\cdot P}
\sum_{1\leq m\leq\alpha(N)}\left|A_1\cap (m-A_2)\right|\\
&=\frac{|A_1|\cdot|A_2|}{\alpha(N)\cdot P}.
\end{align*}
The last equality follows by double counting. Indeed, the map
$$
A_1\times A_2\to\sqcup_{1\leq m\leq\alpha(N)}(A_1\cap (m-A_2))\times\{m\},\qquad(a_1,a_2)\mapsto(a_1,a_1+a_2)
$$
is a bijection.
On the other hand, \Cref{r9eopfsdlds} gives
\begin{align*}
\bigg|
\frac{1}{\alpha(N)}\sum_{1\leq m\leq\alpha(N)}
\mu\bigl(\overline{A_1}\cap T^{-m}\overline{-A_2}\bigr)
&-\frac{1}{\alpha(N)}\sum_{n=1}^N\alpha'(n)
\mu\bigl(\overline{A_1}\cap T^{-\lfloor\alpha(n)\rfloor}\overline{-A_2}\bigr)
\bigg|\\
&\leq\varepsilon
\sqrt{\mu(\overline{A_1})\mu(\overline{-A_2})}\\
&=\varepsilon\frac{\sqrt{|A_1|\cdot|A_2|}}{P}.
\end{align*}
Therefore, combining the two previous equations we obtain
\begin{align}
\label{rweofisdioirfsdkl}
\frac{1}{\alpha(N)\cdot P}\sum_{n=1}^N\alpha'(n)|A_1\cap(\lfloor\alpha(n)\rfloor-A_2)|
&\geq\frac{|A_1|\cdot|A_2|}{\alpha(N)\cdot P}-\varepsilon\frac{\sqrt{|A_1|\cdot|A_2|}}{P}>0,
\end{align}
where the last inequality follows from \Cref{5trdgf9oopfd}. Thus, $A_1\cap(\lfloor\alpha(n)\rfloor-A_2)\neq\varnothing$ for some $1\leq n\leq N$. Hence there exist $x\in A_1$ and $y\in A_2$ such that $x+y=\lfloor\alpha(n)\rfloor$.
\end{proof}

\begin{corollary}
\label{QuantitativeErgToComb1set}
Let $\alpha(x)\succ1$ be a positive sub-polynomial. Suppose that, for some $N\in\mathbb{N}$ and $\varepsilon>0$, \Cref{r9eopfsdlds} holds for every m.p.s. $(\mathcal X,\mathcal{B},\mu,T)$ and every $A,B\in\mathcal{B}$.
Then, for any subsets $A_1,A_2\subseteq\left[1,\frac{\alpha(N)}{2}\right)\cap\mathbb{N}$ with $|A_i|\geq3$ and
\begin{equation}
|A_1|\cdot|A_2|>9\varepsilon^2 \alpha(N)^2,
\end{equation}
there exist distinct $x\in A_1,y\in A_2$ and an integer $n\in\{1,\dots,N\}$ such that $x+y=\lfloor\alpha(n)\rfloor$.
\end{corollary}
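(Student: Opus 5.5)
We will not reuse only the existence statement of \Cref{QuantitativeErgToComb}. Instead we rerun its counting argument and keep the quantitative lower bound \eqref{rweofisdioirfsdkl}. That bound is a weighted count of all representations $x+y=\lfloor\alpha(n)\rfloor$ with $x\in A_1$, $y\in A_2$, $1\le n\le N$. The idea is to show this count is strictly larger than the largest possible weighted contribution of the diagonal solutions $x=y$. Then at least one solution must have $x\neq y$.

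First, multiplying \eqref{rweofisdioirfsdkl} by $\alpha(N)P$ and using the same cyclic model $\mathbb Z/P\mathbb Z$ as before, we get
\[
\sum_{n=1}^N\alpha'(n)\,\bigl|A_1\cap(\lfloor\alpha(n)\rfloor-A_2)\bigr|\;\ge\;|A_1||A_2|-\varepsilon\alpha(N)\sqrt{|A_1||A_2|}.
\]
Under the hypothesis $|A_1||A_2|>9\varepsilon^2\alpha(N)^2$ we have $\varepsilon\alpha(N)<\tfrac13\sqrt{|A_1||A_2|}$. Hence the right-hand side exceeds $\tfrac23|A_1||A_2|$.

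Second, we bound the diagonal contribution. A pair $(x,x)$ is counted for the index $n$ only when $2x=\lfloor\alpha(n)\rfloor$. For each fixed $x\in A_1\cap A_2$, the set of $n$ with $\lfloor\alpha(n)\rfloor=2x$ consists of those $n$ with $\alpha(n)\in[2x,2x+1)$. Since $\alpha$ is eventually increasing, the $\alpha'$-weight of this set is roughly $\int \alpha'=1$, up to a bounded error coming from comparing a sum with an integral. We must handle this error carefully. The cleanest route is a mean value/monotonicity comparison showing that $\sum_{n:\alpha(n)\in[k,k+1)}\alpha'(n)\le 2$ for $k$ large, or at worst a uniform constant. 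If we only get a constant, we compensate using $\alpha'$ being eventually monotone, since $\alpha'(n)\le\alpha(n+1)-\alpha(n)$ or $\alpha(n)-\alpha(n-1)$ depending on whether $\alpha'$ is increasing. Either way the diagonal contribution is at most $2\min(|A_1|,|A_2|)$.

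Finally, we compare the two bounds. Write $a=|A_1|$ and $b=|A_2|$, both at least $3$. Then $\tfrac23 ab\ge 2\min(a,b)$, because $\max(a,b)\ge3$. So the off-diagonal weighted count is strictly positive, which yields distinct $x\in A_1$, $y\in A_2$ and some $n\le N$ with $x+y=\lfloor\alpha(n)\rfloor$. The inequality at this last step is tight when $a=b=3$, so any loss in the constant $2$ would break it.

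The main obstacle is therefore the second step: getting a sharp enough per-level bound on $\sum_{\lfloor\alpha(n)\rfloor=k}\alpha'(n)$, uniformly over $k$ and for small $n$ where monotonicity may fail. We would first try the telescoping bound $\alpha'(n)\le\alpha(n+1)-\alpha(n)$ in the eventually convex case, and the analogous backward difference in the concave case. If an additive slack remains, it could be absorbed using the strict inequality in the hypothesis together with $\alpha'(n)\to$ its limit.
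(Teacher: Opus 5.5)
Your first step and the final arithmetic are fine. The gap is the per-level bound in your second step. The estimate $\sum_{n:\lfloor\alpha(n)\rfloor=k}\alpha'(n)\le 2$ fails as soon as $\alpha$ grows faster than linearly, which is the main case of interest: the corollary is used in \Cref{AlphaFarFromPolyExplicit1.4} for $\alpha$ of any non-integer degree $c\ge1$.

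Take $\alpha(x)=x^{5/2}$. For large $n$ we have $\alpha(n+1)-\alpha(n)>1$, so each level set $\{n:\lfloor\alpha(n)\rfloor=2x\}$ contains at most one $n$. But that $n$ carries weight $\alpha'(n)\asymp n^{3/2}$. Your own telescoping shows the problem. If $n_0,\dots,n_1$ is the level set, it gives $\alpha(n_1+1)-\alpha(n_0)\le 1+\bigl(\alpha(n_1+1)-\alpha(n_1)\bigr)$. This is at most $2$ only when the increments of $\alpha$ are at most $1$, essentially only for $\alpha\prec x$.

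In general the diagonal weight can be as large as $|A_1\cap A_2|\max_{n\le N}\alpha'(n)$. Nothing in the hypotheses lets you dominate this by $\tfrac23|A_1||A_2|$. With $A_1=A_2$ of size $a$, you would need $a\gtrsim\max_{n\le N}\alpha'(n)$, which is of order $\alpha(N)/N$. That does not follow from $a\ge 3$ and $a>3\varepsilon\alpha(N)$.

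Even in the sublinear case, your bound only holds for large $k$, since it relies on eventual monotonicity. The corollary, however, concerns a single arbitrary $N$ with sets in $[1,\alpha(N)/2)$. Your closing comparison is also tight at $a=b=3$, so there is no slack to absorb such errors.

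The missing idea is that you never need to control diagonal solutions at all. Because $|A_i|\ge 3$, you can choose disjoint $A_1^*\subseteq A_1$ and $A_2^*\subseteq A_2$ with $|A_i^*|\ge |A_i|/3$. Put $A_1\setminus A_2$ into $A_1^*$, put $A_2\setminus A_1$ into $A_2^*$, and split $A_1\cap A_2$ between them. Then $|A_1^*||A_2^*|\ge |A_1||A_2|/9>\varepsilon^2\alpha(N)^2$. Applying \Cref{QuantitativeErgToComb} to $A_1^*,A_2^*$ gives $x\in A_1^*$, $y\in A_2^*$ and $n\le N$ with $x+y=\lfloor\alpha(n)\rfloor$, and $x\neq y$ automatically by disjointness. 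This is the paper's proof, and it is where the constants $3$ and $9$ in the statement come from.
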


\begin{proof}
Choose disjoint subsets $A_1^*\subseteq A_1,A_2^*\subseteq A_2$ such that $|A_i^*|\geq\frac{|A_i|}{3}$. 
Applying \Cref{QuantitativeErgToComb} to $A_1^*$ and $A_2^*$ gives distinct $x\in A_1$ and $y\in A_2$ and an integer $n\in\{1,\dots,N\}$ such that $x+y=\lfloor\alpha(n)\rfloor$.
\end{proof}

\begin{remark}
\label{Distinctsummands}
A slight refinement of the argument in the proof of \Cref{QuantitativeErgToComb} shows that given any $\lambda>1$, in \Cref{QuantitativeErgToComb1set} we can replace the expression $9\varepsilon^2 \alpha(N)^2$ by $\lambda\varepsilon^2 \alpha(N)^2$ for sufficiently large $N$.
\end{remark}

Finally, we prove \Cref{maind} from \Cref{tergodic}.
\begin{proof}[Deducing \Cref{maind} from \Cref{tergodic}]
Let $\delta<\overline{d}(A_1,A_2)$ be positive. For arbitrarily big $N\in\mathbb{N}$, we have $\left|A_1\cap \left[1,\frac{\alpha(N)}2\right)\right|,\left|A_2\cap \left[1,\frac{\alpha(N)}2\right)\right|\geq\delta|\alpha(N)|$, so applying \Cref{QuantitativeErgToComb1set} with $\varepsilon=\frac{\delta}{3}$ and using \Cref{4rewiodkl}, we are done.
\end{proof}

We can now prove \Cref{AlphaFarFromPolyExplicit1.4}.

\AlphaFarFromPolyExplicit*

\begin{remark}
The Furstenberg--Sárközy theorem also admits a quantitative version. Indeed, Sárközy \cite{S78b} proved that there exists a constant $c>0$ such that, for all sufficiently large $N$, every set $A\subseteq[1,N]$ satisfying
\[
|A|\geq \frac{N}{(\log N)^c}
\]
contains distinct elements $x,y$ for which $x-y$ is a perfect square. 
\end{remark}

\begin{proof}[Proof of \Cref{AlphaFarFromPolyExplicit1.4}]
We will need a quantitative form of \Cref{4rewiodkl}, which follows from an estimate proved independently later in the paper, at the end of \Cref{SecErgralphaBig}. More precisely, \Cref{4ref8dsi8e9roikl} (with $\lambda_0(x)=x^{1/2}$ and $\sigma_0=2^{-(c+5)}$), together with \Cref{rwedsfoikl,4rewiodkl}, implies that, for every sufficiently large $N$, every m.p.s. $(\mathcal X,\mathcal B,\mu,T)$, and all $B_1,B_2\in\mathcal B$, one has
\begin{multline*}
\left|
\frac{1}{\alpha(N)}\sum_{n=1}^N\alpha'(n)
\mu\bigl(B_1\cap T^{-\lfloor\alpha(n)\rfloor}B_2\bigr)
-\frac{1}{\alpha(N)}\sum_{1\leq m\leq \alpha(N)}
\mu\bigl(B_1\cap T^{-m}B_2\bigr)
\right|
\\
\leq N^{-\frac{1}{2^{c+7}}}\sqrt{\mu(B_1)\mu(B_2)}.
\end{multline*}
Now let $M\in\mathbb{N}$ be sufficiently large, choose $N$ such that $\alpha(N-1)/2\leq M<\alpha(N)/2$, and set $\varepsilon=N^{-\frac{1}{2^{c+7}}}$. If $M$ is sufficiently large, then $\alpha(N)\leq3M$, so
\begin{equation*}
9\varepsilon^2\cdot\alpha(N)^2
<100M^2N^{-\frac{1}{2^{c+6}}}
<\frac{M^2}{M^\delta},
\end{equation*}
where the last inequality follows from comparing degrees. Therefore, by \Cref{QuantitativeErgToComb1set}, every pair of sets $A,B\subseteq\{1,\dots,M\}$ satisfying $|A|\cdot|B|\geq M^2/M^\delta$ contains distinct elements $x\in A$ and $y\in B$ such that $x+y=\lfloor\alpha(n)\rfloor$ for some $n\in\N$.
\end{proof}

\Cref{SumsOfPrimes} follows immediately from the prime number theorem and the following variant of \Cref{AlphaFarFromPolyExplicit1.4}:

\begin{theorem}
Let $k>0$ and let $\alpha(x)$ be a positive Hardy field function satisfying \Cref{PrimeSubpolyCondition}.
Then, for every sufficiently large $M\in\N$ and every pair of sets $A,B\subseteq\{1,\dots,M\}$ satisfying
\begin{equation*}
|A|\cdot|B|\geq\frac{M^2}{\log(M)^k},
\end{equation*}
there exist distinct $x\in A,y\in B$ and $n\in\N$ such that
\begin{equation*}
x+y=\lfloor\alpha(n)\rfloor.
\end{equation*}
\end{theorem}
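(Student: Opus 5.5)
The plan follows the proof of \Cref{AlphaFarFromPolyExplicit1.4}, with the power saving replaced by a logarithmic one. The key input is a quantitative form of \Cref{tergodic}: we need the error $w(N)$ in \Cref{refdsolfdoslk} to satisfy $w(N)\le (\log N)^{-K}$ for every fixed $K>0$ and all $N\ge N_0(K,\alpha)$. Given such a bound, \Cref{4rewiodkl} and then \Cref{QuantitativeErgToComb1set} finish the argument.

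\textbf{The combinatorial step.} Let $M$ be large and choose $N$ with $\alpha(N-1)/2\le M<\alpha(N)/2$. Since $\alpha$ is a sub-polynomial, $\alpha(N)\le 3M$ and $\log M\asymp_\alpha \log N$; the same holds when $\deg\alpha=0$, provided $\alpha$ grows faster than every power of $\log$. Take $\varepsilon=(\log N)^{-K}$ with $K$ large in terms of $k$. Then
\[
9\varepsilon^2\alpha(N)^2\le 81M^2(\log N)^{-2K}<M^2(\log M)^{-k}\le |A|\,|B|.
\]
The hypothesis $|A|\ge 3$ required in \Cref{QuantitativeErgToComb1set} is automatic, since $|A|\ge |A||B|/M\ge M/(\log M)^k$. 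So \Cref{QuantitativeErgToComb1set} yields distinct $x\in A$, $y\in B$ and $n\le N$ with $x+y=\lfloor\alpha(n)\rfloor$.

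\textbf{The ergodic bound.} By the spectral theorem (the ``standard spectral argument'' of \Cref{ProofOftexp}), \Cref{tergodic} reduces to a bound that is uniform in $\theta\in[0,1)$:
\[
\Bigl|\frac{1}{\alpha(N)}\sum_{n\le N}\alpha'(n)e(\theta\lfloor\alpha(n)\rfloor)-\frac{1}{\alpha(N)}\sum_{m\le\alpha(N)}e(m\theta)\Bigr|\le (\log N)^{-K}.
\]
We split according to the size of $\|\theta\|$.

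For $\|\theta\|\ge(\log N)^{-K'}$, the second average is $O((\log N)^{K'}/\alpha(N))$, which is negligible. For the first, we write $e(\theta\lfloor\alpha\rfloor)=e(\theta\alpha)\,g(\{\alpha\})$ with $g(t)=e(-\theta t)$, expand $g$ in a truncated Fourier series, and are left with weighted sums $\sum\alpha'(n)e((\theta+j)\alpha(n))$. These must be bounded with a saving $(\log N)^{-K}$, uniformly in $\theta$ and in $|j|\le(\log N)^{O(K)}$. For this we use van der Corput differencing on dyadic blocks, together with the Hardy-field classification of \Cref{PolyAndSpecialParts}.
\begin{itemize}
\item Suppose the special part $r_\alpha$ has non-integer degree, or dominates every power of $\log$. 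Repeated differencing then reduces the sum to the Kusmin--Landau or van der Corput derivative tests. Hypothesis \eqref{PrimeSubpolyCondition} guarantees that the relevant derivative of $r_\alpha$ is bounded below by $(\log x)^{m}/x^{j}$ for every $m$, and this gives the saving $(\log N)^{-K}$.
\item If $\alpha$ has a nonzero polynomial part, that part is removed by the differencing. What remains is controlled exactly as in the previous case, because $|\alpha-p|/(\log x)^m\to\infty$ for every real polynomial $p$.
\end{itemize}

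For $\|\theta\|<(\log N)^{-K'}$, both averages are essentially $\frac{1}{\alpha(N)}\int_0^{\alpha(N)}e(\theta t)\,dt$. This follows by partial summation against $\alpha'$, since $\lfloor\alpha(n)\rfloor$ takes each value in a run of roughly $1/\alpha'(n)$ consecutive integers $n$ when $\alpha'$ is small; when $\alpha'$ is large the two sums are compared directly. The error is again polylogarithmically small.

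\textbf{Main obstacle.} The hard step is the uniform exponential-sum estimate with a saving of every power of $\log$. The delicate regime is where $\alpha-p_\alpha$ is only slightly larger than a power of $\log$, for instance $\alpha(x)=x^2+(\log x)^{\log\log x}$. There the derivative tests give only a weak gain, and one must check that \eqref{PrimeSubpolyCondition} still provides enough room. I would first try to quote \Cref{4ref8dsi8e9roikl} from \Cref{SecErgralphaBig} with a slowly growing $\lambda_0$, and track how its error depends on the growth of $r_\alpha$ over powers of $\log$.
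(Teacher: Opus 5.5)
Your combinatorial step is exactly the paper's. You pick $N$ with $\alpha(N-1)/2\le M<\alpha(N)/2$ and feed a bound $w(N)=o\bigl((\log N)^{-K}\bigr)$ into \Cref{QuantitativeErgToComb1set}. Only $\log M\ll\log N$ is needed there; your claim $\log M\asymp\log N$ fails when $\deg\alpha=0$, but this is harmless.

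The gap is that you never establish the analytic input. You list it as the main obstacle, and the self-contained route you sketch for it does not work as stated. Your treatment of $\|\theta\|\ge(\log N)^{-K'}$ is in the spirit of \Cref{32ew8ido8er9wiodk}. The small range, however, cannot be handled by direct comparison once $\deg\alpha>1$. Matching $\alpha'(n)e(\theta\lfloor\alpha(n)\rfloor)$ against the block of about $\alpha'(n)$ terms $e(m\theta)$ costs a relative error of order $\min\bigl(1,|\theta|\alpha'(N)\bigr)$; this is the $L|\theta|\alpha'(N)^2$ term in \Cref{major}. So the comparison is only usable for $|\theta|=o(1/\alpha'(N))$, which is why the paper restricts to $\|\theta\|<\lambda_0(N)/\alpha(N)$ in \Cref{lemmaefnemth}. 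For $\lambda_0(N)/\alpha(N)\le\|\theta\|\le(\log N)^{-K'}$ you need a genuine small-frequency bound for $\sum e(\beta\alpha(n))$. The paper gets this in \Cref{32ew8ido8er9wiodkNoFloors}: when $p_\alpha\neq0$, from the $d$-th derivative test on the polynomial part, or Kuzmin--Landau when $d=1$. Eliminating the polynomial part by differencing, as in your bullets, only gains $(\beta\lambda_0(N))^{-2/2^{d+1}}$, which is useless for such small $\beta$.

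The missing idea is one line, and it is the route you mention at the very end, which is exactly the paper's proof. The estimate \eqref{4ref8dsi8e9roikl} already gives $w(N)\ll\lambda_0(N)^{-\sigma_0/2}$ with $\lambda_0(x)=\min\bigl(x^{1/2},x^{d+1}|r_\alpha^{(d+1)}(x)|\bigr)$. So it suffices to show $\lambda_0\succ(\log x)^l$ for every $l$.

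Applying \eqref{PrimeSubpolyCondition} with $p=p_\alpha$ gives $r_\alpha\succ(\log x)^m$ for all $m$, so $\alpha$ is of type (I) and \Cref{SecErgralphaBig} applies. The $(d+1)$-st derivative of $(\log x)^m$ is $\asymp(\log x)^{m-1}x^{-d-1}$, so repeated L'H\^opital gives $x^{d+1}|r_\alpha^{(d+1)}(x)|\succ(\log x)^{m-1}$ for all $m$. Hence $w(N)=o\bigl((\log N)^{-K}\bigr)$ for every $K$, and you may take $\varepsilon\asymp\lambda_0(N)^{-\sigma_0/2}$ in your combinatorial step.

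This also settles your worry about $\alpha(x)=x^2+(\log x)^{\log\log x}$. There $\lambda_0(N)$ is of order $(\log N)^{\log\log N-1}\log\log N$, which beats every power of $\log N$, so a weak but superpolylogarithmic gain is all that is needed.

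Finally, \eqref{4ref8dsi8e9roikl} is proved under $\alpha\gg x$. If instead $\alpha\prec x$, then $\alpha'\to0$, every large integer equals some $\lfloor\alpha(n)\rfloor$, and the statement is trivial.
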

\begin{proof}
The proof is essentially the same as that of \Cref{AlphaFarFromPolyExplicit1.4}, once we observe that \Cref{PrimeSubpolyCondition} implies that the function $\lambda_0(x)$ from \Cref{4ref8dsi8e9roikl} satisfies $\lambda_0(x)\succ\log(x)^l$ for every $l\in\N$. This bound on $\lambda_0(x)$ follows from the fact that, for $m,n\geq1$, the $n^{\textup{th}}$ derivative of $\log(x)^m$ has the same asymptotic growth, up to a nonzero multiplicative constant, as $\frac{\log(x)^{m-1}}{x^n}$.
\end{proof}

\begin{theorem}
\label{GoodSumsHavePositiveDensity}
Suppose that $\alpha(x)$ is a positive sub-polynomial such that, for every $\kappa>0$, the sequence $\bigl(\kappa\alpha(n)\bigr)_{n\in\N}$ is u.d. modulo $1$, and let $A_1,A_2\subseteq\mathbb{N}$ satisfy $\overline{d}(A_1,A_2)>0$. Then the set
\begin{equation*}
S:=\left\{n\in\mathbb{N}:x+y=\lfloor\alpha(n)\rfloor\textup{ for some distinct }x\in A_1\textup{ and }y\in A_2\right\}
\end{equation*}
has positive upper density.
\end{theorem}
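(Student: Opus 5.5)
We argue by contradiction. Suppose $\overline{d}(S)=0$. We rerun the counting argument from the proof of \Cref{QuantitativeErgToComb}, but this time we remove the contribution of the indices $n\in S$ using \Cref{WeightedDensityToDensity}. Fix $\delta$ with $0<\delta<\overline{d}(A_1,A_2)$. Then for infinitely many $N$ we have
\[
\Bigl|A_i\cap\bigl[1,\tfrac{\alpha(N)}{2}\bigr)\Bigr|\geq\delta\alpha(N),\qquad i=1,2.
\]
For such $N$, put $A_i^{(N)}:=A_i\cap[1,\alpha(N)/2)$. As in \Cref{QuantitativeErgToComb1set}, we pass to disjoint subsets $A_i^{*}\subseteq A_i^{(N)}$ with $|A_i^*|\geq|A_i^{(N)}|/3$. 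Disjointness guarantees that any representation $x+y=\lfloor\alpha(n)\rfloor$ with $x\in A_1^*$, $y\in A_2^*$ has $x\neq y$.

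We embed these sets into $\mathbb{Z}/P\mathbb{Z}$ with $P>\lceil\alpha(N)\rceil$, exactly as in the proof of \Cref{QuantitativeErgToComb}. Combining \Cref{4rewiodkl} with the double-counting identity gives the analogue of \eqref{rweofisdioirfsdkl}:
\[
\frac{1}{\alpha(N)}\sum_{n=1}^N\alpha'(n)\,\bigl|A_1^*\cap(\lfloor\alpha(n)\rfloor-A_2^*)\bigr|
\geq\frac{|A_1^*||A_2^*|}{\alpha(N)}-w(N)\sqrt{|A_1^*||A_2^*|}
\geq\frac{\delta^2}{9}\alpha(N)-w(N)\alpha(N).
\]
Since $w(N)\to0$, the right-hand side is at least $c\,\alpha(N)$ for some fixed $c>0$ and all large admissible $N$.

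We now bound the left-hand side from above. The summand vanishes unless $n\in S$, because a nonzero summand produces distinct $x\in A_1$, $y\in A_2$ with $x+y=\lfloor\alpha(n)\rfloor$. Moreover, each summand is at most $|A_1^*|\leq\alpha(N)$. Hence the left-hand side is at most
\[
\sum_{\substack{1\leq n\leq N\\ n\in S}}\alpha'(n).
\]
\Cref{WeightedDensityToDensity} applies here because $\alpha\succ1$: a positive sub-polynomial with $\kappa\alpha(n)$ u.d.\ mod $1$ must tend to infinity. It shows that this sum is $o(\alpha(N))$. (The boundary term $n=N$ is harmless, since $\alpha'(N)=o(\alpha(N))$ and we may enlarge $N$ by one.) This contradicts the lower bound $c\,\alpha(N)$ along our infinite sequence of $N$, so $\overline{d}(S)>0$.

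The main point requiring care is the trivial upper bound $|A_1^*\cap(\lfloor\alpha(n)\rfloor-A_2^*)|\leq\alpha(N)$. It is only just strong enough: it matches the size $\alpha(N)$ of the main term, and the gain comes entirely from the weighted density $\frac{1}{\alpha(N)}\sum_{n\in S,\,n\le N}\alpha'(n)$ tending to zero. That is precisely what \Cref{WeightedDensityToDensity} provides. Beyond this, the only check needed is that the normalizations (the factor $1/P$ and the weights $\alpha'(n)/\alpha(N)$) are consistent between the two bounds.
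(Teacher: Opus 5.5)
Your proposal is correct and is essentially the paper's own argument. The paper uses the same cyclic embedding together with \Cref{4rewiodkl} to get a lower bound of order $\alpha(N)$ for the weighted count, and the same trivial bound on each intersection to turn this into a lower bound for $\frac{1}{\alpha(N)}\sum_{n\in S,\,n<N}\alpha'(n)$, which \Cref{WeightedDensityToDensity} then rules out under $\overline{d}(S)=0$; the only cosmetic differences are that you argue by contradiction and make the sets disjoint separately for each $N$ rather than once globally.
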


\begin{proof}
The first part of the argument is similar to the proof of \Cref{QuantitativeErgToComb}. By replacing $A_1$ and $A_2$ with suitable subsets if necessary, we may assume that $A_1\cap A_2=\varnothing$. The positive joint upper density of $A_1$ and $A_2$ allows us to choose $\delta>0$ and arbitrarily large $N$ such that, on setting
\[
A_{i,N}:=A_i\cap\left[1,\frac{\alpha(N)}2\right)
\qquad\text{and}\qquad
P:=\lceil\alpha(N)\rceil+1,
\]
we have $|A_{i,N}|\geq\delta P$ for $i=1,2$. For each such sufficiently large $N$, define the cyclic m.p.s. as in the proof of \Cref{QuantitativeErgToComb}, with $A_{i,N}$ in place of $A_i$. By \Cref{4rewiodkl}, we may apply \Cref{rweofisdioirfsdkl} with $\varepsilon=\delta/10$. Writing
\[
q_N:=\frac{\sqrt{|A_{1,N}|\cdot|A_{2,N}|}}{P}\geq\delta,
\]
we obtain
\begin{align*}
\frac{1}{\alpha(N)}\sum_{n=1}^N\alpha'(n)\frac{|A_{1,N}\cap(\lfloor\alpha(n)\rfloor-A_{2,N})|}{P}
&\geq\frac{P}{\alpha(N)}q_N^2-\varepsilon q_N\\
&\geq q_N(q_N-\varepsilon)>\frac{\delta^2}{2}.
\end{align*}
Moreover, $0\leq |A_{1,N}\cap(\lfloor\alpha(n)\rfloor-A_{2,N})|/P\leq1$ for all $1\leq n\leq N$, and this quotient vanishes whenever $n\notin S$. It follows that, for infinitely many $N$,
\begin{equation}
\label{redsoilkldkclk}
\frac{1}{\alpha(N)}\sum_{\substack{1\leq n<N\\n\in S}}\alpha'(n)\geq\frac{\delta^2}{3}.
\end{equation}

The conclusion now follows immediately from \Cref{WeightedDensityToDensity} and \Cref{redsoilkldkclk}.
\end{proof}

\begin{theorem}
\label{MorePreciseVersionOfmaind}
Let $\alpha(x)$ satisfy the hypotheses of \Cref{maind}. Then, for every $A_1,A_2\subseteq\N$ with $\overline{d}(A_1,A_2)>0$ and every pair of real numbers $a<b$, there exist distinct $x\in A_1$, $y\in A_2$, and $n\in\mathbb{N}$ such that $a<\alpha(n)-(x+y)<b$.
\end{theorem}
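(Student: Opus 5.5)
Since $\alpha$ satisfies the hypotheses of \Cref{maind}, the function $\kappa\alpha$ is u.d. mod $1$ for every $\kappa>0$ (\Cref{4refsdpossdlfoxc}). The plan is to pass to a shifted and rescaled function and apply \Cref{maind} to it directly. The difficulty is that the condition $a<\alpha(n)-(x+y)<b$ involves the real value $\alpha(n)$ and not only its integer part. We may assume $b-a<1$ by shrinking the interval. We then split into two cases according to whether $(a,b)$ contains an integer.

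\emph{Case $(a,b)\cap\Z\neq\varnothing$.} Pick $j\in(a,b)\cap\Z$ and $\eta>0$ with $(j,j+\eta)\subseteq(a,b)$. If $\alpha(n)-j\in\Z+(0,\eta)$, then $\alpha(n)-(x+y)\in(j,j+\eta)$ is equivalent to $x+y=\lfloor\alpha(n)\rfloor-j$. So it suffices to solve $x+y=\lfloor\alpha(n)\rfloor-j$ with $n$ restricted to $S_\eta:=\{n:\{\alpha(n)\}<\eta\}$. To do this, I would reprove \Cref{maind} along the lines of \Cref{QuantitativeErgToComb} and \Cref{GoodSumsHavePositiveDensity}, replacing the weights $\alpha'(n)$ by $\alpha'(n)\1_{S_\eta}(n)$ and the comparison average $\frac1{\alpha(N)}\sum_{m\le\alpha(N)}U_T^m f$ by $\eta$ times it.

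The key input is therefore a restricted version of \Cref{tergodic}:
\[
\Bigl\|\frac1{\alpha(N)}\sum_{n\le N}\alpha'(n)\1_{[0,\eta)}(\{\alpha(n)\})U_T^{\lfloor\alpha(n)\rfloor}f-\frac{\eta}{\alpha(N)}\sum_{m\le\alpha(N)}U_T^mf\Bigr\|_2\le w_\eta(N)\|f\|_2 .
\]
I would obtain this by the spectral argument underlying \Cref{tergodic}. Approximate $\1_{[0,\eta)}$ by trigonometric polynomials $\sum_h c_h e(h t)$ (Fejér or Vaaler approximations with controlled $L^1$ error). This reduces the problem to uniform-in-$\theta$ estimates for
\[
\frac1{\alpha(N)}\sum_{n\le N}\alpha'(n)e(h\alpha(n))e(\theta\lfloor\alpha(n)\rfloor).
\]
Writing $\lfloor\alpha(n)\rfloor=\alpha(n)-\{\alpha(n)\}$ and expanding $e(-\theta\{\cdot\})$ as well, one needs bounds for sums of $e((h+\theta+k)\alpha(n))$ that are uniform in $\theta$ away from $0$. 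This is exactly the type of exponential sum estimate (\Cref{texp}) used to prove \Cref{tergodic}; I expect it applies with $\theta$ replaced by $\theta+h$. This uniformity, including the regime where $h+\theta+k$ is near $0$ (which produces the main term $\eta\cdot$(ordinary average)), is the main obstacle. Given this estimate, the counting argument of \Cref{QuantitativeErgToComb} yields solutions with $x,y$ taken from $A_1\cap[1,\alpha(N)/2)$ and $A_2\cap[1,\alpha(N)/2)$. The shift by $j$ is harmless: the sets $A_1$ and $j$-shifted copies of $A_2$ still have the same joint density up to $O(1)$. Distinctness of $x$ and $y$ follows as in \Cref{QuantitativeErgToComb1set}.

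\emph{Case $(a,b)\cap\Z=\varnothing$.} Write $(a,b)=j+(s,t)$ with $j\in\Z$ and $0\le s<t\le1$. Then $\alpha(n)-(x+y)\in(a,b)$ holds exactly when $\{\alpha(n)\}\in(s,t)$ and $x+y=\lfloor\alpha(n)\rfloor-j$. This is handled by the same restricted estimate with $\1_{(s,t)}$ in place of $\1_{[0,\eta)}$, so both cases unify. Finally, the positivity argument goes through because the main term $(t-s)\,|A_1||A_2|/(\alpha(N)P)$ dominates the error $w(N)\sqrt{|A_1||A_2|}/P$ once $N$ is large.
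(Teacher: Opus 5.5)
Your reduction is sound. After shrinking $(a,b)$, the condition $\alpha(n)-(x+y)\in j+(s,t)$ is indeed equivalent to $\{\alpha(n)\}\in(s,t)$ together with $x+y=\lfloor\alpha(n)\rfloor-j$. The counting argument of \Cref{QuantitativeErgToComb} would then finish, provided one has the restricted, $\theta$-uniform version of \Cref{tergodic}. But that restricted estimate is the entire content of the argument, and you only assert it (``I expect it applies''). It is not a consequence of \Cref{texp} for $\alpha$ itself, which concerns only the unrestricted average.

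Proving it directly means reopening \Cref{ProofOftexp}. The frequency bounds available there (\Cref{32ew8ido8er9wiodkNoFloors}, \Cref{pfnbeta}) hold only for $\beta$ in a slowly growing window, of size $\lambda_0(N)$ or $\lambda(N)^{1/3}$. Meanwhile, the Fourier coefficients of $u\mapsto\1_{(s,t)}(u)e(-\theta u)$ decay only like $1/|h|$. So you would need a \Cref{fit}-type truncation with smoothing, plus a bound on the number of $n$ with $\{\alpha(n)\}$ near $s$ or $t$. All of this would have to be carried out separately in the regimes $\alpha\prec x$, $r_\alpha\succ\log x$, and type (II). That is plausible, but it is not done, so as written the proof is incomplete at its load-bearing step.

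The missing idea, which makes all of this unnecessary, is dilation. Since $\lfloor k\alpha(n)\rfloor=k\lfloor\alpha(n)\rfloor+\lfloor k\{\alpha(n)\}\rfloor$, a solution of $kx+ky=\lfloor k\alpha(n)\rfloor$ forces $\{\alpha(n)\}<1/k$ and $x+y=\lfloor\alpha(n)\rfloor$. In other words, $0\le\alpha(n)-(x+y)<1/k$. Such solutions, with distinct $x\in A_1$ and $y\in A_2$, come straight from \Cref{maind}. Apply it to $k\alpha$, which still satisfies \eqref{fglog}, and to $kA_1,kA_2$, which satisfy $\overline{d}(kA_1,kA_2)\ge\overline{d}(A_1,A_2)/k>0$. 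Then replace $\alpha$ by $\alpha-c$, which also satisfies \eqref{fglog} because constants are $O(\log x)$. Choosing $k$ and $c$ with $[c,c+1/k)\subseteq(a,b)$ gives the theorem; this is the paper's two-line proof.

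The same identity shows that your restricted estimate for windows $[j/k,(j+1)/k)$ is a formal consequence of \Cref{tergodic} for $k\alpha$. Apply it to the height-$k$ tower $S(x,i)=(x,i+1)$ for $i<k-1$, $S(x,k-1)=(Tx,0)$, for which $S^{km+r}(x,0)=(T^mx,r)$, with test functions $f\otimes\1_{\{j\}}$ and $g\otimes\1_{\{0\}}$. So even your route needs no new exponential sum estimates.
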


\begin{proof}
Since $\alpha(x)+c$ satisfies the hypotheses of \Cref{maind} for every $c\in\mathbb{R}$, it suffices to prove that, for every $k\in\mathbb{N}$, there exist distinct $x\in A_1$, $y\in A_2$, and $n\in\mathbb{N}$ such that
\[
0\leq\alpha(n)-(x+y)<\frac{1}{k}.
\]
Indeed, $k\alpha(x)$ satisfies the hypotheses of \Cref{maind}, and $\overline{d}(kA_1,kA_2)>0$, where $kA_i:=\{ka:a\in A_i\}$. Hence there exist distinct $x\in A_1$, $y\in A_2$, and $n\in\mathbb{N}$ such that
\[
kx+ky=\lfloor k\alpha(n)\rfloor.
\]
Therefore, $0\leq k\alpha(n)-k(x+y)<1$. Dividing by $k$ gives the required inequality.
\end{proof}

\LinearDifferenceEquationCorollary*

\begin{proof}[Proof sketch]
This proof closely follows that of \Cref{QuantitativeErgToComb}. For $N\in\N$, set
\[
A_N:=A\cap\left[1,\frac{\alpha(N)}{k}\right].
\]
In order to prove that there are infinitely many pairs $x,y\in A$ and $n\in\N$ such that $\ell y+\lfloor\alpha(n)\rfloor=kx$, it suffices to prove that
\begin{equation}
\label{4refdsrewds}
\limsup_N\frac{1}{\alpha(N)}\sum_{n=1}^N\alpha'(n)\frac{|(\ell A_N+\lfloor\alpha(n)\rfloor)\cap kA_N|}{\alpha(N)}>0.
\end{equation}
By \Cref{PositiveAverageLinearDifferences} below, $\overline{d}(A)>0$ implies
\begin{equation*}
\limsup_N\frac{1}{\alpha(N)}\sum_{m=1}^{\lfloor\alpha(N)\rfloor}\frac{|(\ell A_N+m)\cap kA_N|}{\alpha(N)}>0.
\end{equation*}
Moreover, by \Cref{4rewiodkl}, applied as in the proof of \Cref{QuantitativeErgToComb}, we have
\begin{equation*}
\limsup_N
\left|
\frac{1}{\alpha(N)}\sum_{m=1}^{\lfloor\alpha(N)\rfloor}\frac{|(\ell A_N+m)\cap kA_N|}{\alpha(N)}
-
\frac{1}{\alpha(N)}\sum_{n=1}^N\alpha'(n)\frac{|(\ell A_N+\lfloor\alpha(n)\rfloor)\cap kA_N|}{\alpha(N)}
\right|
=0.
\end{equation*}
So \Cref{4refdsrewds} follows.
\end{proof}

\begin{lemma}
\label{PositiveAverageLinearDifferences}
Let $A\subseteq\N$ have positive upper density, and let $k,\ell\in\N$ satisfy $k\geq \ell$. If
\[
A_N:=A\cap\left[1,\frac{N}{k}\right],
\]
then
\[
\limsup_N\frac{1}{N}\sum_{m=1}^{N}\frac{|(\ell A_N+m)\cap kA_N|}{N}>0.
\]
\end{lemma}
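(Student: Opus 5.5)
The idea is to rewrite the inner count as a count of pairs of elements of $A_N$, and then show that a positive proportion of all pairs are counted. Write $S_N:=\sum_{m=1}^{N}|(\ell A_N+m)\cap kA_N|$. An element of $(\ell A_N+m)\cap kA_N$ has the form $kx=\ell y+m$ with $x,y\in A_N$. Since $x\mapsto kx$ is injective, for each $m$ the elements of this intersection correspond bijectively to pairs $(x,y)\in A_N^2$ with $kx-\ell y=m$. Hence
\[
S_N=\#\{(x,y)\in A_N\times A_N:\ 1\leq kx-\ell y\leq N\}.
\]

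Next I will check which pairs satisfy these constraints. The upper constraint is automatic: $x\leq N/k$ gives $kx-\ell y< kx\leq N$. For the lower constraint, the hypothesis $k\geq \ell$ is used. If $x>y$, then $kx\geq \ell x>\ell y$, so $kx-\ell y\geq1$. Therefore every pair with $x>y$ is counted, and
\[
S_N\geq \binom{|A_N|}{2}=\frac{|A_N|(|A_N|-1)}{2}.
\]

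Finally, I will use the upper density of $A$. Fix $\delta$ with $0<\delta<\overline{d}(A)$. There are arbitrarily large $M$ with $|A\cap[1,M]|\geq \delta M$. For such $M$, take $N:=kM$; then $A_N=A\cap[1,M]$, so $|A_N|\geq \delta N/k$. For these $N$,
\[
\frac{S_N}{N^2}\geq \frac{\delta^2}{2k^2}-\frac{\delta}{2kN},
\]
and the right-hand side tends to $\delta^2/(2k^2)>0$. This gives the required positive $\limsup$.

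There is no real obstacle in this argument. The only point needing care is that $k\geq\ell$ guarantees $kx-\ell y\geq 1$ on the half of the pairs with $x>y$. This is exactly where the argument would break down for $k<\ell$, consistent with the counterexample referenced in the introduction.
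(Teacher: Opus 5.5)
Your proposal is correct and follows essentially the same route as the paper: you rewrite the sum as the number of pairs $(x,y)\in A_N^2$ with $1\leq kx-\ell y\leq N$, use $k\geq\ell$ and $x\leq N/k$ to see that every pair with $x>y$ is counted, and bound the count below by $|A_N|(|A_N|-1)/2$. Your choice $N=kM$ just makes explicit the density bound $|A_N|\gg N$ that the paper asserts directly.
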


This lemma is false in general when $k<\ell$. For instance, one may take $A$ to be a union of intervals of the form $[(1-\varepsilon)N!,N!]$, with $\varepsilon>0$ small enough.

\begin{proof}
The definition of upper density gives $\varepsilon>0$ and infinitely many $N$ such that $|A_N|\geq\varepsilon N$. For such $N$, the sum
\[
\sum_{m=1}^{N}|(\ell A_N+m)\cap kA_N|
\]
counts the pairs $(x,y)\in A_N^2$ such that $kx-\ell y\in[1,N]$. Every pair with $x>y$ has this property, because $k\geq \ell$ and $x\leq N/k$. Hence, for infinitely many $N$,
\[
\frac{1}{N}\sum_{m=1}^{N}\frac{|(\ell A_N+m)\cap kA_N|}{N}
\geq \frac{|A_N|(|A_N|-1)}{2N^2}
\geq \frac{\varepsilon^2}{4},
\]
and the desired conclusion follows.
\end{proof}

\msection{Proof of \Cref{tergodic}}
\label{ProofOftexp}

Our proof of \Cref{tergodic} proceeds through the following equivalent statement. Set $e(x)=e^{2\pi ix}$ for $x\in\R$.
 
\begin{theorem}\label{texp}
Let $\alpha(x)$ be a positive sub-polynomial such that, for every $\kappa>0$, the sequence $\bigl(\kappa\alpha(n)\bigr)_{n\in\N}$ is u.d.\ mod $1$. Then there exists a sequence $w\colon\N\to(0,+\infty)$ with $w(N)\searrow0$ such that
\begin{equation}\label{4refdsoireodfsi}
\left|
\frac{1}{\alpha(N)}\sum_{1\leq n<N}\alpha'(n)
e\bigl(\lfloor\alpha(n)\rfloor\theta\bigr)
-\frac{1}{\alpha(N)}\sum_{1\leq m<\alpha(N)}e(m\theta)
\right|
<w(N)
\end{equation}
for every $\theta\in\R$ and every $N\in\N$.
\end{theorem}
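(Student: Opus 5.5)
We need a uniform bound in $\theta$ for the difference of the two exponential sums. Since both sums are $1$-periodic in $\theta$, we may take $\theta\in(-1/2,1/2]$. The heuristic is that $\alpha'(n)$ approximates the number of integers $m$ with $\lfloor\alpha(m')\rfloor=m$ for $m'$ near $n$. So the weighted sum should reproduce $\sum_m e(m\theta)$ provided $\alpha(n)\theta$, equivalently $\alpha(n)$ mod $1/\theta$, equidistributes at the scale of the blocks. We split into three ranges of $\theta$: small, intermediate, and large.

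\emph{Small $|\theta|$ (say $|\theta|\le \eta(N)$).} We compare both sums with $\frac{1}{\alpha(N)}\int_1^N\alpha'(t)e(\alpha(t)\theta)\,dt=\frac{1}{\alpha(N)}\int_{\alpha(1)}^{\alpha(N)}e(u\theta)\,du$.
\begin{itemize}
\item Replacing $\lfloor\alpha(n)\rfloor$ by $\alpha(n)$ costs $O(|\theta|)$, since $\sum\alpha'(n)/\alpha(N)\approx1$.
\item Replacing the sum by the integral costs $O\bigl(\frac{1}{\alpha(N)}\int|\,(\alpha'(t)e(\alpha(t)\theta))'\,|\bigr)$. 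This is bounded by $\frac{1}{\alpha(N)}\bigl(O(\max\alpha')+|\theta|\int\alpha'^2\bigr)$, using that $\alpha'$ is eventually monotone.
\item The sum $\frac1{\alpha(N)}\sum_{m<\alpha(N)}e(m\theta)$ differs from the same integral by $O(|\theta|+1/\alpha(N))$.
\end{itemize}
We choose $\eta(N)\to0$ so that $\eta(N)\alpha'(N)\to0$; the case $\alpha'\to0$ is harmless. Then this range contributes $o(1)$. For $\alpha$ of degree below $1$ this already handles all $\theta$ with $|\theta|\ll 1/\alpha'(N)$.

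\emph{Large $|\theta|$ (away from $0$).} Here the right-hand sum is $O\bigl(\frac{1}{\alpha(N)|\theta|}\bigr)$, so it suffices to show the weighted sum is small. We write $e(\lfloor\alpha(n)\rfloor\theta)=e(\alpha(n)\theta)\,g_\theta(\{\alpha(n)\})$ with $g_\theta(x)=e(-x\theta)$. Expanding $g_\theta$ in Fourier series on $[0,1)$ (after a standard smoothing to control the tail near the jump), the weighted sum reduces to weighted Weyl sums $\frac{1}{\alpha(N)}\sum\alpha'(n)e(\alpha(n)(\theta+h))$, $h\in\Z$, with coefficients $\approx\frac{\sin\pi\theta}{\pi(\theta+h)}$. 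By partial summation with the monotone weight $\alpha'(n)/\alpha(N)$, these reduce to unweighted sums $\frac1M\sum_{n\le M}e(\kappa\alpha(n))$ with $\kappa=\theta+h\ne0$. The u.d.\ hypothesis for every $\kappa$ gives decay, but we need it uniformly over $\kappa$ in compact subsets of $\R\setminus\{0\}$, together with a bound growing at most polynomially in $|\kappa|$ for large $\kappa$. We would obtain this from van der Corput estimates in the Hardy-field setting, split by the types of \Cref{PolyAndSpecialParts}:
\begin{itemize}
\item for type (I), Fejér/van der Corput applied to the special part $r_\alpha\succ\log x$ after differencing away the polynomial part;
\item for type (II), Weyl's inequality on the irrational polynomial part, with $r_\alpha\ll\log x$ as a slowly varying perturbation.
\end{itemize}

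\emph{Intermediate range} $\eta(N)\le|\theta|\le\theta_0$ with $\kappa$ near $0$. This is the main obstacle, because uniformity in $\kappa\to0$ fails for a fixed $N$. Here we would use the scale-dependent argument, controlling $\sum e(\kappa\alpha(n))$ on dyadic blocks where $\kappa\alpha^{(j)}$ is neither too small nor too large. Choosing $\eta(N)$ so that the two ranges overlap, and taking $w(N)$ to be the decreasing majorant of the maximum of the three error bounds, completes the proof.
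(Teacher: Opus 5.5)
Your outline has the same architecture as the paper's proof. Near $\theta=0$ you compare with $\int e(u\theta)\,du$ (\Cref{major}). Away from $0$ you expand $e(-\theta\{x\})$ in Fourier series with a $\frac{\log H}{1+H\|x\|}$ error (\Cref{fit}), which reduces everything to Weyl sums $\sum e(\kappa\alpha(n))$ with $\kappa=h+\theta$, $|h|\le H$, and $\kappa=h\le H^2$. You then reinsert the weights $\alpha'(n)$ by summation by parts (\Cref{ChangingCoefs}).

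\textbf{The gap in type (II).} The estimate that carries the whole proof is only asserted: bounds for these Weyl sums on blocks, uniform over a $\kappa$-range that grows with $N$. For type (II), the tool you name does not give it.
\begin{itemize}
\item Pointwise u.d.\ of $(\kappa\alpha(n))$ yields no uniformity.
\item ``Weyl's inequality on the irrational polynomial part'' breaks down exactly where it matters. Every compact $\kappa$-range contains points where $\kappa\a_1=a/q$ with $q$ small, and there the top coefficient gives no cancellation. Which lower coefficient of $\kappa p_\alpha$ is ``irrational enough'' changes with $\kappa$.
\end{itemize}
The missing idea is the Diophantine argument of \Cref{pcq,pfnbeta}.
\begin{itemize}
\item Take the first coefficient of $\kappa p_\alpha$ that has a rational approximation with large denominator. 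Replace the higher coefficients by their small-denominator fractions, split $n$ into residue classes modulo their lcm, absorb the deviations by partial summation on intervals of length $N^{1-\varepsilon}$, and apply Weyl.
\item If every coefficient is near a small-denominator fraction, use the complete-sum bound $Q^{-1/d+\varepsilon}$ (\Cref{Va97Thm7.1}). You must then show $Q$ is large. Since $\kappa\a_{j_0}=\kappa\a_1\cdot(\a_{j_0}/\a_1)$ with $\a_{j_0}/\a_1\notin\Q$, the fraction approximating $\kappa\a_{j_0}$ is essentially $\frac{a_1}{q_1}\cdot\frac{a_*}{q_*}$, where $q_*$ is the denominator of a good approximation to $\a_{j_0}/\a_1$. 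Hence $q_*\to\infty$ forces $q_1$ or $q_{j_0}$ to be large.
\end{itemize}
This is where $p_\alpha\notin c\,\Q[x]$ enters quantitatively. It is also why $w(N)$ depends on the approximation properties of $\a_{j_0}/\a_1$ through $\lambda(N)$.

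For type (II) your ``main obstacle'' is also misplaced. For small $|\kappa|$, any good approximation to $\kappa\a_1$ has denominator $\gtrsim 1/|\kappa\a_1|$, so Weyl alone suffices. The hard $\kappa$ are those of size $\asymp 1$ up to $H^2$.

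\textbf{Type (I).} Here your sketch is closer to \Cref{32ew8ido8er9wiodkNoFloors}. You still have to switch tests by size of $\kappa$: for small $\kappa$, the $d$-th derivative test on the polynomial part (or Kusmin--Landau if $d=1$); for larger $\kappa$, the $(d+1)$-st derivative test on $r_\alpha$.

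\textbf{The case $\deg\alpha<1$.} Your claim that here the small-$\theta$ comparison ``already handles all $\theta$ with $|\theta|\ll 1/\alpha'(N)$'' is false. Your own first and third bullets each lose $O(|\theta|)$, which is not small for $\theta\asymp 1$. When $\alpha'\to 0$ one should not pass through $e(\alpha(n)\theta)$ at all. \Cref{SectexpSlowerThanx} keeps the floor and groups the $n$ with $\lfloor\alpha(n)\rfloor=m$, using $\sum_{m\le\alpha(n)<m+1}\alpha'(n)=1+O(\alpha'(N_m-1))$. This settles the whole case $\alpha\prec x$ for all $\theta$ at once, with no exponential sums.
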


We now explain why \Cref{tergodic} and \Cref{texp} are equivalent. In one direction,
\Cref{texp} follows from \Cref{tergodic} by applying the latter, for each \(\theta\in\mathbb R\), to the m.p.s. \(\mathbb T=\mathbb R/\mathbb Z\) with Lebesgue measure, the translation \(T:x\mapsto x+\theta\), and the function \(f(z)=e(z)\). 
To prove the opposite direction, we will show now that \Cref{texp} actually implies the following \Cref{tunitary}, a stronger form of \Cref{tergodic} for general unitary operators (applying \Cref{tunitary} to the Hilbert space $L^2(X,\mu)$ and the operator $U=U_T$ immediately yields \Cref{tergodic}). 

\begin{theorem}\label{tunitary}
Let $\alpha(x)$ be a positive sub-polynomial such that, for every $\kappa>0$, the sequence $\bigl(\kappa\alpha(n)\bigr)_{n\in\N}$ is u.d. modulo $1$. Then there is a positive sequence $w(N)\searrow0$ such that, for every $N\in\mathbb{N}$, every Hilbert space $\mathcal{H}$, every unitary operator $U:\mathcal{H}\to \mathcal{H}$, and every $f\in \mathcal{H}$,
$$
\bigg\|\frac{1}{\alpha(N)}\sum_{1\leq n<N}\alpha'(n)\cdot U^{\lfloor\alpha(n)\rfloor}f-\frac1{\alpha(N)}\sum_{1\leq m<\alpha(N)}U^{m}f\bigg\|\leq w(N)\|f\|.
$$
\end{theorem}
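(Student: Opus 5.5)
The plan is to deduce \Cref{tunitary} from \Cref{texp} by the spectral theorem, taking as $w(N)$ the same sequence given by \Cref{texp}. Fix a Hilbert space $\mathcal H$, a unitary $U$ and $f\in\mathcal H$. By the spectral theorem for unitary operators (Herglotz/Bochner), there is a finite positive Borel measure $\sigma_f$ on $\mathbb T=\R/\Z$ such that
\[
\langle U^k f,f\rangle=\int_{\mathbb T}e(k\theta)\,d\sigma_f(\theta)\qquad\text{for all }k\in\Z,
\]
with total mass $\sigma_f(\mathbb T)=\|f\|^2$. Equivalently, the cyclic subspace spanned by $\{U^kf\}$ is unitarily isomorphic to $L^2(\mathbb T,\sigma_f)$, with $U$ corresponding to multiplication by $e(\theta)$ and $f$ corresponding to the constant function $1$.

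Given $N$, define the trigonometric polynomial
\[
\Phi_N(\theta):=\frac{1}{\alpha(N)}\sum_{1\leq n<N}\alpha'(n)e\bigl(\lfloor\alpha(n)\rfloor\theta\bigr)-\frac{1}{\alpha(N)}\sum_{1\leq m<\alpha(N)}e(m\theta).
\]
This is a finite linear combination of characters $e(k\theta)$, $k\in\Z$. The vector whose norm we must bound is therefore $\Phi_N(U)f=\sum_k c_kU^kf$, where the $c_k$ are the coefficients of $\Phi_N$. Expanding $\|\sum_k c_kU^kf\|^2=\sum_{k,j}c_k\overline{c_j}\langle U^{k-j}f,f\rangle$ and using the spectral identity, we obtain
\[
\|\Phi_N(U)f\|^2=\int_{\mathbb T}|\Phi_N(\theta)|^2\,d\sigma_f(\theta).
\]

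\Cref{texp} gives $|\Phi_N(\theta)|<w(N)$ uniformly in $\theta\in\R$. Since $\Phi_N$ is $1$-periodic, the same bound holds for every $\theta\in\mathbb T$. Hence
\[
\|\Phi_N(U)f\|^2\leq w(N)^2\sigma_f(\mathbb T)=w(N)^2\|f\|^2,
\]
which is exactly the claim, with $w(N)\searrow0$ inherited from \Cref{texp}.

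There is no real obstacle in this reduction. The only points to check are that the sums are finite, so the operators are bounded and the functional calculus is trivial, and that the bound in \Cref{texp} is uniform in $\theta$, which is what makes it independent of $\sigma_f$. All the difficulty lies in \Cref{texp} itself, which I take as given here. Applying \Cref{tunitary} with $U=U_T$ on $L^2(X,\mu)$ then yields \Cref{tergodic}, completing the claimed equivalence.
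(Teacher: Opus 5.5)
Your proposal is correct and follows the paper's argument: Herglotz's theorem gives a spectral measure $\nu$ of total mass $\|f\|^2$, so that $\|\sum_k c_kU^kf\|^2=\int|\sum_k c_ke(k\theta)|^2\,d\nu(\theta)$. The uniform-in-$\theta$ bound of \Cref{texp} then yields the estimate with the same sequence $w(N)$.
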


\begin{proof}[Proof of \Cref{tunitary} from \Cref{texp}]
Let $\mathcal{H}$ be a Hilbert space, let $U:\mathcal{H}\to \mathcal{H}$ be a unitary operator, and let $f\in \mathcal{H}$. Since the function $n\mapsto\langle U^nf,f\rangle$ is positive definite, Herglotz's theorem (cf. \cite[Chapter 1.8]{He83}) gives
$$
\langle U^nf,f\rangle=\int_0^1e(n\theta)d\nu(\theta)
$$
for some finite positive measure $\nu$ on $[0,1)$. In particular, $\|f\|^2=\int_0^1d\nu$. It follows that, for every finitely supported sequence of complex coefficients $(a_n)_{n\in\mathbb{Z}}$,
\begin{align*}
\left\|\sum_na_nU^nf\right\|^2&=\left\langle\sum_na_nU^nf,\sum_ma_mU^mf\right\rangle\\
&=\sum_{n,m}a_n\overline{a_m}\left\langle U^nf,U^mf\right\rangle\\
&=\sum_{n,m}a_n\overline{a_m}\left\langle U^{n-m}f,f\right\rangle\\
&=\sum_{n,m}a_n\overline{a_m}\int_0^1e((n-m)\theta)d\nu(\theta)\\
&=\int_0^1\sum_{n,m}a_ne(n\theta)\cdot\overline{a_me(m\theta)}d\nu(\theta)\\
&=\int_0^1\left|\sum_na_ne(n\theta)\right|^2d\nu(\theta).
\end{align*}

Thus, we have
\begin{align}
&\bigg\|\frac{1}{\alpha(N)}\sum_{1\leq n<N}\alpha'(n)\cdot U^{\lfloor\alpha(n)\rfloor}f-\frac1{\alpha(N)}\sum_{1\leq m<\alpha(N)}U^mf\bigg\|^2\notag\\
=&\int_0^1\bigg|\frac1{\alpha(N)}\sum_{1\leq n<N}\alpha'(n)e\left(\lfloor\alpha(n)\rfloor\theta\right)-
\frac1{\alpha(N)}\sum_{1\leq m<\alpha(N)}e(m\theta)\bigg|^2d\nu(\theta)\label{UsingHerglotz}\\
\leq&w(N)^2\int_0^1d\nu(\theta)=w(N)^2\cdot\|f\|^2.\notag\qedhere
\end{align}
\end{proof}

\begin{remark}
\label{rwedsfoikl}
The reduction of \Cref{tergodic,4rewiodkl} to  \Cref{texp} shows that the same sequence $w(N)$ works for all of them.
\end{remark}

\begin{remark}
In \cite[Lemma 7.3]{BKQW07}, the authors study exponential averages closely related to those appearing in \Cref{4refdsoireodfsi}. They use estimates for these averages to prove pointwise ergodic theorems along certain Hardy-field sequences (see \cite[Theorems 3.4, 3.5, 3.8]{BKQW07}).
\end{remark}

\msubsection{Proof of \Cref{texp} when $\alpha(x)\prec x$}
\label{SectexpSlowerThanx}
We now prove that for all sub-polynomials $1\prec\alpha(x)\prec x$, there exists a positive sequence $w(N)\searrow0$ such that
\begin{equation}
\label{4refdsoireodfsi2}
\left|\frac1{\alpha(N)}\sum_{1\leq n<N}\alpha'(n)e\left(\lfloor\alpha(n)\rfloor\theta\right)-
\frac1{\alpha(N)}\sum_{1\leq n< \alpha(N)}e(n\theta)\right|<w(N)
\end{equation}
for every $\theta\in\R,N\in\mathbb{N}$.

We will use that $\alpha(x+1)-\alpha(x)\leq\alpha'(x)\leq\alpha(x)-\alpha(x-1)$ for big enough $x$. After multiplying the left-hand side of \Cref{4refdsoireodfsi2} by $\alpha(N)$, and setting $M=\lfloor\alpha(N)\rfloor$, we can rewrite it as
\begin{equation}
\label{refsdpo9pfdsol}
\sum_{1\leq m<M}e(m\theta)\left(1-\sum_{\substack{n\in\mathbb{N}\\m\leq\alpha(n)<m+1}}\alpha'(n)\right)+O_\alpha\left(1\right),
\end{equation}
where the error $O_\alpha(1)$ comes from two sources. The first is the contribution of the terms $\alpha'(n)e(\lfloor\alpha(n)\rfloor\theta)$ for which $\alpha(n)\leq1$; the second is the contribution of the terms $\alpha'(n)e(M\theta)$ for which $M\leq\alpha(n)<\alpha(N)$, whose norm is at most
$$
\sum_{M\leq\alpha(n)<\alpha(N)}|\alpha'(n)|
\leq\sum_{M\leq\alpha(n)<\alpha(N)}|\alpha(n)-\alpha(n-1)|\leq2.
$$
On the other hand, for all sufficiently large $m\in\mathbb{N}$, and letting $N_m=\left\lceil\alpha^{-1}(m)\right\rceil$, we have $1-\sum_{m\leq\alpha(n)<m+1}\alpha'(n)\ll\alpha'(N_m-1)$, because by the mean value theorem,
\begin{align*}
1-2\alpha'(N_{m}-1)&\leq\alpha\left(N_{m+1}\right)-\alpha\left(N_m-1\right)\\&=\sum_{m\leq\alpha(n)<m+1}\alpha(n)-\alpha(n-1)\\
&\leq\sum_{m\leq\alpha(n)<m+1}\alpha'(n)\\
&\leq \sum_{m\leq\alpha(n)<m+1}\alpha(n+1)-\alpha(n)\\
&=\alpha(N_{m+1}+1)-\alpha(N_{m})\\
&\leq 1+2\alpha'(N_{m}-1)
\end{align*}
Using $\lim_{m\to\infty}\alpha'(N_m-1)=0$ together with (\ref{refsdpo9pfdsol}), we conclude the proof:
\begin{equation*}
\sum_{1\leq n<N}\alpha'(n)e\left(\lfloor\alpha(n)\rfloor\theta\right)-
\sum_{1\leq n< \alpha(N)}e(n\theta)\ll_\alpha\sum_{m=2}^{\lfloor\alpha(N)\rfloor}\alpha'(N_m-1)\prec \alpha(N).
\end{equation*}

\begin{remark}
If $1\prec\alpha(x)\prec x$ is a sub-polynomial, then $\alpha'(x)<\frac{\alpha(x)}{x}$ for all sufficiently large $x$. Indeed, $\frac{\alpha'(x)}{\alpha(x)}$ is a Hardy function, and if $\frac{\alpha'(x)}{\alpha(x)}\geq\frac{1}{x}$ for all sufficiently large $x$, then integrating would contradict $\ln(x)-\ln(\alpha(x))\to+\infty$. Thus, in this case we can make the estimate above more precise:
\begin{equation*}
\sum_{m=2}^{\lfloor\alpha(N)\rfloor}\alpha'(N_m-1)
\leq
\sum_{m=2}^{\lfloor\alpha(N)\rfloor}\frac{\alpha(N_m-1)}{N_m-1}
\leq 
\sum_{m=2}^{\lfloor\alpha(N)\rfloor}\frac{m}{N_m-1}.
\end{equation*}
Note that, if $\deg(\alpha)<\frac{1}{2}$, the sum $\sum_{m=2}^{\infty}\frac{m}{N_m-1}$ is finite, so we can let the function $w(N)$ from \Cref{4refdsoireodfsi} be $\frac{C_\alpha}{\alpha(N)}$ for some constant $C_\alpha$.
\end{remark}

\msubsection{Some exponential sum estimates}
\label{SecPrelims}

In this section we collect some preliminary results and lemmas which will be needed in \Cref{SecErgralphaBig,SecErgralphaSmall,SecColorralphaSmall}.

We will use summation by parts throughout this article, in the following form.

\begin{lemma}[Summation by parts]\label{SumParts}
Let $(a_n),(B_n)$, $n\in\mathbb{N}$, be sequences of complex numbers, and denote $b_n=B_{n+1}-B_n$. If $X<Y$ are natural numbers,
\begin{equation*}
\sum_{X\leq n<Y}a_nb_n=a_{Y}B_{Y}-a_XB_X-
\sum_{X\leq n< Y}B_{n+1}(a_{n+1}-a_{n}).
\end{equation*}
\end{lemma}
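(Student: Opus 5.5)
This is a purely algebraic identity (Abel summation), so the plan is direct verification rather than any estimate. I will expand the left-hand side using $b_n=B_{n+1}-B_n$:
\[
\sum_{X\leq n<Y}a_nb_n=\sum_{X\leq n<Y}a_nB_{n+1}-\sum_{X\leq n<Y}a_nB_n .
\]
In the first sum I will write $a_n=a_{n+1}-(a_{n+1}-a_n)$, which gives
\[
\sum_{X\leq n<Y}a_nB_{n+1}=\sum_{X\leq n<Y}a_{n+1}B_{n+1}-\sum_{X\leq n<Y}B_{n+1}(a_{n+1}-a_n).
\]

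The key step is the telescoping. Combining the two displays, I get
\[
\sum_{X\leq n<Y}\bigl(a_{n+1}B_{n+1}-a_nB_n\bigr)=a_YB_Y-a_XB_X,
\]
since consecutive terms cancel. The remaining sum $-\sum_{X\leq n<Y}B_{n+1}(a_{n+1}-a_n)$ appears exactly as in the statement, and this yields the claimed formula.

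There is no real obstacle here. The only thing to check carefully is the index bookkeeping: the shifted sum should run over $X+1\leq n\leq Y$, so that the boundary terms come out as $a_YB_Y$ and $a_XB_X$ with the correct signs. The argument holds verbatim for complex sequences, since only finite sums and the ring operations are used. As a sanity check I will confirm the case $Y=X+1$: the left side is $a_X(B_{X+1}-B_X)$, and the right side is $a_{X+1}B_{X+1}-a_XB_X-B_{X+1}(a_{X+1}-a_X)=a_XB_{X+1}-a_XB_X$, so the two agree.
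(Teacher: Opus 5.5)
Your proposal is correct: the rewriting $a_nB_{n+1}=a_{n+1}B_{n+1}-B_{n+1}(a_{n+1}-a_n)$ combined with the telescoping sum $\sum_{X\leq n<Y}(a_{n+1}B_{n+1}-a_nB_n)=a_YB_Y-a_XB_X$ gives exactly the stated identity, and your check of the case $Y=X+1$ is right. The paper states this lemma without proof as the standard Abel summation formula, so your argument is the routine verification it takes for granted.
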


The following application of summation by parts will be particularly useful.

\begin{lemma}\label{ChangingCoefs}
Let $N\in\mathbb{N}$. Suppose $\beta:(0,+\infty)\to\mathbb{R}$ is monotone and $(z_n)_{n\in\mathbb{N}}$ is a sequence of complex numbers. Also let $1\leq L\leq N$ and let $\lambda\in(0,1]$. Suppose that for all $1\leq Y\leq L$,
\begin{equation*}
\left|\sum_{N-Y\leq n<N}z_n\right|\leq\lambda L.
\end{equation*}
Then for all $1\leq Y\leq L$ we have
\begin{equation*}
\left|\sum_{N-Y\leq n<N}\beta(n)z_n\right|\leq2\lambda L\max\big(|\beta(N)|,|\beta(N-Y)|\big).
\end{equation*}
\end{lemma}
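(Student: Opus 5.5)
The plan is to apply summation by parts (\Cref{SumParts}) with the roles arranged so that the partial sums $B$ are tail sums of the $z_n$ starting at the fixed right endpoint $N$. Fix $1\leq Y\leq L$ and, for $0\leq j\leq Y$, set
\[
S_j:=\sum_{N-j\leq n<N}z_n,
\]
so that $S_0=0$ and, by hypothesis, $|S_j|\leq\lambda L$ for every $1\leq j\leq Y$. Then $z_n=S_{N-n}-S_{N-n-1}$ for $N-Y\leq n<N$. Abel summation gives
\[
\sum_{N-Y\leq n<N}\beta(n)z_n=\beta(N-Y)S_Y+\sum_{N-Y< n<N}S_{N-n}\bigl(\beta(n-1)-\beta(n)\bigr).
\]
Up to an index shift, this is exactly \Cref{SumParts} applied to the reversed sequences. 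The boundary term at $n=N$ vanishes because $S_0=0$.

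Next we bound each piece. The boundary term is at most $|\beta(N-Y)|\lambda L$. Every $S_{N-n}$ in the remaining sum is bounded by $\lambda L$. Since $\beta$ is monotone, the differences $\beta(n-1)-\beta(n)$ all have the same sign, so
\[
\sum_{N-Y<n<N}\bigl|\beta(n-1)-\beta(n)\bigr|=\bigl|\beta(N-Y)-\beta(N-1)\bigr|.
\]
This quantity is at most $|\beta(N-Y)|+|\beta(N-1)|$.

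The only point needing care is the endpoint. We want the bound to involve $\beta(N)$ rather than $\beta(N-1)$. By monotonicity, $\beta(N-1)$ lies between $\beta(N-Y)$ and $\beta(N)$. Hence
\[
\bigl|\beta(N-Y)-\beta(N-1)\bigr|\leq\bigl|\beta(N-Y)-\beta(N)\bigr|\leq|\beta(N-Y)|+|\beta(N)|.
\]
Combining, the total is at most $\lambda L\bigl(|\beta(N-Y)|+|\beta(N-Y)|+|\beta(N)|\bigr)$. This is $3\lambda L\max$ rather than the claimed $2\lambda L\max$, so the argument needs one refinement.

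The tighter accounting runs as follows. If $\beta(N)$ and $\beta(N-Y)$ have the same sign, then $|\beta(N-Y)-\beta(N)|\leq\max(|\beta(N)|,|\beta(N-Y)|)$. If they have opposite signs, then $\beta(N-1)$ lies between them, so $|\beta(N-Y)-\beta(N-1)|\leq|\beta(N-Y)|+|\beta(N)|$. To handle both cases, I would run the same summation by parts with $\beta(n)-\beta(N)$ in place of $\beta(n)$. Writing
\[
\sum_{N-Y\leq n<N}\beta(n)z_n=\beta(N)S_Y+\sum_{N-Y\leq n<N}\bigl(\beta(n)-\beta(N)\bigr)z_n,
\]
the second sum is bounded, by the same argument, by $\lambda L\,|\beta(N-Y)-\beta(N)|$. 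Indeed, $\beta-\beta(N)$ is monotone and vanishes at $N$, so its values on $[N-Y,N)$ have absolute value at most $|\beta(N-Y)-\beta(N)|$, and its total variation on that range is also at most this quantity.

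The total is then $\lambda L\bigl(|\beta(N)|+|\beta(N-Y)-\beta(N)|\bigr)$. In the same-sign case this is at most $\lambda L\bigl(|\beta(N)|+\max\bigr)\leq2\lambda L\max$. In the opposite-sign case I would instead use the original expansion anchored at $\beta(N-Y)$, with the bound $|\beta(N-Y)|+|\beta(N-Y)-\beta(N-1)|$. Choosing whichever anchor is better, I expect the sign case analysis to yield the factor $2$. This case split is the only real obstacle; the rest is routine.
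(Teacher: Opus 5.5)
The step that fails is the opposite-sign case, which you leave at ``I expect the sign case analysis to yield the factor $2$''. It cannot be completed, because the stated inequality is false when $\beta(N-Y)$ and $\beta(N)$ have opposite signs.

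\textbf{Counterexample.} Take any $N\ge3$, $\lambda=1$, $L=Y=2$, $z_{N-1}=2$, $z_{N-2}=-4$ (all other $z_n=0$), and a nondecreasing $\beta$ with $\beta(N-2)=-1$ and $\beta(N-1)=\beta(N)=1$. The hypothesis holds, since $|z_{N-1}|=|z_{N-2}+z_{N-1}|=2=\lambda L$. Yet $\sum_{N-2\le n<N}\beta(n)z_n=4+2=6$, while $2\lambda L\max(|\beta(N)|,|\beta(N-2)|)=4$.

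\textbf{Why no anchor helps.} Every regrouping is the same identity $\sum\beta(n)z_n=\beta(N-Y)S_Y+\sum_{j=1}^{Y-1}\bigl(\beta(N-j)-\beta(N-j-1)\bigr)S_j$. The hypothesis allows the tail sums $S_j$ to be arbitrary of modulus at most $\lambda L$. So the sharp bound is $\lambda L\bigl(|\beta(N-Y)|+|\beta(N-1)-\beta(N-Y)|\bigr)$, which is exactly your first estimate, and your factor $3$ is attained in the example. (Your Abel formula also has $\beta(n-1)-\beta(n)$ where it should read $\beta(n)-\beta(n-1)$; this is harmless for the bound.)

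Separately, your claimed bound $\lambda L|\beta(N-Y)-\beta(N)|$ for $\sum(\beta(n)-\beta(N))z_n$ is off by a factor $2$. The boundary term and the total variation each contribute up to $\lambda L|\beta(N-Y)-\beta(N)|$. With the data above, $\sum_{N-2\le n<N}(\beta(n)-\beta(N))z_n=8$, whereas you assert it is at most $\lambda L|\beta(N-2)-\beta(N)|=4$.

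\textbf{Relation to the paper.} Your first computation is exactly the paper's proof. The paper uses tail sums $B_m=-\sum_{m\le n<N}z_n$ with $B_N=0$ and $|B_m|\le\lambda L$, bounds by boundary term plus total variation, and concludes with $|\beta(N-Y)|+|\beta(N)-\beta(N-Y)|\le 2\max(|\beta(N)|,|\beta(N-Y)|)$. That last step tacitly uses $\beta(N-Y)\beta(N)\ge0$. Under this sign assumption, your first estimate already finishes the proof: $|\beta(N-Y)-\beta(N-1)|\le|\beta(N-Y)-\beta(N)|\le\max(|\beta(N)|,|\beta(N-Y)|)$. The detour through $\beta-\beta(N)$ is then unnecessary.

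The right repair is to add the hypothesis that $\beta$ does not change sign on $[N-Y,N]$, or to replace $2$ by $3$, rather than to search for a better anchor. This costs nothing in the paper, where the lemma is applied only with $\beta=\alpha'$ or $\beta=p'$, both positive on the relevant ranges.
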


\begin{proof} 
For each $N-L\leq m\leq N$ let $\displaystyle B_m=-\sum_{m\leq n<N}z_n$, so that $z_m=B_{m+1}-B_m$ and $|B_m|\leq\lambda L$ for all $m$. For $1\leq Y\leq L$, using summation by parts we obtain
\begin{align*}
\left|\sum_{N-Y\leq n<N}\beta(n)z_n\right|&=\left|-\beta(N-Y)B_{N-Y}-\sum_{N-Y\leq m< N}B_{m+1}(\beta(m+1)-\beta(m))\right|\\
&\leq\lambda L\left(|\beta(N-Y)|+\sum_{N-Y\leq m<N}|\beta(m+1)-\beta(m)|\right)\\
&\leq2\lambda L\max(|\beta(N)|,|\beta(N-Y)|).\qedhere
\end{align*}
\end{proof}

Let $(a_1,\dots,a_n)$ denote the greatest common divisor of $a_1,\dots,a_n\in\mathbb{N}$.

\begin{lemma}[Rational polynomials]
\label{Va97Thm7.1}
Fix $d\in\mathbb{N}$ and $\varepsilon>0$. Let $p(n)=a_1n+\cdots+a_dn^d\in\mathbb{Z}[x]$ and $Q\in\mathbb{N}$ satisfy $(Q,a_1,\dots,a_d)=1$. Then
\begin{equation*}
\frac1Q\sum_{r=1}^Q e\left(\frac{p(r)}Q\right)\ll_{d,\varepsilon} Q^{-\frac1d+\varepsilon}.
\end{equation*}
\end{lemma}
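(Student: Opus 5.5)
This is Hua's classical bound for complete exponential sums (cf. Vaughan, \emph{The Hardy--Littlewood method}, Theorem 7.1). I would prove it in three steps: reduce to prime-power moduli, treat primes and prime powers separately, and recombine. Write $S(Q,p):=\sum_{r=1}^Q e(p(r)/Q)$.

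\textbf{Step 1: multiplicativity.} Suppose $Q=Q_1Q_2$ with $(Q_1,Q_2)=1$. Write $r\equiv Q_2r_1+Q_1r_2\pmod Q$, with $r_i$ ranging over residues mod $Q_i$. Then by the Chinese remainder theorem and $p(0)=0$,
\[
S(Q,p)=S\bigl(Q_1,p_1\bigr)\,S\bigl(Q_2,p_2\bigr),
\]
where $p_1(x)=p(Q_2x)/Q_2$ and $p_2$ is defined symmetrically. One checks that $(Q_1,\text{coefficients of }p_1)=1$ still holds, since multiplying the coefficients by powers of $Q_2$, which is a unit mod $Q_1$, preserves coprimality. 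It therefore suffices to prove
\[
|S(p^k,f)|\le C_d\,p^{k(1-1/d)}
\]
for prime powers, with $C_d$ depending only on $d$. Then for general $Q$ we get $|S(Q,p)|\le C_d^{\omega(Q)}Q^{1-1/d}$, and the classical bound $C_d^{\omega(Q)}\ll_{d,\varepsilon}Q^{\varepsilon}$ finishes the proof.

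\textbf{Step 2: $k=1$.} Since $p$ does not divide all coefficients, $f$ is nonconstant mod $p$ of degree at most $d$. Weil's bound gives $|S(p,f)|\le(d-1)\sqrt p\le(d-1)p^{1-1/d}$ for $d\ge2$. For $d=1$ the sum vanishes.

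\textbf{Step 3: $k\ge2$, by induction on $k$.} Write $r=u+p^{k-1}v$ with $u$ mod $p^{k-1}$ and $v$ mod $p$, and Taylor-expand:
\[
f(u+p^{k-1}v)\equiv f(u)+p^{k-1}vf'(u)\pmod{p^k},
\]
valid since $2(k-1)\ge k$ (the higher Taylor coefficients $f^{(j)}/j!$ are integral). Summing over $v$ restricts $u$ to solutions of $f'(u)\equiv0\pmod p$. Grouping these $u$ by their root $\xi$ mod $p$ of $f'$ with multiplicity $m$, substitute $u=\xi+py$. The function $g(y)=\bigl(f(\xi+py)-f(\xi)\bigr)/p^{\tau}$, where $p^\tau$ is the content, then has coprime coefficients again. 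This yields a recursion
\[
S(p^k,f)=\sum_{\xi}p^{\,\cdot}\,S(p^{k-\tau},g_\xi)
\]
with $\tau\ge m+1$ controlled by the multiplicity, and $\sum_\xi m_\xi\le d-1$. The degenerate case where $p\mid$ all coefficients of $f'$ but not of $f$ (only possible when $p\le d$) needs separate bookkeeping: then $p^{\,\cdot}$ factors are absorbed into constants depending on $d$. The induction hypothesis combined with $\tau\ge m+1$ and $\tau\le d$ gives the exponent $1-1/d$. When $k-\tau$ is small one falls back on the trivial bound.

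The main obstacle is Step 3: tracking the multiplicity and content so that the constant stays independent of $k$, especially for small primes $p\le d$ where $f'$ can vanish identically mod $p$. I would follow Vaughan's Lemma 7.x treatment closely, using the root-multiplicity bound $\sum m_\xi\le d-1$ to make the exponents telescope.
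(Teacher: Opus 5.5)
The paper gives no argument of its own for this lemma; it simply cites Vaughan's Theorem~7.1 (Hua's bound). Your outline is the standard proof of that theorem, so the overall route matches the paper's source. Steps~1 and~2 are sound, with one small fix: Weil's bound needs $p\nmid\deg f$, which you can only guarantee for $p>d$. For instance, $f(x)=x^p-x$ gives $S(p,f)=p$. For $p\le d$, use the trivial bound $p\le d^{1/d}p^{1-1/d}$ instead.

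Step~3, however, does not work as written.

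\textbf{The inequality between $\tau$ and $m$ is reversed.} For $p>d$ one has $2\le\tau_\xi\le m_\xi+1$, and $\tau\ge m+1$ fails in general. Take $f(x)=x^3+px$ with $p>3$: the root $\xi=0$ of $f'$ is double, yet $f(py)-f(0)=p^3y^3+p^2y$ gives $\tau=2$.

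\textbf{A fixed-exponent induction does not close uniformly in $k$.} Suppose your induction hypothesis is just the target bound $|S(p^j,g)|\le C_dp^{j(1-1/d)}$. The recursion then yields $|S(p^k,f)|\le C_dp^{k(1-1/d)}\sum_\xi p^{\tau_\xi/d-1}$, and this factor can exceed $1$ even for primes $p>d$. With $d=3$, $p=5$, $f(x)=x^3-3x$, the roots $\xi=\pm1$ of $f'$ are simple with $\tau_\xi=2$, so the factor is $2\cdot5^{-1/3}>1$. The constant would therefore grow with $k$, and the difficulty is not confined to $p\le d$.

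\textbf{The fix.} Induct on a statement that remembers the multiplicity. Writing $S_\xi$ for the part of the sum with $r\equiv\xi\pmod{p}$, prove $|S_\xi|\le m_\xi\,p^{k(1-1/(m_\xi+1))}$.
\begin{itemize}
\item For $p>d$ this closes. The polynomial $g_\xi'(y)=p^{1-\tau}f'(\xi+py)$ reduces mod $p$ to a nonzero polynomial of degree at most $\tau-1$, so its roots $\eta$ satisfy $\sum_\eta\mu_\eta\le\tau-1$.
\item The induction hypothesis applied to $g_\xi$ then gives $|S_\xi|\le p^{\tau-1}(\tau-1)\,p^{(k-\tau)(1-1/\tau)}=(\tau-1)p^{k(1-1/\tau)}\le m\,p^{k(1-1/(m+1))}$. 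The cases $k\le\tau+1$ follow from the trivial bound and Weil.
\item Summing over $\xi$ gives $(d-1)p^{k(1-1/d)}$.
\item For $p\le d$, run the same induction with $p^{-t}f'$ in place of $f'$, where $p^t\le d$ is the $p$-part of the content of $f'$. This loses only a factor depending on $d$, and it is what Vaughan's proof of Theorem~7.1 carries out.
\end{itemize}
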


\begin{proof}
See \cite[Theorem 7.1]{Va97}. The implied constant only depends on $d$ and $\varepsilon$, as indicated in the `Notation' section of \cite{Va97}.
\end{proof}

For $x\in\R$, let $\|x\|= |x-[\![x]\!]|$ be the distance from $x$ to the nearest integer.

\begin{lemma}\label{DerivEstimates}
Let $X,Y\in\mathbb{R},Y>0$, and let $f:[X,X+Y]\to\mathbb{R}$ have continuous derivative.
\begin{enumerate}[label=(\roman*)]
    \item\label{1DerivEstimates} Suppose that $X,Y$ are natural numbers, $f'$ is monotone on $[X,X+Y]$ and for some $\lambda\in\left(0,\frac{1}{2}\right)$ and for all $x\in[X,X+Y]$,
$$
0<\lambda\leq\|f'(x)\|
$$
Then
\begin{equation}\label{KL}
\left|\sum_{X\leq n\leq X+Y}e\left(f(n)\right)\right|\leq\frac{2}{\lambda}.
\end{equation}
\item \label{dDerivEstimates} Suppose that $d\geq2$, $X,Y\in\mathbb N$, that $f$ has continuous $d$-th derivative on $[X,X+Y]$, and for some constants $\lambda>0,h\geq1$ we have
$$
\lambda\leq\left|f^{(d)}(x)\right|\leq h\lambda
$$
on $[X,X+Y]$. Then
\begin{equation}\label{VC}
\left|\sum_{X\leq n\leq X+Y}e\left(f(n)\right)\right|<21
hY\left(\lambda^{\frac1{2^d-2}}+Y^{-\frac2{2^d}}+(Y^d\lambda)^{-\frac2{2^d}}\right).
\end{equation}
\end{enumerate}
\end{lemma}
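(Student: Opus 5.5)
Part \ref{1DerivEstimates} is the Kusmin--Landau inequality, and part \ref{dDerivEstimates} is the van der Corput $d$-th derivative test with explicit constants. I would prove \ref{1DerivEstimates} directly by summation by parts, and then prove \ref{dDerivEstimates} by induction on $d$. The base case $d=2$ uses \ref{1DerivEstimates}, and the inductive step uses Weyl--van der Corput differencing.

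\emph{Part \ref{1DerivEstimates}.} Since $f'$ is continuous and $\|f'(x)\|\geq\lambda>0$ on $[X,X+Y]$, $f'$ never meets an integer there. Hence $f'([X,X+Y])\subseteq[k+\lambda,k+1-\lambda]$ for some $k\in\Z$. Replacing $f(x)$ by $f(x)-kx$ leaves every $e(f(n))$ unchanged, so we may assume $\lambda\leq f'\leq1-\lambda$. Set $\Delta_n:=f(n+1)-f(n)$. By the mean value theorem $\Delta_n\in[\lambda,1-\lambda]$, and $\Delta_n$ is monotone in $n$ because $f'$ is.

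Write
\[
e(f(n))=\bigl(e(f(n+1))-e(f(n))\bigr)\,c_n,\qquad c_n:=\frac{1}{e(\Delta_n)-1}=-\tfrac12-\tfrac{i}{2}\cot(\pi\Delta_n).
\]
Apply \Cref{SumParts} with $B_n=e(f(n))$, which has modulus $1$. Since $\cot(\pi\Delta_n)$ is monotone in $n$, the total variation $\sum|c_{n+1}-c_n|$ telescopes. The sum is therefore bounded by the two boundary terms plus $\tfrac12|\cot(\pi\Delta_{\mathrm{first}})-\cot(\pi\Delta_{\mathrm{last}})|$. Each of these is at most $1/(2\sin\pi\lambda)\leq 1/(4\lambda)$, using $\sin\pi\lambda\geq2\lambda$ on $(0,\tfrac12]$. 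The last term $n=X+Y$, which the telescoping does not cover, is handled separately at cost $1$, and $1<1/\lambda$. Altogether this gives at most $2/\lambda$.

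\emph{Base case $d=2$ of part \ref{dDerivEstimates}.} Here $f'$ is monotone with $\lambda\leq|f''|\leq h\lambda$. Split $[X,X+Y]$ into the set where $\|f'\|<\eta$ and its complement.
\begin{itemize}
\item The set where $\|f'\|<\eta$ consists of at most $h\lambda Y+2$ intervals, one for each integer near the range of $f'$. Each has length at most $2\eta/\lambda$ because $|f''|\geq\lambda$, and we bound these parts trivially.
\item The complement consists of about the same number of intervals. On each, \ref{1DerivEstimates} gives a contribution of at most $2/\eta$.
\end{itemize}
Choosing $\eta=\lambda^{1/2}$ yields $|S|\ll hY\lambda^{1/2}+\lambda^{-1/2}$. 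This is of the required shape, since $\lambda^{-1/2}=Y(Y^2\lambda)^{-1/2}$ and $2/2^2=\tfrac12=1/(2^2-2)$.

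\emph{Inductive step, $d-1\to d$.} Use van der Corput's inequality: for $1\leq H\leq Y$,
\[
|S|^2\leq\frac{Y+H}{H}\Bigl(Y+1+2\sum_{1\leq r<H}\Bigl|\sum_{n}e\bigl(f(n+r)-f(n)\bigr)\Bigr|\Bigr).
\]
The differenced phase $g_r(x)=f(x+r)-f(x)$ satisfies $r\lambda\leq|g_r^{(d-1)}|\leq hr\lambda$ by the mean value theorem, since $f^{(d)}$ is continuous and so has constant sign. Apply the induction hypothesis to each inner sum with $\lambda$ replaced by $r\lambda$. Summing over $r<H$ then gives
\[
|S|^2\ll\frac{Y^2}{H}+hY^2\Bigl((H\lambda)^{\frac1{2^{d-1}-2}}+Y^{-\frac2{2^{d-1}}}+(Y^{d-1}\lambda)^{-\frac2{2^{d-1}}}\Bigr).
\]
Choose $H\approx\lambda^{-1/(2^{d-1}-1)}$ so that the first two terms balance. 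If this value exceeds $Y$, the claimed bound already exceeds the trivial bound $Y+1$ and there is nothing to prove. Taking square roots then produces the exponents $\tfrac1{2^d-2}$ and $\tfrac2{2^d}$.

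The main obstacle is this last step. One has to handle the boundary case $H$ versus $Y$ correctly, absorb the factor $h^{1/2}\leq h$, and track the numerical constants so that the final constant is indeed $21$ uniformly in $d$. To keep that bookkeeping uniform, I would first prove the inequality with an unspecified absolute constant $C_d$ and check that the recursion $C_d\leq\sqrt{2(1+2C_{d-1})}+\cdots$ stays bounded by $21$.
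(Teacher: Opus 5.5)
Part (i) is fine. It is the standard Kusmin--Landau argument, which the paper simply cites from Graham--Kolesnik. There is one slip: the telescoped term $\tfrac12|\cot\pi\Delta_{\mathrm{first}}-\cot\pi\Delta_{\mathrm{last}}|$ can be as large as $\cot\pi\lambda\approx 1/(\pi\lambda)$, not $1/(2\sin\pi\lambda)$. The total $\frac{1}{2\lambda}+\frac{1}{\pi\lambda}+1$ is still below $2/\lambda$, so nothing breaks. Your base case $d=2$ is also fine.

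\textbf{The gap is in the inductive step of (ii): it does not produce the third term $(Y^d\lambda)^{-2/2^d}$.}

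In your display for $|S|^2$ you replaced $(Y^{d-1}r\lambda)^{-2/2^{d-1}}$ by $(Y^{d-1}\lambda)^{-2/2^{d-1}}$, discarding the factor $r$. After the square root this gives $(Y^{d-1}\lambda)^{-2/2^d}$. That exceeds the required term by $Y^{2/2^d}$ and is not dominated by the other two terms. For instance, take $d=3$, $h=1$, $\lambda=Y^{-7/4}$. The right-hand side of (ii) is then $\asymp Y^{3/4}$, whereas your display cannot give better than $|S|\ll Y^{15/16}$, whatever $H$ is.

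Keeping the gain, $\frac1H\sum_{r<H}(rY^{d-1}\lambda)^{-2/2^{d-1}}\ll (HY^{d-1}\lambda)^{-2/2^{d-1}}$, is not enough with your choice $H=\lambda^{-1/(2^{d-1}-1)}$ either. In the same example $H=Y^{7/12}$, and this term contributes $Y^{-5/12}$ to $|S|^2/Y^2$ instead of the admissible $Y^{-1/2}$. You then only get $|S|\ll Y^{19/24}$.

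The repair is to balance $(H\lambda)^{1/(M-2)}$ also against $(HY^{d-1}\lambda)^{-2/M}$, where $M=2^{d-1}$:
\begin{itemize}
\item take $H=\min\{Y,\max(\lambda^{-1/(M-1)},H_2)\}$ with $H_2\lambda=Y^{-2(d-1)(M-2)/(3M-4)}$;
\item at $H_2$ both terms equal $Y^{-2(d-1)/(3M-4)}\le Y^{-2/M}$, because $(d-1)M\ge 3M-4$ for $d\ge 3$;
\item if $H=Y$, the third term is exactly $(Y^d\lambda)^{-2/M}$.
\end{itemize}
With this choice the exponents close; in the example, $H=Y^{3/4}$.

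\textbf{The constant $21$ is also not established.} It is part of the statement, uniformly in $d$, and your sketch leaves it unverified. The factor $(Y+H)/H$, rounding $H$ to an integer, and the bound $\sum_{r<H}r^{-2/M}\le 2H^{1-2/M}$ all enter a recursion of the form $C_d\le\sqrt{a+bC_{d-1}}$. Whether it stays below $21$ is precisely the bookkeeping you postponed.

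The paper sidesteps both issues by not proving (ii) at all. It quotes van der Corput's Satz~4, whose bound already has the three terms $(r/R^2)^{-1/(\varkappa-2)}$, $(r(b-a)^k)^{-2/\varkappa}$, $(r(b-a)/R)^{-2/\varkappa}$ with the constant $21$. It then substitutes $k=d$, $r=\lambda$, $\lambda\le R\le h\lambda$, and uses $h^{2/(2^d-2)}\le h$ and $h^{2/2^d}\le h$.

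Your self-contained induction is a legitimate alternative once repaired. An unspecified constant, even one depending on $d$, would suffice for every use of (ii) in the paper. As written, however, it proves only a strictly weaker inequality.
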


\begin{proof}
\Cref{1DerivEstimates} is the Kuzmin-Landau inequality (see the proof of Theorem 2.1 in \cite{GK91}). \Cref{dDerivEstimates} is a version of \cite[Satz 4]{C29} which, in translation from German, reads as follows:

Let $a<b$ be integers, let $k\geq 2$ be an integer, let $\varkappa=2^k$, and let
$f$ be a real-valued function on the interval $[a,b]$ which is $k$ times
differentiable, possibly only in $(a,b)$. Suppose that, for some $r>0$, we have 
$|f^{(k)}(t)|\geq r$ for all $t$. If
\[
R=\frac{1}{b-a}\left|f^{(k-1)}(b)-f^{(k-1)}(a)\right|,
\]
then
\begin{equation}
\label{3r9efdsoclcds}
\left|\sum_{x=a}^{b} e^{2\pi i f(x)}\right|
<
21(b-a)\left\{
\left(\frac{r}{R^2}\right)^{-\frac{1}{\varkappa-2}}
+\left(r(b-a)^k\right)^{-\frac{2}{\varkappa}}
+\left(\frac{r(b-a)}{R}\right)^{-\frac{2}{\varkappa}}
\right\}.
\end{equation}
We now apply this estimate to the function $f$ in \ref{dDerivEstimates}, setting $k=d,\varkappa=2^d,a=X,b=X+Y,r=\lambda$ and 
$$R=\frac{1}{b-a}|f^{(d-1)}(b)-f^{(d-1)}(a)|=\left|\frac{1}{b-a}\int_a^bf^{(k)}(x)dx\right|,$$ 
so that $\lambda\leq R\leq h\lambda$. Thus, applying \Cref{3r9efdsoclcds} we obtain
\begin{align*}
\left|\sum_{X\leq n\leq X+Y}e(f(n))\right|&<21(b-a)\left(
\left(\frac{r}{R^{2}}\right)^{-1/(\varkappa - 2)}
+ \left(r(b - a)^{k}\right)^{-2/\varkappa}
+ \left(\frac{r(b - a)}{R}\right)^{-2/\varkappa}
\right)\\
&\leq21Y\left(
\left(\frac{\lambda}{(h\lambda)^{2}}\right)^{-1/(2^d - 2)}
+ \left(\lambda Y^{d}\right)^{-2/2^d}
+ \left(\frac{\lambda Y}{h\lambda}\right)^{-2/\varkappa}
\right)\\
&<21
hY\left(\lambda^{\frac1{2^d-2}}+Y^{-\frac2{2^d}}+(Y^d\lambda)^{-\frac2{2^d}}\right).\qedhere
\end{align*}
\end{proof}

\begin{lemma}[{Passing from $\lfloor\alpha(n)\rfloor$ to $\alpha(n)$}]\label{fit} Let $X,Y,H\in\mathbb N$ with \(X<Y\) and \(H\geq3\), let \(\theta\in(0,1)\), and let \((\alpha(n))_{n\in\mathbb N}\) be a sequence of real numbers. Then there is a constant $c(\theta)\in\mathbb{C}$ such that $|c(\theta)|\leq\|\theta\|$ and
\begin{align*}
\sum_{X\leq n< Y}e\left(\lfloor\alpha(n)\rfloor\theta\right)=
&c(\theta)\sum_{|h|\leq H}\frac{1}{h+\theta}\sum_{X\leq n< Y}e\left((h+\theta)\alpha(n)\right)\notag\\
&+O\left(\log^2 
H\sup_{\substack{1\leq h\leq H^2\\
h\in\mathbb{N}}}\bigg|\sum_{X\leq n< Y}e(h \alpha(n))\bigg|+
\frac{(Y-X)\log^2H}{H}\right),
\end{align*}
where the implicit constant in the \(O\)-term can be taken to be \(10^3\).
\end{lemma}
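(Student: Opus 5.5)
Write $\lfloor\alpha(n)\rfloor=\alpha(n)-\{\alpha(n)\}$, so that
\[
e(\lfloor\alpha(n)\rfloor\theta)=e(\theta\alpha(n))\,e(-\theta\{\alpha(n)\}).
\]
This reduces the statement to expanding the $1$-periodic function $g(t):=e(-\theta\{t\})$ in a Fourier series. Since $g$ has a single jump at the integers, we have
\[
g(t)=\sum_{h\in\Z}\hat g(h)e(ht),\qquad \hat g(h)=\int_0^1 e(-\theta u-hu)\,du=\frac{1-e(-\theta)}{2\pi i(h+\theta)}.
\]
The constant $c(\theta):=\frac{1-e(-\theta)}{2\pi i}$ satisfies $|c(\theta)|=|\sin(\pi\theta)|/\pi\le\|\theta\|$, which is the constant in the statement. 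The main term then arises from the frequencies $|h|\le H$: summing $\hat g(h)e(h\alpha(n))e(\theta\alpha(n))$ over $X\le n<Y$ gives exactly $c(\theta)\sum_{|h|\le H}\frac{1}{h+\theta}\sum_n e((h+\theta)\alpha(n))$.

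The Fourier series only converges conditionally and fails near the jump, so I will not use it directly. Instead I will use a Vaaler-type (or Erdős–Turán/Vinogradov smoothed) approximation of the sawtooth. The cleanest route writes $g(t)=e(-\theta/2)\bigl(\cos(\pi\theta)-\ldots\bigr)$ in terms of $\psi(t)=\{t\}-\tfrac12$. More practically, use the identity
\[
e(-\theta\{t\})=\sum_{|h|\le H}\hat g(h)e(ht)+R_H(t),
\]
where $|R_H(t)|\ll \min\bigl(1,\frac{1}{H\|t\|}\bigr)$ uniformly in $\theta$. This follows by comparing partial Fourier sums of a function of bounded variation with a single jump: the Dirichlet kernel partial sums of such a function are bounded and converge with error $\ll 1/(H\|t\|)$.

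It then suffices to bound $\sum_{X\le n<Y}\min\bigl(1,\frac{1}{H\|\alpha(n)\|}\bigr)$. The standard device is to majorize $\min(1,\frac{1}{H\|t\|})$ by a trigonometric polynomial of degree $\le H^2$ with nonnegative-type control. Decompose dyadically in $\|t\|$: for $2^{-j-1}<\|t\|\le 2^{-j}$ with $2^j\le H$, use a Fejér-kernel majorant of the indicator of $\{\|t\|\le 2^{-j}\}$ of degree $\approx 2^j$. For the final piece $\|t\|\le 1/H$, use a Fejér majorant of degree $H$. Each majorant has mean $\ll 2^{-j}$ and coefficients $\ll 2^{-j}$. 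This bounds $\sum_n\min(\cdot)$ by
\[
\ll \frac{(Y-X)\log H}{H}+\sum_j \frac{2^{j}}{H}\,2^{-j}\sum_{1\le h\le 2^{j+1}}\Bigl|\sum_n e(h\alpha(n))\Bigr|.
\]
That in turn is $\ll \frac{(Y-X)\log H}{H}+\log H\,\sup_{h\le H^2}\bigl|\sum_n e(h\alpha(n))\bigr|\cdot(\text{harmonic factor }\log H)$. This gives the claimed $\log^2H$ factors, with room to spare, since degrees up to $H^2$ are allowed.

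The main obstacle is establishing the pointwise remainder bound $|R_H(t)|\ll\min(1,1/(H\|t\|))$ uniformly in $\theta\in(0,1)$, and tracking explicit constants to reach $10^3$. I would handle it by reducing to the classical sawtooth: write $e(-\theta u)$ on $[0,1)$ as the smooth function $e(-\theta u)$ (whose derivatives are bounded by $2\pi$) plus a multiple $c(\theta)$ of the jump function $\psi$. The smooth part's Fourier tail is then $O(1/H)$ after one integration by parts. The $\psi$ part uses the known bound for the partial sums $\sum_{0<|h|\le H}\frac{e(ht)}{2\pi i h}$, namely $\psi(t)+O(\min(1,1/(H\|t\|)))$. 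Finally, the difference between $\frac{1}{h+\theta}$ and $\frac1h$ terms is absorbed into the smooth part.
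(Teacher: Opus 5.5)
Your proposal is correct, and it departs from the paper's proof at both of its steps.

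The paper proves only the weaker pointwise bound $e(-\theta\{x\})=c(\theta)\sum_{|h|\le H}\frac{e(hx)}{h+\theta}+O\bigl(\frac{\log H}{1+H\|x\|}\bigr)$. For $\|x\|\le 1/H$ it applies the triangle inequality to the partial sum, which costs a factor $\log H$. For $\|x\|\ge 1/H$ it bounds the tail by the same summation by parts you have in mind. It then averages over $n$ using the explicit Fourier coefficients of $\frac{1}{1+H\|x\|}$, namely $|b_h|\le 2\frac{H\log H}{H^2+h^2}$. Since this is not a trigonometric polynomial, it truncates at $|h|\le H^2$ and bounds the rest trivially. This is where the second factor $\log H$ and the range $h\le H^2$ in the statement come from.

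You instead write $e(-\theta\{t\})=s(t)+\kappa\psi(t)$ with $\kappa=e(-\theta)-1=-2\pi i\,c(\theta)$ and $s$ continuous, so that $\hat s(h)=c(\theta)\bigl(\frac{1}{h+\theta}-\frac1h\bigr)=-\frac{\theta c(\theta)}{h(h+\theta)}$. The uniform boundedness of the partial sums of $\sum_{h\ge1}\frac{\sin 2\pi ht}{\pi h}$ then gives $|R_H(t)|\ll\min\bigl(1,\frac{1}{H\|t\|}\bigr)$ with no logarithmic loss. Your dyadic Fejér majorants, using $F_K(t)\ge 4K/\pi^2$ for $\|t\|\le\frac{1}{2K}$, are genuine trigonometric polynomials of degree at most $H$.

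Carried through, your argument gives the sharper error $O\bigl(\frac{(Y-X)\log H}{H}+\sup_{1\le h\le 2H}|\sum_{X\le n<Y}e(h\alpha(n))|\bigr)$. Indeed $\frac1H\sum_{2^j<H}2^{j+1}\ll 1$, so the ``harmonic factor'' in your last estimate is an unnecessary overestimate. The implied absolute constant comes out far below $10^3$, so the stated constant is reached with room to spare.

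As for what each route buys: the paper's is more self-contained, with one explicit Fourier computation and constants visible line by line. Yours imports two classical facts (the Gibbs-type bound for sawtooth partial sums and the Fejér lower bound) and in exchange gives an estimate stronger than anything the later applications need.

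One small correction. $s$ is only Lipschitz, and $s'$ jumps at the integers, so a single integration by parts gives only $\hat s(h)=O(1/|h|)$, which is not a summable tail. The $O(h^{-2})$ decay, and hence the $O(1/H)$ tail, comes from the explicit formula for $\hat s(h)$ above (or a second integration by parts). Your final sentence essentially indicates this.
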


\begin{proof} 
This is a version of Inequality (3.15) in \cite{CKW10}, which is given with a short sketch of the proof. We include the details of the proof for convenience of the reader. We first need two claims.

\begin{claim}
\label{Claim4refdo}
For all \(x\in\mathbb{R}\), \(\theta\in(0,1)\), and \(H\in\mathbb N\) with \(H\geq3\), there is a constant \(c(\theta)\in\mathbb C\), with \(|c(\theta)|\leq\|\theta\|\), such that
\begin{equation}
\label{refd9soerfds9oi}
e(-\theta\{x\})=c(\theta)\sum_{|h|\leq H}\frac{e(hx)}{h+\theta}+O\left(\frac{\log(H)}{1+H\|x\|}\right),
\end{equation}
where the implied constant can be taken to be \(10\).
\end{claim}
\begin{proof}
We may assume $x\not\in\mathbb{Z}$. We use the Fourier decomposition of the function $x\mapsto e(-\theta\{x\})$, which implies that for $x\not\in\mathbb{Z}$,
\begin{equation}
\label{r9efiodsdfiosklc}
e(-\theta\{x\})=\lim_{H\to\infty}\sum_{|h|\leq H}\frac{1-e(-(h+\theta))}{2\pi i(h+\theta)}e(hx)=c(\theta)\lim_{H\to\infty}\sum_{|h|\leq H}\frac{e(hx)}{h+\theta},
\end{equation}
where $c(\theta)=\frac{1-e(-\theta)}{2\pi i}$, so that $|c(\theta)|\leq\|\theta\|$ (as the function $\theta\mapsto e(\theta)$, $\theta\in\mathbb{R}$, is $2\pi$-Lipschitz).
If $\|x\|\leq H^{-1}$, then \Cref{refd9soerfds9oi} follows from the triangle inequality:
\begin{align*}
\left|e(-\theta\{x\})-c(\theta)\sum_{|h|\leq H}\frac{e(hx)}{h+\theta}\right|&\leq
1+\left|c(\theta)\sum_{-1\leq h\leq 1}\frac{e(hx)}{h+\theta}\right|+\left|c(\theta)\sum_{2\leq|h|\leq H}\frac{e(hx)}{h+\theta}\right|\\
&\leq 1+3\frac{|c(\theta)|}{\|\theta\|}+\frac{1}{2}\sum_{2\leq |h|\leq H}\left|\frac{1}{h+\theta}\right|\\
&\leq 4+\log(H)\leq\frac{10\log(H)}{1+H\|x\|}.
\end{align*}
If $\|x\|\geq H^{-1}$, we check that
\begin{equation}
\label{ref9do9reiofd}
\lim_{L\to\infty}\left|\sum_{H<h\leq L}\frac{e(hx)}{h+\theta}\right|\leq5\frac{\log(H)}{1+H\|x\|}.
\end{equation}
The negative terms of the Fourier series, with $h<-H$, are handled similarly.
We use summation by parts, \Cref{SumParts}, with $a_n=\frac{1}{n+\theta}$ and $B_n=\sum_{j=1}^{n-1}e(jx)$, so that $|B_n|\leq\frac{1}{\|x\|}$ for all $n$, and take limits when $L\to\infty$, to obtain
\begin{align*}
\left|\sum_{h>H}\frac{e(hx)}{h+\theta}\right|&\leq\frac{|B_{H+1}|}{H+1+\theta}+\sum_{h>H}|B_{h+1}|\left|\frac{1}{h+1+\theta}-\frac{1}{h+\theta}\right|\\
&\leq\frac{1}{H\|x\|}+\frac{1}{H\|x\|}\leq\frac{5\log(H)}{1+H\|x\|}.\qedhere
\end{align*}
\end{proof}

\begin{claim}
\label{Claim4refdo2}
For $H\geq3$, the Fourier expansion of $x\mapsto\frac{1}{1+H\|x\|}$ has the form
\begin{equation}
\frac{1}{1+H\|x\|}=\sum_{h\in\mathbb{Z}}b_he(hx),\textup{ with }|b_h|\leq2\frac{H\log(H)}{H^2+h^2}.
\end{equation}
\end{claim}

\begin{proof}
Let us see how to obtain the bound on $b_h$. For \(h\neq0\), integration by parts gives
$$b_h=\int_{-1/2}^{1/2}\frac{e(-hx)}{1+H|x|}dx=\frac{H}{2\pi i h}\left(\int_{-1/2}^0\frac{e(-hx)}{(1-Hx)^2}dx-\int_{0}^{1/2}\frac{e(-hx)}{(1+Hx)^2}dx\right).$$
We focus on the integral from $0$ to $\frac{1}{2}$; again by integration by parts,
$$
\left|\int_{0}^{1/2}\frac{e(-hx)}{(1+Hx)^2}dx\right|\leq\frac{2}{|h|}+\left|\int_0^{1/2}\frac{-2He(-hx)}{h(1+Hx)^3}dx\right|\leq\frac{2}{|h|}+\frac{1}{|h|}=\frac{3}{|h|},$$
where the last inequality uses the following bound:
$$
\int_0^{1/2}\frac{H}{(1+Hx)^3}dx=\int_1^{1+H/2}\frac{1}{u^3}du\leq1.$$
We conclude that $|b_h|\leq\frac{H}{h^2}$, which implies the bound we want when $|h|\geq H$. If $|h|\leq H$, then we conclude as follows:
\begin{equation*}
|b_h|
\leq
\int_{-1/2}^{1/2}
\frac{1}{1+H|x|}dx=
\frac{2}{H}\log(1+H/2)
\leq\frac{2\ln(H)}{H}\leq2\frac{H\log(H)}{H^2+h^2}.\qedhere
\end{equation*}
\end{proof}
We now return to the proof of \Cref{fit}. Using \Cref{Claim4refdo},
\begin{align*}
\sum_{X\leq n< Y}e\left(\lfloor\alpha(n)\rfloor\theta\right)&=\sum_{X\leq n< Y}e\left(\alpha(n)\theta\right)e(-\{\alpha(n)\}\theta)
\end{align*}
\begin{align*}
&=
\sum_{X\leq n< Y}e\left(\alpha(n)\theta\right)\left(c(\theta)\sum_{|h|\leq H}\frac{e(h\alpha(n))}{h+\theta}\right)+O\left(\sum_{X\leq n<Y}\frac{\log(H)}{1+H\|\alpha(n)\|}\right)\\
&=
c(\theta)\sum_{|h|\leq H}
\frac{1}{h+\theta}\sum_{X\leq n<Y}e((h+\theta)\alpha(n))+O\left(\log(H)\sum_{X\leq n<Y}\frac{1}{1+H\|\alpha(n)\|}\right).
\end{align*}
It remains to use \Cref{Claim4refdo2} to bound the error term. By \Cref{Claim4refdo2},
\begin{align*}
&\sum_{X\leq n<Y}\frac{1}{1+H\|\alpha(n)\|}
=
\sum_{X\leq n<Y}\sum_{h\in\mathbb{Z}}b_he(h\alpha(n))\leq\sum_{h\in\mathbb{Z}}|b_h|\cdot\left|\sum_{X\leq n<Y}e(h\alpha(n))\right|\\
&\ll
\sum_{h\in\mathbb{Z}}\frac{H\log(H)}{H^2+h^2}\left|\sum_{X\leq n<Y}e(h\alpha(n))\right|\\
&\leq\frac{\log(H)}{H}(Y-X)+\sum_{1\leq|h|\leq H^2}\frac{H\log(H)}{H^2+h^2}\left|\sum_{X\leq n<Y}e(h\alpha(n))\right|\\
&\quad+\sum_{|h|>H^2}\frac{H\log(H)}{h^2}(Y-X)\\
&\leq 3\frac{\log(H)}{H}(Y-X)+H\log(H)\left(\sum_{1\leq|h|\leq H^2}\frac{1}{H^2+h^2}\right)\sup_{1\leq h\leq H^2}\left|\sum_{X\leq n<Y}e(h\alpha(n))\right|.
\end{align*}
The remaining elementary estimate is
\begin{equation*}
\sum_{1\leq|h|\leq H^2}\frac{1}{H^2+h^2}\leq\sum_{|h|\leq H}\frac{1}{H^2}+\sum_{H\leq|h|\leq H^2}\frac{1}{h^2}\leq\frac{3}{H}+\frac{2}{H}\leq\frac{5}{H}.
\end{equation*}
Substituting this into the previous expression gives the claimed error term.\qedhere
\end{proof}

In the next lemma, sums of the form $\sum_{x\leq n<y}$, where $x,y\in\R$, are always over the natural numbers $n$ in the interval $[x,y)$.

\begin{lemma}\label{major}
For any sub-polynomial $\alpha(x)\gg_\alpha x$ there is $N_0=N_0(\alpha)$ such that, for all $N\geq N_0$, $1\leq L\leq \frac{N}{2}$ and $\theta\in\mathbb{R}$,
\begin{align}
\label{reodsil9oerwifdslk}
\sum_{N-L\leq n<N}\alpha'(n)e\left(\alpha(n)\theta\right)&-\sum_{\alpha(N-L)\leq m<\alpha(N)}e(m\theta)\\
&\ll
L|\theta|(\alpha'(N)^2+1)+L|\alpha''(N)|+\alpha'(N)+1.\notag
\end{align}
In particular, if $\alpha(x)=\mathfrak{a}_1x+r_\alpha(x)$ with $\mathfrak{a}_1>0$ and $r_\alpha(x)\prec x$, then
$$
\sum_{N-L\leq n<N}\alpha'(n)e\left(\alpha(n)\theta\right)-\sum_{\alpha(N-L)\leq m<\alpha(N)}e(m\theta)\ll 
L|\theta|(\mathfrak{a}_1^2+1)+\mathfrak{a}_1+1.
$$
\end{lemma}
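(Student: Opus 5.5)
The idea is to compare the weighted sum over $n$ with the sum over $m$ block by block. Each integer $n$ is matched with the short interval $[\alpha(n),\alpha(n+1))$ in the $m$-variable. Since $\alpha\gg x$ and $\alpha$ is a sub-polynomial, for $N\geq N_0$ the function $\alpha$ is increasing on $[N/2,N]$. So the intervals $I_n:=[\alpha(n),\alpha(n+1))$ for $N-L\leq n<N$ tile $[\alpha(N-L),\alpha(N))$. We then write
\[
\sum_{\alpha(N-L)\leq m<\alpha(N)}e(m\theta)=\sum_{N-L\leq n<N}\ \sum_{m\in I_n\cap\mathbb{Z}}e(m\theta).
\]
Because the endpoints are real rather than integers, this identity holds up to boundary effects at the two ends. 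Each such effect costs $O(1)$, which accounts for the $+1$ term.

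\textbf{Per-block comparison.} Fix $n$ and compare $\sum_{m\in I_n}e(m\theta)$ with $\alpha'(n)e(\alpha(n)\theta)$. For every $m\in I_n$ we have $|m-\alpha(n)|\leq\alpha(n+1)-\alpha(n)$. Hence $|e(m\theta)-e(\alpha(n)\theta)|\leq 2\pi|\theta|(\alpha(n+1)-\alpha(n))$, and this term appears $\#(I_n\cap\mathbb{Z})\leq \alpha(n+1)-\alpha(n)+1$ times. We also have
\[
\#(I_n\cap\mathbb{Z})=\alpha(n+1)-\alpha(n)+\epsilon_n,\qquad \epsilon_n=\{ -\alpha(n)\}-\{-\alpha(n+1)\}\in(-1,1),
\]
and $\alpha(n+1)-\alpha(n)=\alpha'(n)+O(|\alpha''(\xi_n)|)$ by Taylor's theorem. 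The main term of block $n$ is therefore
\[
(\alpha'(n)+\epsilon_n)e(\alpha(n)\theta)+O\bigl(|\alpha''|+|\theta|(\alpha'(n)+1)^2\bigr).
\]

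\textbf{Telescoping the integer-count errors.} The $\epsilon_n$ telescope, $\epsilon_n=g(n)-g(n+1)$ with $g(n):=\{-\alpha(n)\}$, but they carry the phase $e(\alpha(n)\theta)$. I would apply summation by parts (\Cref{SumParts}) with $b_n=\epsilon_n$ and $a_n=e(\alpha(n)\theta)$. This gives boundary terms of size $O(1)$ plus
\[
\sum_n |g(n+1)|\,|e(\alpha(n+1)\theta)-e(\alpha(n)\theta)|\ll |\theta|\sum_n(\alpha(n+1)-\alpha(n))\ll L|\theta|\alpha'(N).
\]
This is absorbed into $L|\theta|(\alpha'(N)^2+1)$, since $\alpha'\gg1$ and so $\alpha'\ll\alpha'^2+1$.

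\textbf{Uniform bounds over the window and the linear case.} Summing the per-block errors over the $L$ values of $n$ gives $L|\theta|\max(\alpha'^2+1)+L\max|\alpha''|$, with the maxima taken over $[N/2,N]$. By \Cref{LemmaLogHardyBehaviour} and \Cref{LemDegreeDerivative}, together with the monotonicity of the Hardy functions $\alpha'$ and $\alpha''$, these maxima are $\ll_\alpha$ the values at $N$; this uses $L\leq N/2$. The remaining boundary piece at the right end, where $\alpha(N)$ is not an integer, contributes at most one block, of size $\ll\alpha'(N)+1$. In the special case $\alpha=\mathfrak a_1x+r_\alpha$ with $r_\alpha\prec x$, we have $\alpha'(N)\to\mathfrak a_1$ and $\alpha''(N)\ll r_\alpha(N)/N^2\prec 1/N$. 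Since $L\leq N$, the term $L|\alpha''(N)|$ is $O(1)$, which yields the stated simplified bound.

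\textbf{Main obstacle.} The delicate step is the $\epsilon_n$ bookkeeping. Bounding each $\epsilon_n$ trivially would lose $L$, which is too much, so the summation by parts must be done carefully enough that the only cost is $|\theta|$ times the total length $\alpha(N)-\alpha(N-L)$. A secondary technical issue is making the comparability of $\alpha'$ and $\alpha''$ across $[N-L,N]$ uniform in $L$; this requires $N_0$ large enough that $\alpha',\alpha''$ are monotone and that doubling estimates hold beyond $N_0/2$.
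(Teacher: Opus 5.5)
Your argument is correct and follows the same skeleton as the paper's proof: you pair each $n$ with the block $[\alpha(n),\alpha(n+1))$, use that $e$ is $2\pi$-Lipschitz inside a block, and trade $\alpha'(n)$ for $\alpha(n+1)-\alpha(n)$ at cost $|\alpha''|$. The only real difference is how you handle integer rounding.

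The paper never counts lattice points in individual blocks. It writes $\sum_{\alpha(N-L)\leq m<\alpha(N)}e(m\theta)=\int_{\alpha(N-L)}^{\alpha(N)}e(\lfloor x\rfloor\theta)\,dx+O(1)$, so each block carries exactly its Lebesgue measure $\alpha(n+1)-\alpha(n)$ and the fluctuations $\epsilon_n$ never arise. Your summation by parts recovers the same information discretely, at the price of an extra $O(1+L|\theta|\alpha'(N))$, which is indeed absorbed; the integral device is simply shorter. Incidentally, $\#(I_n\cap\mathbb{Z})=\lceil\alpha(n+1)\rceil-\lceil\alpha(n)\rceil$ gives $\epsilon_n=g(n+1)-g(n)$, the opposite sign to yours, which is harmless.

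A few details need adjusting, none of which affects the conclusion.

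\begin{itemize}
\item The boundary loss comes from the left end when $N-L\notin\mathbb{Z}$. It is $O(\alpha'(N)+1)$, not $O(1)$, which is still within the stated bound.
\item The claim $\max_{[N-L,N]}|\alpha''|\ll_\alpha|\alpha''(N)|$ fails when $\alpha''$ decays faster than every power of $x$, for example $\alpha(x)=x+e^{-x}$.
\item \Cref{LemDegreeDerivative} cannot be applied to $\alpha'$ when $\alpha'$ has a finite nonzero limit.
\end{itemize}

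The paper instead uses $\max_{[N-L,N]}|\alpha''|\ll\max(|\alpha''(N)|,1/N)$. If $\deg(\alpha'')\in\mathbb{R}$, apply \Cref{LemmaLogHardyBehaviour} to $|\alpha''|$. Otherwise $|\alpha''(x)|\leq x^{-2}$ eventually, so $L\max|\alpha''|\ll1$.

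Likewise, in the special case the bound $\alpha''(N)\ll r_\alpha(N)/N^2$ need not hold (take $r_\alpha=e^{-x}$). What does hold is $\alpha''=r_\alpha''\prec 1/x$: since $r_\alpha'\to0$, the Hardy function $xr_\alpha''$ must tend to $0$. That is all you need to make $L|\alpha''|=O(1)$.
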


\begin{proof} 
We may assume $L$ is an integer; otherwise change $L$ to $\lfloor L\rfloor$ and add an error term $O(\alpha'(N)+1)$.
Write $\alpha^\Delta(x)=\alpha(x+1)-\alpha(x)$, so that by the mean value theorem and the hypotheses on $\alpha$, we have for big enough $N-L\leq n<N$,
\begin{equation*}
|\alpha^\Delta(n)-\alpha'(n)|\leq\max_{N-L\leq x\leq N}|\alpha''(x)|\ll\max\left(|\alpha''(N)|,\frac{1}{N}\right).
\end{equation*}
Thus, we can change $\alpha'(n)$ to $\alpha^\Delta(n)$ in \Cref{reodsil9oerwifdslk}, with error $O(L|\alpha''(N)|+1)$. Now, as $x\mapsto e(x)$ is $2\pi$-Lipschitz, if $n$ is sufficiently large and $\alpha(n)\leq x\leq\alpha(n+1)$, then 
\begin{equation*}
|e(\alpha(n)\theta)-e(\lfloor x\rfloor\theta)|\leq
2\pi|\theta|\cdot\left|\alpha(n)-\lfloor x\rfloor\right|
\ll
(\alpha'(n)+1)|\theta|.
\end{equation*}
So, for all $N\geq N_0(\alpha)$, we have
\begin{align*}
&\left|\sum_{N-L\leq n<N}\alpha^\Delta(n)e\left(\alpha(n)\theta\right)-\sum_{\alpha(N-L)\leq m<\alpha(N)}e(m\theta)\right|\\
\leq&
\left|\sum_{N-L\leq n<N}\alpha^\Delta(n)e\left(\alpha(n)\theta\right)-\int_{\alpha(N-L)}^{\alpha(N)}e(\lfloor x\rfloor\theta)dx\right|+2\\
=&2+\left|\sum_{N-L\leq n<N}\left(\int_{\alpha(n)}^{\alpha(n+1)}e(\alpha(n)\theta)-e(\lfloor x\rfloor\theta)dx\right)\right|\\
\ll&1+\sum_{N-L\leq n<N}\alpha^\Delta(n)\cdot(\alpha'(n)+1)|\theta|\\
\ll&1+L|\theta|\alpha'(N)(\alpha'(N)+1)
\ll1+L|\theta|(\alpha'(N)^2+1)
\end{align*}
This proves the general estimate. The stated special case follows immediately.
\end{proof}

The remaining results in this section will come into play starting in \Cref{SecErgralphaSmall}.

\begin{lemma}[Weyl inequality]
\label{4refds9oiref9doi}
For any polynomial $p(x)=\a_1x^d+\cdots+\a_dx\in\mathbb{R}[x]$, and for any $X,Y,a,q\in\mathbb{N}$ such that $(a,q)=1$ and $\left|\a_1-\frac{a}{q}\right|\leq\frac{1}{q^2}$, we have
\begin{equation}
\label{43re9fsd9e4rdiooi}
\sum_{X\leq n< X+Y}e(p(n))\ll_{d,\varepsilon}Y^{1+\varepsilon}\left(\frac1q+\frac1Y+\frac{q}{Y^d}\right)^{\frac{1}{2^{d-1}}}.
\end{equation}
\end{lemma}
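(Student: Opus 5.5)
The plan is the classical Weyl differencing argument, applied $d-1$ times and followed by the standard estimate for linear exponential sums. Write $S=\sum_{X\leq n<X+Y}e(p(n))$. The statement is invariant under shifting $n$: replacing $n$ by $n+X$ changes only the lower-order coefficients. We may therefore assume the summation range is $0\leq n<Y$.

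For $d=1$ the sum is geometric. It is bounded by $\min(Y,\|\a_1\|^{-1})$, which is $\ll Y$, so the claim is trivial there. Assume $d\geq2$.

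\textbf{Differencing.} The first step is the van der Corput/Weyl squaring,
\[
|S|^2\leq \sum_{|h|<Y}\Bigl|\sum_{n\in I_h}e\bigl(p(n+h)-p(n)\bigr)\Bigr|,
\]
where $I_h$ is an interval of length at most $Y$. The phase $p(n+h)-p(n)$ has degree $d-1$ in $n$. Iterating this $d-1$ times (Cauchy--Schwarz each time) gives
\[
|S|^{2^{d-1}}\leq (2Y)^{2^{d-1}-d}\sum_{|h_1|,\dots,|h_{d-1}|<Y}\Bigl|\sum_{n\in I(\mathbf h)}e\bigl(d!\,h_1\cdots h_{d-1}\a_1 n+\beta(\mathbf h)\bigr)\Bigr|.
\]
Here $\beta(\mathbf h)$ does not depend on $n$, and $I(\mathbf h)$ is an interval of length at most $Y$. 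Since the leading term of $p$ is $\a_1 n^d$, the $(d-1)$-fold difference is linear in $n$ with slope $d!\,h_1\cdots h_{d-1}\a_1$. The inner sum is then at most $\min(Y,\|d!\,h_1\cdots h_{d-1}\a_1\|^{-1})$.

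\textbf{Divisor bound.} Tuples with some $h_i=0$ contribute $\ll Y^{d-1}$. For the remaining tuples, collect them by $m=d!\,h_1\cdots h_{d-1}$. Then $1\leq|m|\leq d!\,Y^{d-1}$, and each $m$ arises from $\ll_{d,\varepsilon}Y^{\varepsilon}$ tuples by the divisor bound. This leaves
\[
|S|^{2^{d-1}}\ll_{d,\varepsilon} Y^{2^{d-1}-d+\varepsilon}\Bigl(Y^{d-1}+\sum_{1\leq m\leq d!Y^{d-1}}\min\bigl(Y,\|m\a_1\|^{-1}\bigr)\Bigr).
\]

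\textbf{Main obstacle.} The key step is the classical lemma (Vaughan, Lemma 2.2): if $|\a-a/q|\leq q^{-2}$ with $(a,q)=1$, then
\[
\sum_{m\leq M}\min\bigl(Y,\|m\a\|^{-1}\bigr)\ll \Bigl(\frac{M}{q}+1\Bigr)(Y+q\log q).
\]
To prove it, split $m$ into blocks of $q$ consecutive integers. Within one block, the points $m\a$ are spaced roughly $1/q$ apart modulo $1$, up to an error of $O(1/q)$ coming from the approximation. Hence each block contributes $\ll Y+q\log q$.

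Apply this with $M=d!\,Y^{d-1}$. Absorb $\log q$ into $Y^{\varepsilon}$ when $q\leq Y^d$; if $q>Y^d$ the right side of \Cref{43re9fsd9e4rdiooi} already exceeds $Y$, so that case is trivial. This gives
\[
|S|^{2^{d-1}}\ll_{d,\varepsilon} Y^{2^{d-1}+\varepsilon}\Bigl(\frac1q+\frac1Y+\frac{q}{Y^d}\Bigr).
\]
Taking $2^{d-1}$-th roots yields \Cref{43re9fsd9e4rdiooi}, after renaming $\varepsilon$.

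Alternatively, the whole statement can be quoted directly from \cite[Lemma 2.4]{Va97}, after the same reduction to $X=0$.
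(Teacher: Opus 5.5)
Your argument for $d\geq 2$ is correct and is the standard Weyl-differencing proof of \cite[Lemma 2.4]{Va97}. The paper's proof simply cites that lemma, applied to $p(x+X-1)$, which is exactly your closing alternative, so the approach is the same.

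\textbf{The step that is wrong as written is the case $d=1$.} There the right-hand side equals $Y^{\varepsilon}(Y/q+1+q)$. For $Y^{2\varepsilon}\leq q\leq Y^{1-2\varepsilon}$ this is at most $3Y^{1-\varepsilon}$, so knowing that the sum is $\ll Y$ does not make the claim trivial.

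You have to keep the second term of your minimum:
\begin{itemize}
\item If $q=1$, the right-hand side is at least $Y^{1+\varepsilon}$, so there is nothing to prove.
\item If $q\geq 2$, coprimality gives $\|a/q\|\geq 1/q$. Hence $\|\a_1\|\geq 1/q-1/q^2\geq 1/(2q)$, and the geometric sum is at most $1/(2\|\a_1\|)\leq q\leq Y^{1+\varepsilon}\cdot q/Y$.
\end{itemize}
Weyl's inequality is normally stated for degree at least $2$. So this one-line argument is also what covers $d=1$ if you take the citation route.
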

\begin{proof}
It follows from \cite[Lemma 2.4]{Va97} applied to $\phi(x)=p(x+X-1)$. The implied constant only depends on $d,\varepsilon$, as stated in the `Notation' section of \cite{Va97}.
\end{proof}

\begin{lemma}[Wooley's transference principle, cf. {\cite[Lemma A.1]{W15}}]
\label{WooleyTransference}
Let $\tau,X_0,Y_0,Z_0$ be positive real numbers, and let $\Psi:\mathbb{R}\to\mathbb{C}$. Suppose that whenever $\xi\in\R$, $a\in\mathbb{Z}$ and $q\in\mathbb{N}$ satisfy $(a,q)=1$ and $
\left|\xi-a/q\right|\leq q^{-2}$,
one has
\[
\Psi(\xi)\ll X_0\left(\frac{1}{q}+\frac{1}{Y_0}+\frac{q}{Z_0}\right)^\tau.
\]
Then, whenever $b\in\mathbb{Z}$ and $r\in\mathbb{N}$ satisfy $(b,r)=1$, one has
\[
\Psi(\xi)\ll X_0\left(\frac{1}{r+Z_0|r\xi-b|}+\frac{1}{Y_0}+\frac{r+Z_0|r\xi-b|}{Z_0}\right)^\tau.
\]
The implied constant in the conclusion depends only on $\tau$ and on the implied constant in the hypothesis.
\end{lemma}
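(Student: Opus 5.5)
Fix $\xi\in\R$ and coprime $b\in\Z$, $r\in\N$, and put $\Lambda:=r+Z_0|r\xi-b|$, so that $\Lambda\geq r\geq1$. The plan is to produce, from $\xi$, a second rational approximation $a/q$ with $(a,q)=1$ and $|\xi-a/q|\leq q^{-2}$, apply the hypothesis to it, and show that
\[
\frac1q+\frac1{Y_0}+\frac q{Z_0}\ll \frac1\Lambda+\frac1{Y_0}+\frac\Lambda{Z_0}
\]
with an absolute implied constant. The constant in the conclusion then depends only on $\tau$ and on the constant in the hypothesis. The tool for producing $a/q$ is Dirichlet's approximation theorem at a level $Q\asymp Z_0/\Lambda$: there are coprime $a,q$ with $1\leq q\leq Q$ and $|q\xi-a|\leq 1/Q$. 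Then $|\xi-a/q|\leq 1/(qQ)\leq q^{-2}$, so the hypothesis applies, and $q/Z_0\leq Q/Z_0\ll 1/\Lambda$. If $Q<1$, then $\Lambda\gg Z_0$, so $\Lambda/Z_0\gg1$; in that case I apply the hypothesis with $q=1$ and $a$ the nearest integer to $\xi$ (assuming, as in applications, $Z_0\geq1$), which already gives the claimed bound. The remaining task is to bound $1/q$.

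I will split according to which summand of $\Lambda$ dominates.

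\emph{Case $Z_0|r\xi-b|\leq r$.} Here $\Lambda\leq 2r$, and I simply apply the hypothesis to $b/r$ itself. We have $|\xi-b/r|\leq 1/Z_0$, but I need $|\xi-b/r|\leq r^{-2}$. This holds when $r^2\leq Z_0$. If instead $r^2>Z_0$, then $\Lambda/Z_0\geq r/Z_0>1/r\gg 1/\Lambda$. In that sub-case I expect to fall back on the Dirichlet step, where the large term $\Lambda/Z_0$ should absorb the loss.

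\emph{Case $Z_0|r\xi-b|>r$.} Here $\Lambda\leq 2Z_0|r\xi-b|$. I choose $Q:=2/|r\xi-b|$, which is $\ll Z_0/\Lambda$. With this choice, $a/q=b/r$ is impossible: it would force $q=r$ and $|r\xi-b|\leq 1/Q=|r\xi-b|/2$. Hence $a/q\neq b/r$, and
\[
\frac1{qr}\leq\Bigl|\frac aq-\frac br\Bigr|\leq\frac1{qQ}+\frac{|r\xi-b|}r,
\qquad\text{so}\qquad 1\leq \frac rQ+q|r\xi-b|.
\]
If $r/Q\leq\frac12$, then $q\geq 1/(2|r\xi-b|)\gg Z_0/\Lambda$, so $1/q\ll\Lambda/Z_0$, as required.

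The main obstacle is the leftover configuration in which $r/Q>\frac12$, that is, $r|r\xi-b|\gg1$. There $\Lambda^2\geq r\cdot Z_0|r\xi-b|\gg Z_0$, so $\Lambda/Z_0\gg 1/\Lambda$, but this does not by itself control $1/q$. My first attempt would be to rerun Dirichlet at the smaller level $Q'\asymp\sqrt{Z_0}$, where in this regime $\sqrt{Z_0}\ll\Lambda$. If the new approximant $a'/q'$ equals $b/r$, then $q'=r$ and $r^2\ll Z_0$ cannot coexist with the case assumption. Otherwise the same gap inequality should give $q'\gg\sqrt{Z_0}$. That yields $1/q'+q'/Z_0\ll Z_0^{-1/2}\ll\Lambda/Z_0$, which is acceptable. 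Checking that these level choices are mutually compatible across the boundaries of the cases is the step I expect to require care; the rest is routine bookkeeping of constants.
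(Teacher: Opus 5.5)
The paper gives no proof of this lemma; it is quoted from \cite[Lemma A.1]{W15}. Your argument therefore has to stand on its own, and it has a genuine gap when $r$ is large compared with $\sqrt{Z_0}$. This covers the sub-case $r^2>Z_0$ of your first case, which you leave open, and part of your leftover configuration. At any Dirichlet level $Q'$ the gap inequality reads $1\le r/Q'+q'|r\xi-b|$, and it forces $q'$ to be large only when $Q'\ge 2r$. Once $r\gg\sqrt{Z_0}$, neither $Q\asymp Z_0/\Lambda$ nor $Q'\asymp\sqrt{Z_0}$ has this property, and small $q'$ then genuinely occurs.

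Take $Z_0=M^{10}$, $b=1$, $r=M^7$, $\xi=M^{-7}+M^{-9}$. Then $|r\xi-b|=M^{-2}$, so you are in the leftover configuration with $\Lambda\asymp M^8$, and the target is $\asymp X_0(M^{-2}+Y_0^{-1})^\tau$. But $0<q'\xi<\frac12$ for every $q'\le M^5$. Hence every pair with $q'\le M^5$ and $|q'\xi-a'|\le M^{-5}$ has $a'=0$, i.e.\ reduces to $0/1$, and the hypothesis then yields only $X_0(1+Y_0^{-1}+Z_0^{-1})^\tau$. With $\xi=M^{-7}+M^{-11}$ the same thing happens in your open first-case sub-case, at level $Z_0/\Lambda\asymp M^3$ as well.

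The justifications you sketch there are also off. The case $a'/q'=b/r$ is excluded because it gives $r|r\xi-b|\le r/Q'\le 1$, not because of $r^2\ll Z_0$. And the gap inequality yields $q'\ge 1/(2|r\xi-b|)$, not $q'\gg\sqrt{Z_0}$.

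The missing idea is to take the second Dirichlet level comparable to $\Lambda$, and to split simply according to whether $\Lambda^2\le Z_0$.

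\emph{If $\Lambda^2\le Z_0$,} apply Dirichlet at level $Q=2Z_0/\Lambda\ge 2\Lambda$. Then $|\xi-a/q|\le q^{-2}$ and $q/Z_0\le 2/\Lambda$. If $a/q=b/r$, then $Z_0|r\xi-b|\le \Lambda/2$, whence $r\ge\Lambda/2$. Otherwise $1\le r\Lambda/(2Z_0)+q|r\xi-b|\le\frac12+q|r\xi-b|$, whence $1/q\le 2|r\xi-b|\le 2\Lambda/Z_0$. This is essentially your second-case argument.

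\emph{If $\Lambda^2>Z_0$,} apply Dirichlet at level $Q'=2\Lambda\ge 2r$. Then $|\xi-a'/q'|\le 1/(2\Lambda q')\le q'^{-2}$ and $q'/Z_0\le 2\Lambda/Z_0$. If $a'/q'\ne b/r$, the gap inequality gives $1\le\frac12+q'|r\xi-b|$, so $1/q'\le 2\Lambda/Z_0$. If $a'/q'=b/r$, then $Z_0|r\xi-b|\le Z_0/(2\Lambda)<\Lambda/2$, so $r>\Lambda/2$.

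In both regimes $1/q+q/Z_0\le 4(1/\Lambda+\Lambda/Z_0)$, which gives the lemma with a constant depending only on $\tau$ and the hypothesis. This also makes unnecessary your split by the dominant summand of $\Lambda$, the separate treatment of $Q<1$, and the assumption $Z_0\ge 1$, since both levels are at least $2$.
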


\begin{lemma}\label{weyl}
Let $p(x)=\a_1x^d+\cdots+\a_dx\in\mathbb{R}[x]$ be a polynomial. 
Suppose for some $a\in\mathbb{Z}$, $q\in\mathbb{N}$ and $s\in[1,q]$ with $(a,q)=1$ we have
$$
\bigg|\a_1-\frac{a}{q}\bigg|\leq\frac1{q s}.
$$
Then for any $\varepsilon>0$ and $X,Y\in\mathbb{N}$,
$$
\sum_{X\leq n< X+Y}e\left(p(n)\right)\ll_{d,\varepsilon}Y^{1+\varepsilon}\left(\frac1{s}+\frac1Y+\frac{q}{Y^d}\right)^{\frac1{2^{d-1}}}.
$$
\end{lemma}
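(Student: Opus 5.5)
The plan is to deduce \Cref{weyl} from Wooley's transference principle, \Cref{WooleyTransference}, with the Weyl inequality \Cref{4refds9oiref9doi} as its input. Fix $d$, $\varepsilon$, $X$, $Y$ and the lower coefficients of $p$. Define $\Psi(\xi):=\sum_{X\leq n<X+Y}e(p_\xi(n))$, where $p_\xi$ is obtained from $p$ by replacing the leading coefficient $\a_1$ with $\xi$. By \Cref{4refds9oiref9doi}, whenever $(a',q')=1$ and $|\xi-a'/q'|\leq q'^{-2}$, we have
\[
\Psi(\xi)\ll_{d,\varepsilon} Y^{1+\varepsilon}\left(\frac1{q'}+\frac1Y+\frac{q'}{Y^d}\right)^{\frac1{2^{d-1}}}.
\]
This is exactly the hypothesis of \Cref{WooleyTransference} with $X_0=Y^{1+\varepsilon}$, $Y_0=Y$, $Z_0=Y^d$ and $\tau=2^{1-d}$. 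The implied constant depends only on $d$ and $\varepsilon$, not on $X$, $Y$ or the lower coefficients, so the transferred constant is uniform as well.

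Next I apply the conclusion of \Cref{WooleyTransference} at $\xi=\a_1$ with $b=a$ and $r=q$. It gives
\[
\Psi(\a_1)\ll Y^{1+\varepsilon}\left(\frac1{q+Y^d|q\a_1-a|}+\frac1Y+\frac{q+Y^d|q\a_1-a|}{Y^d}\right)^{\tau}.
\]
It remains to bound the three terms. The first is at most $1/q\leq 1/s$, since $s\leq q$. The hypothesis $|\a_1-a/q|\leq 1/(qs)$ gives $|q\a_1-a|\leq 1/s$. The third term is therefore at most $q/Y^d+1/s$. Collecting terms, the bracket is at most $2(1/s+1/Y+q/Y^d)$. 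The factor $2^\tau$ is absorbed into the implied constant, which yields the claimed estimate.

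The only point needing care is to check that the Weyl bound really holds uniformly in the form \Cref{WooleyTransference} requires, i.e.\ for all $\xi$ and every admissible approximation $a'/q'$ of $\xi$, with a constant independent of $\xi$. I do not expect a real obstacle here: \Cref{4refds9oiref9doi} is stated for arbitrary real polynomials with constant depending only on $d$ and $\varepsilon$. The step where one uses $|q\a_1-a|\leq 1/s$ to absorb the third term is the one substantive inequality in the argument.
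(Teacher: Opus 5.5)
Your proposal is correct and matches the paper's proof: the paper also replaces the leading coefficient by a variable $\xi$, feeds the Weyl inequality into Wooley's transference principle with $X_0=Y^{1+\varepsilon}$, $Y_0=Y$, $Z_0=Y^d$, $\tau=2^{1-d}$, and then applies it at $\xi=\a_1$, $b=a$, $r=q$. The resulting terms are bounded exactly as you do, using $|q\a_1-a|\leq 1/s$ and $s\leq q$.
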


\begin{proof}
Let
\[
\Phi:\mathbb{R}\to\mathbb{C};\qquad
\Phi(\xi)=\sum_{X\leq n< X+Y}e(\xi n^d+\a_2n^{d-1}+\cdots+\a_dn).
\]
By \Cref{4refds9oiref9doi}, the hypothesis of \Cref{WooleyTransference} holds for $\Phi$ with
\[
X_0=Y^{1+\varepsilon},\qquad Y_0=Y,\qquad Z_0=Y^d,\qquad \tau=\frac{1}{2^{d-1}}.
\]
Applying \Cref{WooleyTransference} with $\xi=\a_1$, $b=a$, and $r=q$, we deduce that
\begin{align*}
\sum_{X\leq n< X+Y}e(p(n))&\ll_{d,\varepsilon} Y^{1+\varepsilon}\left(\frac{1}{q+Y^d|q\a_1-a|}+\frac{1}{Y}+\frac{q+Y^d|q\a_1-a|}{Y^d}\right)^{\frac1{2^{d-1}}}\\
&\ll Y^{1+\varepsilon}\left(\frac{1}{s}+\frac{1}{Y}+\frac{q}{Y^d}\right)^{\frac1{2^{d-1}}},
\end{align*}
where in the last step we used $\left|\a_1-\frac{a}{q}\right|\leq\frac{1}{qs}$ and $s\leq q$.
\end{proof}

\begin{lemma}\label{ptheta} Suppose that $q,N,Y\in\mathbb{N}$, $Y\leq N$ and $p(x)\in\Z[x]$. Let $\psi$ be differentiable on $[N-Y,N]$ with $\Psi=\max_{N-Y\leq x\leq N}|\psi'(x)|$. Then
$$
\sum_{N-Y\leq n< N}e\left(\frac{p(n)}q+\psi(n)\right)=
\frac1q\sum_{r=1}^qe\left(\frac{p(r)}q\right)\sum_{N-Y\leq n< N}e\left(\psi(n)\right)+O\left(q+q\Psi Y\right).$$
\end{lemma}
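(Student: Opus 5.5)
The plan is to split $n$ according to its residue class modulo $q$. Since $p\in\Z[x]$, the value $e(p(n)/q)$ depends only on $n \bmod q$. So we write $n=r+qk$ with $r\in\{1,\dots,q\}$ and obtain
\[
\sum_{N-Y\leq n<N}e\left(\frac{p(n)}q+\psi(n)\right)=\sum_{r=1}^q e\left(\frac{p(r)}q\right)\sum_{\substack{N-Y\leq n<N\\ n\equiv r\ (\mathrm{mod}\ q)}}e(\psi(n)).
\]
The task then reduces to showing that each inner sum over an arithmetic progression equals $\frac1q\sum_{N-Y\leq n<N}e(\psi(n))$ up to an error $O(1+\Psi Y)$. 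Summing over the $q$ residues then produces the stated error $O(q+q\Psi Y)$, because $|e(p(r)/q)|=1$.

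The comparison will be done block by block. Partition $[N-Y,N)$ into consecutive blocks $I_j$ of $q$ consecutive integers, leaving at most one incomplete block of fewer than $q$ integers. There are at most $Y/q+1$ blocks in total.

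Within a full block $I_j$, fix a base point $n_j\in I_j$. Every $n\in I_j$ satisfies $|n-n_j|\le q$, so the mean value theorem gives $|e(\psi(n))-e(\psi(n_j))|\le 2\pi q\Psi$. The block $I_j$ contains exactly one $n\equiv r \pmod q$. Its contribution to the progression sum is therefore $e(\psi(n_j))+O(q\Psi)$. Meanwhile, $\frac1q\sum_{n\in I_j}e(\psi(n))$ equals $e(\psi(n_j))+O(q\Psi)$ as well. Hence each full block contributes a discrepancy of $O(q\Psi)$ between the progression sum and $\frac1q$ times the full sum. Over the at most $Y/q$ full blocks this totals $O(\Psi Y)$. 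The incomplete block contributes at most $1$ to the progression sum and at most $1$ to $\frac1q\sum$, which is $O(1)$.

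Combining these, for each $r$ the inner sum equals $\frac1q\sum_{N-Y\le n<N}e(\psi(n))+O(1+\Psi Y)$. Multiplying by $e(p(r)/q)$ and summing over $r$ yields the main term $\frac1q\sum_{r=1}^q e(p(r)/q)\sum_{N-Y\le n<N}e(\psi(n))$ plus an error $O(q+q\Psi Y)$, which is exactly the claim.

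There is no real obstacle here. The only points needing care are the bookkeeping at the block boundaries and confirming that the implied constant is absolute, which it is, since it is at most about $4\pi+2$.
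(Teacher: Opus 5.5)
Your proof is correct and is essentially the paper's argument: you decompose by residue classes modulo $q$ (using $p\in\Z[x]$) and apply the mean value theorem on blocks of $q$ consecutive integers to show that each progression sum equals $\frac1q\sum_{N-Y\le n<N}e(\psi(n))+O(1+\Psi Y)$. The only difference is cosmetic: the paper first compares the $r$-th progression with the $0$-th one and then averages over $r$, whereas you compare each progression directly with the block averages, which also makes your residue-class decomposition exact and avoids the paper's $O(q)$ boundary term.
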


\begin{proof}
By definition of $\Psi$, for all $\frac{N-Y}{q}\leq m<\frac{N}{q}-1$ and $0\leq r\leq q-1$ we have $|\psi(mq)-\psi(mq+r)|<q\Psi$. This implies that, for all $0\leq r\leq q-1$,
\begin{align*}
\sum_{\frac{N-Y}{q}\leq m<\frac{N}{q}}e(\psi(qm))&=\sum_{\frac{N-Y}{q}\leq m<\frac{N}{q}}e(\psi(qm+r))+O(1+\Psi Y)\\&
=\frac{1}{q}\sum_{N-Y\leq n< N}e(\psi(n))+O(1+\Psi Y),
\end{align*}
where the second equality follows by averaging the first equality over $0\leq r\leq q-1$. Thus,
\begin{align*}
&\sum_{N-Y\leq n< N}e\left(\frac{p(n)}q+\psi(n)\right)=\sum_{r=0}^{q-1}\sum_{\frac{N-Y}{q}\leq m<\frac{N}{q}}e\left(\frac{p(mq+r)}{q}+\psi(mq+r)\right)+O(q)\\
&=\sum_{r=0}^{q-1}e\left(\frac{p(r)}{q}\right)\sum_{\frac{N-Y}{q}\leq m<\frac{N}{q}}e\left(\psi(mq+r)\right)+O(q)\\
&=
\sum_{r=0}^{q-1}e\left(\frac{p(r)}{q}\right)\frac{1}{q}\sum_{N-Y\leq n< N}e(\psi(n))+O(q+q\Psi Y).\qedhere
\end{align*}
\end{proof}

\begin{lemma}\label{QuantitativeUnifDist}
Let $X,Y\in\mathbb{N}$, $\delta\in(0,1/4)$ and $p:\mathbb{R}\to\mathbb{R}$, and suppose that for all natural $1\leq j\leq\delta^{-2}$ we have
\begin{equation*}
\left|\sum_{X\leq n<X+Y}e(j\cdot p(n))\right|<\delta^2 Y.
\end{equation*}
Then, for all $\b\in\mathbb{R}$,
$$
|\{n\in\mathbb{N}:X\leq n<X+Y,\,\|p(n)-\b\|\leq \delta\}|\ll\delta Y.
$$
\end{lemma}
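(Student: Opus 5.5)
The natural route is a smoothed-indicator (Erdős–Turán type) argument. Fix $\b\in\R$ and write $\|x\|$ for the distance to the nearest integer. I would take a nonnegative $1$-periodic trigonometric polynomial $F$ of degree $J:=\lfloor\delta^{-2}\rfloor$ which majorizes the indicator of the arc $\{x:\|x-\b\|\leq\delta\}$ and has mean $\widehat F(0)\ll\delta$. Concretely, use a Fejér-kernel majorant: let $K_J(x)=\sum_{|j|\leq J}\bigl(1-\frac{|j|}{J+1}\bigr)e(jx)$, which is nonnegative and satisfies $K_J(x)\gg J$ for $\|x\|\leq \frac{1}{2J}$. Since $\delta\geq\frac{1}{2J}$ roughly when $J\approx\delta^{-2}$, the arc of length $2\delta$ is too wide for a single Fejér bump. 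Instead, use the Beurling–Selberg majorant $S^+$ of the arc's indicator, with degree $J$ and
\[
\widehat{S^+}(0)=2\delta+\frac{1}{J+1}\leq 3\delta,\qquad |\widehat{S^+}(j)|\leq 2\delta+\frac{1}{J+1}\ll\delta \text{ for all } j.
\]
Alternatively, avoid citing Beurling–Selberg by covering the arc with $O(\delta J)$ arcs of length $\frac1J$ and majorizing each by $\frac{C}{J}K_J(x-c_i)$. This gives a majorant $F\geq0$ with $\widehat F(0)\ll \delta$ and $|\widehat F(j)|\ll\delta$ for $|j|\leq J$.

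With such an $F$, bound the count by
\[
\#\{n\}\leq\sum_{X\leq n<X+Y}F(p(n))=\sum_{|j|\leq J}\widehat F(j)\,e(-j\b)\sum_{X\leq n<X+Y}e(j\,p(n)).
\]
The $j=0$ term contributes $\widehat F(0)Y\ll\delta Y$. For $1\leq|j|\leq J\leq\delta^{-2}$, use the hypothesis; the cases $j<0$ follow by complex conjugation. Each such term is at most $|\widehat F(j)|\,\delta^2Y$. With the uniform bound $|\widehat F(j)|\ll\delta$, the total over $2J\leq2\delta^{-2}$ frequencies is $\ll \delta\cdot\delta^{-2}\cdot\delta^2 Y=\delta Y$. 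This gives the claimed $\ll\delta Y$.

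The only delicate point is the bookkeeping of Fourier coefficients. I need $\sum_{1\leq|j|\leq J}|\widehat F(j)|\ll \delta^{-1}$, or better, so that multiplying by $\delta^2Y$ gives $\delta Y$. The uniform bound $|\widehat F(j)|\leq\widehat F(0)\ll\delta$, which holds since $F\geq0$, is already sufficient: it gives $\sum|\widehat F(j)|\ll\delta\cdot\delta^{-2}=\delta^{-1}$. So the key is simply to construct the majorant $F\ge 0$ of degree at most $\delta^{-2}$ with mean $O(\delta)$, for instance by the Fejér-kernel covering above, and then to use positivity to bound all coefficients by the mean.
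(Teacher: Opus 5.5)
Your argument is correct and is essentially the paper's proof: both majorize the indicator of the arc $\|x-\b\|\le\delta$ by a nonnegative function with mean $O(\delta)$, expand it in Fourier series, and apply the hypothesis to the frequencies $1\le|j|\le\delta^{-2}$. The only difference is the choice of majorant. The paper uses Vinogradov's smooth cutoff, with coefficients $|a_j|\le\min\{8\delta,1/(\delta j^2)\}$, and so must bound the tail $|j|>\delta^{-2}$ trivially. Your Beurling--Selberg (or Fej\'er-covering) trigonometric polynomial of degree $\lfloor\delta^{-2}\rfloor$ has no tail, and positivity gives $|\widehat F(j)|\le\widehat F(0)\ll\delta$ directly.
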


\begin{proof}
By \cite[Chapter I, Lemma 12]{Vi54} with $r=1$, $\alpha=-2\delta$, $\beta=2\delta$, $\Delta=2\delta$, there is a $1$-periodic function $\chi:\mathbb{R}\to[0,1]$ satisfying $\chi(x)=1$ if $\|x\|\leq \delta$, $\chi(x)=0$ if $\|x\|\geq3\delta$ and with a Fourier expansion
$$
\chi(x)=4\delta+\sum_{|j|\geq 1}a_je(jx),\textup{ where }|a_j|\leq\min\left\{\frac{1}{|j|},8\delta,\frac{1}{\delta j^2}\right\}.
$$ 
Therefore, for all $\b\in\mathbb{R}$,
\begin{align*}
&|\{X\leq n< X+Y:\,\|p(n)-\b\|\leq \delta\}|\\
\leq&
\sum_{X\leq n< X+Y}\chi(p(n)-\b)\\
\leq&
4\delta Y+\sum_{|j|\geq1}|a_j|\cdot\left|\sum_{X\leq n<X+Y}e(j\cdot(p(n)-\b))\right|
\\
\leq&4\delta Y+\sum_{|j|>\delta^{-2}}\frac{Y}{\delta j^2}+\sum_{1\leq |j|\leq \delta^{-2}}8\delta\cdot\bigg|\sum_{X\leq n<X+Y}e(j\cdot p(n))\bigg|\\
\leq&
4\delta Y+2\delta Y+2\delta^{-2}\cdot 8\delta\cdot (\delta^2 Y)=22\delta Y.\qedhere
\end{align*}
\end{proof}

\msubsection{Proof of \Cref{texp} when $r_\alpha(x)\succ\log x$}
\label{SecErgralphaBig}

Fix a Hardy function $\alpha(x)=p_\alpha(x)+r_\alpha(x)$, where $p_\alpha,r_\alpha$ are as in \Cref{PolyAndSpecialParts}, and assume that $\alpha(x)\gg_\alpha x$. Let $d=\deg(\alpha)$.

Since $\alpha(x)\gg_\alpha x$, if $\deg(r_\alpha)<1$, then $\deg(p_\alpha)=d\geq1$. If $p_\alpha\neq0$, we write the polynomial part as
$$p_\alpha(x)=\a_1x^{d}+\a_2x^{d-1}+\cdots+\a_{d}x,\textup{ where }\a_1\neq0.$$
Throughout this section we assume that $r_\alpha\succ\log x$. Then, by L'Hôpital's rule, for every $k\geq1$,
$$
\lim_{x\to+\infty}x^k|r_\alpha^{(k)}(x)|=\lim_{x\to+\infty}\frac{|r_\alpha^{(k)}(x)|}{|\log^{(k)}(x)|}=+\infty
$$
Indeed, L'Hôpital's rule applies because, as $r_\alpha$ is the non-polynomial part of $\alpha$, we have $\lim_{x\to+\infty}|r_\alpha^{(k)}(x)|\in\{0,+\infty\}$ for all $k$.
Consider the Hardy function
$$
\lambda_0(x)=\min\left(x^{1/2},x^{d+1}\left|r_\alpha^{(d+1)}(x)\right|\right),
$$
so that $\lim_{x\to+\infty}\lambda_0(x)=+\infty$ and $
\deg(\lambda_0)=\min(1/2,\deg(r_\alpha))$. Also let
\begin{equation*}
\sigma_0=\frac{1}{2^{\deg(\alpha)+5}}.
\end{equation*}

\begin{lemma}
\label{32ew8ido8er9wiodkNoFloors}
There is $N_0=N_0(\alpha)\in\mathbb{N}$ such that, for all $N\geq N_0$, $1\leq Y\leq N/2$ and $\beta\in\mathbb{R}$ such that 
$$
\frac{\lambda_0(N)}{\alpha(N)}\leq\beta\leq\lambda_0(N),
$$
we have
\begin{equation}
\label{re9fdsoic}
\sum_{N-Y\leq n< N}e\left(\alpha(n)\beta\right)\ll
\frac{N}{\lambda_0(N)^{2\sigma_0}}.
\end{equation}
\end{lemma}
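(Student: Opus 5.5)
The natural tool is the van der Corput type estimate \Cref{DerivEstimates}\ref{dDerivEstimates}, applied to $f(x)=\beta\alpha(x)$ with derivative order $k=d+1$, where $d=\deg(\alpha)$. (If $d$ is not an integer, use $k=\lceil d\rceil+1$ or $\lfloor d\rfloor+1$; the argument below is the same.) Since $p_\alpha$ has degree at most $d$, its $(d+1)$-st derivative vanishes, so $f^{(d+1)}=\beta r_\alpha^{(d+1)}$.

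The plan has four steps.

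\emph{Step 1: localize.} Split $[N-Y,N)$ into subintervals of length $Z$, where $Z$ is chosen below as a small power of $N$ times a power of $\lambda_0(N)$. If $Y<N\lambda_0(N)^{-2\sigma_0}$ the bound is trivial, so assume $Y\geq N\lambda_0(N)^{-2\sigma_0}$. On $[N/2,N]$, \Cref{LemmaLogHardyBehaviour} applied to $r_\alpha^{(d+1)}$ shows that this function is comparable to $r_\alpha^{(d+1)}(N)$ up to a factor $h=h(\alpha)$. (Its limit is $0$ or $\infty$ and it has real degree.) So on each subinterval
\[
|f^{(d+1)}|\asymp \lambda:=|\beta|\,|r_\alpha^{(d+1)}(N)|\geq |\beta|\,\lambda_0(N)N^{-d-1}.
\]

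\emph{Step 2: apply \Cref{DerivEstimates}\ref{dDerivEstimates} and choose $Z$.} On each block the estimate gives
\[
Z\Bigl(\lambda^{1/(2^{d+1}-2)}+Z^{-2/2^{d+1}}+(Z^{d+1}\lambda)^{-2/2^{d+1}}\Bigr).
\]
The upper bound $\beta\leq\lambda_0(N)\leq N^{1/2}$ is used together with a crude bound $|r_\alpha^{(d+1)}(N)|\ll N^{\deg r_\alpha-d-1}$, where $\deg r_\alpha\leq d$. This makes $\lambda\ll N^{-1/2+o(1)}$, so the first term is a negative power of $N$ times $Z$. For the last term we need $Z^{d+1}\lambda$ large. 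The lower bound $\beta\geq\lambda_0(N)/\alpha(N)\gg\lambda_0(N)N^{-d}$ gives $\lambda\gg\lambda_0(N)^2N^{-2d-1}$. That is too small when $Z\leq N$, so take $Z\asymp Y$ (essentially a single block, $Y\leq N/2$) and then $Y^{d+1}\lambda\gg\lambda_0(N)^2N^{-d}\lambda_0(N)^{-2\sigma_0(d+1)}$.

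\emph{Step 3: fix the exponent $k$.} The computation in Step 2 shows that $k=d+1$ alone is insufficient in the small-$\beta$ regime. I would therefore choose $k$ adaptively: use the $k$-th derivative with $k=d$ when $\beta$ is small. In that case $f^{(d)}=\beta(d!\,\a_1+r_\alpha^{(d)})$ is of size $\asymp\beta$ when $p_\alpha\neq0$, or of size $\beta r_\alpha^{(d)}$ otherwise. Use $k=d+1$ when $\beta$ is large. The factors $2^{-(d+5)}$ in $\sigma_0$ leave room to absorb the constant losses from $21h$ and from the exponents $2/2^k$.

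\emph{Main obstacle.} The hardest part will be the middle range of $\beta$ in the polynomial case. There, $f^{(d)}$ has size near $\beta\a_1$, and $\beta$ could be close to an integer divided by $\a_1$ in a way that makes the $d$-th derivative estimate weak. This is exactly where $r_\alpha\succ\log x$ enters: since $x^{d+1}|r_\alpha^{(d+1)}|\geq\lambda_0$, the $(d+1)$-st derivative is nondegenerate. So I would handle this range with $k=d+1$ and interval length $Y$, and check that the three terms balance to $N\lambda_0(N)^{-2\sigma_0}$.
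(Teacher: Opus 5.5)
Your Step~3 (small $\beta$ via the $d$-th derivative, large $\beta$ via the $(d+1)$-st) is the paper's strategy when $p_\alpha\ne0$. But the proposal stops at a plan and never fixes a crossover or checks an estimate, and the difficulty you single out is not real. Split at $\beta=\lambda_0(N)^{-1/2}$, as the paper does when $\deg r_\alpha=0$. For $\beta\le\lambda_0(N)^{-1/2}$ and $d\ge2$, apply \Cref{DerivEstimates}\ref{dDerivEstimates} with $k=d$, $\lambda=\beta d!\,\a_1/2$, $h=3$. The first term is then $\ll\lambda_0(N)^{-1/(2^{d+1}-4)}$, and the third is controlled by $N^d\beta\gg N^d\lambda_0(N)/\alpha(N)\gg\lambda_0(N)$. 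For $\beta\ge\lambda_0(N)^{-1/2}$, take $k=d+1$ and $Y=N/2$. By the definition of $\lambda_0$, $Y^{d+1}\lambda\gg\beta N^{d+1}|r_\alpha^{(d+1)}(N)|\ge\beta\lambda_0(N)\ge\lambda_0(N)^{1/2}$. Meanwhile $|f^{(d+1)}|\le N^{\deg r_\alpha-d-1/2+o(1)}\le N^{-1/2+o(1)}$. The two ranges cover everything, so no middle range remains. Whether $\beta\a_1$ is near an integer plays no role in the $k\ge2$ test. Your constant-$h$ comparability via \Cref{LemmaLogHardyBehaviour} is a genuine simplification: it makes this single split work also when $\deg r_\alpha>0$, $p_\alpha\ne0$, where the paper instead takes $h=N^{\varepsilon/2}$ and splits at $N^{-c+\eta}$. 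Localization is also unnecessary: for $k\ge2$ the right side of \eqref{VC} is nondecreasing in $Y$, so one may take $Y=N/2$ directly.

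The places that do need care are the ones you pass over. For $d=1$, \Cref{DerivEstimates}\ref{dDerivEstimates} is unavailable in the small-$\beta$ range: it needs $k\ge2$, and $k=2$ fails there, as your own Step~2 shows. The paper uses Kuzmin--Landau, \Cref{DerivEstimates}\ref{1DerivEstimates}, with $\|\beta\alpha'(n)\|\ge\beta\a_1/2$. This gives $\ll1/\beta\le\alpha(N)/\lambda_0(N)\ll N/\lambda_0(N)$. For non-integer $d$ (so $p_\alpha=0$), your option $k=\lfloor d\rfloor+1$ fails. For $\alpha(x)=x^{1.9}$ and $\beta$ near $\lambda_0(N)=N^{1/2}$ one has $|f''|\asymp N^{0.4}$, and \eqref{VC} is worse than trivial. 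The paper uses the single exponent $k=\lceil d+1\rceil$ over the whole $\beta$-range. This works because $\alpha$ is special: no derivative has a nonzero finite limit, so $N^k|\alpha^{(k)}(N)|=\alpha(N)N^{o(1)}$, and hence $Y^k\lambda\gg\beta\alpha(N)N^{o(1)}\ge N^{1/2+o(1)}$. Your Step~2 conclusion that $k=d+1$ alone cannot handle small $\beta$ comes from the lossy bound $|r_\alpha^{(d+1)}|\ge\lambda_0N^{-d-1}$. It is correct only when $p_\alpha\ne0$.
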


\begin{proof}
We first consider the case $\deg(r_\alpha)=0$. Let $N$ be sufficiently large. 

We first consider the range of values of $\beta$ given by
$$
\frac{\lambda_0(N)}{\alpha(N)}\leq\beta\leq\frac1{\lambda_0(N)^{\frac12}}.
$$
Set $\b_1:=d!\a_1$. Since $\deg(r_\alpha)=0$, we have
$\alpha^{(d)}(x)=\b_1+o(1)$.
If $d\geq 2$, 
then by \Cref{DerivEstimates}\ref{dDerivEstimates} with $\lambda=\frac{\beta\b_1}{2},h=3$ (we may replace $Y$ by $N/2$, which only weakens the desired estimate), for big enough $N$ we have
\begin{align*}
\sum_{N-Y\leq n< N}e\left(\alpha(n)\beta\right)&\ll
N\left(\left(\frac{\beta\b_1}{2}\right)^{\frac{1}{{2^d}-2}}+N^{-\frac{2}{2^d}}+\left(N^{d}\frac{\beta\b_1}{2}\right)^{-\frac2{2^d}}\right)
\\
&\ll
N\left(\left(\frac{\b_1}{\lambda_0(N)^{\frac12}}\right)^{\frac{1}{{2^d}-2}}+N^{-\frac{2}{2^d}}+\lambda_0(N)^{-\frac2{2^d}}\right)\\
&\ll\frac{N}{\lambda_0(N)^{\frac1{2\cdot{2^d}}}}.
\end{align*}
If $d=1$, then, for all sufficiently large $N$, $\beta$ is small enough that $\|\beta\alpha'(n)\|\geq\frac{\beta\b_1}{2}$, so by \Cref{DerivEstimates}\ref{1DerivEstimates},
\begin{equation}\label{bkl}
\sum_{N-Y\leq n< N}e\left(\alpha(n)\beta\right)\leq\frac2{\b_1\beta}\leq\frac{2\alpha(N)}{\b_1\lambda_0(N)}\ll\frac{N}{\lambda_0(N)}.
\end{equation}

It remains to consider the range $\frac1{\lambda_0(N)^{\frac12}}<\beta\leq\lambda_0(N)$.
Since $p_\alpha^{(d+1)}=0$, $$\left|\left(\beta \alpha(x)\right)^{(d+1)}\right|=\beta\cdot\lambda_0(x) x^{-d-1}.$$
Applying \Cref{DerivEstimates}\ref{dDerivEstimates} (to the $(d+1)^{\textup{st}}$ derivative),
\begin{align*}
\sum_{N-Y\leq n< N}e\left(\alpha(n)\beta\right)&\ll
N\cdot\left(\left(\beta \lambda_0(N) N^{-d-1}\right)^{\frac1{{2^{d+1}}-2}}+N^{-\frac2{2^{d+1}}}+(\beta\lambda_0(N))^{-\frac2{2^{d+1}}}\right)\\
&\ll
N\cdot\left(\left(N^{-d}\right)^{\frac{1}{2^{d+1}-2}}+N^{-\frac2{2^{d+1}}}+(\lambda_0(N))^{-\frac1{2^{d+1}}}\right)
\\
&\ll
\frac{N}{\lambda_0(N)^{\frac1{2^{d+1}}}}.
\end{align*}
This completes the proof in the case $\deg(r_\alpha)=0$.

We now turn to the case $\deg(r_\alpha)>0$, so $\lambda_0(x)=x^{1/2}$. We argue similarly to \cite[Section 3.2]{CKW10} and add the details for the case $p_\alpha=0$. Let $c=\deg(r_\alpha)$, and set $\eta=\frac{1}{4}\min\left(\frac12,c\right)=\frac{1}{4}\deg(\lambda_0)$. We split the proof into two subcases:
\begin{enumerate}
    \item $p_\alpha\neq0$, so $r_\alpha(x)\prec p_\alpha(x)$. Then $\alpha(x)^{(d)}=\b_1+o(1)$. First, assume
\[
\frac{\lambda_0(N)}{\alpha(N)}\leq\beta\leq N^{-c+\eta}.
\]
If $d=1$, then \Cref{bkl} holds true again, for the same reason. If $d\geq 2$, 
then applying \Cref{DerivEstimates}\ref{dDerivEstimates} with $\lambda=\frac{\beta\b_1}{2},h=3$, we have for all sufficiently large $N$ that 
\begin{align*}
\sum_{N-Y\leq n< N}e(\alpha(n)\beta)
&\ll
N\left(\left(\frac{\beta\b_1}{2}\right)^{\frac{1}{{2^d}-2}}+N^{-\frac{2}{2^d}}+\left(N^{d}\frac{\beta\b_1}{2}\right)^{-\frac2{2^d}}\right)\\
&\ll N\left(\beta^{\frac{1}{2^d}}+N^{\frac{-2}{2^d}}+\lambda_0(N)^{\frac{-2}{2^d}}\right)\\
&\ll N\left(N^{\frac{-c+\eta}{2^d}}+\lambda_0(N)^{\frac{-2}{2^d}}\right)\ll\frac{N}{\lambda_0(N)^{\frac{1}{2^{d+1}}}},
\end{align*}
where in the last inequality we used $\deg(\lambda_0)\leq c$.

It remains to consider the range $N^{-c+\eta}\leq\beta\leq \lambda_0(N)$. Let $\varepsilon=\eta\cdot100^{-d}$. Then $\deg\left(\alpha^{(d+1)}\right)=c-d-1<-1$. We apply \Cref{DerivEstimates}\ref{dDerivEstimates} with $(d+1)^{\textup{th}}$-derivative, $\lambda=\beta N^{c-d-1-\frac{\varepsilon}{4}}$ (so $N^{\eta-d-1-\frac{\varepsilon}{4}}\ll\lambda\ll N^{\frac{-1}{2}}$) and $h=N^{\frac\varepsilon2}$, to obtain
\begin{align*}
\sum_{N-Y\leq n< N}e(\alpha(n)\beta)&\ll N^{1+\frac{\varepsilon}{2}}\left(\left(N^{-1/2}\right)^{\frac{1}{2^{d+1}-2}}+N^{\frac{-2}{2^{d+1}}}+\left(N^{\eta-\frac{\varepsilon}{4}}\right)^{\frac{-2}{2^{d+1}}}\right)
\\
&\ll\frac{N}{N^{\frac{\eta}{2^{d+1}}}}
\ll\frac{N}{\lambda_0(N)^{\frac{1}{2^{d+4}}}}.
\end{align*}

    \item $p_\alpha=0$, so $\alpha(x)=r_\alpha(x)\succ x$, $c\geq1$, and $\lambda_0(x)=x^{1/2}$. 
    Fix $\beta$ with $\frac{\lambda_0(N)}{\alpha(N)}\leq\beta\leq\lambda_0(N)$, and let $k=\lceil c+1\rceil\geq2$ and $\varepsilon=\eta\cdot100^{-k}$. We apply \Cref{DerivEstimates}\ref{dDerivEstimates} to the $k^{\textup{th}}$ derivative. Since $\alpha^{(k)}$ has degree $c-k$, we may take $\lambda=\beta N^{c-k-\frac{\varepsilon}{4}}$ and $h=N^{\frac{\varepsilon}{2}}$, so that
    $
    N^{\frac{1}{2}-k-\frac{\varepsilon}{2}}\ll\lambda\ll N^{-\frac12}.
    $
    Therefore,
\begin{align*}
\sum_{N-Y\leq n< N}e(\alpha(n)\beta)&\ll N^{1+\frac{\varepsilon}{2}}\left(\left(N^{\frac{-1}2}\right)^{\frac{1}{2^k-2}}+N^{\frac{-2}{2^k}}+\left(N^{\frac{1}{2}-\frac\varepsilon2}\right)^{\frac{-2}{2^k}}\right)
\\
&\ll\frac{N}{N^{\frac1{2^{k+1}}}}
\ll 
\frac{N}{\lambda_0(N)^{\frac1{2^{c+3}}}}
\qedhere
\end{align*}
\end{enumerate}
\end{proof}

\begin{lemma}
\label{32ew8ido8er9wiodk}
For all $N\geq N_0(\alpha)$, $1\leq Y\leq N/2$ and $\theta\in\mathbb{R}$ such that 
$
\|\theta\|\geq\frac{\lambda_0(N)}{\alpha(N)}
$, we have
\begin{equation}\label{minore}
\sum_{N-Y\leq n< N}e\left(\lfloor\alpha(n)\rfloor\theta\right)\ll\frac{N}{\lambda_0(N)^{\sigma_0}}.
\end{equation}
\end{lemma}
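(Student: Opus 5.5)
The plan is to combine \Cref{fit} with \Cref{32ew8ido8er9wiodkNoFloors}. Since $e(\lfloor\alpha(n)\rfloor\theta)$ depends only on $\theta$ modulo $1$, we may assume $\theta\in(0,1)$ with $\|\theta\|\geq\lambda_0(N)/\alpha(N)$. Apply \Cref{fit} with $X=N-Y$, the given $Y$, and a parameter $H\geq3$ to be chosen as a small power of $\lambda_0(N)$, for instance $H=\lfloor\lambda_0(N)^{\sigma_0}\rfloor$ or slightly larger. This writes the sum as
\[
c(\theta)\sum_{|h|\leq H}\frac{1}{h+\theta}\sum_{N-Y\leq n<N}e\bigl((h+\theta)\alpha(n)\bigr)
\]
plus an error of size $\log^2H\sup_{1\leq h\leq H^2}\bigl|\sum e(h\alpha(n))\bigr|+N\log^2H/H$.

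The main term is treated as follows. For each $|h|\leq H$ set $\beta=h+\theta$. Since $\overline{e(x)}=e(-x)$, we may assume $\beta>0$. Then $|\beta|\geq\|\theta\|\geq\lambda_0(N)/\alpha(N)$ and $|\beta|\leq H+1\leq\lambda_0(N)$ for large $N$. So \Cref{32ew8ido8er9wiodkNoFloors} applies and bounds each inner sum by $N\lambda_0(N)^{-2\sigma_0}$.

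The weights are $|c(\theta)|/|h+\theta|$. For $h=0$ and $h=-1$, these are at most $\|\theta\|/\|\theta\|\le 1$, using $|c(\theta)|\leq\|\theta\|$. The remaining terms contribute at most $\sum_{2\leq|h|\leq H}1/(|h|-1)\ll\log H$. So the main term is $\ll N\log H\,\lambda_0(N)^{-2\sigma_0}$.

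In the first error term, $\beta=h$ ranges over $1\leq h\leq H^2$. We need $H^2\leq\lambda_0(N)$, and the lower bound $h\geq1\geq\lambda_0(N)/\alpha(N)$ holds because $\lambda_0(N)\leq N^{1/2}\ll\alpha(N)$. So again each sum is $\ll N\lambda_0(N)^{-2\sigma_0}$. The second error term is $N\log^2H/H$.

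Choosing $H\asymp\lambda_0(N)^{1/2}$ makes $H^2\leq\lambda_0(N)$ safely. The total is then
\[
\ll N\log^2\lambda_0(N)\bigl(\lambda_0(N)^{-2\sigma_0}+\lambda_0(N)^{-1/2}\bigr).
\]
Since $2\sigma_0<1/2$, this is at most $N\lambda_0(N)^{-2\sigma_0}\log^2\lambda_0(N)\ll N\lambda_0(N)^{-\sigma_0}$, because the $\log^2$ loss is absorbed by the spare factor $\lambda_0(N)^{-\sigma_0}$.

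The main obstacle is the bookkeeping on admissible $\beta$. We must check that every frequency $h+\theta$ with $|h|\leq H$, and every $h\leq H^2$, lies in the window $[\lambda_0(N)/\alpha(N),\lambda_0(N)]$ of \Cref{32ew8ido8er9wiodkNoFloors}, including the sign reduction. We must also check that the weights near $h=0,-1$ are controlled by $|c(\theta)|\leq\|\theta\|$. The uniformity in $Y$ and the choice of $N_0(\alpha)$ come directly from the cited lemma.
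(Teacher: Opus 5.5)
Your proposal is correct and follows essentially the same route as the paper. The paper also applies \Cref{fit} with $H=\lfloor\lambda_0(N)^{1/2}\rfloor$ and then invokes \Cref{32ew8ido8er9wiodkNoFloors} with $\beta=|h+\theta|$ for the main terms and $\beta=h\leq H^2$ for the error term, absorbing the resulting $\log^2\lambda_0(N)$ loss into the spare factor $\lambda_0(N)^{-\sigma_0}$. Your final choice $H\asymp\lambda_0(N)^{1/2}$ is the right one; the tentative $H=\lambda_0(N)^{\sigma_0}$ you mention first would leave a $\log^2$ factor in the $N\log^2H/H$ term.
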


\begin{proof}
Since the sum is unchanged when $\theta$ is replaced by its fractional part, we may assume that
$
\frac{\lambda_0(N)}{\alpha(N)}\leq \theta\leq1-\frac{\lambda_0(N)}{\alpha(N)}.
$
Applying \Cref{fit} with $H=\lfloor\lambda_0(N)^{\frac12}\rfloor$, we obtain
\begin{align}
&\left|\sum_{N-Y\leq n< N}e\left(\lfloor\alpha(n)\rfloor\theta\right)\right|
\leq
\sum_{|h|\leq H}\frac{\|\theta\|}{|h+\theta|}\left|\sum_{N-Y\leq n< N}e((h+\theta)\alpha(n))\right|\notag\\
&+O\left(
\log^2 H\cdot\left(\frac{Y}{H}+\sup_{1\leq h\leq H^2}
\left|
\sum_{N-Y\leq n< N}e(h\alpha(n))
\right|
\right)\right).
\label{4re9wfdso90opdfs}
\end{align}
Write the right-hand side of \Cref{4re9wfdso90opdfs} as $A_1+O(A_2)$. Since $|h+\theta|\geq\|\theta\|$ and $|h+\theta|\leq H+1\leq\lambda_0(N)$ for all sufficiently large $N$, we can apply \Cref{32ew8ido8er9wiodkNoFloors} with $\beta=|h+\theta|$ for each $h\in\mathbb{Z}$, $|h|\leq H$. Thus,
\begin{equation*}
A_1\ll
\sum_{|h|\leq H}\frac{\|\theta\|}{|h+\theta|}\cdot\frac{N}{\lambda_0(N)^{2\sigma_0}}\leq\frac{2N}{\lambda_0(N)^{2\sigma_0}}\sum_{1\leq h< H+1}\frac{1}{h}\ll\frac{N\log(\lambda_0(N))}{\lambda_0(N)^{2\sigma_0}}.
\end{equation*}
Similarly, applying \Cref{32ew8ido8er9wiodkNoFloors} with $\beta=h$ for all $1\leq h\leq H^2$, we obtain 
$
A_2\ll N\log^2(H)\lambda_0(N)^{-2\sigma_0}.
$
This completes the proof.
\end{proof}

\begin{lemma}\label{minor} For all $N\geq N_0(\alpha)$ and $\theta\in\mathbb{R}$ such that 
$
\|\theta\|\geq\frac{\lambda_0(N)}{\alpha(N)}
$, we have
\begin{equation}\label{minord}
\sum_{N/2\leq n< N}\alpha'(n)e\left(\lfloor\alpha(n)\rfloor\theta\right)\ll\frac{\alpha'(N)N}{\lambda_0(N)^{\sigma_0}}.
\end{equation}
\end{lemma}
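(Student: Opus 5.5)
The plan is to deduce \Cref{minor} from \Cref{32ew8ido8er9wiodk} by summation by parts, removing the monotone weight $\alpha'(n)$ with \Cref{ChangingCoefs}. Fix $N\geq N_0(\alpha)$ and $\theta$ with $\|\theta\|\geq\lambda_0(N)/\alpha(N)$, and set $L:=\lceil N/2\rceil$ and $z_n:=e(\lfloor\alpha(n)\rfloor\theta)$.

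First we need the hypothesis of \Cref{ChangingCoefs}: the tail sums $\sum_{N-Y\leq n<N}z_n$ must be small uniformly for $1\leq Y\leq L$. \Cref{32ew8ido8er9wiodk} gives exactly this for $1\leq Y\leq N/2$: each such sum is $\ll N\lambda_0(N)^{-\sigma_0}$. Its threshold $\lambda_0(N)/\alpha(N)$ is the same one that appears in \Cref{minor}. The single extra index when $L=\lceil N/2\rceil>N/2$ contributes $O(1)$, which is absorbed because $\lambda_0(N)^{\sigma_0}\leq N^{1/2}$. So we may take
\[
\lambda:=C\lambda_0(N)^{-\sigma_0}
\]
for a suitable constant $C$, provided $\lambda\leq1$. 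This holds for large $N$, since $\lambda_0\to\infty$; otherwise we enlarge $N_0$.

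Next we apply \Cref{ChangingCoefs} with $\beta=\alpha'$. Since $\alpha$ is a Hardy field function, $\alpha'$ is eventually monotone, so this is legitimate after enlarging $N_0$ so that $N/2$ lies in the monotone range. With $Y=L$ we obtain
\[
\sum_{N-L\leq n<N}\alpha'(n)z_n\ll \lambda_0(N)^{-\sigma_0}\,N\max\bigl(|\alpha'(N)|,|\alpha'(N-L)|\bigr).
\]

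It remains to show $\alpha'(N/2)\ll\alpha'(N)$, so that the maximum is $\ll\alpha'(N)$. Because $\alpha\gg x$ throughout this subsection, we have $\deg(\alpha')\geq0$, and in particular $\deg(\alpha')$ is a real number. \Cref{LemmaLogHardyBehaviour} applied to $\alpha'$ with $\lambda=1/2$ gives $\alpha'(N/2)/\alpha'(N)\to2^{-\deg(\alpha')}$. The same conclusion holds directly by monotonicity if $\alpha'$ is nondecreasing. Either way $\alpha'(N-L)\ll_\alpha\alpha'(N)$, and combining this with the previous display yields \Cref{minord}.

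The main obstacle is the bookkeeping rather than any new estimate. One must check that the range of $Y$ in \Cref{32ew8ido8er9wiodk} covers the whole interval $[N/2,N)$ up to $O(1)$ boundary terms. One must also confirm that the eventual monotonicity and positivity of $\alpha'$ hold uniformly on $[N/2,N]$, which requires taking $N_0$ large.
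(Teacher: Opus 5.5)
Your proposal is correct and takes the same route as the paper, whose proof simply applies \Cref{ChangingCoefs} to the tail bounds of \Cref{32ew8ido8er9wiodk} with $\beta=\alpha'$, $z_n=e(\lfloor\alpha(n)\rfloor\theta)$ and $L=\lfloor N/2\rfloor$. Your added details, namely $\alpha'(N/2)\ll\alpha'(N)$ via \Cref{LemmaLogHardyBehaviour} and the normalization $\lambda\leq1$, make explicit what the paper leaves implicit; note that $L=\lfloor N/2\rfloor$ matches the range $N/2\leq n<N$ exactly, so the extra boundary index coming from $\lceil N/2\rceil$ is unnecessary, though harmless.
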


\begin{proof}
This follows from \Cref{32ew8ido8er9wiodk} and \Cref{ChangingCoefs}, applied with $\beta(x)=\alpha'(x)$, $z_n=e\left(\lfloor\alpha(n)\rfloor\theta\right)$ and $L=\lfloor N/2\rfloor$.
\end{proof}

\begin{lemma}\label{lemmaefnemth}
For all $N\geq N_0(\alpha)$ and $\theta\in\mathbb{R}$,
\begin{equation}\label{efnemth}
\sum_{N/2\leq n< N}\alpha'(n)e\left(\lfloor\alpha(n)\rfloor\theta\right)=\sum_{\alpha(N/2)\leq m< \alpha(N)}e(m\theta)+O\left(\frac{\alpha(N)}{\lambda_0(N)^{\frac{\sigma_0}2}}\right).
\end{equation}
\end{lemma}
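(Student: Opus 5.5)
We split according to the size of $\|\theta\|$, using the threshold $\|\theta\|\geq\lambda_0(N)/\alpha(N)$ from \Cref{minor}.

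\textbf{Minor arcs.} Suppose $\|\theta\|\geq\lambda_0(N)/\alpha(N)$. By \Cref{minor}, the left-hand side of \eqref{efnemth} is $O(\alpha'(N)N\lambda_0(N)^{-\sigma_0})$. Since $\deg\alpha\geq 1$ and $\alpha'(N)N\ll_\alpha\alpha(N)$, this is $O(\alpha(N)\lambda_0(N)^{-\sigma_0})$. The geometric sum on the right is bounded by $1/(2\|\theta\|)\leq\alpha(N)/(2\lambda_0(N))$. Both terms are therefore $O(\alpha(N)\lambda_0(N)^{-\sigma_0/2})$, using $\sigma_0\leq 1$.

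\textbf{Major arcs.} Suppose $\|\theta\|<\lambda_0(N)/\alpha(N)$. The sum is $1$-periodic in $\theta$ on both sides, so we may take $|\theta|<\lambda_0(N)/\alpha(N)$. First replace $\lfloor\alpha(n)\rfloor$ by $\alpha(n)$. Each term changes by $O(|\theta|)$, so the total error is $O(|\theta|\sum_{n<N}\alpha'(n))=O(|\theta|\alpha(N))=O(\lambda_0(N))$. This is acceptable because $\deg\lambda_0\leq 1/2<\deg\alpha$.

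Next we compare $\sum\alpha'(n)e(\alpha(n)\theta)$ with $\sum e(m\theta)$. A single application of \Cref{major} with $L=N/2$ gives the error $L|\theta|\alpha'(N)^2\approx\lambda_0(N)\,\alpha'(N)^2N/\alpha(N)$, which is too large when $d>1$. Instead we partition $[N/2,N)$ into consecutive blocks of length $L$ and apply \Cref{major} to each block; it is stated for intervals ending at $N$, but the argument works on any subinterval of $[N/2,N]$. Each block contributes $O(L|\theta|\alpha'(N)^2+L|\alpha''(N)|+\alpha'(N)+1)$, and there are about $N/L$ blocks. The total error is
\[
O\Bigl(N|\theta|\alpha'(N)^2+N|\alpha''(N)|+\tfrac{N}{L}(\alpha'(N)+1)\Bigr).
\]
The middle term is the problem: $N\alpha''(N)\approx\alpha(N)/N$ is not small relative to $\alpha(N)$ when $d\geq 1$. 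So we return to the proof of \Cref{major} and avoid the $L|\alpha''|$ loss. There, $\alpha'(n)$ was replaced by $\alpha^\Delta(n)$ at cost $|\alpha''|$ per term. Instead, note that $\sum_n\alpha'(n)e(\alpha(n)\theta)$ differs from $\int\alpha'(x)e(\alpha(x)\theta)\,dx=\int_{\alpha(N/2)}^{\alpha(N)}e(u\theta)\,du$ by a sum of per-unit-interval errors. Each such error is $O(|\alpha''|+\alpha'\cdot|\theta|\alpha')$, so the total is $O(\alpha'(N)+N|\theta|\alpha'(N)^2)$. Finally, $\int e(u\theta)\,du$ differs from $\sum_m e(m\theta)$ by $O(1+|\theta|\alpha(N))$.

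With $|\theta|<\lambda_0(N)/\alpha(N)$, the total error is $O(\lambda_0(N)\,N\alpha'(N)^2/\alpha(N)+\alpha'(N)+\lambda_0(N))\ll\lambda_0(N)\alpha(N)/N$. Since $\lambda_0(N)\leq N^{1/2}$, this is $\ll\alpha(N)N^{-1/2}\ll\alpha(N)\lambda_0(N)^{-\sigma_0/2}$.

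\textbf{Main obstacle.} The delicate point is this major-arc bookkeeping: making sure that the phase error $|\theta|\alpha'(N)^2N$ and the Euler–Maclaurin-type error stay below $\alpha(N)\lambda_0(N)^{-\sigma_0/2}$. This relies on $\alpha'(N)\asymp\alpha(N)/N$ (from \Cref{LemDegreeDerivative}), and in the degree-$1$ case one needs that $\alpha'$ is bounded.
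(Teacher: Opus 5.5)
Your final argument is correct and is essentially the paper's proof. For $\|\theta\|\geq\lambda_0(N)/\alpha(N)$ you combine \Cref{minor} with the geometric-series bound; otherwise you remove the floor at cost $O(|\theta|\alpha(N))$ and compare $\sum\alpha'(n)e(\alpha(n)\theta)$ with $\sum e(m\theta)$.

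The only difference is that the paper simply applies \Cref{major} once with $L=N/2$. Your stated reasons for avoiding this are miscalculations:
\begin{itemize}
\item With $|\theta|<\lambda_0(N)/\alpha(N)$, the term $L|\theta|\alpha'(N)^2$ is $\ll\lambda_0(N)\alpha(N)/N\leq\alpha(N)N^{-1/2}$. This is exactly the quantity you accept at the end, and splitting into blocks could not reduce it anyway.
\item The term $N|\alpha''(N)|$ satisfies $N|\alpha''(N)|\ll\alpha'(N)\ll\alpha(N)/N$, which is far below $\alpha(N)\lambda_0(N)^{-\sigma_0/2}$.
\end{itemize}
So your sum--integral comparison just reproves \Cref{major} with the same error terms.
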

\begin{proof}
By the geometric series estimate, if $\theta\not\in\mathbb{Z}$ then
$$
\sum_{\alpha(N/2)\leq m< \alpha(N)}e(m\theta)=O\left(\|\theta\|^{-1}\right),
$$
so, in view of \Cref{minor}, \Cref{efnemth} holds when $\|\theta\|\geq\frac{\lambda_0(N)}{\alpha(N)}$.
Thus we may assume that $\|\theta\|<\lambda_0(N)/\alpha(N)$. Replacing $\theta$ by its representative in $\left[-\frac12,\frac12\right]$, we may also assume that $|\theta|<\lambda_0(N)/\alpha(N)$. 

Since $|e(\lfloor\alpha(n)\rfloor\theta)-e(\alpha(n)\theta)|\leq|\theta|$ for all $n$ and $\sum_{n=1}^N|\alpha'(n)|\ll\alpha(N)$, we have
$$
\sum_{N/2\leq n<N}\alpha'(n)e\left(\lfloor\alpha(n)\rfloor\theta\right)=\sum_{N/2\leq n<N}\alpha'(n)e\left(\alpha(n)\theta\right)+O\left(\alpha(N)|\theta|\right).
$$

By \Cref{major}, for any $\theta$ with $|\theta|\leq\lambda_0(N)/\alpha(N)$ we have
\begin{multline*}
\sum_{N/2\leq n<N}\alpha'(n)e\left(\alpha(n)\theta\right)-\sum_{\alpha(N/2)\leq m<\alpha(N)}e(m\theta)
\\
\ll
N\frac{\lambda_0(N)}{\alpha(N)}\alpha'(N)^2+N\alpha''(N)+\alpha'(N)
\ll\frac{\alpha(N)}{\lambda_0(N)^{\frac{\sigma_0}2}},
\end{multline*}
where the last inequality follows by comparing degrees and increasing $N_0(\alpha)$ if necessary.
\end{proof}

\begin{proof}[Proof of \Cref{texp} when $r_\alpha(x)\succ\log x$.]

Let $
\beta(x)=\frac{\alpha(x)}{\lambda_0(x)^{\frac{\sigma_0}{2}}}$.
Then $\beta$ belongs to a Hardy field and $\deg(\beta)>\frac{1}{2}$, so by \Cref{LogHardyBehaviour}, for all sufficiently large $N$ we have $\beta(N/2)<0.9\beta(N)$. Let $k$ satisfy 
$$\sqrt{N}\leq 2^k\leq 2\sqrt{N}.$$ Since $\alpha(x)\gg_\alpha x$, we have $\frac{\alpha(x)}{x}\gg_\alpha1$, and hence $\frac{\alpha(x)}{x}\gg\frac{\alpha\left(\sqrt{x}\right)}{\sqrt{x}}$ for all sufficiently large $x$. Thus, for all sufficiently large $N$,
\[
\alpha\left(\frac{N}{2^k}\right)\ll\alpha\left(\sqrt{N}\right)\ll\frac{\alpha(N)}{\sqrt{N}}.
\]
We now conclude the proof by using \Cref{lemmaefnemth}:
\begin{align}
\label{4ref8dsi8e9roikl}&\sum_{1\leq n<N}\alpha'(n)e\left(\lfloor\alpha(n)\rfloor\theta\right)-
\sum_{1\leq n< \alpha(N)}e(n\theta)\\
\notag=&\sum_{j=0}^k\left(\sum_{\frac{N}{2^{j+1}}\leq n<\frac{N}{2^{j}}}\alpha'(n)e\left(\lfloor\alpha(n)\rfloor\theta\right)-
\sum_{\alpha\left(\frac{N}{2^{j+1}}\right)\leq n< \alpha\left(\frac{N}{2^{j}}\right)}e(n\theta)\right)+O\left(\alpha\left(\frac{N}{2^k}\right)\right)\\
\notag\ll&\sum_{j=0}^k\beta\left(\frac{N}{2^j}\right)+\alpha\left(\frac{N}{2^k}\right)\ll\sum_{j=0}^k0.9^j\beta(N)+\frac{\alpha(N)}{\sqrt{N}}\ll\beta(N)=\frac{\alpha(N)}{\lambda_0(N)^{\frac{\sigma_0}{2}}}.\qedhere
\end{align}
\end{proof}

\msubsection{Proof of \Cref{texp} when $r_\alpha(x)\ll_\alpha\log x$}
\label{SecErgralphaSmall}

In this section, we fix a Hardy function $\alpha(x)$ such that $\alpha(x)\gg_\alpha x$ and $\alpha(x)=p_\alpha(x)+r_\alpha(x)$, where $r_\alpha(x)\ll_\alpha\log(x)$, and write the polynomial part $p_\alpha$ as
$$p_\alpha(x)=\a_1x^d+\cdots+\a_dx\in\mathbb{R}[x],\textup{ with }\a_1\neq0.$$
The assumptions of \Cref{texp} imply that $p_\alpha\not\in c\mathbb{Q}[x]$ for all real $c$; in particular, the degree $d$ of $p_\alpha$ is at least $2$. Moreover, throughout the rest of the section we use the notation
\begin{align*}
D&=d\cdot2^{d+2},\\
\varepsilon&=100^{-d^2},\\
j_0&=\min\{j:\,\a_j/\a_1\not\in\Q\}\geq2.
\end{align*}

For any $\theta\in\R$ and $X\in[1,+\infty)$, let 
\begin{align*}
\L_{X}(\theta)&=\{q\in\mathbb{N}:q\leq X\textup{ and }|\theta q-a|\leq X^{-1}\text{ for some }a\in\mathbb{Z}\text{ with }(a,q)=1\}\\
&=
\{q\in\{1,2,\dots,\lfloor X\rfloor\}:\left|\theta-\frac{a}{q}\right|\leq\frac{1}{qX}\text{ for some }a\in\mathbb{Z}\text{ with }(a,q)=1\}.
\end{align*}

Dirichlet's approximation theorem asserts that $\L_{X}(\theta)$ is always non-empty. Note that, if $\theta$ is rational, say $\theta=\frac{a}{q}$ for some coprime $a,q\in\mathbb{Z}$ with $q>0$, then $\mathcal{L}_X(\theta)=\{q\}$ for any $X>|q|$. If, on the contrary, $\theta$ is irrational, then Dirichlet's approximation theorem implies that $\lim_{N\to\infty}\max \L_N(\theta)=\infty$ and that $\max\L_q(\theta)=q$ for infinitely many $q$. Finally, define
$$
\lambda(N)=\min\big\{N^{\varepsilon},\max\L_{N^{d-j_0+1-2\varepsilon D^{j_0}}}(\a_{j_0}/\a_1)\big\},
$$
so that $\lambda(N)\to+\infty$ as $N\to\infty$.

During this section we will routinely use inequalities such as, for $N\geq1$,
$$
N\geq N^{\frac{1}{d}}\geq N^{\frac{1}{D}}\geq N^{\varepsilon D^d}\geq N^{d\varepsilon}\geq N^{\varepsilon}.
$$

The bulk of the proof is formed by two lemmas which bound exponential sums related to the polynomial part $p_\alpha$. The non-polynomial part $r_\alpha$ will not play a role in the proof until \Cref{438erwiofsd}.

\begin{lemma}\label{pcq}
There is $N_0=N_0(d,\varepsilon)$ such that, for all $N\geq N_0$, if there exists $1\leq j_1\leq d$ satisfying
\begin{enumerate}
    \item for each $1\leq j\leq j_1-1$,
$$
\max\L_{N^{d-j+1-\varepsilon D^{j}}}(\a_j)\leq N^{\varepsilon D^{j}},
$$
\item there exist $a_{j_1},q_{j_1}$ such that $(a_{j_1},q_{j_1})=1$, $N^{\frac34\varepsilon D^{j_1}}\leq q_{j_1}\leq N^{d-j_1+1-\varepsilon D^{j_1}}$ and 
$$
\bigg|\a_{j_1}-\frac{a_{j_1}}{q_{j_1}}\bigg|\leq\frac{1}{q_{j_1}N^{\varepsilon D^{j_1}}}.
$$
\end{enumerate}

Then for all $Y\leq N^{1-\varepsilon}$, 
$$
\sum_{N-Y\leq n< N}e\left(p_\alpha(n)\right)\ll_{d,\varepsilon} N^{1-2\varepsilon}.
$$
\end{lemma}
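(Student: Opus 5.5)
The plan is to reduce to a Weyl-type estimate for a polynomial of degree $k:=d-j_1+1$ whose leading coefficient is $\a_{j_1}$ (suitably rescaled). Lemma~\ref{weyl} then gives the saving, because $\a_{j_1}$ has a denominator of intermediate size. The earlier coefficients $\a_1,\dots,\a_{j_1-1}$ are handled by passing to residue classes and absorbing their rational parts into constant phases.

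\textbf{Step 1: arithmetic progressions.} For each $j<j_1$, Dirichlet's theorem gives $a_j,q_j$ with $q_j\in\L_{N^{d-j+1-\varepsilon D^j}}(\a_j)$. Hypothesis (1) then forces $q_j\le N^{\varepsilon D^j}$ and $|\a_j-a_j/q_j|\le q_j^{-1}N^{-(d-j+1)+\varepsilon D^j}$. Let $Q=\mathrm{lcm}(q_1,\dots,q_{j_1-1})$, so that
\[
Q\le N^{\varepsilon(D+\cdots+D^{j_1-1})}\le N^{2\varepsilon D^{j_1-1}}.
\]
I will write $n=Qm+r$ with $0\le r<Q$. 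Since $q_j\mid Q$, the integer $(Qm+r)^{d-j+1}-r^{d-j+1}$ is divisible by $q_j$. Hence $\frac{a_j}{q_j}(Qm+r)^{d-j+1}\equiv \frac{a_j}{q_j}r^{d-j+1}\pmod 1$, and these rational parts contribute only a phase depending on $r$. Write $\beta_j=\a_j-a_j/q_j$. The inner sum over $m$ is then
\[
\sum_m e\bigl(\phi_r(m)+g_r(m)\bigr),\qquad \phi_r(m)=\sum_{j\ge j_1}\a_j(Qm+r)^{d-j+1},\quad g_r(m)=\sum_{j<j_1}\beta_j(Qm+r)^{d-j+1}.
\]

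\textbf{Step 2: remove the slowly varying part.} We have $|g_r'(m)|\ll Q\max_{j<j_1}|\beta_j|N^{d-j}\ll QN^{-1+\varepsilon D^{j_1-1}}$. Over the full range the total variation of $g_r$ need not be small, so I will cut the $m$-range into blocks of length $Z/Q$, with
\[
Z=N^{1-4\varepsilon D^{j_1-1}},
\]
so that the variation of $g_r$ on each block is $\ll N^{-\varepsilon D^{j_1-1}}$. On each block, summation by parts (in the form of Lemma~\ref{ChangingCoefs}) lets me drop the factor $e(g_r(m))$ at the cost of multiplying the partial sums of $e(\phi_r(m))$ by $O(1)$. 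The number of blocks times $Q$ is $\ll Y/Z+Q$, and all of these losses are powers $N^{O(\varepsilon D^{j_1-1})}$.

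\textbf{Step 3: Weyl on each block.} The polynomial $\phi_r$ has degree $k$ in $m$ with leading coefficient $\a_{j_1}Q^k$. By hypothesis (2), $\a_{j_1}Q^k$ is within $Q^k/(q_{j_1}N^{\varepsilon D^{j_1}})$ of $a_{j_1}Q^k/q_{j_1}$. Reducing this fraction gives a denominator $q'$ with
\[
q_{j_1}/Q^k\le q'\le q_{j_1},\qquad q'\ge N^{\frac34\varepsilon D^{j_1}-2k\varepsilon D^{j_1-1}}\ge N^{\frac12\varepsilon D^{j_1}},
\]
where the last inequality uses $D\ge 8d$. The distance is then $\le 1/(q's)$ with $s\asymp N^{\varepsilon D^{j_1}}/Q^{2k}\ge N^{\frac12\varepsilon D^{j_1}}$; I will truncate $s$ to $\min(s,q')$ so that $s\le q'$ as Lemma~\ref{weyl} requires. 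Applying Lemma~\ref{weyl} on a block of length $Z/Q$ gives
\[
\ll (Z/Q)^{1+\varepsilon'}\Bigl(N^{-\frac12\varepsilon D^{j_1}}+Q/Z+q_{j_1}Q^k/Z^k\Bigr)^{1/2^{k-1}}.
\]
The last term is at most $N^{k-\varepsilon D^{j_1}+O(\varepsilon D^{j_1-1})-k}$, using $q_{j_1}\le N^{d-j_1+1-\varepsilon D^{j_1}}$, which is crucial here. So every term inside the bracket is $\le N^{-\frac14\varepsilon D^{j_1}}$, and the block sum saves $N^{-\varepsilon D^{j_1}/2^{d+1}}$. Summing over blocks and residues, and choosing $\varepsilon'$ tiny, the total is $\ll Y\cdot N^{O(\varepsilon D^{j_1-1})}N^{-\varepsilon D^{j_1}/2^{d+1}}$. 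Since $D=d2^{d+2}$, the saving exponent exceeds the loss by far more than $2\varepsilon$, giving $\ll N^{1-2\varepsilon}$. When $Y<Z$, a single block suffices, and the trivial bound covers $Y\le N^{1-2\varepsilon}$.

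The main obstacle is Step 3's bookkeeping: ensuring that the reduced denominator $q'$ and the parameter $s$ remain of size $N^{c\varepsilon D^{j_1}}$ after the rescaling by $Q^k$ and the block shortening. Both of those only cost powers $N^{O(\varepsilon D^{j_1-1})}$, and the gap between $D^{j_1}$ and $D^{j_1-1}$ is exactly what the choice $D=d2^{d+2}$ is designed to absorb; I would verify these exponent inequalities first.
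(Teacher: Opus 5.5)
Your proof is correct and follows the same route as the paper. Both arguments use the Dirichlet approximations $a_j/q_j$ for $j<j_1$ controlled by hypothesis (1), split into residue classes modulo $Q=\mathrm{lcm}(q_1,\dots,q_{j_1-1})$, and apply Lemma~\ref{weyl} to the leading coefficient $\a_{j_1}Q^{d-j_1+1}$ via the reduced denominator of $a_{j_1}Q^{d-j_1+1}/q_{j_1}$. The only difference is how you remove the slowly varying phase. The paper does a single summation by parts over the whole interval and compensates with the stronger partial-sum bound $N^{1-\varepsilon-\varepsilon D^{j_1-1}}$. You instead cut into blocks of length $N^{1-4\varepsilon D^{j_1-1}}$ on which that phase is essentially constant, and absorb the shorter length into the slack coming from $q_{j_1}\le N^{d-j_1+1-\varepsilon D^{j_1}}$. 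One small correction: Lemma~\ref{ChangingCoefs} requires a monotone real weight, so it does not apply to $e(g_r)$; use Lemma~\ref{SumParts} instead, or simply freeze $e(g_r)$ on each block at total cost $O(YN^{-3\varepsilon})$.
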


\begin{proof}
The case $j_1=1$ follows from \Cref{weyl}, applied with $X=N-Y$, $q=q_{j_1}$ and $s=\lfloor N^{\frac34\varepsilon D}\rfloor$. Thus, we assume $j_1\geq 2$.
For $1\leq j\leq j_1-1$, let
$$
q_j=\max\L_{N^{d-j+1-\varepsilon D^{j}}}(\a_j)\leq N^{\varepsilon D^j}
$$
and let
$$
\vartheta_j=\a_j-\frac{a_j}{q_j},$$
where $(a_j,q_j)=1$ and $|\vartheta_j|\leq q_j^{-1}N^{\varepsilon D^j-d+j-1}$.
Set
$$
S_m:=\sum_{N-2Y\leq n< m}e\left(\sum_{j=1}^{j_1-1}\frac{a_jn^{d-j+1}}{q_j}+\sum_{j=j_1}^{d}\a_{j}n^{d-j+1}\right),
$$
and apply the summation by parts formula, \Cref{SumParts}, to obtain
\begin{align}
&\notag\sum_{N-Y\leq n< N}e\left(p_\alpha(n)\right)=
\sum_{N-Y\leq n< N}e\left(\sum_{j=1}^{j_1-1}\a_j n^{d-j+1}+\sum_{j=j_1}^{d}\a_j n^{d-j+1}\right)\\\notag
=&\sum_{N-Y\leq n< N}e\left(\sum_{j=1}^{j_1-1}\frac{a_jn^{d-j+1}}{q_j}+\sum_{j=j_1}^{d}\a_{j}n^{d-j+1}\right)\cdot
e\left(\sum_{j=1}^{j_1-1}\vartheta_jn^{d-j+1}\right)\\\label{4refd9oierfdoi}
\ll&|S_N|+|S_{N-Y}|\\&+\sum_{N-Y\leq m<N}\left|\left(e\left(\sum_{j=1}^{j_1-1}\vartheta_j(m+1)^{d-j+1}\right)-e\left(\sum_{j=1}^{j_1-1}\vartheta_j m^{d-j+1}\right)\right) S_{m+1}\right|.\notag
\end{align}
For all $N-Y\leq m\leq N$,
$$
\left|e\left(\sum_{j=1}^{j_1-1}\vartheta_j(m+1)^{d-j+1}\right)-e\left(\sum_{j=1}^{j_1-1}\vartheta_j m^{d-j+1}\right)\right|\ll_d\sum_{j=1}^{j_1-1}|\vartheta_j|N^{d-j}\ll_d N^{\varepsilon D^{j_1-1}-1}.
$$
Thus it remains to prove that
\begin{equation}
\label{holrdsidlx}
\max_{N-Y\leq m\leq N}|S_m|\ll_{d,\varepsilon}N^{1-\varepsilon}\cdot N^{-\varepsilon D^{j_1-1}}
\end{equation}
Indeed, the two endpoint terms in \Cref{4refd9oierfdoi} are then acceptable, and the last term is
$$
\ll_{d,\varepsilon}Y\cdot N^{\varepsilon D^{j_1-1}-1}\cdot \left(N^{1-\varepsilon}\cdot N^{-\varepsilon D^{j_1-1}}\right)
\leq N^{1-2\varepsilon}.
$$

Let $Q=\mathrm{lcm}(q_1,\ldots,q_{j_1-1})$, and write $\frac{a_*}{q_*}$ for the reduced form of $\frac{a_{j_1}Q^{d-j_1+1}}{q_{j_1}}$, so that
$$
q_*=\frac{q_{j_1}}{(q_{j_1},Q^{d-j_1+1})}.
$$
Then we have
\begin{equation}
\label{3ew9ds0op9ewdospl}
\bigg|\a_{j_1} Q^{d-j_1+1}-\frac{a_*}{q_*}\bigg|=Q^{d-j_1+1}\bigg |\a_{j_1} -\frac{a_{j_1}}{q_{j_1}}\bigg|\leq\frac{Q^{d-j_1+1}}{q_{j_1} N^{\varepsilon D^{j_1}}}.
\end{equation}
Moreover, using that $q_j\leq N^{\varepsilon D^j}$, we obtain
\begin{equation}
\label{3ew9ds0op9ewdospl2}
Q\leq\prod_{j=1}^{j_1-1}q_j\leq N^{\frac{\varepsilon D(D^{j_1-1}-1)}{D-1}}\leq N^{\frac{\varepsilon D^{j_1}}{2^{d+2}(d-1)}}.
\end{equation}
In particular, $Q^{d-1}\leq N^{\frac{\varepsilon D^{j_1}}{2^{d+2}}}$.
Since $j_1\geq2$, we have $d-j_1+1\leq d-1$. Combining this with $q_*\leq q_{j_1}$, \Cref{3ew9ds0op9ewdospl,3ew9ds0op9ewdospl2} imply that
\begin{align}
\label{qXe}
\bigg|\a_{j_1} Q^{d-j_1+1}-\frac{a_*}{q_{*}}\bigg|
&\leq \frac{Q^{d-j_1+1}}{q_{j_1}N^{\varepsilon D^{j_1}}}
\leq \frac{Q^{d-1}}{q_*N^{\varepsilon D^{j_1}}}
\leq\frac{1}{q_*N^{\frac{\varepsilon D^{j_1}}{2}}}.
\end{align}

Now, as $q_j$ divides $Q$ for all $1\leq j< j_1$, we have
\begin{align*}
|S_m|\leq&\sum_{r=1}^Q\left|\sum_{\substack{N-2Y\leq n< m\\ n\equiv r\pmod{Q}}}e\left(\sum_{j=1}^{j_1-1}\frac{a_j n^{d-j+1}}{q_j}+\sum_{j=j_1}^{d}\a_{j}n^{d-j+1}\right)\right|\\
\leq&Q\max_{1\leq r\leq Q}
\left|\sum_{(N-2Y-r)/Q\leq n< (m-r)/Q}e\left(\sum_{j=j_1}^{d}\a_{j}(Qn+r)^{d-j+1}\right)\right|
\end{align*}

Therefore, in order to prove \Cref{holrdsidlx} (and thus conclude the proof of \Cref{pcq}), it is enough to show, uniformly for $N-Y\leq m\leq N$ and $1\leq r\leq Q$, that
\begin{align}
&\sum_{(N-2Y-r)/Q\leq n< (m-r)/Q}e\left(\sum_{j=j_1}^{d}\a_{j}(Qn+r)^{d-j+1}\right)\notag
\\
\ll&\left(\frac{N^{1-\varepsilon}}{Q}\right)^{1+\varepsilon}
\left(\frac1{N^{\frac{\varepsilon D^{j_1}}{2}}}+\frac{Q}{N^{1-\varepsilon}}+\frac{Q^{d-j_1+1}q_*}{N^{(1-\varepsilon)(d-j_1+1)}}\right)^{\frac{8d}D}\ll\frac{N^{1-\varepsilon}}{QN^{\varepsilon(2D^{j_1-1}-1)}}.\label{reofidsodfslk}
\end{align}
First note that the polynomial $\sum_{j=j_1}^{d}\a_{j}(Qn+r)^{d-j+1}$ has leading coefficient $\a_{j_1}Q^{d-j_1+1}$. We will also need upper and lower bounds for $q_*$, which follow from the definition of $q_*$ and the bounds on $q_{j_1}$:
\begin{equation*}
N^{\frac{\varepsilon D^{j_1}}{2}}\leq\frac{q_{j_1}}{Q^{d-1}}\leq q_*\leq q_{j_1}\leq N^{d-j_1+1-\varepsilon D^{j_1}}.
\end{equation*}

The first inequality in \Cref{reofidsodfslk} follows from \Cref{weyl}, using \eqref{qXe}, except that we replace the length of the interval by the larger quantity $N^{1-\varepsilon}/Q$ and the exponent $\frac{1}{2^{d-j_1}}$ by the smaller exponent $\frac{8d}{D}=\frac{1}{2^{d-1}}$. 
This change in exponents is harmless because, by the bounds on $Q$ and $q_*$, the expression inside the big parentheses in \eqref{reofidsodfslk} is at most $1+1+1=3$. 
It remains to verify the second inequality in \Cref{reofidsodfslk}. For this, it suffices to check, for each $X\in\left\{\frac1{N^{\frac{\varepsilon D^{j_1}}{2}}},\frac{Q}{N^{1-\varepsilon}},\frac{Q^{d-j_1+1}q_*}{N^{(1-\varepsilon)(d-j_1+1)}}\right\}$, that 
\begin{equation}
\label{43refds9oierodfo9i}
X\ll\left(\frac{Q^\varepsilon}{N^{(1-\varepsilon)\varepsilon} N^{2\varepsilon D^{j_1-1}}}\right)^{\frac D{8d}}=\frac{Q^{\frac{\varepsilon D}{8d}}}{N^{\varepsilon(1-\varepsilon)\frac{D}{8d}+\varepsilon\frac{D^{j_1}}{4d}}}=
\frac{Q^{\frac{\varepsilon D}{8d}}}{N^{\varepsilon\left(\frac{D^{j_1}}{4d}+\frac{D}{8d}-\frac{\varepsilon D}{8d}\right)}}
\end{equation}
The bound $1\leq Q\leq N^{\frac{\varepsilon D^{j_1}}{2^{d+2}(d-1)}}$ implies \Cref{43refds9oierodfo9i} for $X=\frac1{N^{\frac{\varepsilon D^{j_1}}{2}}}$ and $X=\frac{Q}{N^{1-\varepsilon}}$. For the remaining value $X=\frac{Q^{d-j_1+1}q_*}{N^{(1-\varepsilon)(d-j_1+1)}}$, we have
\begin{equation*}
X\leq\frac{N^{\frac{\varepsilon D^{j_1}(d-j_1+1)}{2^{d+2}(d-1)}}\cdot N^{d-j_1+1-\varepsilon D^{j_1}}}{N^{(1-\varepsilon)(d-j_1+1)}}= N^{\varepsilon\left(\frac{ D^{j_1}(d-j_1+1)}{2^{d+2}(d-1)}-D^{j_1}+d-j_1+1\right)}\leq\frac1{N^{\frac{\varepsilon D^{j_1}}{2}}},
\end{equation*}
so \Cref{43refds9oierodfo9i} follows from the first case. This completes the proof.\qedhere
\end{proof}

\begin{lemma}\label{pfnbeta}
There is $N_0=N_0(\alpha)$ such that, for all $N>N_0$, all $Y\leq N^{1-\varepsilon}$, and all $\theta\in\R$ satisfying
\[
N^{\varepsilon(D+1)-d}\leq\theta\leq\lambda(N)^{\frac13},
\]
we have
\begin{equation}\label{pfs0}
\sum_{N-Y\leq n< N}e\left(p_\alpha(n)\theta\right)\ll\frac{N^{1-\varepsilon}}{\lambda(N)^{\frac{1}{4d}}}.
\end{equation}
\end{lemma}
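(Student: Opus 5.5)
The plan is to apply \Cref{pcq} to the polynomial $p_\alpha(x)\theta$, whose coefficients are $\theta\a_1,\dots,\theta\a_d$. The case split is on whether condition (2) of \Cref{pcq} holds for some index $j_1$. That lemma gives the bound $N^{1-2\varepsilon}$, which is acceptable because $\lambda(N)\leq N^{\varepsilon}$ and hence $N^{1-2\varepsilon}\leq N^{1-\varepsilon}\lambda(N)^{-1/(4d)}$.

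Step 1 (Dirichlet at each level). For each $j$, pick by Dirichlet a pair $a_j,q_j$ with $(a_j,q_j)=1$, $q_j\leq N^{d-j+1-\varepsilon D^j}$ and $|\theta\a_j-a_j/q_j|\leq 1/(q_jN^{d-j+1-\varepsilon D^j})$. If some $j$ has $q_j\geq N^{\frac34\varepsilon D^j}$, take $j_1$ to be the least such $j$. The approximation is then even better than required in (2), since $d-j+1-\varepsilon D^j\geq\varepsilon D^j$. Condition (1) is forced by minimality for the lower indices, modulo the mismatch between $\max\L$ and the Dirichlet denominator, which I would handle by choosing $q_j=\max\L$. \Cref{pcq} then finishes this case.

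Step 2 (all denominators small). Otherwise every $q_j<N^{\frac34\varepsilon D^j}$, so all the $\theta\a_j$ lie on major arcs with small denominators.

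First, the leading coefficient. We have $\theta\a_1\approx a_1/q_1$ with error at most $q_1^{-1}N^{\varepsilon D-d}$. The lower bound $\theta\geq N^{\varepsilon(D+1)-d}$ rules out $a_1=0$ once $N$ is large. Hence $\theta\approx a_1/(q_1\a_1)$ up to a relative error of about $N^{-\varepsilon}$, with $q_1\leq N^{\frac34\varepsilon D}$.

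Next, the index $j_0$. From the previous step, $\theta\a_{j_0}\approx\frac{a_1}{q_1}\cdot\frac{\a_{j_0}}{\a_1}$. Since $\theta\a_{j_0}$ is also close to $a_{j_0}/q_{j_0}$, we obtain a rational approximation $\frac{\a_{j_0}}{\a_1}\approx\frac{a_{j_0}q_1}{a_1q_{j_0}}$. Its denominator is at most $|a_1|q_{j_0}\ll\theta q_1q_{j_0}\leq\lambda(N)^{1/3}N^{\frac34\varepsilon(D+D^{j_0})}$, and its error is tiny, roughly $N^{-(d-j_0+1)+\varepsilon D^{j_0}}$ up to factors of $\theta q_1$.

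Step 3 (contradiction with $\lambda(N)$). The aim is to show that this rational is a good approximation at scale $X=N^{d-j_0+1-2\varepsilon D^{j_0}}$, i.e.\ that its denominator lies in $\L_X(\a_{j_0}/\a_1)$ after reduction. Every element of $\L_X$ is at most $\max\L_X$. This should give a contradiction with the definition of $\lambda(N)$, or else force $\lambda(N)$ to be at most a small power of the denominators, which is then absorbed. This is the main obstacle. It requires comparing with $\max\L_X$, where Dirichlet uniqueness (two good approximations with denominators below $X$ whose product is small must coincide) gives the ordering. The exponents have to be tracked carefully, using $\theta\leq\lambda(N)^{1/3}\leq N^{\varepsilon/3}$ and the factor $2\varepsilon D^{j_0}$ in the definition of $\lambda$.

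If no contradiction arises, all $q_j$ are at most $\lambda(N)^{O(1)}$ and the residual phases are small. In that case I would instead bound the sum directly. I would use \Cref{ptheta} with $q=\mathrm{lcm}(q_j)$ and treat the residual phases as $\psi$. The complete sum $\frac1q\sum_r e(p(r)/q)$ is bounded by \Cref{Va97Thm7.1}, and it should be at most about $\lambda(N)^{-1/d+\varepsilon}$ because $q$ is forced to be at least of size about $\lambda(N)^{1/3}$ by the irrationality at level $j_0$. This yields the exponent $\frac1{4d}$.
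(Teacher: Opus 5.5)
Your proposal is correct, but its key step runs in the opposite direction from the paper's.

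\textbf{How the paper argues.} In the case where every $\max\L_{N^{d-j+1-\varepsilon D^j}}(\a_j\theta)$ is small, the paper does not derive an approximation to $\a_{j_0}/\a_1$ from the data attached to $\theta$. It starts from the $\max\L$-approximant $a_*/q_*$ of $\a_{j_0}/\a_1$ at scale $X=N^{d-j_0+1-2\varepsilon D^{j_0}}$. It then defines a new approximant of $\a_{j_0}\theta$ as the reduced form $a_{j_0}/q_{j_0}$ of $\frac{a_1}{q_1}\cdot\frac{a_*}{q_*}$. The bound $q_{j_0}\geq q_*/|a_1|$ comes directly from $q_{j_0}=\frac{q_1}{(q_1,a_*)}\cdot\frac{q_*}{(q_*,a_1)}$. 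Finally it splits into two cases:
\begin{itemize}
\item if $q_*\geq N^{\varepsilon D^{j_0}}$, it applies \Cref{pcq} at $j_1=j_0$ with this composite approximant;
\item if $q_*<N^{\varepsilon D^{j_0}}$, it uses \Cref{ptheta} and \Cref{Va97Thm7.1} with $Q\gg q_*^{1/3}\geq\lambda(N)^{1/3}$.
\end{itemize}

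\textbf{What your route buys.} You keep the direct Dirichlet approximant at level $j_0$. The large-denominator case is then already handled by Step~1. All residual phases satisfy the uniform bound $|\vartheta_j|\leq 1/(q_jN^{d-j+1-\varepsilon D^j})$, and only one major-arc case remains. The cost is a uniqueness argument, which needs to be stated more sharply than in your sketch.

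\textbf{The uniqueness step made precise.} Put $Q'=|a_1|q_{j_0}$, and let $Q''$ be the denominator of $\frac{a_{j_0}q_1}{a_1q_{j_0}}$ in lowest terms. One gets
$\bigl|\a_{j_0}/\a_1-\frac{a_{j_0}q_1}{a_1q_{j_0}}\bigr|\leq\eta/(Q'X)$ with $\eta\ll q_1N^{-\varepsilon D^{j_0}}+N^{-1/2}=o(1)$. The dominant contribution is $2q_1X/N^{d-j_0+1-\varepsilon D^{j_0}}=2q_1N^{-\varepsilon D^{j_0}}$. Since $\eta<\frac12$ and $Q''\leq Q'\leq X/2$, every other reduced $a/q$ with $q\leq X$ satisfies
$\bigl|\a_{j_0}/\a_1-a/q\bigr|\geq\frac{1}{qQ''}-\frac{\eta}{Q''X}>\frac{1}{qX}$.
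Hence $\L_X(\a_{j_0}/\a_1)=\{Q''\}$, and so
$\lambda(N)\leq Q''\leq|a_1|q_{j_0}\ll\theta q_1q_{j_0}\leq\lambda(N)^{1/3}q_1q_{j_0}$.
This is the paper's inequality $q_{j_0}\geq q_*/|a_1|$, obtained in reverse.

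\textbf{Corrections to your Step 3.}
\begin{itemize}
\item No contradiction ever arises; only your second alternative occurs.
\item The remark that elements of $\L_X$ are at most $\max\L_X$ gives an inequality in the wrong direction for what you need.
\item ``The product of the denominators is small'' is not what forces uniqueness, since a priori $q_*$ could be as large as $X$. What forces it is the small factor $\eta$ in the error.
\item Before applying \Cref{Va97Thm7.1}, you still need to check that $(Qa_1/q_1,\dots,Qa_d/q_d,Q)=1$.
\end{itemize}
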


\begin{proof} 
Suppose first that there is some $1\leq j\leq d$ such that $\max\L_{N^{d-j+1-\varepsilon D^j}}(\a_j\theta)\geq N^{\varepsilon D^j}$.
Let $j_1$ be the smallest such index. Then $\max\L_{N^{d-j+1-\varepsilon D^j}}(\a_j\theta)<N^{\varepsilon D^j}$ for each $1\leq j<j_1$, and there are $a_{j_1},q_{j_1}\in\mathbb{Z}$ such that
\[
N^{\varepsilon D^{j_1}}\leq q_{j_1}\leq N^{d-{j_1}+1-\varepsilon D^{j_1}}
\]
and 
$$
\left|\a_{j_1}\theta-\frac{a_{j_1}}{q_{j_1}}\right|\leq\frac{1}{q_{j_1}N^{d-{j_1}+1-\varepsilon D^{j_1}}}\leq\frac{1}{q_{j_1}N^{\varepsilon D^{j_1}}}.
$$
Then \Cref{pfs0} follows by applying \Cref{pcq} to the polynomial $\theta p_\alpha(n)$, since $\lambda(N)\leq N^\varepsilon$.

So we assume that $\max\L_{N^{d-j+1-\varepsilon D^j}}(\a_j\theta)<N^{\varepsilon D^j}$ for every $1\leq j\leq d$.
For $j\neq j_0$, let
\[
q_j=\max\L_{N^{d-j+1-\varepsilon D^j}}(\a_j\theta),
\]
and let
\[
q_*=\max\L_{N^{d-j_0+1-2\varepsilon D^{j_0}}}(\a_{j_0}/\a_1).
\]
Choose $a_j,a_*\in\mathbb{Z}$ so that $\frac{a_j}{q_j}$ and $\frac{a_*}{q_*}$ are the corresponding closest approximants to $\a_j\theta$ and $\a_{j_0}/\a_1$, respectively; in particular, $(a_1,q_1)=(a_*,q_*)=1$.
Write 
\begin{align*}
\a_{j_0}\theta=\a_{1}\theta\cdot\frac{\a_{j_0}}{\a_1}=&\left(\frac{a_1}{q_1}+O\left(\frac1{q_1N^{d-\varepsilon D}}\right)\right)\cdot\left(\frac{a_*}{q_*}+O\left(\frac1{q_*N^{d-j_0+1-2\varepsilon D^{j_0}}}\right)\right)\\
=&\frac{a_1}{q_1}\cdot\frac{a_*}{q_*}+O_\alpha\left(\frac1{q_1N^{d-\varepsilon D}}+\frac{\theta}{q_*N^{d-j_0+1-2\varepsilon D^{j_0}}}\right),
\end{align*}
Note that $a_1\neq0$, because
$$
\left|\frac{a_1}{q_1}\right|\geq|\a_1\theta|-\frac{1}{q_1N^{d-\varepsilon D}}\geq\frac{|\a_1|}{N^{d-\varepsilon(D+1)}}-\frac{1}{q_1N^{d-\varepsilon D}}>0.
$$
Let $\frac{a_{j_0}}{q_{j_0}}$ be the reduced form of $\frac{a_1a_*}{q_1q_*}$, so that
\begin{equation}
\label{t5erdf89oire89dfi}
q_{j_0}=\frac{q_1q_*}{(q_1q_*,a_1a_*)}=\frac{q_1}{(q_1,a_*)}\cdot
\frac{q_*}{(q_*,a_1)}.
\end{equation}
Since $q_*\geq q_{j_0}/q_1$, $q_1\leq N^{\varepsilon D}$, and $0\leq \theta\leq N^{\varepsilon}$, we have $\frac{\theta}{q_*}\leq\frac{N^{\varepsilon(D+1)}}{q_{j_0}}$, so
$$
\a_{j_0}\theta=
\frac{a_{j_0}}{q_{j_0}}+O_\alpha\left(\frac1{N^{d-\varepsilon D}}+\frac{N^{\varepsilon(D+1)}}{q_{j_0}N^{d-j_0+1-2\varepsilon D^{j_0}}}\right).
$$

If $q_*\geq N^{\varepsilon D^{j_0}}$, then
$$
q_{j_0}\leq q_*q_1\leq N^{d-j_0+1-\varepsilon D^{j_0}},
$$
and, using \Cref{t5erdf89oire89dfi},
$$
q_{j_0}\geq\frac{q_*}{|a_1|}\gg_\alpha\frac{q_*}{\theta q_1}\geq N^{\frac34\varepsilon D^{j_0}}.
$$
Thus $q_{j_0}$ satisfies requirement (ii) of \Cref{pcq}, and the desired result follows.

So we assume that $q_*\leq N^{\varepsilon D^{j_0}}$. For $1\leq j\leq d$, let
$
\vartheta_j=\a_j\theta-a_j/q_j
$, so that
$$|\vartheta_j|\leq\frac{1}{q_jN^{d-j+1-\varepsilon D^j}}\textup{ for }j\neq j_0\textup{ and }|\vartheta_{j_0}|\ll_\alpha\frac{N^{\varepsilon(D+1)}}{N^{d-j_0+1-2\varepsilon D^{j_0}}}.$$
Let $\psi(x)=\vartheta_1 x^d+\cdots+\vartheta_d x$.
Then
$$
\Psi:=\max_{N-N^{1-\varepsilon}\leq x\leq N}|\psi'(x)|\leq d
\sum_{j=1}^d|\vartheta_j|N^{d-j}\ll_\alpha N^{3\varepsilon D^d-1}.
$$
Let $Q=\mathrm{lcm}(q_1,\ldots,q_d)$ and let
\[
p(x)=Q\left(\frac{a_1x^d}{q_1}+\cdots+\frac{a_dx}{q_d}\right).
\]
By \Cref{ptheta},
\begin{align}
\label{43rewfsido94eriofs}
\sum_{N-Y\leq n< N}e\left(p_\alpha(n)\theta\right)=&
\sum_{N-Y\leq n< N}e\left(\frac{p(n)}Q+\psi(n)\right)\\
\notag=&
\frac1Q\sum_{r=1}^Q e\left(\frac{p(r)}Q\right)\sum_{N-Y\leq n< N}e\left(\psi(n)\right)+O\left(Q N^{3\varepsilon D^d}\right).
\end{align}
We claim that $(Qa_1/q_1,\ldots,Qa_d/q_d,Q)=1$. 
Suppose not, and let $\rho$ be a prime dividing $(Qa_1/q_1,\ldots,Qa_d/q_d,Q)$.
By definition of $Q$, there exists $1\leq j\leq d$ such that
$\rho$ does not divide $Q/q_j$. Since $\rho$ divides $Qa_j/q_j$, it follows that $\rho$ divides $a_j$. Also, since $\rho$ divides $Q$ but not $Q/q_j$, it divides $q_j$, contradicting
$(a_j,q_j)=1$. Therefore, we may apply \Cref{Va97Thm7.1}:
\begin{equation}
\label{43rewfsido94eriofs2}
\frac1Q\sum_{r=1}^Q e\left(\frac{p(r)}Q\right)=\frac1Q\sum_{r=1}^Q e\left(\frac{\frac{Qa_1}{q_1} r^d+\cdots+\frac{Qa_d}{q_d} r}{Q}\right)\ll_{d,\varepsilon} Q^{-\frac1d+\varepsilon}.
\end{equation}
Note that $Q\geq\max\{q_1,q_{j_0}\}\gg_{\a_1} q_*^{\frac13}$; to check the latter inequality we use that, if
$q_1\leq q_*^{\frac13}$, then as $\lambda(N)=\min(N^\varepsilon,q_*)\leq q_*$ and $\theta\leq\lambda(N)^{\frac13}$, we have
$$
q_{j_0}\geq\frac{q_*}{|a_1|}\geq\frac{q_*}{2|\a_1|\theta q_1}\geq \frac{q_*^{\frac13}}{2|\a_1|}.
$$
Also, $q_{j_0}\leq q_*q_1\leq N^{2\varepsilon D^{j_0}}$, and hence
\[
Q\leq q_1\cdots q_{j_0}\cdots q_d\leq N^{2\varepsilon D^dd}.
\]
It now follows from \Cref{43rewfsido94eriofs,43rewfsido94eriofs2} that
\begin{align*}
\sum_{N-Y\leq n< N}e\left(p_\alpha(n)\theta\right)
&\ll \frac{Y}{Q}\bigg|\sum_{r=1}^Q e\left(\frac{p(r)}Q\right)\bigg|+N^{5\varepsilon D^dd}\\
&\ll_{d,\varepsilon}\frac{N^{1-\varepsilon}}{Q^{\frac{1}{d}-\varepsilon}}
\ll_\alpha\frac{N^{1-\varepsilon}}{q_*^{\frac1{3.5d}}}
\ll\frac{N^{1-\varepsilon}}{\lambda(N)^{\frac1{3.5d}}}.
\end{align*}
Therefore, by increasing $N_0$ if necessary, for all $N\geq N_0$ we have
\begin{equation}
\left|\sum_{N-Y\leq n< N}e\left(p_\alpha(n)\theta\right)\right|\leq\frac{N^{1-\varepsilon}}{\lambda(N)^{\frac{1}{4d}}}.\qedhere
\end{equation}
\end{proof}

\begin{lemma}\label{pff}
There is $N_0=N_0(\alpha)$ such that, for all $N\geq N_0$, all $\b\in\R$, and all $\delta\geq\lambda(N)^{-\frac{1}{8d}}$,
\[
|\{n\in[N-N^{1-\varepsilon},N]:\,\|p_\alpha(n)+\b\|\leq \delta\}|\ll\delta N^{1-\varepsilon}.
\]
\end{lemma}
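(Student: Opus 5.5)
The plan is to combine \Cref{QuantitativeUnifDist} with the exponential sum bound of \Cref{pfnbeta}. Apply \Cref{QuantitativeUnifDist} with $p(n)=p_\alpha(n)+\b$, $X=\lceil N-N^{1-\varepsilon}\rceil$ and $Y=\lfloor N^{1-\varepsilon}\rfloor$ (the endpoint discrepancy costs $O(1)$, which is acceptable because $\delta N^{1-\varepsilon}\geq N^{1-\varepsilon}\lambda(N)^{-1/(8d)}\geq N^{1-\varepsilon-\varepsilon/(8d)}\to\infty$). Two preliminary reductions come first. If $\delta\geq 1/4$, the bound is trivial with implied constant $4$, so we may assume $\delta<1/4$. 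The shift $\b$ only multiplies each sum $\sum e(j(p_\alpha(n)+\b))$ by a unimodular constant, so it does not affect the hypothesis of \Cref{QuantitativeUnifDist}.

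It remains to verify that, for every integer $1\leq j\leq \delta^{-2}$,
\[
\Bigl|\sum_{X\leq n<X+Y}e\bigl(j\,p_\alpha(n)\bigr)\Bigr|<\delta^2 Y.
\]
We invoke \Cref{pfnbeta} with $\theta=j$; this needs $N^{\varepsilon(D+1)-d}\leq j\leq\lambda(N)^{1/3}$. The lower bound holds because $j\geq1$ and $\varepsilon(D+1)<d$. For the upper bound, $j\leq\delta^{-2}\leq\lambda(N)^{1/(4d)}\leq\lambda(N)^{1/3}$ since $d\geq2$. \Cref{pfnbeta} then gives
\[
\Bigl|\sum e(j p_\alpha(n))\Bigr|\leq \frac{N^{1-\varepsilon}}{\lambda(N)^{1/(4d)}}\leq \delta^2 N^{1-\varepsilon}\cdot\frac{\lambda(N)^{1/(4d)}}{\lambda(N)^{1/(4d)}}.
\]
Here we used $\delta^2\geq\lambda(N)^{-1/(4d)}$. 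This yields $\leq\delta^2 N^{1-\varepsilon}$, which is borderline against the required strict inequality $<\delta^2Y$ with $Y\approx N^{1-\varepsilon}$.

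This borderline comparison is the main obstacle, since the exponents match exactly. I would handle it by adjusting constants. One option is to apply \Cref{QuantitativeUnifDist} with $\delta'=\delta/2$; then $j$ ranges up to $4\delta^{-2}\leq 4\lambda(N)^{1/(4d)}$, which is still at most $\lambda(N)^{1/3}$ for large $N$, and we need the sum to be below $\tfrac14\delta^2 Y$. To gain this factor of $4$, go back to the final display in the proof of \Cref{pfnbeta}, which actually gives the stronger bound $\ll N^{1-\varepsilon}\lambda(N)^{-1/(3.5d)}$ in the structured case and $N^{1-2\varepsilon}$ in the \Cref{pcq} case. Both beat $\tfrac14\delta^2N^{1-\varepsilon}\geq\tfrac14 N^{1-\varepsilon}\lambda(N)^{-1/(4d)}$ once $N\geq N_0(\alpha)$, because $\lambda(N)\to\infty$ and $\lambda(N)\leq N^{\varepsilon}$.

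With the hypothesis verified, \Cref{QuantitativeUnifDist} gives
\[
|\{n:\|p_\alpha(n)+\b\|\leq\delta/2\}|\ll\delta N^{1-\varepsilon}.
\]
Covering the $\delta$-neighbourhood of $-\b$ by two $\delta/2$-neighbourhoods, centred at $-\b\pm\delta/2$, then gives the stated bound with only a doubling of the constant.
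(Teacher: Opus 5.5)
Your proposal is correct and follows the paper's proof, which treats $\delta\geq 1/4$ as trivial and otherwise applies \Cref{QuantitativeUnifDist} to $p_\alpha+\b$ on $[N-N^{1-\varepsilon},N]$, bounding the exponential sums by \Cref{pfnbeta} with $\theta=j$. The paper does not address the borderline constant you noticed. Your fix is valid: halve $\delta$ and use the slightly stronger bounds $N^{1-\varepsilon}\lambda(N)^{-1/(3.5d)}$ and $N^{1-2\varepsilon}$ available inside the proof of \Cref{pfnbeta}. Alternatively, one can observe that the proof of \Cref{QuantitativeUnifDist} still works under a hypothesis of the form $\leq C\delta^2 Y$, at the cost of only a constant factor.
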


\begin{proof}
If $\delta\geq1/4$, the result is trivial. Otherwise, apply \Cref{QuantitativeUnifDist} to $p_\alpha(x)+\b$ on the interval $[N-N^{1-\varepsilon},N]$ and \Cref{pfnbeta}.
\end{proof}

We now combine the previous lemmas into the final estimate needed for the proof of the case
$r_\alpha(x)\ll_\alpha\log x$ of \Cref{texp}.

\begin{lemma}
\label{438erwiofsd}
There is $N_0=N_0(\alpha)$ such that, for all $N\geq N_0$, all $\theta\in\mathbb{R}$, and all $Y\leq N^{1-\varepsilon}$,
\[
\sum_{N-Y\leq n< N}\alpha'(n)e\left(\lfloor\alpha(n)\rfloor\theta\right)=
\sum_{\alpha(N-Y)\leq m< 
\alpha(N)}e(m\theta)+O\left(N^{d-\varepsilon}\lambda(N)^{-\frac{1}{10d}}\right).
\]
\end{lemma}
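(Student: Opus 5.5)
We split according to the size of $\|\theta\|$, as in the proof of \Cref{lemmaefnemth}. First note that on $[N-Y,N]$ with $Y\le N^{1-\varepsilon}$ the weight $\alpha'(n)$ is essentially constant. Indeed, $\alpha'(n)=\alpha'(N)+O(N^{d-1-\varepsilon})$, and $\alpha'(N)\asymp N^{d-1}$. Also, the right-hand sum has about $\alpha'(N)Y\ll N^{d-\varepsilon}$ terms. So, up to the allowed error, the claim says that $\alpha'(N)\sum e(\lfloor\alpha(n)\rfloor\theta)$ matches a geometric sum.

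\emph{Major arcs.} Suppose $|\theta|\le N^{\varepsilon(D+1)-d}$ (after reducing $\theta$ mod $1$). Replace $\lfloor\alpha(n)\rfloor$ by $\alpha(n)$ at cost $O(|\theta|\sum\alpha'(n))=O(|\theta| N^{d-\varepsilon})$. This is too big as stated, so we use \Cref{major} directly instead. Its proof compares $\alpha^\Delta(n)e(\alpha(n)\theta)$ with $\int e(\lfloor x\rfloor\theta)dx$, and the same argument applies with $e(\lfloor\alpha(n)\rfloor\theta)$ in place of $e(\alpha(n)\theta)$. The error per step is $|\theta|\alpha'(n)$ times the step length $\alpha'(n)$, giving a total of $O(Y|\theta|\alpha'(N)^2+Y|\alpha''(N)|+\alpha'(N))$. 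With $|\theta|\le N^{\varepsilon(D+1)-d}$ this is $O(N^{1-\varepsilon}N^{\varepsilon(D+1)-d}N^{2d-2}+N^{d-1})$, which is $\ll N^{d-1+\varepsilon D}$. Since $\varepsilon D<1-\varepsilon$, this is acceptable.

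\emph{Minor arcs.} Suppose $N^{\varepsilon(D+1)-d}\le\|\theta\|\le 1/2$. The geometric sum is $O(\|\theta\|^{-1})=O(N^{d-\varepsilon(D+1)})$, which is acceptable. By \Cref{ChangingCoefs} with $\beta=\alpha'$, it then suffices to show $\sum_{N-Y'\le n<N}e(\lfloor\alpha(n)\rfloor\theta)\ll N^{1-\varepsilon}\lambda(N)^{-1/(10d)}$ uniformly in $Y'\le Y$. Apply \Cref{fit} with $H\approx\lambda(N)^{1/(8d)}$ (so $H\le\lambda(N)^{1/3}$), which reduces matters to sums $\sum e(\beta\alpha(n))$ with $\beta=h+\theta$, $|h|\le H$, and $\beta=h\le H^2$. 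Now remove $r_\alpha$. Since $r_\alpha\ll\log x$, we have $r_\alpha'(x)\prec 1/x$ (as in the earlier $\log$ arguments), so $\beta r_\alpha$ varies by $O(H^2N^{-\varepsilon})=o(1)$ on $[N-Y,N]$. Partial summation (\Cref{SumParts}) then replaces $e(\beta\alpha(n))$ by $e(\beta p_\alpha(n))$ up to a unimodular constant and a small loss. Then \Cref{pfnbeta} applies to every such $\beta$, because $N^{\varepsilon(D+1)-d}\le\|\theta\|\le|\beta|\le\lambda(N)^{1/3}$. It gives $\ll N^{1-\varepsilon}\lambda(N)^{-1/(4d)}$. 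Summing $\sum_h\|\theta\|/|h+\theta|\ll\log H$ and adding the \Cref{fit} error $N^{1-\varepsilon}\log^2H/H$ yields $N^{1-\varepsilon}\lambda(N)^{-1/(8d)}\log^2\lambda(N)$. This is $\ll N^{1-\varepsilon}\lambda(N)^{-1/(10d)}$, and multiplying by $\alpha'(N)\ll N^{d-1}$ gives the claimed error.

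\emph{Main obstacle.} The delicate point is the major-arc range. There, the naive floor-removal loses $|\theta|N^{d-\varepsilon}$, so one must genuinely use the counting argument of \Cref{major}, with floors kept. Equivalently, one can use \Cref{pff} to control the $n$ with $\{\alpha(n)\}$ near $0$ or $1$, but this is only needed if the integral comparison fails to absorb $\lfloor\cdot\rfloor$ cleanly. A second thing to check carefully is that removing $r_\alpha$ is valid uniformly for $|\beta|\le H^2$; this works because $H$ is a tiny power of $\lambda(N)\le N^\varepsilon$.
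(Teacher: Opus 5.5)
Your proof is correct, but in the minor-arc range it takes a different route from the paper. The paper first replaces $\lfloor\alpha(n)\rfloor$ by $\lfloor p_\alpha(n)+r_\alpha(N)\rfloor$ on all of $[N-Y,N)$. These differ only for $n$ with $\|p_\alpha(n)+r_\alpha(N)\|<|r_\alpha(N)-r_\alpha(n)|\ll\lambda(N)^{-1/(8d)}$, and \Cref{pff} (built from \Cref{QuantitativeUnifDist} and \Cref{pfnbeta}) shows there are only $O(N^{1-\varepsilon}\lambda(N)^{-1/(8d)})$ such $n$. \Cref{fit} is then applied to the shifted polynomial, where the shift contributes only a unimodular factor.

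You instead apply \Cref{fit} to $\alpha(n)$ itself and remove $r_\alpha$ inside each sum $\sum e(\beta\alpha(n))$. This is legitimate because every frequency that occurs satisfies $|\beta|\le H^2=\lambda(N)^{1/(4d)}\le N^{\varepsilon/(4d)}$. Hence even termwise $|e(\beta r_\alpha(n))-e(\beta r_\alpha(N))|\ll N^{\varepsilon/(4d)-\varepsilon}$, and the total loss is comfortably below $N^{1-\varepsilon}\lambda(N)^{-1/(4d)}$. Your route makes \Cref{pff} and \Cref{QuantitativeUnifDist} superfluous for this lemma. The paper's route disposes of $r_\alpha$ once, at the level of the integer values $\lfloor\alpha(n)\rfloor$, so everything afterwards concerns only a shifted polynomial.

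The ``main obstacle'' you describe on the major arc does not exist. There $\|\theta\|\le N^{\varepsilon(D+1)-d}$, so replacing $\lfloor\alpha(n)\rfloor$ by $\alpha(n)$ (or by $p_\alpha(n)+r_\alpha(N)$) costs $O(\|\theta\|N^{d-\varepsilon})=O(N^{\varepsilon D})$, which is negligible. This is exactly what the paper does before applying \Cref{major} to $p_\alpha+r_\alpha(N)$. Your floor-kept variant of \Cref{major} is also correct, since the per-step error $|\theta|(\alpha'(n)+1)$ is unchanged; it is just unnecessary.

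Two small slips. First, $r_\alpha\ll\log x$ gives $r_\alpha'(x)=O(1/x)$, not $r_\alpha'(x)\prec 1/x$ (consider $r_\alpha=\log x$); you only use the $O$-bound, so nothing breaks. Second, the acceptability check for $N^{d-1+\varepsilon D}$ should be made against $N^{d-\varepsilon}\lambda(N)^{-1/(10d)}\ge N^{d-\varepsilon-\varepsilon/(10d)}$. That requires $\varepsilon\bigl(D+1+\frac{1}{10d}\bigr)\le 1$ rather than $\varepsilon D<1-\varepsilon$, which still holds with plenty of room.
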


\begin{proof} 
Let $\b_0=r_\alpha(N)$. If $N$ is big enough, then
\[
\b_0-r_\alpha(n)\ll\lambda(N)^{-\frac{1}{8d}}
\]
for all $n\in [N-N^{1-\varepsilon},N]$, since $r_\alpha'(x)=O(x^{-1})$ and $\lambda(N)^{-\frac{1}{8d}}\gg N^{-\varepsilon/2}$.
Note that
\[
\lfloor p_\alpha(n)+r_\alpha(n)\rfloor\not=\lfloor p_\alpha(n)+\b_0\rfloor
\]
only if 
\[
\|p_\alpha(n)+\b_0\|<|\b_0-r_\alpha(n)|.
\]
In view of \Cref{pff}, the number of such $n$ in $[N-N^{1-\varepsilon},N]$ is at most $O(N^{1-\varepsilon}\lambda(N)^{-\frac{1}{8d}})$. Therefore, for any $\theta\in\R$,
\[
\sum_{N-Y\leq n< N}e\left(\lfloor\alpha(n)\rfloor\theta\right)=
\sum_{N-Y\leq n< N}e\left(\lfloor p_\alpha(n)+\b_0\rfloor\theta\right)+O\left(N^{1-\varepsilon}\lambda(N)^{-\frac{1}{8d}}\right).
\]
Suppose first that $\|\theta\|\geq N^{\varepsilon(D+1)-d}$ and let $H=\lambda(N)^{\frac{1}{8d}}$. By periodicity and conjugation, we may assume $\theta\in(0,1/2]$. Applying \Cref{fit,pfnbeta},
we have
\begin{align*}
&\sum_{N-Y\leq n< N}e\left(\lfloor p_\alpha(n)+\b_0\rfloor\theta\right)\\
\ll&\sum_{|h|\leq H}\frac{\|\theta\|}{|h+\theta|}\left|\sum_{N-Y\leq n< N}e((h+\theta)(p_\alpha(n)+\b_0))\right|\\
&+\log^2 H\sup_{\substack{1\leq h\leq H^2\\
h\in\mathbb{N}}}
\bigg|\sum_{N-Y\leq n< N}e\left(h(p_\alpha(n)+\b_0)\right)\bigg|+O\left(\frac{Y\log^2 H}{H}\right)\\
\ll&\log(H)\frac{N^{1-\varepsilon}}{\lambda(N)^{\frac1{4d}}}+\log^2H\frac{N^{1-\varepsilon}}{\lambda(N)^{\frac1{4d}}}+\frac{N^{1-\varepsilon}\log^2H}{H}
\ll N^{1-\varepsilon}\lambda(N)^{-\frac{1}{10d}}.
\end{align*}
Thus, using \Cref{ChangingCoefs} to change the coefficients to $\alpha'(n)$ we obtain 
$$
\sum_{N-Y\leq n< N}\alpha'(n)e\left(\lfloor\alpha(n)\rfloor\theta\right)\ll 
N^{d-\varepsilon}\lambda(N)^{-\frac{1}{10d}}.
$$

On the other hand,
if $\|\theta\|\leq N^{\varepsilon(D+1)-d}$, we may assume $\theta\in(-1/2,1/2]$, so that $\|\theta\|=|\theta|$.
Since $\lfloor\alpha(n)\rfloor=p_\alpha(n)+\b_0+O(1)$ for $N-Y\leq n<N$, $r_\alpha'(N)=O_\alpha(N^{-1})$ and
using \Cref{major} in the last step, for big enough $N$ we have
\begin{align*}
&\sum_{N-Y\leq n< N}\alpha'(n)e\left(\lfloor\alpha(n)\rfloor\theta\right)\\
=&\sum_{N-Y\leq n< N}p_\alpha'(n)e\left(\lfloor\alpha(n)\rfloor\theta\right)+O_\alpha(N^{-\varepsilon})\\
=&\sum_{N-Y\leq n< N}p_\alpha'(n) e\left((p_\alpha(n)+\b_0)\theta\right)+O_\alpha\left(N^{d-\varepsilon}\|\theta\|+N^{-\varepsilon}\right)\\
=&\sum_{p_\alpha(N-Y)+\b_0\leq m< 
p_\alpha(N)+\b_0}e(m\theta)+O\left(p_\alpha'(N)^2N^{1-\varepsilon}\|\theta\|+p_\alpha''(N)N^{1-\varepsilon}+p_\alpha'(N)\right)\\
=&\sum_{\alpha(N-Y)\leq m< 
\alpha(N)}e(m\theta)+O\left(N^{d-\frac12}\right).\qedhere
\end{align*}
\end{proof}

\begin{proof}[Proof of \Cref{texp} when $r_\alpha(x)=O(\log x)$]

Let $a_0=N,a_{j+1}=a_{j}-a_{j}^{1-\varepsilon}$ and $k:=\min\left\{j\in\mathbb{N};a_j\leq\frac{N}{\lambda(N)}\right\}$. Then we have $k\leq N$, and for each $j=0,\dots,k-1$, by \Cref{438erwiofsd},
\begin{equation}
\label{4refd9oil}
\sum_{a_{j+1}\leq n<a_{j}}\alpha'(n)e\left(\lfloor\alpha(n)\rfloor\theta\right)-\sum_{\alpha(a_{j+1})\leq m<\alpha(a_{j})}e(m\theta)\ll\frac{a_{j}^{d-\varepsilon}}{\lambda(a_j)^{\frac{1}{10d}}}.
\end{equation}
Also note that $a_{j}^{d-\varepsilon}\leq N^{d-1}a_{j}^{1-\varepsilon}=N^{d-1}(a_j-a_{j+1})$, and let $\Lambda:\mathbb{N}\to\mathbb{N};\Lambda(N)=\min(\lambda(n);n\geq N)$, so that $\lim_{N\to\infty}\Lambda(N)=+\infty$. We conclude:
\begin{align*}
&\sum_{1\leq n< N}\alpha'(n)e\left(\lfloor\alpha(n)\rfloor\theta\right)-\sum_{1\leq m< \alpha(N)}e(m\theta)\\
=&\sum_{a_k\leq n< N}\alpha'(n)e\left(\lfloor\alpha(n)\rfloor\theta\right)-
\sum_{\alpha(a_k)\leq m< \alpha(N)}e(m\theta)+O\left(\alpha\left(\frac{N}{\lambda(N)}\right)\right)\\
=&
\sum_{j=0}^{k-1}
\left(\sum_{a_{j+1}\leq n<a_{j}}\alpha'(n)e\left(\lfloor\alpha(n)\rfloor\theta\right)-\sum_{\alpha(a_{j+1})\leq m<\alpha(a_{j})}e(m\theta)\right)
+O\left(\frac{N^d}{\lambda(N)}\right)\\
\ll&\sum_{j=0}^{k-1}\frac{a_j^{d-\varepsilon}}{\lambda(a_j)^{\frac{1}{10d}}}+\frac{N^d}{\lambda(N)}
\ll N^{d-1}\sum_{j=0}^{k-1}\frac{a_j-a_{j+1}}{\lambda(a_j)^{\frac{1}{10d}}}+\frac{N^d}{\lambda(N)}\ll\frac{N^d}{\Lambda\left(\frac{N}{\lambda(N)}\right)^{\frac{1}{10d}}}+\frac{N^d}{\lambda(N)}.\qedhere
\end{align*}
\end{proof}

\msection{Proof of \Cref{mainp} for sub-polynomials of type ~\protect\hyperlink{CaseIII}{(III)}}
\label{ProofOfmainp}

We have already proved \Cref{mainp} for the sub-polynomials satisfying the hypotheses of \Cref{maind}, namely sub-polynomials of types ~\hyperlink{CaseI}{(I)} and ~\hyperlink{CaseII}{(II)}. Indeed, for any finite coloring of $\mathbb{N}$, one color class has positive upper density, and \Cref{maind} applies to that color class. Thus it remains to treat sub-polynomials $\alpha(x)$ of type ~\hyperlink{CaseIII}{(III)}, namely those of the form
\begin{equation*}
\alpha(x)=c(p_\alpha(x)+r_\alpha(x)),
\end{equation*}
where $c>0$, $p_\alpha\in\Q[x]$ and $1\prec r_\alpha(x)\ll_\alpha\log(x)$.

The section is organized as follows. In \Cref{SecExpPolyRat}, we prove \Cref{ErgodicThmIntegerPolys}, the ergodic approximation theorem for integer polynomials needed in this remaining case. Then, in \Cref{SecColorralphaSmall}, we use \Cref{ErgodicThmIntegerPolys} to prove \Cref{DistanceColorThmDensityVersionCaseIII}, a finitistic result which implies the remaining case of \Cref{mainp}.

\msubsection{A version of \Cref{tergodic} for a class of integer polynomials}
\label{SecExpPolyRat}
\Cref{tergodic} need not hold when $\alpha$ is an integer polynomial (in fact, even \Cref{maindc}, which follows from \Cref{tergodic}, can fail in this case; see \Cref{SecObstructionsDensityTheorem}). However, we still have the following variant of \Cref{tergodic} along arithmetic progressions.

\begin{theorem}\label{ErgodicThmIntegerPolys}
Let $d\in\mathbb{N}$, $\varepsilon=100^{-d^2}$, and let $p\in\mathbb{Z}[x]$ be given by
\begin{equation*}
p(x)=\b_1x^d+\b_2x^{d-1}+\cdots+\b_{d-1}x^2+ux,\textup{ with }\b_1>0,u\in\{-1,1\}.
\end{equation*}
There exists $w_0(p)\in\mathbb{N}$ such that, for all $w,N\in\N$ and all $N\in\mathbb{N}$ satisfying $w\geq w_0$ and $1\leq w\leq\left\lfloor\varepsilon^3\log(N)\right\rfloor$, the following holds. Define $W=\textup{lcm}(1,2,\dots,w)$ and $P(n)=p(Wn)\in\mathbb{Z}[x]$. Then, for all m.p.s. $(\mathcal{X},\mathcal{B},\mu,T)$ and for all $f\in L^2(\mu)$, we have
\begin{align*}
\Bigg\|\frac{1}{P(N)}
\sum_{1\leq n< N}P'(n)T^{P(n)}f
-\frac{W}{P(N)}\sum_{\substack{1\leq m<\frac{P(N)}{W}}}T^{Wm}f\Bigg\|_2
\ll_d\frac{\|f\|_2}{w^{\frac{1}{d}-\varepsilon}}.
\end{align*}

More generally, for all $1\leq Y\leq N-N^{1-\varepsilon}$ we have
\begin{align*}
\Bigg\|\frac{1}{P(N)-P(Y)}
\sum_{Y\leq n< N}P'(n)T^{P(n)}f
-\frac{W}{P(N)-P(Y)}\sum_{\substack{\frac{P(Y)}{W}\leq m<\frac{P(N)}{W}}}T^{Wm}f\Bigg\|_2
\ll_d\frac{\|f\|_2}{w^{\frac{1}{d}-\varepsilon}}.
\end{align*}
\end{theorem}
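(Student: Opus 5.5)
The plan is to follow the same spectral route used for \Cref{tergodic}. By the Herglotz argument in the proof of \Cref{tunitary} from \Cref{texp}, it suffices to prove the scalar estimate
\[
\frac{1}{P(N)-P(Y)}\sum_{Y\leq n<N}P'(n)e(P(n)\theta)-\frac{W}{P(N)-P(Y)}\sum_{\frac{P(Y)}{W}\leq m<\frac{P(N)}{W}}e(Wm\theta)\ll_d w^{-\frac1d+\varepsilon}
\]
uniformly in $\theta\in\R$. The second average equals $1$ when $W\theta\in\Z$ and is essentially zero otherwise (it is $O(\|W\theta\|^{-1}W/P(N))$). The first statement is the case $Y=1$, up to the negligible initial segment. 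For the general case I will cut $[Y,N)$ dyadically, or with steps $a_{j+1}=a_j-a_j^{1-\varepsilon}$ as in the end of \Cref{SecErgralphaSmall}. Using \Cref{ChangingCoefs}, this reduces matters to bounding the unweighted sums $\sum_{X\leq n<X+L}e(P(n)\theta)$ on blocks of length $L\approx X^{1-\varepsilon}$.

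On each block I run a major/minor arc dichotomy for $\theta$, written $\theta=\frac{a}{q}+\vartheta$ via Dirichlet. Note that $P(n)=\b_1W^dn^d+\cdots+uWn$.

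\emph{Minor arcs.} Suppose some coefficient $\b_j W^{d-j+1}\theta$ has a denominator in the range $[N^{c\varepsilon},N^{d-j+1-c\varepsilon}]$. Then \Cref{pcq}, via \Cref{weyl}, gives a power saving $N^{-2\varepsilon}$ on the block. Since $w\leq\varepsilon^3\log N$ forces $W\leq e^{2w}\leq N^{2\varepsilon^3}$, this saving beats $w^{-1/d}$.

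\emph{Major arcs.} Otherwise every coefficient is within $N^{-(d-j+1)+c\varepsilon}$ of a rational with small denominator $q_j$. Then \Cref{ptheta} factors the block sum as
\[
\frac1Q\sum_{r=1}^Q e\Bigl(\frac{\tilde P(r)}{Q}\Bigr)\cdot\sum e(\psi(n)),
\]
where $\psi$ is the slowly varying part. Write $\theta\approx a/q$ in lowest terms. The key arithmetic point is the linear coefficient $uW$ with $u=\pm1$.

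If $q\mid W$, then $W\theta\in\Z$ up to the small error $\vartheta$. In that case $e(P(n)\theta)=e(P(n)\vartheta)$ exactly on the rational part, because every coefficient of $P$ is divisible by $W$. The comparison with the second average then follows from \Cref{major}, applied to $P$ and $\vartheta$, just as in \Cref{lemmaefnemth}. Both sides are then compared with $\sum e(Wm\vartheta)$.

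If $q\nmid W$, then $q$ has a prime-power factor exceeding $w$. Because $u=\pm1$, the complete sum $\frac1q\sum_{r\leq q}e(P(r)a/q)$ satisfies the coprimality condition $(q,\text{coeffs})$ after removing the part of $q$ dividing $W$. More precisely, write $q=q'q''$ with $q''$ the part coprime to, or not dividing, $W$. One checks that $(q'',\,\text{coefficients of }P\,a/q)=1$ thanks to the term $uWa$, and then \Cref{Va97Thm7.1} gives a bound of $q''^{-1/d+\varepsilon}\ll w^{-1/d+\varepsilon}$. For the multiplicativity of complete sums, I will factor via CRT. Meanwhile the second average is $O(\|W\theta\|^{-1}W/P(N))$, and this is small unless $\vartheta$ is tiny, in which case $W\theta$ is close to $Wa/q\notin\Z$ with denominator at most $N^{c\varepsilon}$, so the average is negligible anyway.

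The main obstacle is this arithmetic step: making the gcd bookkeeping work for $P(n)=p(Wn)$ so that \Cref{Va97Thm7.1} really yields a decay in $w$ rather than in $q$. I would try first to reduce to the prime-power component $\rho^k\| q$ with $\rho^k\nmid W$, where $\rho^k>w$, using CRT factorization of the complete exponential sum. The remaining work is to check that the error terms from \Cref{ptheta}, namely $Q+Q\Psi L$, and from summation by parts stay below $N^{d-\varepsilon/2}$ on each block. This holds because $Q,W\leq N^{O(\varepsilon^2)}$. Summing the block errors then gives the bound $\ll_d w^{-1/d+\varepsilon}$ uniformly.
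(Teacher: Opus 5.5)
Your architecture matches the paper's: the Herglotz reduction to a uniform exponential-sum estimate, short blocks glued with \Cref{ChangingCoefs}, \Cref{major} near integer frequencies, \Cref{ptheta} plus \Cref{Va97Thm7.1} for small denominators, and Weyl-type bounds for large ones. The gap sits exactly at the step you flag as the main obstacle, and the fix you sketch does not close it.

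\textbf{Why your fix fails.} Take $\theta=a/q+\vartheta$ on a major arc with $q\nmid W$, say $q=2^{j+1}$ where $2^j\,\|\,W$ (so $q>w$).
\begin{itemize}
\item The part of $q$ coprime to $W$ is $1$, so it gives no decay.
\item The prime-power part $2^{j+1}$ fails the coprimality hypothesis of \Cref{Va97Thm7.1}, because the linear coefficient $uWa$ of $aP$ is divisible by $2^j$. The term $uWa$ helps only for primes $\rho>w$.
\item You can legitimately remove the common factor: the complete sum modulo $q$ equals one modulo $q''=q/(q,W)$ for the polynomial $aP/(q,W)$, and there coprimality does hold. But here $q''=2$, so \Cref{Va97Thm7.1} only gives $O(1)$.
\end{itemize}
In general, with $\rho^k\,\|\,q$ and $\rho^j\,\|\,W$, the effective modulus is $\rho^{k-j}$, which can be as small as $\rho$. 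So the fact $\rho^k>w$ never turns into a factor $w^{-1/d+\varepsilon}$.

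\textbf{The missing idea.} In this situation the complete sum is exactly $0$. Suppose $q''=q/(q,W)\ge2$ divides $W$. Every non-linear coefficient of $P/(q,W)$ is divisible by $W^2/(q,W)$, hence by $q''$. So modulo $q''$ the phase is the linear form $au\frac{W}{(q,W)}r$, whose coefficient is coprime to $q''$, and it sums to zero over a period. Only when $q''\nmid W$, which forces $q''>w$, do you need \Cref{Va97Thm7.1}; it then gives $q''^{-1/d+\varepsilon}<w^{-1/d+\varepsilon}$.

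\textbf{How the paper builds this in.} It rescales from the start: it works with $P(x)/W=\b_1W^{d-1}x^d+\cdots+\b_{d-1}Wx^2+ux$ at frequency $W\theta$, so $q''$ is simply the Dirichlet denominator of $W\theta$. It then splits into three cases:
\begin{itemize}
\item $q=1$, handled by \Cref{q=1SumClose};
\item $2\le q\le N^{\varepsilon/(3d)}$, where either $q\mid W$ and the complete sum $\frac1q\sum_r e(aur/q)$ vanishes, or $q\nmid W$, so $q>w$ and $(q,au)=1$;
\item $q\ge N^{\varepsilon/(3d)}$, handled by \Cref{betaaqRat}.
\end{itemize}
With this rescaling, your coefficient-by-coefficient approximation in the style of \Cref{pfnbeta} and the appeal to \Cref{pcq} also become unnecessary. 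All coefficients are integer multiples of the single frequency $W\theta$, so one Dirichlet approximation at scale $N^{d-\varepsilon(d+1)}$ suffices.
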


\begin{remark}\label{ErgodicThmIntegerPolysRmk}
With the same setup as in \Cref{ErgodicThmIntegerPolys}, for all $A,B\in\mathcal{B}$ we have
\begin{multline*}
\Bigg|\frac{1}{P(N)-P(Y)}
\sum_{Y\leq n< N}P'(n)\mu\left(A\cap T^{P(n)}B\right)
\\-\frac{W}{P(N)-P(Y)}\sum_{\substack{\frac{P(Y)}{W}\leq m<\frac{P(N)}{W}}}\mu\left(A\cap T^{Wm}B\right)\Bigg|
\ll_d\frac{\sqrt{\mu(A)\mu(B)}}{w^{\frac{1}{d}-\varepsilon}}.
\end{multline*}
Indeed, this follows by applying \Cref{ErgodicThmIntegerPolys} to $1_B$, taking the inner product with $1_A$, and using the Cauchy--Schwarz inequality.
\end{remark}

We now state the exponential sum version of \Cref{ErgodicThmIntegerPolys}. The proof of \Cref{ErgodicThmIntegerPolys} from \Cref{WeaktexpForIntegerPolys} is the same as the proof of \Cref{tergodic} from \Cref{texp}, and is omitted.

\begin{theorem}\label{WeaktexpForIntegerPolys}
Let $d\in\mathbb{N}$, $\varepsilon=100^{-d^2}$, and let $p\in\mathbb{Z}[x]$ be given by
\begin{equation*}
p(x)=\b_1x^d+\b_2x^{d-1}+\cdots+\b_{d-1}x^2+ux,\textup{ with }\b_1>0,u\in\{-1,1\}.
\end{equation*}
There exists $w_0(p)\in\mathbb{N}$ such that, for all $w,N\in\N$ and all $N\in\mathbb{N}$ satisfying $w\geq w_0$ and $1\leq w\leq\left\lfloor\varepsilon^3\log(N)\right\rfloor$, the following holds. Define $W=\textup{lcm}(1,2,\dots,w)$ and $P(n)=p(Wn)\in\mathbb{Z}[x]$. Then, for all $\theta\in\mathbb{R}$,
\begin{align*}
\frac{1}{P(N)}
\sum_{1\leq n< N}P'(n)e\left(P(n)\theta\right)=&\frac{W}{P(N)}\sum_{\substack{1\leq m<\frac{P(N)}{W}}}e(mW\theta)
+O_d\left(\frac{1}{w^{\frac{1}{d}-\varepsilon}}\right).
\end{align*}
More generally, for all $1\leq Y\leq N-N^{1-\varepsilon}$ we have
\begin{align}
\label{r4ew9dfsoiklrowiekdls}
\frac{1}{P(N)-P(Y)}
\sum_{Y\leq n< N}P'(n)e\left(P(n)\theta\right)=&\frac{W}{P(N)-P(Y)}\sum_{\substack{\frac{P(Y)}{W}\leq m<\frac{P(N)}{W}}}e(mW\theta)\\
&+O_d\left(\frac{1}{w^{\frac{1}{d}-\varepsilon}}\right).\notag
\end{align}
\end{theorem}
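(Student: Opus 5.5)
The plan is a major/minor arc decomposition in $\theta$, with the $W$-trick supplying equidistribution on the minor arcs. We run it on short blocks $[N'-N'^{1-\varepsilon},N')$ and then sum over blocks, as at the end of \Cref{SecErgralphaSmall}. Throughout, $P(n)=p(Wn)$ is an integer polynomial whose coefficients are $\b_k W^{d-k+1}$ for $k\le d-1$, with linear coefficient $uW$. Consequently $P(n)\equiv 0 \pmod W$, and $P(n)/W = \b_1W^{d-1}n^d+\cdots+un$ has content $1$, because the linear coefficient is $\pm1$. Set $\phi=W\theta$ and $\tilde P(n)=P(n)/W$. Then $e(P(n)\theta)=e(\tilde P(n)\phi)$, and the right-hand side of \eqref{r4ew9dfsoiklrowiekdls} becomes $\frac{W}{P(N)-P(Y)}\sum_{m} e(m\phi)$ over $m\in[\tilde P(Y),\tilde P(N))$. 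It therefore suffices to compare $\sum P'(n)e(\tilde P(n)\phi)$ with $W\sum_m e(m\phi)$ uniformly in $\phi\in\R/\Z$.

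\emph{Major arcs.} Suppose $\|\phi\|\le N^{\varepsilon(D+1)-d}$, or more generally $\phi$ lies within a comparable radius of $0$. Then \Cref{major}, applied to $\alpha=\tilde P$ on each short block, gives the comparison with error $O(N^{d-1/2})$ per unit of normalization. Note that the multiplier is $P'=W\tilde P'$, which is exactly what produces the factor $W$. Now suppose $\phi$ is near a rational $a/q$ with $1<q$ small, say $q\le N^{\varepsilon}$. Here we apply \Cref{ptheta} with $p=a\tilde P$ and $\psi(n)=(\phi-a/q)\tilde P(n)$. This factors out the complete sum $\frac1q\sum_{r\le q}e(a\tilde P(r)/q)$, which we bound as follows.
\begin{itemize}
\item If $q\le w$, then $q\mid W$. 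Every coefficient of $a\tilde P$ except the linear one $ua$ is divisible by $W$, hence by $q$. The complete sum is therefore $\frac1q\sum_r e(uar/q)=0$.
\item If $q>w$, the content condition $(q,a\cdot\text{coeffs})=1$ holds because of the linear coefficient. \Cref{Va97Thm7.1} then bounds the sum by $q^{-1/d+\varepsilon}\le w^{-1/d+\varepsilon}$.
\end{itemize}
In both cases the main term $W\sum_m e(m\phi)$ is small, since $\|\phi\|\ge 1/q$ forces the geometric sum to be $O(q)$, which is negligible after normalization. The difference is thus $O(w^{-1/d+\varepsilon})$. The remaining factor $\sum_n e(\psi(n))$ is trivially bounded by $Y$, with $P'$ reinserted via \Cref{ChangingCoefs}.

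\emph{Minor arcs.} Suppose no approximation with $q\le N^{\varepsilon D^j}$ exists in the chain of Dirichlet sets $\L_X$. Then \Cref{pcq}, applied to $\phi\tilde P$, gives $\sum e(\phi\tilde P(n))\ll N^{1-2\varepsilon}$ on each block. The main term is also small here, so this case contributes $O(N^{-\varepsilon})$. The requirement $w\le\varepsilon^3\log N$ guarantees $W=e^{(1+o(1))w}\le N^{\varepsilon^2}$. Hence the enlarged coefficients of $\tilde P$ do not spoil the ranges in \Cref{pcq}, \Cref{major} (where $\tilde P'\approx W^{d-1}N^{d-1}$), or \Cref{ptheta}. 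The case $q\le N^{\varepsilon D^j}$ at some intermediate step $j$ reduces, as in the proof of \Cref{pfnbeta}, to a common denominator $Q$ and \Cref{Va97Thm7.1}. Since the linear coefficient is $\pm1$, the same dichotomy $Q\le w$ versus $Q>w$ applies.

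Finally we sum the block estimates as in the end of \Cref{SecErgralphaSmall}; the hypothesis $Y\le N-N^{1-\varepsilon}$ ensures at least one full block. I expect the main obstacle to be the bookkeeping that makes the error bounds uniform in $w$ while the coefficients of $\tilde P$ grow with $W$. The key fact used throughout is $W\le N^{\varepsilon^2}$, which is why the constraint $w\le\varepsilon^3\log N$ is imposed.
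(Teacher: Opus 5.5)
Several pieces of your proposal are exactly the paper's argument: the reduction to $\tilde P=P/W$ at frequency $\phi=W\theta$, the arc at $0$ via \Cref{major}, the small-$q$ arcs via \Cref{ptheta} with the dichotomy ($q\mid W$ kills the complete sum; $q\nmid W$ forces $q>w$ and \Cref{Va97Thm7.1} applies), and the final summation over blocks. In the paper these are \Cref{q=1SumClose,RatEstimateBigSumqsmall,UniformSumEstimateRationalPolyGeneralLongIntervals}.

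You diverge on the remaining frequencies. The paper keeps a single Dirichlet approximation $\theta\in\M_{a,q}$. This suffices because every coefficient of $\theta\tilde P$ is an integer multiple of $\theta$. In \Cref{betaaqRat} it applies Weyl's inequality, through Wooley's transference (\Cref{weyl}), to the leading coefficient $\b_1W^{d-1}\theta$ when $q\geq N^{\varepsilon}$. When $N^{\varepsilon/(3d)}\le q<N^{\varepsilon}$ it uses \Cref{ptheta} with \Cref{Va97Thm7.1}. Either way it saves $q^{-\varepsilon^2}\le w^{-1}$.

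You instead import the coefficient-by-coefficient chain of \Cref{pcq,pfnbeta}, which was built for unrelated real coefficients. The \Cref{pcq} branch is fine: its constants depend only on $d,\varepsilon$, so the size of $W$ is harmless, and it even saves $N^{-\varepsilon}$. But the route is heavier than needed, and it leaves one step unjustified.

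That step is the branch in which every $q_j:=\max\L_{N^{d-j+1-\varepsilon D^j}}(c_j\phi)$ is below $N^{\varepsilon D^j}$ and $Q=\mathrm{lcm}(q_1,\dots,q_d)\le w$. Here $c_j$ is the coefficient of $x^{d-j+1}$ in $\tilde P$, so $c_d=u$ and $W\mid c_j$ for $j<d$. The complete sum is $\frac1Q\sum_{r=1}^{Q}e\bigl(\sum_j a_jr^{d-j+1}/q_j\bigr)$ with separately chosen approximants. The facts that $Q\mid W$ and that $\tilde P$ has linear coefficient $\pm1$ do not by themselves make it vanish: for $d=2$ and $a_1/q_1=a_2/q_2=\frac12$ the sum equals $1$.

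The missing idea is that the approximants are compatible.
\begin{itemize}
\item For $j<d$, the fractions $a_j/q_j$ and $c_jua_d/q_d$ both approximate $c_j\phi$. Their difference is at most $\frac{1}{q_jN^{d-j+1-\varepsilon D^j}}+\frac{|c_j|}{q_dN^{1-\varepsilon D^d}}$.
\item This is less than $\frac{1}{q_jq_d}$, because $W\le N^{\varepsilon^2}$ and $\varepsilon D^d$ is tiny. Hence the two fractions coincide.
\item Consequently $Q=q_d$, and the phase is $\frac{ua_d}{q_d}\tilde P(r)$.
\item If $q_d\mid W$ and $q_d>1$, the complete sum is $\frac1{q_d}\sum_{r=1}^{q_d}e(a_dr/q_d)=0$. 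If $q_d=1$, then $\|\phi\|\le N^{\varepsilon D-d}$, which is your arc at $0$.
\item Writing $\vartheta_1:=c_1\phi-a_1/q_1$, compatibility gives $|\phi-ua_d/q_d|=|\vartheta_1|/c_1\le N^{\varepsilon D-d}$. This is the precision \Cref{ptheta} needs; the linear-scale approximation alone would make $\Psi$ too large.
\end{itemize}
With this added, your all-small branch becomes exactly \Cref{RatEstimateBigSumqsmall} with $q=q_d\le N^{\varepsilon D^d}$, and the argument closes.
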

We will presently formulate a more convenient for our purposes version of \Cref{WeaktexpForIntegerPolys}, namely \Cref{UniformSumEstimateRationalPolyGeneral}. We then derive \Cref{WeaktexpForIntegerPolys} from this formulation and devote the remainder of the subsection to proving \Cref{UniformSumEstimateRationalPolyGeneral}.

The following notation will be used throughout the rest of this subsection. Fix $d\in\mathbb{N}$ and set
\begin{align*}
D&=2^{d+2}d,\\
\varepsilon&=100^{-d^2}.
\end{align*}
During the rest of this section, we let $w,N\in\N$ be natural numbers such that 
$$w_0\leq w\leq2\varepsilon^3\log N,$$ 
where $w_0$ is a sufficiently large natural number depending only on $d,\b_1,\dots,\b_{d-1}$. Define
\begin{align}
W=W(w):=\textup{lcm}(1,2,\dots,w)\label{DefFunctsN}
\end{align}
It is proved in \cite[Theorem 12]{RS62} that $W(w)\leq\exp(1.03883w)$, and therefore
\[
W\leq N^{3\varepsilon^3}.
\]

Finally, let $\b_1,\dots,\b_{d-1}\in\mathbb{Z}$ satisfy $\b_1>0$, let $u\in\{-1,1\}$, and define
\begin{equation}
\label{GoodPolynomial}
p(x)=\b_1W^{d-1}x^d+\b_2W^{d-2}x^{d-1}+\cdots+\b_{d-1}Wx^2+ux\in\mathbb{Z}[x].
\end{equation}

With this convention,
\[
p(N)-p(N-N^{1-\varepsilon})\asymp_d \b_1W^{d-1}N^{d-\varepsilon}.
\]

\begin{theorem}\label{UniformSumEstimateRationalPolyGeneral}
For all $\theta\in\mathbb{R}$ and all $Y\in\mathbb{N}$ with $1\leq Y\leq N^{1-\varepsilon}$, we have
\begin{align*}
\sum_{N-Y\leq n< N}p'(n)e\left(p(n)\theta\right)&=
\sum_{p(N-Y)\leq m< p(N)}e(m\theta)
+O_{d}\left(\b_1 W^{d-1}N^{d-\varepsilon}w^{\frac{-1}{d}+\varepsilon}\right)\\
&=
\sum_{p(N-Y)\leq m< p(N)}e(m\theta)
+O_{d}\left(\frac{p(N)-p(N-N^{1-\varepsilon})}{w^{\frac{1}{d}-\varepsilon}}\right).
\end{align*}
\end{theorem}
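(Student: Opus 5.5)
We split according to the size of $\|\theta\|$, as in \Cref{438erwiofsd}, and compare the weighted sum with the unweighted sum $\sum_{N-Y\leq n<N}e(p(n)\theta)$ via \Cref{ChangingCoefs}. Write $p(x)=\sum_{j=1}^{d}\a_jx^{d-j+1}$ with $\a_j=\b_jW^{d-j}$ for $j<d$ and $\a_d=u$. The choice of the last coefficient $u=\pm1$ together with the factors $W^{d-j}$ is what will prevent rational obstructions: for $\theta$ near a rational $a/q$ with $q\leq w$, the quantity $p(n)a/q\equiv u na/q\pmod 1$ is linear, so the associated complete sum vanishes; for $q>w$ coprime to $a$, the complete sum is small by \Cref{Va97Thm7.1}.

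\emph{Major arc near $0$.} If $\|\theta\|\leq N^{\varepsilon(D+1)-d}W^{-(d-1)}$ (an admissible threshold, since $p'(N)\asymp \b_1W^{d-1}N^{d-1}$), we apply \Cref{major} directly with $L=Y\leq N^{1-\varepsilon}$. The error is
\[
Y\|\theta\|(p'(N)^2+1)+Y|p''(N)|+p'(N)\ll_d \b_1W^{d-1}N^{d-1+\varepsilon(D+2)},
\]
which is far below $\b_1W^{d-1}N^{d-\varepsilon}w^{-1/d+\varepsilon}$, because $w\leq\log N$.

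\emph{Remaining $\theta$.} Here we aim to show that both $\sum p'(n)e(p(n)\theta)$ and $\sum_{p(N-Y)\leq m<p(N)}e(m\theta)$ are $O(\b_1W^{d-1}N^{d-\varepsilon}w^{-1/d+\varepsilon})$. The geometric sum is $O(\|\theta\|^{-1})$, so it is acceptable unless $\|\theta\|$ is tiny. For the weighted sum, \Cref{ChangingCoefs} reduces the task to bounding the unweighted sums over subintervals by $N^{1-\varepsilon}w^{-1/d+\varepsilon}$. For those we rerun the dichotomy of \Cref{pfnbeta}.
\begin{itemize}
\item[(a)] If some coefficient $\a_j\theta$ admits a rational approximation with denominator in the range required by \Cref{pcq}, that lemma gives the saving $N^{-2\varepsilon}$. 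This step uses only the sizes of the coefficients, not irrationality, and it survives the dependence on $W$ because $W\leq N^{3\varepsilon^3}$ is tiny.
\item[(b)] Otherwise every $\a_j\theta$ is close to $a_j/q_j$ with $q_j\leq N^{\varepsilon D^j}$. \Cref{ptheta} then factors the sum as $\frac1Q\sum_{r=1}^Qe(p(r)\cdot)$ times a smooth sum, with $Q=\mathrm{lcm}(q_j)$.
\end{itemize}

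\emph{Key arithmetic step in case (b).} In the top-degree term, $\theta\a_1=\theta\b_1W^{d-1}$. Hence the approximations of $\a_j\theta$ are all induced by a single approximation $\theta\approx a/q$, with $a_j/q_j$ the reduced form of $\a_ja/q$. Two subcases follow.
\begin{itemize}
\item If $q\nmid W$, the linear coefficient $ua/q$ has denominator $q$. Since every prime power dividing $W$ is at most $w$, there is a prime power $\rho^k\mid q$ with $\rho^k>w$. Then $Q\geq q_d=q$, and the coprimality condition of \Cref{Va97Thm7.1} is checked exactly as in \Cref{pfnbeta}. That lemma gives $Q^{-1/d+\varepsilon}$, and we would try to reduce to the $\rho$-part to get the bound $\ll w^{-1/d+\varepsilon}$, using multiplicativity of complete sums over coprime moduli.
\item If $q\mid W$, then $p(n)a/q\equiv una/q\pmod 1$. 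The complete sum is $\frac1q\sum_re(ura/q)$, which vanishes unless $q=1$. If $q=1$, then $\theta$ is near an integer, and we are back in the major-arc case after absorbing the intermediate range $\|\theta\|\leq N^{-d+O(\varepsilon D^d)}$ by a \Cref{major}-type comparison.
\end{itemize}
The smooth factor in \Cref{ptheta} has the phase $\psi$ with $\Psi$ small, and the error term $O(Q+Q\Psi Y)$ is negligible.

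\emph{Main obstacle.} We expect the hardest part to be making the uniformity in $W$ honest. The Weyl and \Cref{pcq} thresholds must tolerate coefficients of size $W^{d-1}$, and the medium range of $\|\theta\|$, where neither the geometric-sum bound nor \Cref{major} alone suffices, must be closed. It must also be checked that the lower bound $w^{-1/d+\varepsilon}$ on the complete sums is attained uniformly for all $q\nmid W$. The second displayed form of the statement then follows from $p(N)-p(N-N^{1-\varepsilon})\asymp_d\b_1W^{d-1}N^{d-\varepsilon}$.
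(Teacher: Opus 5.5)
Your argument works, but it is organized differently from the paper's. The paper never approximates the coefficients $\a_j\theta$ one at a time. Since $p$ has integer coefficients, it Dirichlet-approximates $\theta$ itself, $\theta\in\M_{a,q}$ with $q\le N^{d-\varepsilon(d+1)}$, and splits by the size of $q$:
\begin{itemize}
\item $q=1$ is \Cref{q=1SumClose}, an application of \Cref{major}.
\item $2\le q\le N^{\varepsilon/(3d)}$ is \Cref{RatEstimateBigSumqsmall}, which is exactly your case (b) computation. The complete sum vanishes if $q\mid W$, and is $\ll q^{-1/d+\varepsilon}\le w^{-1/d+\varepsilon}$ by \Cref{Va97Thm7.1} if $q\nmid W$, because then $q>w$.
\item $q\ge N^{\varepsilon/(3d)}$ is the short \Cref{betaaqRat}. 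There the approximation $a/q$ of $\theta$ is transferred to the leading coefficient $\b_1W^{d-1}\theta$, with reduced denominator $q_1\ge q/(\b_1W^{d-1})$. \Cref{weyl} then applies for $q\ge N^{\varepsilon}$, while \Cref{ptheta} and \Cref{Va97Thm7.1} cover $N^{\varepsilon/(3d)}\le q<N^{\varepsilon}$. This gives the saving $q^{-\varepsilon^2}\le w^{-1}$.
\end{itemize}

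You instead rerun the coefficient-wise dichotomy of \Cref{pfnbeta} and import \Cref{pcq} for the minor arcs. This is legitimate, since \Cref{pcq} is uniform in the coefficients, and it spares you \Cref{betaaqRat}. The price is the heavier \Cref{pcq}, plus one step the paper gets for free: in case (b), all the $a_j/q_j$ must come from a single fraction. That step is true, but it needs an actual argument, not just the remark that $\a_1=\b_1W^{d-1}$. Take the approximation of $\a_1\theta$, with error at most $N^{-d+\varepsilon D}$, and divide by the integer $\b_1W^{d-1}\le\b_1N^{3d\varepsilon^3}$; alternatively, use the approximation of $u\theta$ directly. Either way you get $\theta\approx a/q$ so precisely that $\a_ja/q$ approximates $\a_j\theta$ within $N^{-1+O(\varepsilon D^d)}$. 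On the other hand, two distinct fractions with denominators at most $N^{O(\varepsilon D^d)}$ are at distance at least $N^{-O(\varepsilon D^d)}$. Hence $a_j/q_j$ is the reduced form of $\a_ja/q$, and $Q=q_d=q$.

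The obstacles you list are not real:
\begin{itemize}
\item If $q\nmid W$, then already $q>w$, so $Q^{-1/d+\varepsilon}\le w^{-1/d+\varepsilon}$ with no need to isolate a prime-power part.
\item The subcase $q=1$ of (b) cannot occur above your threshold. It forces $q_1=1$, hence $\|\theta\|\le N^{-d+\varepsilon D}/(\b_1W^{d-1})$, which lies below your threshold.
\item Above your threshold the geometric sum is already $\ll W^{d-1}N^{d-\varepsilon(D+1)}$, so there is no uncovered medium range. In fact \Cref{major} alone would cover all $\|\theta\|\le N^{-d+1/2}$.
\end{itemize}
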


The deduction of \Cref{WeaktexpForIntegerPolys} from \Cref{UniformSumEstimateRationalPolyGeneral} proceeds through the following intermediate statement.

\begin{corollary}\label{UniformSumEstimateRationalPolyGeneralLongIntervals}
Assume $w\leq\varepsilon^3\log N$. Then, for all $Y\in\mathbb{N}$ with $1\leq Y\leq N-N^{1-\varepsilon}$ and all $\theta\in\mathbb{R}$, we have
\begin{equation}
\label{4re9fsdo9ero}
\sum_{Y\leq n< N}p'(n)e\left(p(n)\theta\right)=
\sum_{p(Y)\leq m< p(N)}e(m\theta)
+O_{d}\left(\frac{p(N)-p(Y)}{w^{\frac{1}{d}-\varepsilon}}\right).
\end{equation}
\end{corollary}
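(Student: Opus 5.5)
The plan is to derive \Cref{UniformSumEstimateRationalPolyGeneralLongIntervals} from \Cref{UniformSumEstimateRationalPolyGeneral} by chopping $[Y,N)$ into short blocks, exactly as in the proof of \Cref{texp} in the case $r_\alpha(x)=O(\log x)$. Set $a_0=N$ and $a_{j+1}=a_j-a_j^{1-\varepsilon}$. Stop at the first $k$ with $a_{k}\leq Y$, and replace $a_k$ by $Y$. On each block $[a_{j+1},a_j)$ we invoke \Cref{UniformSumEstimateRationalPolyGeneral} with $N$ replaced by $a_j$ and block length at most $a_j^{1-\varepsilon}$. For this to be legitimate, the standing hypothesis $w_0\leq w\leq 2\varepsilon^3\log a_j$ must hold. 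I handle this by splitting into two regimes.

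\textbf{Large $a_j$.} If $a_j\geq N^{1/2}$, then $2\varepsilon^3\log a_j\geq\varepsilon^3\log N\geq w$, which is exactly why the corollary assumes $w\leq\varepsilon^3\log N$ while the theorem allows $2\varepsilon^3\log N$. Note that $p$ depends on $W$, hence on $w$, but not on $N$, so the same polynomial is used on every block. The error on block $j$ is then
\[
O_d\Bigl(\b_1W^{d-1}a_j^{d-\varepsilon}w^{-\frac1d+\varepsilon}\Bigr).
\]
Since $a_j^{d-\varepsilon}\leq a_j^{d-1}(a_j-a_{j+1})\ll_d\int_{a_{j+1}}^{a_j}x^{d-1}\,dx$ up to a constant (because $a_{j+1}\geq a_j/2$), this error is $\ll_d (p(a_j)-p(a_{j+1}))\,w^{-1/d+\varepsilon}$. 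Here I use $p'(x)\asymp_d \b_1W^{d-1}x^{d-1}$ for $x\geq N^{1/2}$, which holds since the lower-order terms carry smaller powers of $W$ and $x\gg W$. The main terms telescope: $\sum_{p(a_{j+1})\leq m<p(a_j)}$ concatenates to $\sum_{p(a_k)\leq m<p(N)}$, using that $p$ is increasing in this range. The errors sum to $O_d\bigl((p(N)-p(a_k))w^{-1/d+\varepsilon}\bigr)$.

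\textbf{Small $a_j$ and the endpoint.} If $Y<N^{1/2}$, the blocks below $N^{1/2}$ are not covered by the theorem. I bound both sums there trivially: the weighted sum is at most $\sum_{n<N^{1/2}}|p'(n)|\ll_d p(N^{1/2})+O(N^{1/2})$, and likewise the geometric sum. This is $\ll \b_1W^{d-1}N^{d/2}$. Because $Y\leq N-N^{1-\varepsilon}$, we have $p(N)-p(Y)\geq p(N)-p(N-N^{1-\varepsilon})\gg_d\b_1W^{d-1}N^{d-\varepsilon}$, which dominates $\b_1W^{d-1}N^{d/2}\cdot w^{1/d}$ since $w\ll\log N$. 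When the last block overshoots $Y$, truncating it at $Y$ gives a block of length at most $a_{k-1}^{1-\varepsilon}$, still allowed by the theorem, which permits any $Y'\leq N^{1-\varepsilon}$. Summing all errors yields \eqref{4re9fsdo9ero}.

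\textbf{Expected obstacle.} The delicate point is comparing each block's error with its share $p(a_j)-p(a_{j+1})$. This needs the uniform lower bound $p'(x)\gg_d\b_1W^{d-1}x^{d-1}$ on $[N^{1/2},N]$ despite the possibly negative coefficients $\b_i$ and $u$. The bound holds once $x$ is large relative to $W\max|\b_i|/\b_1$, which is guaranteed by $W\leq N^{3\varepsilon^3}$ and $w\geq w_0$.
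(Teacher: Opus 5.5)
Your proposal is correct and follows essentially the same route as the paper. Both reduce to $Y\geq\sqrt N$ via the trivial bound, partition $[Y,N)$ into blocks of length comparable to $Y_i^{1-\varepsilon}$ so that $w\leq\varepsilon^3\log N\leq 2\varepsilon^3\log Y_i$ permits \Cref{UniformSumEstimateRationalPolyGeneral} on each block, and sum the errors using $Y_i^{d-\varepsilon}\leq Y_i^{d-1}(Y_i-Y_{i-1})$. There are two cosmetic differences: the block endpoints should be rounded to integers, and the paper keeps every block of length at least $Y_i^{1-\varepsilon}/2$, which avoids your truncated last block. Your version still works, because that block's error $\ll_d \b_1W^{d-1}N^{d-\varepsilon}w^{-\frac1d+\varepsilon}$ is absorbed by $p(N)-p(Y)\gg_d\b_1W^{d-1}N^{d-\varepsilon}$.
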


\begin{proof}[Proof of \Cref{UniformSumEstimateRationalPolyGeneralLongIntervals} from \Cref{UniformSumEstimateRationalPolyGeneral}]
We may assume that $Y\geq\sqrt{N}$; the case when $Y$ is smaller follows, for sufficiently large $N$, from the triangle inequality. We may partition the interval $[Y,N]$ as $Y=Y_0<Y_1<\cdots<Y_k=N$ so that
\[
\frac{Y_i^{1-\varepsilon}}{2}\leq Y_i-Y_{i-1}\leq Y_i^{1-\varepsilon}
\]
for every $1\leq i\leq k$. Since $w\leq \varepsilon^3\log N\leq 2\varepsilon^3\log Y_i$, we may apply \Cref{UniformSumEstimateRationalPolyGeneral} to each interval $[Y_{i-1},Y_i)$ and sum the resulting estimates. The total error is
\[
\ll_d \frac{\b_1W^{d-1}}{w^{\frac1d-\varepsilon}}\sum_{i=1}^{k}Y_i^{d-\varepsilon}
\ll_d \frac{\b_1W^{d-1}}{w^{\frac1d-\varepsilon}}\sum_{i=1}^{k}Y_i^{d-1}(Y_i-Y_{i-1})
\ll_d \frac{p(N)-p(Y)}{w^{\frac1d-\varepsilon}},
\]
which gives the claim.
\end{proof}

\begin{proof}[Proof of \Cref{WeaktexpForIntegerPolys} from \Cref{UniformSumEstimateRationalPolyGeneralLongIntervals}]
With the notation of \Cref{WeaktexpForIntegerPolys}, the polynomial $\frac{P(x)}{W}=\frac{p(Wx)}{W}$ has the form given in \Cref{GoodPolynomial}. Applying \Cref{UniformSumEstimateRationalPolyGeneralLongIntervals} to $\frac{P(x)}{W}$, with frequency $W\theta$, gives
\begin{equation*}
\sum_{Y\leq n< N}\frac{P'(n)}{W}e\left(P(n)\theta\right)=
\sum_{\frac{P(Y)}{W}\leq m< \frac{P(N)}{W}}e(mW\theta)
+O_{d}\left(\frac{P(N)-P(Y)}{Ww^{\frac{1}{d}-\varepsilon}}\right).
\end{equation*}
Multiplying both sides by $\frac{W}{P(N)-P(Y)}$, we obtain \Cref{r4ew9dfsoiklrowiekdls}.
\end{proof}

We now turn to the proof \Cref{UniformSumEstimateRationalPolyGeneral}.
For $a\in\mathbb{Z}$ and $q,N\in\mathbb{N}$ with $(a,q)=1$, let $\M_{a,q}=\M_{a,q}(N)$ denote the set of $\theta\in\mathbb{R}$ such that
\begin{equation}\label{h1b1Rational}
\left|\theta-\frac{a}{q}\right|\leq\frac{1}{qN^{d-\varepsilon(d+1)}}.
\end{equation}
By Dirichlet's approximation theorem, for every $\theta\in\mathbb{R}$ there exist coprime $a\in\mathbb{Z}$ and $1\leq q\leq N^{d-\varepsilon(d+1)}$ such that $\theta\in\M_{a,q}$.

The proof of \Cref{UniformSumEstimateRationalPolyGeneral} is split according to the size of the denominator $q$ such that $\theta\in\M_{a,q}$ for some $a\in\Z$. We use different arguments in the cases
\[
q=1,\qquad
2\leq q\leq N^{\frac{\varepsilon}{3d}},\qquad
N^{\frac{\varepsilon}{3d}}\leq q\leq N^{d-\varepsilon(d+1)}.
\]
These three cases are handled, respectively, by the next three lemmas.

\begin{lemma}\label{q=1SumClose}
For all $Y\in\mathbb{N}$ with $Y\leq N^{1-\varepsilon}$, all $a\in\mathbb{Z}$ and all $\theta\in\M_{a,1}$, we have
\begin{align*}
\sum_{N-Y\leq n< N}p'(n)e\left(p(n)\theta\right)&=\sum_{p(N-Y)\leq m< p(N)}e\left(m\theta\right)+O_d\left(N^{d-1+\varepsilon(d+1)}\right).
\end{align*}
\end{lemma}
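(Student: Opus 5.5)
Since $p\in\mathbb{Z}[x]$, the sum on the left is unchanged if $\theta$ is replaced by $\theta-a$, and so is the geometric sum on the right. We may therefore assume $a=0$, so that $|\theta|\leq N^{-d+\varepsilon(d+1)}$. The plan is to invoke \Cref{major} directly with $\alpha=p$, $L=Y$, and the frequency $\theta$. The function $p$ is a genuine polynomial of degree $d$ with positive leading coefficient, so it is a sub-polynomial with $p(x)\gg x$. The constant $N_0$ in \Cref{major} depends on $p$, hence on $W$. To get uniformity, I would instead re-run the proof of \Cref{major}, which only uses the mean value theorem together with the explicit sizes of $p'$ and $p''$ on $[N-Y,N]$.

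The first step is to record, on $[N-N^{1-\varepsilon},N]$, the bounds
\[
p'(x)\asymp_d \b_1W^{d-1}N^{d-1}\qquad\text{and}\qquad |p''(x)|\ll_d \b_1W^{d-1}N^{d-2}.
\]
These hold because $W\leq N^{3\varepsilon^3}$ and the lower-order coefficients $\b_jW^{d-j}$ are dominated by the leading term once $N$ is large in terms of $\b_1,\dots,\b_{d-1}$. In particular $p'>1$ there, so $p$ is increasing.

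The second step follows the proof of \Cref{major}. Replacing $p'(n)$ by $p(n+1)-p(n)$ costs $O(Y\max|p''|)$. Then $\sum_n (p(n+1)-p(n))e(p(n)\theta)$ is compared with $\int_{p(N-Y)}^{p(N)}e(\lfloor x\rfloor\theta)\,dx$, and the latter equals the sum over $m$ up to an error $O(1)$. On each interval $[p(n),p(n+1)]$ the integrand differs from $e(p(n)\theta)$ by $O(|\theta|(p'(N)+1))$. The resulting total error is
\[
\ll_d\; Y|\theta|\,p'(N)^2+Y|p''(N)|+p'(N)+1.
\]

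The third step plugs in $Y\leq N^{1-\varepsilon}$ and $|\theta|\leq N^{-d+\varepsilon(d+1)}$. The first term is at most
\[
N^{1-\varepsilon}N^{-d+\varepsilon(d+1)}\b_1^2W^{2d-2}N^{2d-2}=\b_1^2W^{2d-2}N^{d-1+\varepsilon d}.
\]
The second term is $\ll \b_1W^{d-1}N^{d-1-\varepsilon}$, and the third is $\ll \b_1 W^{d-1}N^{d-1}$. Since $W^{2d-2}\leq N^{6d\varepsilon^3}\leq N^{\varepsilon}$ and $\b_1$ is a constant absorbed once $N$ is large (equivalently, once $w_0$ is large), everything is $O_d(N^{d-1+\varepsilon(d+1)})$.

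The only delicate point is this bookkeeping: making sure the factors of $W$ and $\b_j$ are absorbed into the $N^{\varepsilon}$ slack, and that the implied constant depends only on $d$. I expect no real obstacle beyond it.
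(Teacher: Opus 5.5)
Your proposal is correct and follows the paper's proof: use $p\in\mathbb{Z}[x]$ to reduce to $|\theta|=\|\theta\|\le N^{-d+\varepsilon(d+1)}$, run the argument of \Cref{major} (the paper just remarks that its $N_0$ depends only on $d,\b_1,\dots,\b_{d-1}$, while you re-derive it), and plug in the sizes of $p'(N)$ and $p''(N)$. The one bookkeeping point to state correctly is that $\b_1^2$ is absorbed for large $N$ by the gap between $W^{2d-2}\le N^{6d\varepsilon^3}$ and $N^{\varepsilon}$. It is not absorbed by $N^{\varepsilon}$ itself, since your chain $W^{2d-2}\le N^{\varepsilon}$ discards exactly that slack.
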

\begin{proof}
We have $\|\theta\|\leq\frac{1}{N^{d-\varepsilon(d+1)}}$, and as $p\in\mathbb{Z}[x]$, we may assume $|\theta|=\|\theta\|$. Using \Cref{major} (whose proof gives a choice of $N_0$ which only depends on $d,\b_1,\dots,\b_{d-1}$),
we have
\begin{align*}
&\left|\sum_{N-Y\leq n< N}p'(n)e\left(p(n)\theta\right)-
\sum_{p(N-Y)\leq m< p(N)}
e(m\theta)\right|\\
&\ll p'(N)^2\cdot N^{1-\varepsilon}\cdot\frac{1}{N^{d-\varepsilon(d+1)}}+N^{1-\varepsilon}|p''(N)|+|p'(N)|+1
\ll_d N^{d-1+\varepsilon(d+1)}.\qedhere
\end{align*}
\end{proof}

\begin{lemma}\label{RatEstimateBigSumqsmall}
For all $Y\in\mathbb{N}$ with $1\leq Y\leq N^{1-\varepsilon}$, all $a\in\mathbb{Z}$ and $q\in\mathbb{N}$ with $2\leq q\leq N^{\frac\varepsilon{3d}}$, and all $\theta\in\M_{a,q}$,
\begin{equation}
\label{4erfds9ofdoRat}
\sum_{N-Y\leq n< N}e\left(p(n)\theta\right)\ll_{d}N^{1-\varepsilon}w^{-\frac1d+\varepsilon}.
\end{equation}
\end{lemma}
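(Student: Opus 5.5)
We will use \Cref{ptheta} together with \Cref{Va97Thm7.1}. Write $\theta=\frac aq+\vartheta$ with $|\vartheta|\leq\frac{1}{qN^{d-\varepsilon(d+1)}}$, and set $\psi(x)=\vartheta p(x)$. On $[N-Y,N]$ the derivative satisfies
\[
|\psi'(x)|\ll_d |\vartheta|\b_1W^{d-1}N^{d-1}\ll \frac{W^{d-1}N^{\varepsilon(d+1)}}{N}.
\]
Hence $\Psi Yq\ll W^{d-1}N^{2\varepsilon d}q$. Since $W\leq N^{3\varepsilon^3}$ and $q\leq N^{\varepsilon/(3d)}$, this is $\ll N^{1/2}$, which is negligible compared with $N^{1-\varepsilon}w^{-1/d}$ because $w\ll\log N$. \Cref{ptheta} therefore gives
\[
\sum_{N-Y\leq n<N}e(p(n)\theta)=\Bigl(\frac1q\sum_{r=1}^q e\Bigl(\frac{ap(r)}{q}\Bigr)\Bigr)\sum_{N-Y\leq n<N}e(\psi(n))+O(N^{1/2}).
\]
We bound the inner sum trivially by $Y\leq N^{1-\varepsilon}$. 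So it remains to show that the complete sum $S(a,q):=\frac1q\sum_{r=1}^q e(ap(r)/q)$ satisfies $S(a,q)\ll_d w^{-\frac1d+\varepsilon}$ for every $q\geq2$ and every $a$ coprime to $q$.

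The key is the structure $p(x)=\b_1W^{d-1}x^d+\dots+\b_{d-1}Wx^2+ux$. Write $q=q_1q_2$, where $q_1$ collects the prime powers of $q$ whose primes are at most $w$ (the $w$-smooth part). Then $S$ factors multiplicatively by the Chinese remainder theorem, and we treat the two cases separately.

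\textbf{Case $q_2>1$.} The coefficients of $ap(x)$ modulo $q_2$ are $a\b_jW^{d-j}$ and $au$. Since $u=\pm1$ and $(a,q_2)=1$, the linear coefficient is coprime to $q_2$, so the gcd condition in \Cref{Va97Thm7.1} holds. That lemma gives a factor $\ll q_2^{-1/d+\varepsilon}$. Every prime of $q_2$ exceeds $w$, so $q_2>w$ and this factor is at most $w^{-1/d+\varepsilon}$.

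\textbf{Case $q_2=1$.} Then $q$ is $w$-smooth and $q\geq2$. For each prime power $\rho^k\| q$ with $\rho\leq w$, we want that part of the sum to be essentially of size $\rho^{-k}$ (a Ramanujan-type vanishing), because the higher coefficients are divisible by high powers of $\rho$ coming from $W$. If $\rho^k\mid W$, i.e.\ $\rho^k\leq w$, then every coefficient except $u$ is divisible by $\rho^k$. The sum modulo $\rho^k$ is then $\sum_r e(aur/\rho^k)=0$, so $S=0$. If $\rho^k>w$, we use \Cref{Va97Thm7.1} again (the linear coefficient $au$ is coprime to $\rho$), obtaining a factor $\ll \rho^{-k/d+\varepsilon k}\leq w^{-1/d+\varepsilon}$.

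The delicate point, and the step I expect to be the main obstacle, is controlling the implied constants across the factorization. \Cref{Va97Thm7.1} carries a constant $C_{d,\varepsilon}$ per factor, and multiplying these over many prime-power factors could blow up. To avoid this, I would apply \Cref{Va97Thm7.1} only once, to a single combined modulus. Concretely, if some prime-power factor $\rho^k\| q$ has $\rho^k\leq w$, then $S=0$ and we are done. Otherwise every prime-power factor of $q$ exceeds $w$, so $q>w$. We then apply \Cref{Va97Thm7.1} directly to the modulus $q$, using that the linear coefficient $au$ is coprime to $q$, and obtain $S\ll_d q^{-1/d+\varepsilon}\leq w^{-1/d+\varepsilon}$. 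This single application covers both cases above and gives \Cref{4erfds9ofdoRat}.
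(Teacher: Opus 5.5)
Your proposal is correct and follows essentially the paper's proof: write $\theta=a/q+\vartheta$, apply \Cref{ptheta} with negligible error, bound the inner sum trivially by $Y$, and show that the complete sum $\frac1q\sum_{r}e(ap(r)/q)$ either vanishes or is $\ll_d q^{-1/d+\varepsilon}\leq w^{-1/d+\varepsilon}$ by \Cref{Va97Thm7.1}, since the linear coefficient $au$ is coprime to $q$. The only difference is the case split: the paper distinguishes $q\mid W$ (all nonlinear coefficients vanish mod $q$, so the sum is zero) from $q\nmid W$ (which forces $q>w$), and this avoids the CRT factorization your prime-power split requires.
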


\begin{proof}
Let $\vartheta=\theta-\frac{a}{q}$, so that $|\vartheta|\leq\frac{1}{q N^{d-\varepsilon(d+1)}}$, and let $\psi(x)=\vartheta p(x)$. If $N$ is big enough (depending only on $d,\b_i$),
$$\Psi:=\max_{N-N^{1-\varepsilon}\leq x\leq N}|\psi'(x)|\ll_d\frac{\b_1W^{d-1}N^{d-1}}{qN^{d-\varepsilon(d+1)}}\leq N^{\varepsilon(d+2)-1}.$$
Thus $q\Psi Y\leq N^{\varepsilon(d+3/2)}$, which by \Cref{ptheta}, applied to the polynomial $ap$, implies
\begin{align}
\sum_{N-Y\leq n< N}e\left(p(n)\theta\right)
=\frac1q\sum_{r=1}^qe\left(p(r)\cdot \frac aq\right)\sum_{N-Y\leq n< N}e\left(p(n)\vartheta\right)+O\left(N^{\varepsilon(d+3/2)}\right).\label{ewiodssidfkl}
\end{align}
Now, for each $2\leq j\leq d$, the coefficient of $x^j$ in $p(x)$ is divisible by $W$. Suppose first that $q$ does not divide $W$. Then $q\geq w$ by definition of $W$, so by \Cref{Va97Thm7.1},
\begin{equation}
\label{ewrsdfoiewsdoRat}
\frac1q\sum_{r=1}^qe\left(\frac aq\cdot p(r)\right)\ll_d q^{-\frac1d+\varepsilon}\leq w^{-\frac1d+\varepsilon}.
\end{equation}

The same estimate also holds if $q$ divides $W$, since then, using $q>1$,
$$
\frac1q\sum_{r=1}^qe\left(\frac aq\cdot p(r)\right)=\frac1q\sum_{r=1}^qe\left(\frac aq\cdot ur\right)=0.
$$
We conclude using \Cref{ewiodssidfkl,ewrsdfoiewsdoRat}:
\begin{align*}
\sum_{N-Y\leq n< N}e\left(p(n)\theta\right)&\ll_d\left|\frac1q\sum_{r=1}^qe\left(\frac aq\cdot p(r)\right)\right|\cdot\left|\sum_{N-Y\leq n< N}e\left(p(n)\vartheta\right)\right|+N^{\varepsilon(d+3/2)}\\
&\ll w^{\frac{-1}{d}+\varepsilon}\cdot N^{1-\varepsilon}.\qedhere
\end{align*}
\end{proof}

\begin{lemma}\label{betaaqRat}
For all $Y\in\N$ with $1\leq Y\leq N^{1-\varepsilon}$, all $\theta\in\mathbb{R}$, and all $a,q\in\mathbb{Z}$ satisfying $1\leq q\leq N^{d-\varepsilon (d+1)}$, $(a,q)=1$ and
$$
\left|\theta-\frac aq\right|\leq\frac1{qN^{d-2\varepsilon D}},
$$
we have
$$
\sum_{\substack{N-Y\leq n< N}}e\left(p(n)\theta\right)
\ll_d
\frac{N^{1-\varepsilon}}{q^{\varepsilon^2}}.
$$
\end{lemma}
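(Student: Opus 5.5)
The plan is to apply the Weyl-type bound of \Cref{weyl} to $\theta p(x)$, after splitting into two regimes according to the size of $q$. The leading coefficient of $\theta p(x)$ is $\a_1=\b_1W^{d-1}\theta$. The hypothesis gives $\theta$ close to $a/q$, but \Cref{weyl} needs a rational approximation to $\a_1$. Multiplying the hypothesis by $\b_1W^{d-1}$ gives
\[
\left|\b_1W^{d-1}\theta-\frac{a\b_1W^{d-1}}{q}\right|\leq\frac{\b_1W^{d-1}}{qN^{d-2\varepsilon D}}.
\]
Let $a'/q'$ be the reduced form of $a\b_1W^{d-1}/q$. Then $q'=q/(q,\b_1W^{d-1})\geq q/(\b_1W^{d-1})$ and $q'\leq q$. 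Since $W\leq N^{3\varepsilon^3}$, we get $q'\geq qN^{-4d\varepsilon^3}$ once $N$ is large in terms of $\b_1$. The right-hand side above is at most $\frac{1}{q'}\cdot\frac{\b_1W^{d-1}}{N^{d-2\varepsilon D}}$, so the hypothesis of \Cref{weyl} holds with $q'$ and $s=\min\bigl(q',\,N^{d-2\varepsilon D}/(\b_1W^{d-1})\bigr)$.

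\emph{Large $q$.} Suppose $q\geq N^{\varepsilon/2}$, so that $q'\geq N^{\varepsilon/3}$. Apply \Cref{weyl} with $X=N-Y$ and interval length $Y\leq N^{1-\varepsilon}$ (the claimed bound only gets easier for smaller $Y$, and the case $Y\leq N^{1-2\varepsilon}$ is trivial). This gives
\[
\sum e(p(n)\theta)\ll_{d,\varepsilon} Y^{1+\varepsilon'}\left(\frac1s+\frac1Y+\frac{q'}{Y^d}\right)^{2^{1-d}}.
\]
Each term in the parentheses needs to be a power saving:
\begin{itemize}
\item $1/s\leq N^{-\varepsilon/3}$, provided $N^{d-2\varepsilon D}/W^{d-1}$ is large, which holds since $2\varepsilon D\ll1$.
\item $1/Y\leq N^{-(1-2\varepsilon)}$.
\item $q'/Y^d\leq N^{d-\varepsilon(d+1)}/N^{d(1-2\varepsilon)}=N^{\varepsilon(d-1)}$. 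This is the bad term: when $Y$ is near $N^{1-\varepsilon}$ and $q$ is near the top of its range, it is not a saving at all.
\end{itemize}

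This last term is the main obstacle. To handle it, I would use \Cref{WooleyTransference} in the form already packaged in \Cref{weyl}, but run it with $Z_0=N^{d}$ in place of $Y^d$. The idea is that the relevant scale for the leading coefficient is $N^d$, since $n\sim N$. Concretely, apply Weyl differencing directly to the shifted polynomial $p(n+N-Y)$: after $d-1$ differencings, the linear coefficient is $d!\,\a_1 h_1\cdots h_{d-1}$, and the lower-order terms depend on $N$ but not in a way that matters. A clean alternative is to split $[N-Y,N)$ into subintervals of length $Y_1=N^{1-\varepsilon^2}$ only if this is needed, and otherwise accept the loss. With $q\leq N^{d-\varepsilon(d+1)}$, the trade-off should still yield the saving $q^{-\varepsilon^2}$, because $\varepsilon^2$ is tiny compared with the available exponents; one only needs $\left(\frac{1}{s}+\frac{q'}{N^d}\right)^{2^{1-d}}\ll q^{-2\varepsilon^2}$ up to $N^{\varepsilon'}$ losses, taking $\varepsilon'=\varepsilon^3$ in \Cref{weyl}.

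\emph{Small $q$.} Suppose $q<N^{\varepsilon/2}$. Here I would use \Cref{ptheta} with $\psi(x)=(\theta-a/q)p(x)$. In this range
\[
\Psi\ll \b_1W^{d-1}N^{d-1}\cdot\frac{1}{qN^{d-2\varepsilon D}}\leq N^{3\varepsilon D-1},
\]
so the error term is $O(q+qN^{3\varepsilon D-\varepsilon})$, which is fine since $3D\varepsilon$ is small relative to the saving needed. The main term factors as the complete sum $\frac1q\sum_r e(ap(r)/q)$ times $\sum e(\psi(n))$. Since $u=\pm1$, $p$ has content coprime to $q$, so \Cref{Va97Thm7.1} bounds the complete sum by $q^{-1/d+\varepsilon}\leq q^{-\varepsilon^2}$. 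The remaining factor is trivially at most $Y\leq N^{1-\varepsilon}$. Combining the two regimes proves \Cref{betaaqRat}.
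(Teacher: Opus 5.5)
Your two-regime structure matches the paper's, and your small-$q$ argument is fine. For large $q$ you reduce $\b_1W^{d-1}a/q$ and apply \Cref{weyl}. For small $q$ you factor through \Cref{ptheta} and bound the complete sum by \Cref{Va97Thm7.1}, using that the linear coefficient of $ap$ is $\pm a$.

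The gap is in the large-$q$ part. First, you misdiagnose the ``bad term''. At $Y=N^{1-\varepsilon}$ one has $q'/Y^d\le N^{d-\varepsilon(d+1)}/N^{d-d\varepsilon}=N^{-\varepsilon}$, which is a genuine saving. The exponent $\varepsilon(d-1)$ you found comes from the \emph{bottom} of your range, $Y=N^{1-2\varepsilon}$, so the real cause is that you cut off the trivial range too early.

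Second, the remedy you propose does not work. Weyl differencing on $[N-Y,N)$ in the shifted variable $m\in[0,Y)$ only sees the scale $Y^d$, since the differencing parameters and the final summation variable all range over intervals of length $Y$. The location $N$ enters only through lower-order coefficients, and \Cref{4refds9oiref9doi}, hence also \Cref{WooleyTransference}, discards those. In fact a bound with $N^d$ in place of $Y^d$, uniform in the lower-order coefficients, is false. Choosing those coefficients to absorb the shift turns the sum into $e(c)\sum_{0\le m<Y}e(\xi m^d)$, which has modulus at least $Y/2$ for $\xi=1/q$ with $q\ge 8Y^d$.

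Your alternative of splitting into pieces of length $N^{1-\varepsilon^2}$ is vacuous, because $Y\le N^{1-\varepsilon}$. ``Accept the loss'' is not an argument. So, as written, the large-$q$ case is not proved.

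The missing step is to match the trivial cutoff to the target.
\begin{itemize}
\item Since $q\le N^d$, the target satisfies $N^{1-\varepsilon}q^{-\varepsilon^2}\ge N^{1-\varepsilon-d\varepsilon^2}$. So the trivial bound handles every $Y\le N^{1-\varepsilon-d\varepsilon^2}$.
\item For $N^{1-\varepsilon-d\varepsilon^2}\le Y\le N^{1-\varepsilon}$ one gets $q'/Y^d\le N^{-\varepsilon+d^2\varepsilon^2}\le N^{-\varepsilon/2}$. You also have $1/s\le N^{-\varepsilon/3}$ and $1/Y\le N^{-1/2}$.
\item \Cref{weyl}, applied with $\varepsilon^2$ as its $\varepsilon$, then gives $\ll_d N^{1-\varepsilon+\varepsilon^2-\varepsilon/(3\cdot 2^{d-1})}\le N^{1-\varepsilon-d\varepsilon^2}$. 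The last inequality holds because $(d+1)\varepsilon\le 1/(3\cdot 2^{d-1})$ for $\varepsilon=100^{-d^2}$.
\end{itemize}
This is what the paper does. It effectively evaluates the Weyl bound at length $N^{1-\varepsilon}$, where $q_1/(N^{1-\varepsilon})^d\le N^{-\varepsilon}$.
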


\begin{proof}
Suppose first that $q\geq N^\varepsilon$, and let $\frac{a_1}{q_1}$ be the reduced form of $W^{d-1}\b_1\frac{a}{q}$. For sufficiently large $N$, we have 
\begin{equation*}
N^{\varepsilon-3d\varepsilon^3}\leq \frac{q}{\b_1W^{d-1}}\leq q_1 \leq q\leq N^{d-\varepsilon(d+1)}.
\end{equation*}
Moreover, $\frac{a_1}{q_1}$ is very close to the leading coefficient of $\theta p(n)$:
$$\left|W^{d-1}\b_1\theta-\frac{a_1}{q_1}\right|=W^{d-1}\b_1\left|\theta-\frac{a}{q}\right|
\leq
\frac{\b_1W^{d-1}}{qN^{d-2\varepsilon D}}
\leq
\frac{1}{q_1N^{d-3\varepsilon D}}.$$
Applying \Cref{weyl} (with the variables $p(x),a,q,s,\varepsilon$ from \Cref{weyl} taking the values $p(x)\theta,a_1,q_1,\min(q_1,N^{d-3\varepsilon D}),\varepsilon^2$ respectively) gives
\begin{align*}
\sum_{\substack{N-Y\leq n< N}}e\left(p(n)\theta\right)
&
\ll_d
Y^{1+\varepsilon^2}\left(\frac{1}{\min(q_1,N^{d-3\varepsilon D})}+\frac{1}{Y}+\frac{q_1}{Y^d}\right)^{\frac{1}{2^{d-1}}}\\
&\ll
\left(N^{1-\varepsilon}\right)^{1+\varepsilon^2}\left(\frac{1}{\min(q_1,N^{d-3\varepsilon D})}+\frac{1}{N^{1-\varepsilon}}+\frac{q_1}{\left(N^{1-\varepsilon}\right)^d}\right)^{\frac{1}{2^{d-1}}}\\
&\ll
N^{1-\varepsilon+\varepsilon^2}
\left(\frac{1}{N^{\varepsilon/2}}\right)^{\frac{1}{2^{d-1}}}\ll\frac{N^{1-\varepsilon}}{N^{d\varepsilon^2}}
\ll
\frac{N^{1-\varepsilon}}{q^{\varepsilon^2}}.
\end{align*}

We may therefore suppose $q<N^\varepsilon$. Then the coefficients of the polynomial $\psi(x):=\left(\theta-\frac{a}{q}\right)p(x)$ are $\ll_p \frac{W^{d-1}}{qN^{d-2\varepsilon D}}$, and hence
$$\Psi:=\max_{N-N^{1-\varepsilon}\leq x\leq N}|\psi'(x)|
\ll
\frac{W^{d-1}}{qN^{d-1.5\varepsilon D}}\cdot N^{d-1}\leq N^{2\varepsilon D-1}.$$
Moreover, the linear coefficient of $ap(x)$ is $\pm a$, which is coprime with $q$. Thus, using \Cref{ptheta} and \Cref{Va97Thm7.1}, we obtain
\begin{align*}
\sum_{\substack{N-Y\leq n< N}}e\left(p(n)\theta\right)&=\sum_{\substack{N-Y\leq n< N}}e\left(\frac{a}{q}p(n)+\psi(n)\right)\\
&=\frac{1}{q}\sum_{r=1}^qe\left(\frac{a}{q}p(r)\right)\sum_{N-Y\leq n<N}e(\psi(n))+O(q+qN^{2\varepsilon D-1}Y)\\
&\ll_d q^{\frac{-1}{d}+\varepsilon}\cdot Y+q+qN^{2\varepsilon D-1}Y\ll q^{\frac{-1}{d}+\varepsilon}N^{1-\varepsilon}.\qedhere
\end{align*}
\end{proof}

\begin{proof}[Proof of \Cref{UniformSumEstimateRationalPolyGeneral}]
If $\theta\in\M_{a,1}$ for some $a\in\mathbb{Z}$, that is, if $\|\theta\|\leq\frac{1}{N^{d-\varepsilon(d+1)}}$, then \Cref{UniformSumEstimateRationalPolyGeneral} follows from \Cref{q=1SumClose}. We may therefore assume $\|\theta\|>\frac{1}{N^{d-\varepsilon(d+1)}}$. First,
\begin{align}
\label{EZBound}
\left|\sum_{p(N-Y)\leq m< p(N)}e(m\theta)\right|\leq\frac{2}{\|\theta\|}\leq 2N^{d-\varepsilon(d+1)}.
\end{align}
Also, in this case we have $\theta\in\M_{a,q}$ for some $a\in\mathbb{Z}$ and some $2\leq q\leq N^{d-\varepsilon(d+1)}$. If $q\leq N^{\frac{\varepsilon}{3d}}$, then  \Cref{4erfds9ofdoRat} implies that, for all $Y\leq N^{1-\varepsilon}$,
\begin{equation}
\label{4erfds9ofdoRat2}
\sum_{N-Y\leq n< N}e\left(p(n)\theta\right)\ll_{d}N^{1-\varepsilon}w^{-\frac1d+\varepsilon}.
\end{equation}
If $q\geq N^{\frac{\varepsilon}{3d}}$, then, since $\frac{1}{N^{d-\varepsilon(d+1)}}\leq\frac{1}{N^{d-2\varepsilon D}}$, we may apply \Cref{betaaqRat}. Using $q^{\varepsilon^2}\geq N^{\frac{\varepsilon^3}{3d}}\geq w$, we see that \Cref{4erfds9ofdoRat2} holds in this case as well. Applying \Cref{ChangingCoefs}, we conclude that for all $Y\leq N^{1-\varepsilon}$,
\begin{equation}
\sum_{N-Y\leq n< N}p'(n)e\left(p(n)\theta\right)\ll_{d}|p'(N)|N^{1-\varepsilon}w^{-\frac1d+\varepsilon}
\ll\b_1 W^{d-1}N^{d-\varepsilon}w^{\frac{-1}{d}+\varepsilon},
\end{equation}
which, together with \Cref{EZBound}, concludes the proof.
\end{proof}

\msubsection{Proof of \Cref{mainp}}
\label{SecColorralphaSmall}

In this section we prove the following result, \Cref{DistanceColorThmDensityVersionCaseIII}, which implies \Cref{mainp} for sub-polynomials $\alpha(x)$ of
type ~\protect\hyperlink{CaseIII}{(III)}:

\begin{theorem}
\label{DistanceColorThmDensityVersionCaseIII}
Let $\rho\in(0,0.1)$ and let $\alpha(x)=c(p_\alpha(x)+r_\alpha(x))$ be a positive Hardy function, where $c>0$, $p_\alpha\in\mathbb{Q}[x]$ has degree $d\geq1$ and $1\prec r_\alpha(x)\ll_\alpha\log(x)$. Then there is a finite set $F=F(\rho,\alpha)\subseteq\mathbb{N}$ such that, for all sets $A,B\subseteq F$ with $|A|,|B|\geq\rho|F|$, there are distinct $x\in A,y\in B$ such that $|x+y-\alpha(n)|<\rho$ for some $n\in\mathbb{N}$.
\end{theorem}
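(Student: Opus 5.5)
The plan is to imitate the proof of \Cref{QuantitativeErgToComb}, with \Cref{tergodic} replaced by the arithmetic-progression approximation of \Cref{ErgodicThmIntegerPolys}, in the correlation form of \Cref{ErgodicThmIntegerPolysRmk}. The slowly growing part $r_\alpha$ is frozen on a short window of $n$'s.

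\textbf{Normalizing $p_\alpha$.} Write $p_\alpha=\frac1Q\tilde p$ with $\tilde p\in\mathbb Z[x]$. Restrict $n$ to a progression $n=n_1+Q'm$ with $n_1,Q'$ suitably chosen. Absorbing constants into $c$ and into a shift, the function of $m$ becomes
\[
c\bigl(k\,p(m)+\gamma+r_\alpha(n_1+Q'm)\bigr),
\]
where $k\in\N$, $\gamma\in\Q$, and $p$ has the shape required in \Cref{ErgodicThmIntegerPolys}, namely linear coefficient $u\in\{-1,1\}$. The order of operations matters here: first pass to the subprogression, then divide by the gcd of the non-constant coefficients, then use a further dilation $m\mapsto W m$ to make all higher coefficients divisible by the linear one. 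This should be routine but fiddly. I would first check it on $p_\alpha(x)=x^2/2$.

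\textbf{Freezing $r_\alpha$.} Since $r_\alpha'(x)\ll 1/x$, on a window $[N-N^{1-\varepsilon},N]$ the function $r_\alpha$ varies by $O(N^{-\varepsilon})$. Because $r_\alpha\to\infty$ continuously with increments tending to $0$, for any prescribed target $\tau\in\R$ there are arbitrarily large $N$ with $c\,r_\alpha(N)$ within $\rho/10$ of $\tau$ modulo $c\cdot(\text{a fixed lattice})$. So on the window,
\[
\alpha(n)=c\,k\,P(m)+\beta_0+O(\rho/10),
\]
with $P(m)=p(Wm)$ and a constant $\beta_0$ at our disposal.

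\textbf{Handling the irrational factor $c$.} Because $c$ is arbitrary, $c\,k\,P(m)$ is not an integer. To deal with this I would use Dirichlet to pick $q\in\N$ with $\|c k q\|<\rho/(10 M)$, where $M$ bounds the number of multiples involved, so that $c k q\,j$ lies within $\rho/10$ of the integer $[\![ckqj]\!]$ for all $j\le M$. Then take
\[
F=\{[\![ckq\,t]\!]+s:\ t\in I\}
\]
for an interval $I$ of length about $P(N)/W$ and a fixed offset $s$. The offset $s$ and the parameter $\beta_0$ are matched so that $x+y$ lies within $\rho/2$ of $ckq(t_1+t_2)+2s$. This requires arranging that the relevant values $P(m)$ be multiples of $q$, which is done by taking $W$ divisible by $q$, i.e.\ $w\ge q$.

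\textbf{Counting step.} Apply \Cref{ErgodicThmIntegerPolysRmk} in the cyclic system $\mathbb Z/L\mathbb Z$, $Tx=x-1$, with $L$ larger than the diameter of the index set, to the images of the index sets $A',B'\subseteq I$ of $A,B$. As in \Cref{QuantitativeErgToComb}, the averaged term $\frac{W}{P(N)-P(Y)}\sum_m\mu(A'\cap T^{Wm}(-B'))$ counts pairs $(t_1,t_2)$ with $t_1+t_2\in W\mathbb Z$ in the appropriate range. Choosing $I\subseteq W\mathbb Z$ from the start makes this condition automatic, and the count is then $\gg\rho^2$ times the normalization. The error term is $O_d(w^{-1/d+\varepsilon})$. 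Taking $w$ large in terms of $\rho$ (and then $N$ with $w\le\varepsilon^3\log N$) makes the weighted count $\sum P'(m)\,|A'\cap(P(m)-B')|$ positive. Distinctness is handled by splitting $A,B$ into disjoint thirds as in \Cref{QuantitativeErgToComb1set}.

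\textbf{Main obstacle.} I expect the difficulty to lie in the compatibility between the three scales: the Dirichlet denominator $q$ (depending on $\rho$ and $M$), the modulus $W$ (which must satisfy $q\mid W$ and $w\le\varepsilon^3\log N$), and the window length $N^{1-\varepsilon}$ (which must be long enough for \Cref{ErgodicThmIntegerPolys} but short enough to freeze $r_\alpha$). Also, $M\approx P(N)$ feeds back into $q$, so $q$ cannot simply be taken at most $w$. My first attempt at breaking this circularity would be to replace the single multiple $q$ by working directly in the rotation by $ckq$ modulo $1$ on a torus extension, approximating $e(ckq\,\cdot)$ only at scale $\rho$ via a uniform-in-$\theta$ statement such as \Cref{WeaktexpForIntegerPolys}. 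If that fails, I would fall back to the case $c\in\Q$ first, where no approximation is needed, and then treat irrational $c$ by a separate limiting argument.
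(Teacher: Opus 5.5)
Your normalization of $p_\alpha$, the freezing of $r_\alpha$ on a window, and the counting step in $\Z/L\Z$ with $I\subseteq W\Z$ (plus disjoint thirds for distinctness) agree with the paper's proof of \Cref{DistanceColorThmDensityVersionSimpleCaseIII}, the case $c=1$, and they work there. The gap is the scalar $c$, and it breaks the route rather than just the bookkeeping. For irrational $c$ you ask that \emph{every} point $ckq\,t$ with $t\in I$ lie within $\rho/10$ of an integer. This forces $\|ckq\|\lesssim\rho/|I|$, where $|I|$ grows like a positive power of $N$. On the other hand, $q\mid W$ together with $w\le\varepsilon^3\log N$ forces $q\le W\le N^{3\varepsilon^3}$. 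If $ck$ is badly approximable (e.g.\ $c=\sqrt2$), then $\|ckq\|\gg 1/q\geq N^{-3\varepsilon^3}$, which is a contradiction for large $N$.

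Your ``limiting argument'' fallback hits the same wall. Replacing $c$ by a rational $c'$ perturbs $\alpha(n)$ by $|c-c'|\,p_\alpha(n)$, which must stay below $\rho$ for $n\approx N$. That forces the denominator of $c'$ to be polynomially large in $N$, so it cannot divide $W$. The torus-extension idea, as sketched, never says how the integers $x,y$ are produced.

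The missing idea is to require only a $\gg\rho$ proportion of the lattice points to be near integers, and to discard the rest. The paper applies the $c=1$ case to $\alpha_2(x)=\alpha(\mathfrak{a}_dx)/c$ with the smaller parameter $\rho_2=\min(\rho/(3c),\rho^2/40)$. This fixes $W$ in terms of $\rho$ and $\alpha$ only, and yields $F_2=I\cap W\Z$ with $|I|$ arbitrarily large.

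It then takes $F$ to be only those integers in $cI$ that lie within $\rho/3$ of $cF_2$. By \Cref{IntegersInDifferentIntervals} (unique ergodicity of rotation by $1/(cW)$ on its orbit closure), $|F|\geq\frac{\rho}{40}|F_2|$ once $|I|$ is large. This ratio is independent of $c$ and $W$.

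The map sending $x\in F$ to the unique $x_2\in F_2$ with $|x-cx_2|<\rho/3$ is well defined and injective. Hence $|A|\geq\rho|F|$ gives $|A_2|\geq\rho_2|F_2|$, and likewise for $B$. The $c=1$ case then produces distinct $x_2\in A_2$, $y_2\in B_2$ with $|x_2+y_2-\alpha_2(n)|<\rho_2$. The triangle inequality gives $|x+y-\alpha(\mathfrak{a}_dn)|<\frac{\rho}{3}+\frac{\rho}{3}+c\rho_2\le\rho$.

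Because the density loss is absorbed into $\rho_2$ before $W$ is chosen, the circularity between $q$, $W$ and $N$ that you identified never arises.
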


\begin{corollary}
\label{ColoringThmForClosestInteger}
Let $\rho>0$ and let $\alpha(x)=c(p_\alpha(x)+r_\alpha(x))$ be a positive Hardy function, where $c>0$, $p_\alpha\in\mathbb{Q}[x]$ has degree $d\geq1$, $p_\alpha(0)=0$ and $1\prec r_\alpha(x)\ll_\alpha\log(x)$. Then there is a finite set $F=F(\rho,\alpha)\subseteq\mathbb{N}$ such that, for all sets $A,B\subseteq F$ with $|A|,|B|\geq\rho|F|$, there are distinct $x\in A,y\in B$ such that $x+y=\lfloor\alpha(n)\rfloor$ for some $n\in\mathbb{N}$. The same holds if we replace the floor function by the ceiling or closest integer functions.
\end{corollary}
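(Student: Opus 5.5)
The plan is to deduce \Cref{ColoringThmForClosestInteger} from \Cref{DistanceColorThmDensityVersionCaseIII}. The idea is to rescale by a suitable integer and use the fact that $p_\alpha(0)=0$ lets us reach the floor by shifting $\alpha$ by a constant.

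\emph{Step 1: produce a small margin $x+y\in(\alpha(n)-1+\eta,\alpha(n)-\eta)$ rather than just closeness.} We cannot apply \Cref{DistanceColorThmDensityVersionCaseIII} to $\alpha$ directly: $|x+y-\alpha(n)|<\rho$ does not force $x+y=\lfloor\alpha(n)\rfloor$. Instead we apply it to the shifted function $\tilde\alpha(x):=\alpha(x)-\tfrac12$. For the floor we need $x+y\le\alpha(n)<x+y+1$, i.e.\ $|x+y-(\alpha(n)-\tfrac12)|\le \tfrac12$. So it suffices that $|x+y-\tilde\alpha(n)|<\rho'$ for some $\rho'<\tfrac12$.

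We must check that $\tilde\alpha$ satisfies the hypotheses. Write
\[
\tilde\alpha(x)=c\bigl(p_\alpha(x)+r_\alpha(x)-\tfrac1{2c}\bigr).
\]
The polynomial part is unchanged, and the new remainder $r_\alpha-\tfrac1{2c}$ still satisfies $1\prec r\ll\log x$, since $r_\alpha\succ1$ means $|r_\alpha|\to\infty$. Also $\tilde\alpha$ is eventually positive. The hypothesis $p_\alpha(0)=0$ is what keeps the constant out of the polynomial part. Because the constant is absorbed into $r_\alpha$, the rational structure of $p_\alpha$ is not disturbed, and \Cref{DistanceColorThmDensityVersionCaseIII} applies verbatim.

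\emph{Step 2: choose parameters.} Take $\rho_0:=\min(\rho,0.09)$ and apply \Cref{DistanceColorThmDensityVersionCaseIII} to $\tilde\alpha$ with $\rho_0$. This gives a finite set $F$. For $A,B\subseteq F$ with $|A|,|B|\ge\rho|F|\ge\rho_0|F|$, there are distinct $x\in A$, $y\in B$ and $n$ with $|x+y-\alpha(n)+\tfrac12|<0.09$. Hence
\[
x+y\in\bigl(\alpha(n)-0.59,\ \alpha(n)-0.41\bigr),
\]
so $\lfloor\alpha(n)\rfloor=x+y$.

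For the ceiling we shift by $+\tfrac12$ instead, giving $x+y\in(\alpha(n)+0.41,\alpha(n)+0.59)$ and hence $\lceil\alpha(n)\rceil=x+y$. For the nearest integer $[\![\cdot]\!]$ no shift is needed: $|x+y-\alpha(n)|<0.09$ already gives $[\![\alpha(n)]\!]=x+y$.

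\emph{Main obstacle.} Essentially none. The only delicate point is verifying that $\alpha\pm\tfrac12$ remains in the class of \Cref{DistanceColorThmDensityVersionCaseIII}, namely positive, of the form $c(p+r)$ with $p\in\mathbb{Q}[x]$ of degree $d\ge1$ and $1\prec r\ll\log x$. This follows as in Step 1, because $r_\alpha$ tends to $\pm\infty$, so adding a constant does not change its growth class.
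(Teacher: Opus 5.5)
Your argument is correct and is the same as the paper's: apply \Cref{DistanceColorThmDensityVersionCaseIII} with a parameter below $0.1$ to $\alpha-\frac12$, $\alpha+\frac12$ and $\alpha$, absorbing the constant shift into $r_\alpha$ (this works whether or not $p_\alpha(0)=0$, so that hypothesis is not really what makes the step go through). Your pairing ($\alpha-\frac12$ for the floor, $\alpha+\frac12$ for the ceiling) and your choice $\min(\rho,0.09)$ are the correct reading of the paper's one-line deduction, which lists the two shifts in the opposite order and fixes the parameter at $0.05$.
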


\Cref{ColoringThmForClosestInteger} for the floor, ceiling and closest integer functions follows by applying \Cref{DistanceColorThmDensityVersionCaseIII} with $\rho=0.05$ to the functions $\alpha+\frac{1}{2},\alpha-\frac{1}{2}$ and $\alpha$, respectively.

In turn, \Cref{mainp} follows immediately from \Cref{ColoringThmForClosestInteger}. Indeed, by the pigeonhole principle, for every $k$-coloring of $\mathbb{N}$ and every finite set $F\subseteq\mathbb{N}$, some color class $A$ satisfies
\[
\frac{|A\cap F|}{|F|}\geq\frac{1}{k}.
\]

We now move to proving \Cref{DistanceColorThmDensityVersionCaseIII}.
\begin{lemma}
\label{DistanceColorThmDensityVersionSimpleCaseIII}
\Cref{DistanceColorThmDensityVersionCaseIII} is true in the case when $c=1$ and $p_\alpha$ is of the form
$$
p_\alpha(x)=\b_1x^d+\b_2x^{d-1}+\cdots+\b_{d-1}x^2+ux,\textup{ with }\b_i\in\mathbb{Z},\b_1>0,u\in\{1,-1\}.
$$
\end{lemma}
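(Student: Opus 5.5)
We handle the case $\alpha(x)=p(x)+r(x)$, with $p=p_\alpha$ of the stated form and $1\prec r(x)\ll\log x$. The idea is to pass to the subprogression $n=Wm$, where the polynomial part $P(m):=p(Wm)$ is controlled by \Cref{ErgodicThmIntegerPolys}. Along this progression $r$ barely moves over long blocks, so on a block $[Y,N)$ with $N-Y\le N^{1-\varepsilon}$ we have $\alpha(Wm)=P(m)+r(WN)+o(1)$. Since $r\to\infty$ slowly, the shift $r(WN)$ passes through every residue class mod $1$. We will pick $N$ so that $\{r(WN)\}$ lies within $\rho/2$ of $0$; then $|x+y-\alpha(Wm)|<\rho$ follows once $x+y=P(m)+\lfloor r(WN)\rceil$. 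Here $\lfloor\cdot\rceil$ denotes rounding to the nearest integer, and we write $k:=\lfloor r(WN)\rceil\in\mathbb{Z}$.

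\textbf{Choice of parameters.} Fix $w\ge w_0(p)$ so large that $C_d w^{-1/d+\varepsilon}<\rho^2/10$, and put $W=\mathrm{lcm}(1,\dots,w)$. Take $N$ huge with $w\le \varepsilon^3\log N$ and $\{r(WN)\}\in[0,\rho/4)$. Such $N$ exists because $r$ is increasing to $\infty$ with $r'\ll 1/x$, so consecutive values of $r(WN)$ differ by $o(1)$. Set $Y=N-N^{1-\varepsilon}$; by $r'\ll 1/x$, $r(Wm)=r(WN)+o(1)$ for $m\in[Y,N)$. Let $M:=(P(N)-P(Y))/W$ and $L:=P(Y)/W$, and take
\[
F:=\{k\,\text{shifted appropriately}\}\cup\bigl(W\cdot[\tfrac{L}{2},\tfrac{L+M}{2})\bigr)\cap\mathbb{N},
\]
that is, the multiples of $W$ in an interval. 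If $k\ne 0 \pmod W$, we instead use a translate $F=W\cdot I+s$ with $2s\equiv k \pmod W$, which is possible after adjusting the parity of $W$. I expect this residue bookkeeping to need care: if $k$ is odd we may have to pass from $W$ to $2W$, or use the fact that $u=\pm1$ makes $P$ hit all residues needed. I would first try to absorb it by choosing $N$ so that $k\equiv 0 \pmod{W}$ as well. This is possible: $r(WN)$ moves through every real value slowly, so we may choose $N$ with $r(WN)$ within $\rho/4$ of a multiple of $W$.

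\textbf{Main step.} Given $A,B\subseteq F$ with $|A|,|B|\ge\rho|F|$, work in the cyclic system $\mathbb{Z}/Q\mathbb{Z}$ with $Q>P(N)+2k$, exactly as in the proof of \Cref{QuantitativeErgToComb}. Apply \Cref{ErgodicThmIntegerPolysRmk} with the sets $A$ and $k-B$, shifted by $P(m)$. The conventional average over $Wm'$, $L\le m'<L+M$, counts pairs $(x,y)\in A\times B$ with $x+y-k\in W\mathbb{Z}\cap[WL,W(L+M))$. By construction of $F$ this is every pair, so the main term equals $\approx |A||B|/(MQ)$. The error term is $\ll w^{-1/d+\varepsilon}\sqrt{|A||B|}/Q$. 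Since $|F|\asymp M$ and $|A||B|\ge\rho^2|F|^2$, the weighted average over $m\in[Y,N)$ of $|A\cap(P(m)+k-B)|$ is positive. Hence $x+y=P(m)+k$ for some $m$, and then $|x+y-\alpha(Wm)|\le|k-r(WN)|+o(1)<\rho$.

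\textbf{Distinctness and main obstacle.} To obtain distinct $x,y$, replace $A,B$ by disjoint subsets of size at least $|A|/3$ and $|B|/3$, as in \Cref{QuantitativeErgToComb1set}, and choose $w$ large enough to absorb the factor $9$. The main obstacle is making $F$ depend only on $(\rho,\alpha)$ while the arithmetic alignment holds: $F$ must sit exactly in the range $\tfrac12[P(Y)+k,P(N)+k)$ inside the correct residue class mod $W$. This requires choosing $N$ so that $r(WN)$ is simultaneously close to an integer and that integer lies in a prescribed class mod $W$ (or mod $2W$). I would first check that the slow, unbounded, monotone growth of $r$ guarantees this.
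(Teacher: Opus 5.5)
Your proposal is correct and follows essentially the same route as the paper. You fix $w$, choose $N$ so that $r(WN)$ is within a small distance of a multiple of $W$, take $F$ to be the multiples of $W$ in a half-interval of values of $P(m)=p(Wm)$, apply \Cref{ErgodicThmIntegerPolysRmk} in a cyclic group (your modulus $Q$ cancels between main and error terms, so its size is harmless), and pass to disjoint subsets to get distinct $x,y$.

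The differences are cosmetic:
\begin{itemize}
\item You use the block $[N-N^{1-\varepsilon},N)$ instead of the paper's $[(1-\eta)N,N]$.
\item You absorb the integer shift $k$ by translating $F$, whereas the paper translates $B$ by $-k$. With your displayed unshifted $F$, ``every pair'' is not literally true, but only $o(|F|^2)$ pairs are lost.
\item Your worry about $F$ depending only on $(\rho,\alpha)$ is moot, since all parameters are fixed before $A,B$ are given.
\end{itemize}
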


\begin{proof}
We use the notation of \Cref{ErgodicThmIntegerPolys}, and let $C_d$ denote the implied constant from \Cref{ErgodicThmIntegerPolys}. First choose $w\in\mathbb{N}$, with $w\geq w_0$, such that
\[
\frac{C_d}{w^{\frac{1}{d}-\varepsilon}}<\frac{\rho^2}{400}.
\]
As $1\prec r_\alpha(x)\ll_\alpha\log(x)$, there exists $\eta>0$ such that, for sufficiently large $N$, 
\begin{equation*}
|r_\alpha(N)-r_\alpha((1-\eta)N)|\leq\frac{\rho}{2}.
\end{equation*}
Now choose $N\in\mathbb{N}$ sufficiently large so that $w\leq\left\lfloor\varepsilon^3\log(N)\right\rfloor$ and $|r_\alpha(WN)-\b W|<\frac{\rho}{2}$ for some integer $\b$, and let $Y=(1-\eta)N$. Then \Cref{ErgodicThmIntegerPolys} can be applied, and for every $n\in\mathbb{N}\cap[Y,N]$ we have 
\begin{equation}
\label{RhoLittleInInterval}
|\alpha(Wn)-p_\alpha(Wn)-\b W|<\rho.
\end{equation}
As in \Cref{ErgodicThmIntegerPolys}, put $P(x)=p_\alpha(Wx)$, and define
\begin{equation}
\label{DefFiniteSetWhereWeWantBigDensity}
I=\left[\frac{P(Y)}{2},\frac{P(N)}{2}\right)
\textup{ and }
F=I\cap W\mathbb{Z}.
\end{equation}
\begin{claim}\label{Claim3eowi}
For all sets $C,D\subseteq F$ with $|C|,|D|>\frac{\rho}{4}|F|$, there are $c\in C,d\in D$ such that $c+d=P(n)$ for some $n\in[Y,N]$.
\end{claim}
We first check that \Cref{DistanceColorThmDensityVersionSimpleCaseIII} follows from \Cref{Claim3eowi}. Let $A,B\subseteq F$ satisfy $|A|,|B|>\rho|F|$. After removing some elements from $A$ and $B$ if necessary, we obtain sets $A_0\subseteq A$ and $B_0\subseteq B$ such that $|A_0|,|B_0|>\frac{\rho}{3}|F|$ and $A_0\cap B_0=\varnothing$.

Set $C=A_0$ and $D=F\cap(B_0-\b W)$. Clearly $|C|>\frac{\rho}{3}|F|$, and, since $\b\leq r_\alpha(WN)\leq\log(N)^2\leq\frac{\rho|F|}{12}$ for sufficiently large $N$, we obtain
\begin{equation*}
|D|\geq|B_0-\b W|-|(B_0-\b W)\setminus F|
\geq
|B_0|-\b
\geq
\frac{\rho}{3}|F|
-\log(N)^2>\frac{\rho}{4}|F|.
\end{equation*}
We can therefore apply \Cref{Claim3eowi} to obtain numbers $c\in C,d\in D$ such that $c+d=P(n)$. Hence, by \Cref{RhoLittleInInterval},
\[
|c+(d+\b W)-\alpha(Wn)|<\rho.
\]
Since $x:=c\in A_0$ and $y:=d+\b W\in B_0$, the numbers $x,y$ are distinct elements of $A,B$ with $|x+y-\alpha(Wn)|<\rho$, so we are done.

It remains to prove \Cref{Claim3eowi}. Let $L\in\mathbb{N}$ satisfy $3.1|I|< L\leq3.9|I|$. 
Consider the m.p.s. $(\mathcal{X},\mathcal{B},\mu,T)$, where $\mathcal{X}=\frac{\mathbb{Z}}{L\mathbb{Z}}$, so that
\begin{equation*}
3W|F|\leq |\mathcal{X}|\leq 4W|F|,
\end{equation*}
$\mu$ is normalized counting measure on $\mathcal{X}$ and $Tx=x+1$.
Let $\phi:W\mathbb{Z}\to \mathcal{X}$, $x\mapsto\overline{x}$, be the quotient map, and set $\overline{C}=\phi(C),\overline{D}=\phi(D)$. Since $\phi$ is injective on $F$, the assumption $|C|,|D|>\frac{\rho}{4}|F|$ implies that
\[
\mu\left(\overline C\right),\mu\left(\overline D\right)\geq\frac{\rho}{20W}.
\]
To prove \Cref{Claim3eowi}, it suffices to show that
\begin{equation}
\label{459rtefdore9odf}
\sum_{Y\leq n< N}P'(n)\mu\left(\overline{C}\cap\left(P(n)-\overline{D}\right)\right)>0.
\end{equation}
For all $x\in C$ and $y\in D$, the sum $x+y$ is an integer multiple of $W$ contained in the interval $[P(Y),P(N))$. Therefore, by a double-counting argument,
\begin{equation}
\label{5t9eirojir4}
\sum_{\frac{P(Y)}{W}\leq m<\frac{P(N)}{W}}|C\cap(Wm-D)|=|C|\cdot|D|.
\end{equation}
Moreover, for all $\frac{P(Y)}{W}\leq m<\frac{P(N)}{W}$, both sets $C$ and $Wm-D$ are contained in an interval of length $L$, so
\begin{equation}
\label{490refeioopp0}
\mu\left(\overline{C}\cap\left(Wm-\overline{D}\right)\right)=\frac{|C\cap(Wm-D)|}{L}.
\end{equation}
Combining \Cref{5t9eirojir4} and \Cref{490refeioopp0}, we obtain
\begin{multline}
\frac{W}{P(N)-P(Y)}\sum_{\frac{P(Y)}{W}\leq m<\frac{P(N)}{W}}
\mu\left(\overline{C}\cap\left(Wm-\overline{D}\right)\right)\geq\frac{W\cdot|C|\cdot|D|}{(P(N)-P(Y))\cdot L}\\
\geq
\frac{W\cdot\frac{\rho L}{20W}\cdot\frac{\rho L}{20W}}{L\cdot L}\geq\frac{\rho^2}{400W}.
\end{multline}
On the other hand, \Cref{ErgodicThmIntegerPolysRmk} implies that
\begin{multline*}
\Bigg|\frac{1}{P(N)-P(Y)}
\sum_{Y\leq n< N}P'(n)\mu\left(\overline{C}\cap\left(P(n)-\overline{D}\right)\right)
\\-\frac{W}{P(N)-P(Y)}\sum_{\substack{\frac{P(Y)}{W}\leq m<\frac{P(N)}{W}}}\mu\left(\overline{C}\cap\left(Wm-\overline{D}\right)\right)\Bigg|
\leq C_d
\frac{\sqrt{\mu\left(\overline{C}\right)\mu\left(\overline{D}\right)}}{w^{\frac{1}{d}-\varepsilon}}\leq\frac{C_d}{Ww^{\frac{1}{d}-\varepsilon}}.
\end{multline*}
By our choice of $w$, we have $\frac{C_d}{Ww^{\frac{1}{d}-\varepsilon}}<\frac{\rho^2}{400W}$, so \Cref{459rtefdore9odf} follows by the triangle inequality. This proves \Cref{Claim3eowi}.
\end{proof}

\begin{proof}[Proof of \Cref{DistanceColorThmDensityVersionCaseIII} from \Cref{DistanceColorThmDensityVersionSimpleCaseIII}]
After replacing $\alpha(x)$ by $\alpha(x+k)$ for a suitable $k\in\Z$ (which does not affect the validity of \Cref{DistanceColorThmDensityVersionCaseIII}), we may assume that the linear coefficient of $p_\alpha$ is nonzero. Thus we may write, for some $d\in\mathbb{N}$, $c_1\in(0,+\infty)$, $u\in\{-1,1\}$, and $\a_i\in\mathbb{Z}$ satisfying $\a_1,\a_d>0$,
\begin{equation*}
p_\alpha(x)=c_1(\a_1x^d+\a_2x^{d-1}+\cdots+\a_{d-1}x^2+\a_dux).
\end{equation*}
In particular, when $d=1$, we have $\a_d=\a_1>0$ and $u=1$. It suffices to prove \Cref{DistanceColorThmDensityVersionCaseIII} for the function $\alpha(\a_dx)$, whose polynomial part is
\begin{align*}
p_\alpha(\a_dx)&=c_1(\a_1\a_d^{d}x^d+\cdots+\a_{d-1}\a_d^{2}x^2+\a_d^2ux)\\
&=c(\b_1x^d+\cdots+\b_{d-1}x^2+ux),
\end{align*}
where $c=c_1\a_d^2\in(0,+\infty)$ and $\b_i=\a_i\cdot\a_d^{d-i-1}$.

We finish the proof by, essentially, scaling the construction from the proof of \Cref{DistanceColorThmDensityVersionSimpleCaseIII} by a factor of $c$. By replacing $\rho$ with a smaller positive number if necessary, we may assume that $\rho<c$. Set
\begin{align*}
\alpha_2(x)&=\frac{\alpha(\a_dx)}{c}=\b_1x^d+\cdots+\b_{d-1}x^2+ux+\frac{r_\alpha(\a_dx)}{c},\\
\rho_2&=\min\left(\frac{\rho}{3c},\frac{\rho^2}{40}\right)\in\left(0,\frac13\right)
.\end{align*}
By \Cref{DistanceColorThmDensityVersionSimpleCaseIII}, there exists a finite set $F_2\subseteq\mathbb{N}$ such that, whenever $A,B\subseteq F_2$ satisfy $|A|,|B|\geq\rho_2|F_2|$, there are distinct $x\in A,y\in B$ such that $|x+y-\alpha_2(n)|<\rho_2$ for some $n\in\mathbb{N}$. In the proof of \Cref{DistanceColorThmDensityVersionSimpleCaseIII}, this set has the form
\begin{equation*}
F_2=I\cap W\mathbb{Z}.
\end{equation*} 
We regard $W$ as fixed; by increasing the number $w$ from that proof if necessary, we may assume that $cW\geq2$. The interval $I$, however, can be chosen with arbitrarily large length, independently of $W$; denote this length by $L$. In particular, $|F_2|\leq\frac{2L}{W}$. For $x\in\mathbb{R}$ and $A\subseteq\R$, write $d(x,A)=\min_{a\in A}|x-a|$, and define
\begin{align*}
F&=\left\{x\in\mathbb{Z}\cap c I;d(x,c F_2)<\frac{\rho}{3}\right\}.
\end{align*}
For sufficiently large $L$, \Cref{IntegersInDifferentIntervals} gives
\begin{equation*}
|F|\geq\frac{c L\rho}{20c W}=\frac{L\rho}{20W}\geq \frac{\rho}{40}|F_2|,
\end{equation*}
since $F$ agrees, up to at most two endpoint errors, with the set
\begin{align*}
\left\{x\in\mathbb{Z}\cap c I;d(x,c W\mathbb{Z})<\frac{\rho}{3}\right\}.
\end{align*}

For each $x\in F$, let $x_2$ be the unique element of $F_2$ such that $|x-c x_2|<\frac{\rho}{3}$; uniqueness follows from $\rho\leq c$. Similarly, for each subset $X\subseteq F$, set $X_2=\{x_2:x\in X\}$, so that $|X_2|=|X|$.

To conclude the proof, let $A,B\subseteq F$ satisfy $|A|,|B|\geq\rho|F|$. Then
\[
|A_2|=|A|\geq\rho|F|\geq\rho\frac{\rho}{40}|F_2|\geq\rho_2|F_2|,
\]
and similarly $|B_2|\geq\rho_2|F_2|$. By \Cref{DistanceColorThmDensityVersionSimpleCaseIII}, there exist distinct integers $x_2\in A_2$ and $y_2\in B_2$, corresponding to distinct $x\in A$ and $y\in B$, such that $|x_2+y_2-\alpha_2(n)|\leq\rho_2$. The triangle inequality now gives
\begin{align*}
|x+y-\alpha(\a_dn)|&\leq|x-c x_2|+|y-c y_2|+|c x_2+c y_2-c\alpha_2(n)|\\
&< \frac{\rho}{3}+\frac{\rho}{3}+c\rho_2\leq\rho.\qedhere
\end{align*}
\end{proof}

\begin{lemma}
\label{IntegersInDifferentIntervals}
Let $0<\delta<\frac{1}{2}$ and $\lambda\geq2$. There is $L_0(\delta,\lambda)>0$ such that, for any interval $J\subseteq\mathbb{R}$ with length $L\geq L_0$, we have
\begin{equation*}
\left|\left\{x\in\Z\cap J;d(x,\lambda\mathbb{Z})<\delta\right\}\right|\geq\frac{L\delta}{4\lambda}.
\end{equation*}
\end{lemma}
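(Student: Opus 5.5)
The plan is to reduce the count to residues modulo $\lambda$. Write $J=[s,s+L]$ or any interval of length $L$; it contains at least $\lfloor L\rfloor$ integers, and these can be grouped into consecutive blocks of $M$ integers for a suitable $M$, with fewer than $M$ left over. An integer $x$ satisfies $d(x,\lambda\mathbb{Z})<\delta$ exactly when $\|x/\lambda\|<\delta/\lambda$, i.e. when $x/\lambda \bmod 1$ lies in an arc of $\mathbb{R}/\mathbb{Z}$ of length $2\delta/\lambda$. So the statement says that the sequence $x/\lambda$, $x\in\Z\cap J$, visits this arc at least a fraction $\delta/(4\lambda)$ of the time, uniformly in the position of $J$. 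I split according to whether $\lambda$ is rational or irrational.

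If $\lambda\notin\Q$, then $(x/\lambda)_{x\in\Z}$ is uniformly distributed mod $1$, and indeed uniformly so over all intervals (Weyl's criterion holds uniformly in the starting point, since $\sum_{s\le x<s+L}e(hx/\lambda)$ is a geometric sum bounded by $1/\|h/\lambda\|$ regardless of $s$). For $L$ large the fraction of $x\in J$ landing in the arc is then at least $\frac{2\delta}{\lambda}-o(1)\ge\frac{\delta}{\lambda}$, so the count is at least $\frac{\delta}{\lambda}(L-1)\ge\frac{L\delta}{4\lambda}$ once $L\ge L_0$. To make this rigorous I would use \Cref{QuantitativeUnifDist}'s smoothing idea in reverse (a minorant trigonometric polynomial for the arc), or simply quote the uniform version of Weyl equidistribution for translates.

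If $\lambda=a/q$ in lowest terms, then $x/\lambda=qx/a$, and as $x$ runs over $a$ consecutive integers, $qx \bmod a$ runs over all residues. Hence $\{x/\lambda\}$ takes each value $j/a$, $0\le j<a$, exactly once per block of $a$ consecutive integers. The arc $(-\delta/\lambda,\delta/\lambda)$ contains $0$ and at least $2\lfloor a\delta/\lambda\rfloor\ge 2\lfloor q\delta\rfloor$ further points $j/a$. In any case it contains at least $\max(1,\,2a\delta/\lambda-2)$ of them, which is $\ge a\delta/(2\lambda)$; this uses $a\delta/\lambda=q\delta$ together with $\delta<1/2$ (if $q\delta<1$ the single point $0$ suffices, since $1\ge q\delta/2$). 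Summing over the $\lfloor (L-1)/a\rfloor$ complete blocks gives at least $\frac{L\delta}{4\lambda}$ hits once $L\ge L_0(\delta,\lambda)$, as desired.

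The main obstacle is the uniformity in the location of $J$ in the irrational case. I expect it to be handled either by the uniform Weyl bound above or, more elementarily, by a pigeonhole/Dirichlet argument: choose $q$ with $\|q/\lambda\|<\delta/\lambda$, so that among any $q$ consecutive multiples-step progression some term lands in the arc. Every run of roughly $q\lceil\lambda/\delta\rceil$ consecutive integers then contains a hit. That yields a positive proportion, but not obviously the constant $1/4$. So I would commit to the equidistribution route, where the constant $\delta/\lambda$ has room to spare.
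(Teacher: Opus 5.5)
Your argument is correct and uses the same idea as the paper: $x/\lambda \bmod 1$ is equidistributed in its orbit closure, uniformly in the position of the block of consecutive integers. The paper gets this in one stroke from unique ergodicity of the rotation $t\mapsto t+1/\lambda$ on $\overline{\{x/\lambda \bmod 1: x\in\mathbb{Z}\}}$, together with the fact that the Haar measure of the arc $(-\delta/\lambda,\delta/\lambda)$ is at least $\delta/\lambda$. You instead verify it by hand, using uniformly bounded Weyl sums when $\lambda\notin\mathbb{Q}$ and exact periodicity over blocks of $a$ integers when $\lambda=a/q$.

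One small slip, which does not affect the proof: when $q\delta$ is an integer, the open arc contains only $2q\delta-1$ of the points $j/a$, not $2\lfloor q\delta\rfloor+1$. Your subsequent bound $\max(1,2q\delta-2)\geq q\delta/2$ still holds.
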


\begin{proof}
Let $\mathcal{X}:=\overline{\left\{\frac{x}{\lambda}\textup{ mod }1:x\in\mathbb{Z}\right\}}\subseteq\mathbb{T}$, with Haar measure $m_{\mathcal{X}}$. The rotation $T:\mathcal{X}\to \mathcal{X}$, $x\mapsto x+\frac{1}{\lambda}$, is uniquely ergodic, so for all $\varepsilon>0$ we have
\begin{equation}
\label{4refdscxoilkoi}
\lim_{M-N\to\infty}\frac{1}{M-N}\left|N\leq n<M;\left\|\frac{n}{\lambda}\right\|<\varepsilon\right|=m_{\mathcal{X}}((-\varepsilon,\varepsilon)_{\mathbb{T}})\geq\varepsilon.
\end{equation}
Taking $\varepsilon=\frac{\delta}{\lambda}$, we have
\begin{equation}
\label{refdvcoierfd9oi}
\left\{x\in\Z\cap J;d(x,\lambda\mathbb{Z})<\delta\right\}=\left\{x\in\Z\cap J;\left\|\frac{x}{\lambda}\right\|<\varepsilon\right\}.
\end{equation}
Therefore, by \Cref{4refdscxoilkoi}, if $J$ is long enough, the right-hand side of \Cref{refdvcoierfd9oi} has cardinality at least $|\mathbb{Z}\cap J|\cdot\frac{\varepsilon}{2}\geq\frac{L\delta}{4\lambda}$.
\end{proof}

The proof of \Cref{DistanceColorThmDensityVersionSimpleCaseIII} also gives the following version for integer polynomials.
\begin{PolynomialDensityIntro}
Let $\delta>0$ and let $p\in\mathbb{Z}[x]$ satisfy $\lim_{x\to+\infty}p(x)=+\infty$. Suppose that $p(n)$ is even for some $n\in\N$. Then there exist $b,W\in\N$ such that, for every $A\subseteq\mathbb{N}$ satisfying
\begin{equation*}
\overline{d}_{b+W\N}(A):=\limsup_N\frac{\left|A\cap\{b+W,b+2W,\dots,b+NW\}\right|}{N}>\delta,
\end{equation*}
there exist distinct $x,y\in A$ such that $x+y=p(n)$ for some $n\in\mathbb{N}$.
\end{PolynomialDensityIntro}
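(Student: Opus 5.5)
The plan is to reduce \Cref{eroidfklsjoifks} to \Cref{ErgodicThmIntegerPolys} and then repeat, almost verbatim, the argument of \Cref{Claim3eowi}. The one new ingredient is the residue class $b+W\N$, which absorbs the shift caused by rewriting $p$ in normalized form.

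\textbf{Step 1: normalizing $p$.} Fix $n_0$ with $p(n_0)$ even, and write $p(n_0+kx)=p(n_0)+q_k(x)$ with $q_k\in\Z[x]$ and $q_k(0)=0$. We want to bring $q_k$ (or a rescaling of the variable) to the shape required by \Cref{ErgodicThmIntegerPolys}, namely linear coefficient $u\in\{\pm1\}$. In general this is impossible exactly: the linear coefficient $kp'(n_0)$ need not be $\pm1$, and $p'$ may even vanish. So the plan is to factor out the content. Let $g$ be the gcd of the coefficients of $q_1(x)=p(n_0+x)-p(n_0)$, and write $q_1=g\,\tilde q$. After shifting $n_0$ so that the linear coefficient is nonzero, and after a further substitution $x\mapsto ax$ as in the proof of \Cref{DistanceColorThmDensityVersionCaseIII}, we obtain an integer constant $c_0$ with
\[
p(n_0+ax)=p(n_0)+c_0\,\hat p(x),\qquad \hat p(x)=\b_1x^d+\dots+\b_{d-1}x^2+ux,\quad \b_1>0,\ u\in\{\pm1\}.
\]
The constant $c_0$ plays the role of the factor $c$ in \Cref{DistanceColorThmDensityVersionCaseIII}. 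I expect this step to be the main obstacle. In particular, when $\deg p=1$, say $p=\a x+\b$, the statement is elementary, because $x+y$ then ranges over the progression $\b+\a\N$. I would treat that case directly, choosing $d=2\a$ and $b$ so that $2b\equiv\b\pmod{\a}$; this uses the hypothesis that $\b+\a n$ is even for some $n$.

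\textbf{Step 2: choosing the progression.} Fix $w$ large enough that $C_d w^{-1/d+\varepsilon}<\delta^2/400$, and set $W'=\mathrm{lcm}(1,\dots,w)$. The relevant sums are
\[
x+y=p(n_0)+c_0\hat P(n),\qquad \hat P(n)=\hat p(W'n),
\]
with $\hat P(n)\in c_0W'\Z$. Choose the modulus of the progression to be $c_0W'$. Since $p(n_0)$ is even, we can take the offset $b$ with $2b\equiv p(n_0)\pmod{c_0W'}$, for instance $b=p(n_0)/2$ if this is positive (otherwise add a multiple of $c_0W'$). With this choice, $x=b+c_0W'x'$ and $y=b+c_0W'y'$ satisfy $x+y=p(n_0+aW'n)$ exactly when $x'+y'=\hat P(n)/W'$.

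\textbf{Step 3: the density argument.} Put $A'=\{x': b+c_0W'x'\in A\}$. By hypothesis $A'$ has upper density greater than $\delta$. Now run the argument of \Cref{Claim3eowi} with $W=1$, applied to the polynomial $\hat P/W'=\hat p(W'x)/W'$, which has the form \eqref{GoodPolynomial}. Concretely, take $N$ with $w\le\varepsilon^3\log N$ such that $A'$ has density at least $\delta$ in the window $I=[\hat P(Y)/(2W'),\hat P(N)/(2W'))$, with $Y=(1-\eta)N$. The remaining work then follows the earlier proof:
\begin{itemize}
\item split $A'\cap I$ into two disjoint halves $C$ and $D$, which guarantees $x\neq y$;
\item pass to the cyclic group $\Z/L\Z$ with $L\approx 3.5|I|$;
\item compare the weighted average over $P'(n)$ with the full average over $m$ using \Cref{ErgodicThmIntegerPolysRmk} (or \Cref{UniformSumEstimateRationalPolyGeneralLongIntervals});
\item bound the full average from below by $|C||D|/(\cdot)$ via double counting.
\end{itemize}
This last lower bound is order $\delta^2$, while the error term is smaller than $\delta^2/400$. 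The weighted sum is therefore positive, which yields $n$ with $x'+y'=\hat P(n)/W'$ for some $x'\in C$, $y'\in D$.

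\textbf{One point to check.} In Step 3, the density of $A'$ in the window $I$ is required along a suitable subsequence of $N$. Upper density only guarantees density along initial segments $[1,M]$, not along windows of the form $[(1-\eta)M,M]$ scaled by $\hat P$. To handle this, I would take $Y$ small, namely $Y\approx\sqrt N$, using the long-interval version \Cref{UniformSumEstimateRationalPolyGeneralLongIntervals} as in the proof of \Cref{QuantitativeErgToComb}. Then $I$ is essentially $[1,\hat P(N)/(2W'))$, and upper density applies directly as $N$ varies, since consecutive values of $\hat P(N)$ are comparable.
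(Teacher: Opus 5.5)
Your proposal is correct and is essentially the paper's proof: normalize $p$ by a shift and the substitution $x\mapsto hx$ so that the linear coefficient becomes $\pm1$, absorb $p(n_0)/2$ into the offset $b$, and run the cyclic-group double-counting argument of \Cref{Claim3eowi} against \Cref{ErgodicThmIntegerPolys}. Your rescaling to $\hat p(W'x)/W'$, averaging over all $m$, is the same argument divided by $W'$. The only real variation is the passage from initial-segment density to window density: the paper uses dyadic windows $I_k=\bigl[P(2^{k-1})/2,P(2^k)/2\bigr)$ and pigeonholes over $k$, at the cost of taking $\rho=\delta/3^d$, while your choice $Y\approx\sqrt N$ via \Cref{UniformSumEstimateRationalPolyGeneralLongIntervals} makes the window essentially an initial segment, which works equally well and loses no density.
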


\begin{proof}
We may assume that $0<\delta<1$, since otherwise the hypothesis on the relative upper density is never satisfied.

We first prove the result when $p$ has degree $d\geq1$ and is of the form
\begin{equation}\label{SpecialFormPolynomialDensity}
p(x)=\b_1x^d+\b_2x^{d-1}+\cdots+\b_{d-1}x^2+ux,
\end{equation}
where $u\in\{1,-1\}$ and, when $d\geq2$, $\b_i\in\mathbb{Z}$ and $\b_1>0$. 
We follow the proof of \Cref{DistanceColorThmDensityVersionSimpleCaseIII}, for $\rho=\frac{\delta}{3^d}$ and $\alpha(x)=p(x)$, making the following minor changes to account for the fact that $r_\alpha(x)=0$. We take $\eta=\frac{1}{2}$, and choose any sufficiently large $N$, because \Cref{RhoLittleInInterval} is trivially satisfied. We define, letting $N=2^k$ be a power of $2$,
\begin{equation*}
I_k=\left[\frac{P\left(2^{k-1}\right)}{2},\frac{P\left(2^{k}\right)}{2}\right)\textup{ and }F_k=I_k\cap W\Z.
\end{equation*}
It follows that there is $k_0\in\mathbb{N}$ such that, for all $k\geq k_0$ and every set $A\subseteq\mathbb{N}$ satisfying $
\frac{|A\cap F_k|}{|F_k|}>\rho$,
there exist distinct $x,y\in A$ such that $x+y=p(n)$. This proves the special case, since $\overline{d}_{W\N}(A)>\delta$ implies $\frac{|A\cap F_k|}{|F_k|}>\rho$ for arbitrarily large $k$.

We now reduce the general case to the special case just proved. First, observe that replacing $p(x)$ by $p(x+a)$, where $a\in\N$ is a natural number such that $p(a)$ is even and $p'(a)>0$, does not affect the conclusion of \Cref{eroidfklsjoifks}. 
Therefore, we may assume that $p(0)$ is even and $p'(0)>0$.

Next, translating the arithmetic progression $W\N$ allows us to remove the constant term of $p$. Indeed, put $b=p(0)/2$. If the result holds for $p(x)-p(0)$ along $W\N$, then it holds for $p$ along $b+W\N$. Thus, we may assume that $p(0)=0$ and $h:=p'(0)>0$.

Finally, $q(x):=\frac{p(hx)}{h^2}$ has the form in \eqref{SpecialFormPolynomialDensity}. Therefore, by the special case applied to $q(x)$, there is $W$ such that for any $A\subseteq\mathbb{N}$ satisfying $\overline{d}_{h^2W\N}(A)>\delta$, there are distinct $x,y\in A$ and $n\in\N$ such that $x+y=p(hn)$.
\end{proof}

As explained in \Cref{SecIntro}, \Cref{KSt} follows immediately from \Cref{eroidfklsjoifks}.

\begin{proof}[Proof of \Cref{mainp1} from \Cref{mainp} and
\Cref{KSt}]
Since \(\alpha\) is a sub-polynomial, for every \(p\in\Q[x]\), either
\[
|\alpha(x)-p(x)|\longrightarrow+\infty
\]or \(\alpha(x)-p(x)\) converges to a finite limit as \(x\to+\infty\). If the former holds for every \(p\in\Q[x]\), then the conclusion follows directly from \Cref{mainp}. 
Otherwise, there exist \(p\in\Q[x]\), $C\in\R$ and a Hardy field function $r(x)\to0$ such that
\begin{equation*}
\alpha(x)=p(x)+C+r(x).
\end{equation*}
Therefore, there is a constant $C_0$ close to $C$ such that $\lfloor\alpha(n)\rfloor=\lfloor p(n)+C_0\rfloor$ for all big enough $n$.
By assumption, \(\lfloor\alpha(n)\rfloor\) is even for infinitely many \(n\). We may therefore choose some \(B\in\N\) such that $\lfloor p(B)+C_0\rfloor$ is even. We then choose \(A\in\N\) so that $p(Ax+B)-p(B)\in\Z[x]$. Consequently, the polynomial
\[
q(x):=p(Ax+B)-p(B)+\lfloor p(B)+C_0\rfloor
\]
has integer coefficients and takes an even value. Moreover,
\[
q(n)=\lfloor p(An+B)+C_0\rfloor
     =\lfloor\alpha(An+B)\rfloor
\]for every \(n\in\N\). The desired conclusion now follows by applying the Khalfalah--Szemerédi theorem to \(q\).
\end{proof}

\msection{Some counterexamples and remarks}
\label{SecRemarksmaind}

In this section, we first make explicit the Rohlin-tower construction underlying the non-uniformity observation made in \Cref{SecIntro} after \Cref{ergodicconverg}. We then check why \Cref{maind} fails if some of its hypotheses are weakened. Finally, in \Cref{LinearDifferenceCounterexample}, we show that \Cref{LinearDifferenceEquationCorollary}, which deals with equations of the form $
kx-\ell y=\lfloor\alpha(n)\rfloor$,
may fail when $k<\ell$.
\begin{lemma}
\label{RohlinTowersNonUniformity}
For every $N_0\in\N$ and every standard aperiodic m.p.s. $(X,\mathcal{B},\mu,T)$, there exists $A\in\mathcal{B}$ such that
\[
\left\|
\frac{1}{N}\sum_{n=1}^N1_{T^{-n}A}
-P_T(1_A)
\right\|_{L^2(X)}
\geq 0.1
\]
for every $1\leq N\leq N_0$.
\end{lemma}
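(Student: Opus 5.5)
The plan is to use the Rohlin lemma to build a tall tower, and to take $A$ to be a union of an initial block of consecutive levels, so that ergodic averages of length at most $N_0$ cannot move mass across the tower fast enough.

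Fix $N_0$ and choose a height $H$ much larger than $N_0$ (say $H=100N_0$). Since the system is standard and aperiodic, the Rohlin lemma gives a measurable $E\in\mathcal B$ such that $E,TE,\dots,T^{H-1}E$ are pairwise disjoint and their union $R$ satisfies $\mu(X\setminus R)<\eta$, for a small $\eta$ fixed later. Put
\[
A:=\bigcup_{j=0}^{H/2-1}T^{j}E,
\]
the bottom half of the tower, so that $\mu(A)\approx\frac12$.

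The key step is to control $\frac1N\sum_{n=1}^N 1_{T^{-n}A}$ for $N\le N_0$ on part of the tower. For $x\in T^{i}E$ with $0\le i<H/2-N_0$, every $T^nx$ with $1\le n\le N$ lies in level $i+n<H/2$. Hence the average equals $1$ on $\bigcup_{i<H/2-N_0}T^iE$, a set of measure about $\frac12-\frac{N_0}{H}$. Likewise, for $H/2\le i<H-N_0$ the points $T^nx$ stay in the top half, so the average equals $0$ on a set of measure about $\frac12-\frac{N_0}{H}$. Write $g_N$ for the average.

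The obstacle is that $P_T(1_A)$ is unknown and need not be constant, since $T$ is not assumed ergodic. I avoid identifying it by using the $T$-invariance of $h:=P_T1_A$. Set
\[
S_0=\bigcup_{i<H/2-N_0}T^iE,\qquad S_1=\bigcup_{H/2\le i<H-N_0}T^iE .
\]
Then $g_N=1$ on $S_0$ and $g_N=0$ on $S_1$. Since $S_1\supseteq T^{H/2}S_0$, invariance gives
\[
\int_{S_0}|1-h|^2\,d\mu+\int_{T^{H/2}S_0}|h|^2\,d\mu=\int_{S_0}\bigl(|1-h|^2+|h|^2\bigr)\,d\mu\ge\int_{S_0}\tfrac12\,d\mu .
\]
This is at least $\frac12\bigl(\frac12-\frac{N_0}{H}-\eta\bigr)\ge0.2$ once $H$ is large and $\eta$ is small. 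Therefore $\|g_N-h\|_2^2\ge0.2$, which gives $\|g_N-h\|_2\ge0.44>0.1$ for every $1\le N\le N_0$.

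The remaining checks are routine: the disjointness bookkeeping between $S_0$ and $T^{H/2}S_0$, and the choice of $H$ and $\eta$.
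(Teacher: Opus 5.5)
Your proof is correct and is essentially the paper's argument: the same Rohlin tower of height $100N_0$ with $A$ the bottom half, and the same mechanism that $P_T(1_A)$ is invariant under the half-height shift $T^{H/2}$ while the ergodic averages are close to $1$ on $A$ and close to $0$ on its shifted copy. The only difference is bookkeeping. The paper first shows $\|1_A-P_T1_A\|_2\geq\frac12\|1_A-1_C\|_2$ with $C=T^{50N_0}A$, then bounds $\bigl\|1_A-\frac1N\sum_{n=1}^N1_{T^{-n}A}\bigr\|_2$ via $\mu(T^nA\,\Delta\,A)$ and applies the triangle inequality; you instead evaluate the average exactly on $S_0\cup S_1$ and use the pointwise bound $|1-h|^2+|h|^2\geq\frac12$, which even yields a slightly better constant.
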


\begin{proof}
By the Rohlin lemma (see for example \cite[Theorem 10.5.1]{CFS82}), there exists $B\in\mathcal{B}$ such that the sets $B,TB,\dots,T^{100N_0-1}B$ are pairwise disjoint and the complement of their union has measure less than $0.01$:
\[
\mu\left(X\setminus\bigcup_{j=0}^{100N_0-1}T^jB\right)<0.01.
\]
Set
\[
A:=\bigcup_{j=0}^{50N_0-1}T^jB
\qquad\text{and}\qquad
C:=T^{50N_0}A.
\]
We then have $\mu(A)\in(0.49,0.5]$
and $A\cap C=\varnothing$. Moreover, $1_A$ and $1_C$ have the same projection onto the $U_T$-invariant functions, and their distances from this common projection are equal. Hence, by the triangle inequality,
\begin{align*}
\|1_A-P_T1_A\|_{L^2(X)}
\geq\frac{1}{2}\|1_A-1_C\|_{L^2(X)}
>0.4.
\end{align*}
On the other hand, for every $1\leq n\leq N_0$, $
\mu(T^nA\mathbin{\Delta}A)=2n\mu(B)<0.02$.
Consequently, for every $1\leq N\leq N_0$,
\begin{equation*}
\left\|1_A-\frac{1}{N}\sum_{n=1}^N1_{T^{-n}A}\right\|_{L^2(X)}
\leq\frac{1}{N}\sum_{n=1}^N\|1_A-1_{T^{-n}A}\|_{L^2(X)}<\sqrt{0.02}<0.15.
\end{equation*}
The conclusion now follows from the triangle inequality.
\end{proof}

We now show that the condition $\overline{d}(A,B)>0$ in \Cref{maind} cannot, in general, be replaced by the weaker condition $\overline{d}(A),\overline{d}(B)>0$.
\begin{lemma}
\label{RemarkMaind1}
Let $\alpha(x)\succ x$ be a Hardy field function. Then there exist sets $A,B\subseteq\mathbb{N}$ such that $
\overline{d}(A)=\overline{d}(B)=1$
and there are no $x\in A$, $y\in B$, and $n\in\mathbb{N}$ such that $
x+y=\lfloor\alpha(n)\rfloor$.
\end{lemma}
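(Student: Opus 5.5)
We will build $A$ and $B$ out of long intervals placed at very different scales, so that every sum $x+y$ falls strictly between two consecutive values of $\lfloor\alpha(n)\rfloor$. The key input is that $\alpha(x)\succ x$ forces $\alpha'(x)\to+\infty$. Indeed, $\alpha'$ is a Hardy field function, so it has a limit. If that limit were finite, then $\alpha(x)=O(x)$, which is impossible. Hence the gaps $g_n:=\lfloor\alpha(n+1)\rfloor-\lfloor\alpha(n)\rfloor$ tend to infinity. In particular, $\alpha$ is eventually increasing, and the set $S:=\{\lfloor\alpha(n)\rfloor:n\in\N\}$ has arbitrarily long gaps.

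The natural first attempt is to take $A$ and $B$ to be unions of intervals with upper density one. Any pair $x\in A$, $y\in B$ must have $x+y\notin S$. Making this work directly for all pairs, including those with $x$ and $y$ at different scales, is awkward. We therefore make the sums land near prescribed points by using a different shape for $A$ and $B$.

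\textbf{Construction.} Choose a rapidly increasing sequence $N_1<N_2<\cdots$ and set
\[
A=\bigcup_k\bigl(\N\cap[\varepsilon_kN_k,N_k]\bigr),\qquad B=\bigcup_k\bigl(\N\cap[\varepsilon_kM_k,M_k]\bigr),
\]
with $\varepsilon_k\to0$ and with the scales $N_k$ and $M_k$ interlaced. This gives $\overline d(A)=\overline d(B)=1$. For a pair $x\in A$, $y\in B$, the sum $x+y$ is dominated by the larger of the two, which lies at the highest scale present. So we need: for every $z$ in a block of $B$ and every $x$ in lower blocks of $A$ (or the reverse), $z+x\notin S$. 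Intervals alone cannot achieve this, since a full interval $[\varepsilon M,M]$ contains elements $y$ that hit $S$ minus small shifts $x$.

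\textbf{Refinement.} We instead remove from $B$ a thin neighborhood of $S$. Set
\[
B=\Bigl(\bigcup_k[\varepsilon_kM_k,M_k]\Bigr)\setminus\bigcup_n\bigl[\lfloor\alpha(n)\rfloor-R_n,\lfloor\alpha(n)\rfloor\bigr],
\]
where $R_n$ dominates every element of $A$ that lies below the scale of $\lfloor\alpha(n)\rfloor$. Since $g_n\to\infty$, we can choose $R_n$ with $R_n/g_n\to0$, and the removed set then has density zero. With $A$ built symmetrically, any pair $x\le y$ gives $y$ lying in the $B$-block (or, symmetrically, the $A$-block) and $x<R_n$ for the relevant $n$. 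Then $x+y\in S$ would force $y\in[s-R_n,s]$ for some $s\in S$, and this interval was deleted. Both $A$ and $B$ must be thinned in this way, each against the elements of the other up to the current scale.

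\textbf{Main obstacle.} The delicate step is the bookkeeping that makes this consistent. The thresholds $R_n$ must exceed all elements of the other set below scale $\lfloor\alpha(n)\rfloor$, yet stay $o(g_n)$ so that the deletions cost zero density. This requires the elements paired against $y$ to be of size $o(g_n)$ at the scale of $y$. Interlacing alone does not secure this, since a block $[\varepsilon_kN_k,N_k]$ of $A$ can sit just below a block of $B$.

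The resolution is to let the sets occupy genuinely separated scales. Within a block $[\varepsilon M,M]$ of $B$, the partner $x\in A$ is either at most $N_{k}\ll g$ (where $g\approx\alpha'(\alpha^{-1}(\varepsilon M))$, which can be made to exceed $N_k$ by choosing $M_k$ huge) or comparable to $y$. Pairs with both terms at comparable scale do not arise, because $A$ and $B$ are disjoint on all large scales. If $B$'s block is above $A$'s, then $x\le N_k$ and we use the deletion; otherwise we argue symmetrically.

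Thus the choice $M_k\gg N_k$ with $\alpha'(\varepsilon_kM_k)\gg N_k$, and then $N_{k+1}\gg M_k$ with $\alpha'(\varepsilon_{k+1}N_{k+1})\gg M_k$, is the key growth condition. Verifying that the deleted sets have density $\to0$ within each block, via $\#\{n:\lfloor\alpha(n)\rfloor\le M\}\cdot N_k\ll M\cdot N_k/g$, completes the proof.
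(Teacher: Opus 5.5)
Your construction is correct and is essentially the paper's proof. The paper takes $A_k=\{1,\dots,N_k\}\setminus\bigcup_{j\le N_{k-1}}(S-j)$ with $N_k$ growing fast, and puts the even-indexed blocks in $A$ and the odd-indexed ones in $B$; this is your interlaced thinning by block index, and it uses only that $S$ has density zero, so your $\varepsilon_k$-truncation, scale separation and gap estimates are not needed. If you keep the gap bookkeeping, the growth condition should read $\alpha'(\alpha^{-1}(\varepsilon_kM_k))\gg N_k$ rather than $\alpha'(\varepsilon_kM_k)\gg N_k$, and you should count only the elements of $S$ in the block rather than all of $S\cap[1,M]$; choosing $M_k$ with $N_k\,|S\cap[1,2M_k]|\le M_k/k$ avoids both issues.
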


\begin{proof}
Set $
S:=\{\lfloor\alpha(n)\rfloor:n\in\mathbb{N}\}\cap\mathbb{N}$.
Since $\alpha(x)\succ x$, the set $S$ has density zero.
Since $\underline{d}(S)=0$, we can recursively choose a sufficiently fast-growing sequence $(N_k)_{k\in\mathbb{N}}$ such that $N_1=1$ and, for every $k\geq2$, the set
\[
A_k:=\{1,\dots,N_k\}\setminus\bigcup_{j=1}^{N_{k-1}}(S-j)
\]
satisfies
\[
\frac{|A_k|}{N_k}\geq1-\frac{1}{k}.
\]
If $l<k$, then $A_l\subseteq\{1,\dots,N_{k-1}\}$. So by definition of $A_k$, we have $A_k\cap(S-A_l)=\varnothing$, or equivalently, $
(A_k+A_l)\cap S=\varnothing$.
Now set
\[
A:=\textstyle\bigcup_{k\in\mathbb{N}}A_{2k}
\qquad\text{and}\qquad
B:=\textstyle\bigcup_{k\in\mathbb{N}}A_{2k+1}.
\]
The preceding observation shows that $(A+B)\cap S=\varnothing$. Moreover,
\[
\frac{|A\cap[1,N_{2k}]|}{N_{2k}}
\geq\frac{|A_{2k}|}{N_{2k}}
\geq1-\frac{1}{2k},
\]
and the analogous estimate along $(N_{2k+1})_{k\in\mathbb{N}}$ holds for $B$. Hence $\overline{d}(A)=\overline{d}(B)=1$.
\end{proof}

We now show that both \Cref{maind} and \Cref{mainp} fail for any function $\alpha(x)$ of exponential growth. 

\begin{lemma}
\label{4re9fdosil}
Assume that $(a_n)$ is a \emph{lacunary} sequence of natural numbers, meaning that for some $\delta>0$, we have $
a_{n+1}>(1+\delta)a_n$
for every $n\in\mathbb{N}$. Then there exists a finite coloring of $\mathbb{N}$ with no distinct monochromatic $x,y$ satisfying $x+y=a_n$ for any $n\in\mathbb{N}$.
\end{lemma}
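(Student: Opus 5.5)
We will build the coloring from a blocking of $\mathbb{N}$ adapted to the lacunary growth, in the spirit of the classical fact that lacunary sequences are not sets of recurrence. First pass to a uniformly spaced subsequence. Fix $k\in\N$ with $(1+\delta)^k\geq 4$. Then $a_{n+k}\geq 4a_n$ for every $n$. Split $(a_n)$ into the $k$ subsequences $(a_{kj+i})_{j}$, $0\leq i<k$. Each of them satisfies $b_{j+1}\geq 4b_j$. If for each $i$ we find a finite coloring $\chi_i$ with no distinct monochromatic $x,y$ such that $x+y$ lies in the $i$-th subsequence, then the product coloring $\chi=(\chi_0,\dots,\chi_{k-1})$ works for the whole sequence. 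So it suffices to treat a sequence $(b_j)$ with $b_{j+1}\geq 4b_j$.

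For such a sequence we color according to position relative to the $b_j$. Put $b_0=0$ and, for $x\in\N$, let $j(x)$ be the unique $j$ with $b_{j}\leq 2x< b_{j+1}$; if $2x<b_1$ we set $j(x)=0$. Now suppose $x+y=b_m$ with $x\neq y$ and, say, $x<y$. Then $y>b_m/2$, i.e.\ $2y>b_m$, and $2y<2b_m<b_{m+1}$, so $j(y)=m$. On the other side, $2x<b_m$, so $j(x)\leq m-1$. Hence $j(x)\neq j(y)$. This alone gives infinitely many colors, so we need to compress $j$ to finitely many values.

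For the compression, consider the case $j(x)=m-1$. Then $b_{m-1}\leq 2x$, so $x\geq b_{m-1}/2$, and $y=b_m-x$. Coloring by $j\bmod 2$ already separates $j(x)=m-1$ from $j(y)=m$. The dangerous case is $j(x)\leq m-2$ with $j(x)\equiv m\pmod 2$. Then $x<b_{m-1}/2\leq b_m/8$, so $y=b_m-x\in(7b_m/8,\,b_m)$. We add a second coordinate recording whether $2y\in[b_m,\tfrac{7}{4}b_m)$ or $2y\in[\tfrac74 b_m,\,2b_m)$. Here $j(y)=m$, and $2y=2b_m-2x>2b_m-b_{m}/4=\tfrac74 b_m$, so $y$ is in the upper part. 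For $x$ with $j(x)=i$, we need $x$ in the lower part, i.e.\ $2x<\tfrac74 b_i$. This fails when $x$ is near $b_i$, so we refine further. Encode the binary-type position of $2x/b_{j(x)}\in[1,4)$ into finitely many intervals $I_1,\dots,I_r$ of $[1,4)$ of small width, together with $j(x)\bmod 2$. We then check that $2y/b_m\in(7/4,2)$ while $x$ is arbitrary.

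This compression step is the main obstacle. If it resists, we would switch to a cleaner alternative that avoids it. Assign to $x$ the pair $\bigl(j(x)\bmod 3,\ \mathbf 1[\,2x<\tfrac32 b_{j(x)}\,]\bigr)$, and use instead the bound $x<b_{m-1}/2$, which gives $2y>2b_m-b_{m-1}\geq \tfrac74 b_m$. That forces $y$ into the class $\mathbf 1=0$. For $x$ in the same class we then need $j(x)\equiv m\pmod 3$ and $2x\geq\tfrac32 b_{j(x)}$ with $j(x)\leq m-3$. Then $x+y\in b_m-(0,b_{m-3}/2)$, and the contradiction must come from counting sums differently. Concretely, we would replace the single threshold by the full itinerary of $2x$ relative to three consecutive $b$'s, bounded by a fixed finite alphabet. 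Failing that, we fall back on the general theorem that lacunary sets are not sets of chromatic recurrence for the equation $x+y=a_n$. We derive it by coloring $x$ via $\lfloor 4\{\theta x\}\rfloor$ for a $\theta$ with $\|\theta a_n\|\geq c>0$ for all $n$; such a $\theta$ exists for lacunary sequences by the Pollington--de Mathan theorem. If $x$ and $y$ share the color $\lfloor 4\{\theta x\}\rfloor$, then $\{\theta(x+y)\}$ lies in a fixed arc, and finer arcs of width less than $c/2$, applied to $2\theta$, give the contradiction. This last route is the one we actually expect to complete.
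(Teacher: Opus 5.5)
There is a genuine gap: the proposal never produces a finite colouring, and neither of the routes you sketch can be completed. What is correct: the reduction to subsequences with $b_{j+1}\geq 4b_j$ and the product colouring (the paper does the same with ratio $2$), and the observation that $x+y=b_m$ with $x<y$ forces $j(x)<j(y)=m$.

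\textbf{Why the compression fails.} No colouring determined by $j(x)\bmod k$ and an interval partition of the relative position $2x/b_{j(x)}$ can work, and the same goes for positions relative to neighbouring $b$'s. Take $b_j=4^j$, choose $i\equiv m\pmod{k}$ with $m-i$ large, and let $x$ lie slightly below $4^i$. Then $y=4^m-x$ lies slightly below $4^m$. So $x$ and $y$ carry identical local data, yet $x+y=4^m$.

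\textbf{Why the Pollington--de Mathan fallback fails.} That is the argument for \emph{differences}. If $\theta x,\theta y$ lie in a colour arc $J$, then $\theta(x-y)$ is near $0$. But $\theta(x+y)$ only lies in $2J$, which is near $0$ just for arcs $J$ near $0$ or $1/2$. For every other colour class the bound $\|\theta a_n\|\geq c$ gives nothing. If $\theta=a/q$ is rational the colouring is periodic mod $q$, and it fails outright for $a_n=2^n$: the numbers $2^{n-1}\pm q$ share a colour and sum to $2^n$.

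\textbf{The missing idea.} A point near the top of a block must inherit its colour from its partner, not from its position. Assuming $b_m>2b_{m-1}$, the paper colours $[b_{m-1},b_m)$ recursively:
\begin{itemize}
\item $c(x)=0$ on $[b_{m-1},b_m/2)$;
\item $c(x)=1$ on $[b_m/2,b_m-b_{m-1})$;
\item $c(x)=1-c(b_m-x)$ on $[b_m-b_{m-1},b_m)$.
\end{itemize}
This is well defined because $b_m-x\leq b_{m-1}$ already has a determined colour. The inequality $b_m-b_{m-1}>b_m/2$ keeps the three pieces in order.

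Now suppose $x+y=b_m$ with $x<y$. Either $x>b_{m-1}$, and then $c(x)=0\neq1=c(y)$. Or $x\leq b_{m-1}$, and then $c(y)=1-c(x)$. This uses two colours per subsequence, hence $2^l$ colours overall, where $(1+\delta)^l\geq 2$.
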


\begin{proof}
We first consider the case in which $a_{n+1}>2a_n$ for every $n\in\mathbb{N}$. We construct a $2$-coloring $c\colon\mathbb{N}\to\{0,1\}$ recursively on the intervals $[a_{n-1},a_n)$. First, for $1\leq x<a_1$, set
\[
c(x):=
\begin{cases}
0,&1\leq x<\dfrac{a_1}{2},\\
1,&\dfrac{a_1}{2}\leq x<a_1.
\end{cases}
\]
For every $n\geq2$ and $a_{n-1}\leq x<a_n$, set
\[
c(x):=
\begin{cases}
0,&a_{n-1}\leq x<\dfrac{a_n}{2},\\
1,&\dfrac{a_n}{2}\leq x<a_n-a_{n-1},\\
1-c(a_n-x),&a_n-a_{n-1}\leq x<a_n.
\end{cases}
\]

We claim that this coloring has the required property. Suppose that $x+y=a_n$ for some $n\in\mathbb{N}$ and some distinct $x,y\in\mathbb{N}$, and assume that $x<y$. Then $x<a_n/2$, and the definition of $c$ immediately gives $c(x)\neq c(a_n-x)=c(y)$.

Now let $\delta>0$ be arbitrary, and choose $l\in\mathbb{N}$ such that $(1+\delta)^l\geq2$. It follows that $
a_{n+l}>2a_n$
for every $n\in\mathbb{N}$. For each $i\in\{1,\dots,l\}$, apply the first part of the proof to the subsequence $(a_{ln+i})_{n\in\mathbb{N}\cup\{0\}}$. This gives a coloring $c_i\colon\mathbb{N}\to\{0,1\}$ with no distinct $c_i$-monochromatic $x,y$ satisfying $x+y=a_{ln+i}$ for any $n\in\mathbb{N}\cup\{0\}$. Define
\[
c\colon\mathbb{N}\to\{0,1\}^l,
\qquad
c(x):=(c_1(x),\dots,c_l(x)).
\]
If $x$ and $y$ have the same $c$-color, then they have the same $c_i$-color for every $i\in\{1,\dots,l\}$. Since every term of $(a_n)$ belongs to one of the $l$ subsequences above, we cannot have $x+y=a_n$ for any $n\in\mathbb{N}$.
\end{proof}

The number of colors in \Cref{4re9fdosil} cannot, in general, be reduced to two. For example, suppose that $
a_{n+1}<1.1a_n$
for every $n\in\mathbb{N}$, and that $a_n+a_{n+1}+a_{n+2}$ is even for arbitrarily large $n$. Fix such an $n$ and set $b_1=a_n$, $b_2=a_{n+1}$, and $b_3=a_{n+2}$. Since $(a_n)$ is increasing and $b_3<1.1^2b_1$, we have
\[
b_1+b_2+b_3>2\max\{b_1,b_2,b_3\}.
\]
Consequently, the three numbers
\[
\frac{b_1+b_2-b_3}{2},
\qquad
\frac{b_1-b_2+b_3}{2},
\qquad
\frac{-b_1+b_2+b_3}{2}
\]
are distinct natural numbers, and their pairwise sums are $b_1$, $b_2$, and $b_3$. In every $2$-coloring of $\mathbb{N}$, two of these three numbers have the same color; hence there are arbitrarily large distinct monochromatic $x,y$ such that $x+y=a_m$ for some $m\in\mathbb{N}$.

More generally, for every \emph{fixed} $k\in\mathbb{N}$, there exist $\delta>0$ and a sequence of natural numbers $(a_n)$ satisfying
\[
a_{n+1}>(1+\delta)a_n
\]
for every $n\in\mathbb{N}$, such that every $k$-coloring of $\mathbb{N}$ contains distinct monochromatic $x,y$ with $x+y=a_n$ for some $n\in\mathbb{N}$. Indeed, it is enough to construct $(a_n)$ so that, for arbitrarily large $N$, it contains the entire set
\[
S_N:=\{N,2N,\dots,(2k+1)N\}.
\]
For any $k$-coloring, two of the $k+1$ numbers $N,2N,\dots,(k+1)N$ have the same color, and their sum belongs to $S_N$. Although for $k\geq3$ this particular construction cannot arise from an exponentially growing Hardy function, we do not know whether, for every fixed $k$, there exists an exponentially growing Hardy function $\alpha$ such that every $k$-coloring of $\mathbb{N}$ contains distinct monochromatic $x,y$ satisfying $x+y=\lfloor\alpha(n)\rfloor$ for some $n\in\mathbb{N}$.

For a set $A\subseteq\N$, define its \emph{upper Banach density} by
\[
d^*(A):=
\limsup_{\substack{1\leq M<N\\N-M\to\infty}}
\frac{|A\cap[M,N-1]|}{N-M}.
\]
The Furstenberg--S\'ark\"ozy theorem for equations of the form $x-y=p(n)$, stated at the beginning of \Cref{SecIntro}, and its extension to sub-polynomials, \Cref{FW09ThmC}, remain valid when the condition $\overline{d}(A)>0$ is replaced by $d^*(A)>0$.

By contrast, the following result, applied to $a_n=\lfloor\alpha(n)\rfloor$, shows that the upper density in \Cref{maindc} cannot be replaced by upper Banach density if $\alpha(x)\succ x$.
\begin{lemma}
\label{UpperBanachDensityCounterexample}
Let $(a_n)$ be a strictly increasing sequence of natural numbers such that $
a_{n+1}-a_n\longrightarrow\infty$. Then there exists a set $A\subseteq\N$ such that
\[
d^*(A)=1
\qquad\text{and}\qquad
(A+A)\cap\{a_n:n\in\N\}=\varnothing.
\]
\end{lemma}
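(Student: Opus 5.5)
The plan is to build $A$ as a union of long intervals $I_k=[M_k,M_k+L_k)$, with $L_k\to\infty$, placed so that no sum of two elements of $A$ lands on a term $a_n$. Since $d^*(A)=1$ only requires arbitrarily long intervals inside $A$, this is enough for the density condition. The key point is that a set of sums $I_j+I_k$ is itself an interval, of length less than $L_j+L_k$. So we must place the $I_k$ so that each such sum-interval falls entirely inside a gap $(a_n,a_{n+1})$ of the sequence. The gaps are eventually as long as we like, because $a_{n+1}-a_n\to\infty$.

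We construct the intervals recursively. Take $L_k=k$. Suppose $I_1,\dots,I_{k-1}$ have already been chosen, and that every sum-interval $I_i+I_j$ with $i,j<k$ misses $\{a_n\}$. We now look for $M_k$, larger than everything chosen so far, satisfying two kinds of conditions.

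First, $I_k+I_k=[2M_k,2M_k+2k-2]$ must miss $\{a_n\}$. Second, for each $j<k$, the interval $I_k+I_j=[M_k+M_j,\,M_k+M_j+k+j-2]$ must miss $\{a_n\}$.

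Let $m_j=M_j$ for $j<k$ (all these are bounded), and set $R=\max_{j<k}(M_j+L_j)$. All $k$ conditions then say that certain windows avoid the sequence. Each window has length at most $2k+R$, and each is contained in one of two sets: $[M_k,M_k+2R+k]$, or $[2M_k,2M_k+2k]$. So it suffices to choose $M_k$ large such that both $[M_k,M_k+2R+2k]$ and $[2M_k,2M_k+2k]$ contain no $a_n$.

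This is the main step. Fix $G=2R+2k$. Because the gaps tend to infinity, beyond some point every gap exceeds $4G$. Choose a gap $(a_n,a_{n+1})$ far out, and consider the candidate values $M\in(a_n,a_{n+1}-G)$. For every such $M$ the first window avoids the sequence, and these candidates form an interval of length more than $3G$.

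Doubling these candidates, $2M$ ranges over a progression of step $2$ inside an interval $J$ of length more than $6G$. The sequence points in $J$ are separated by more than $4G$, so $J$ contains at most two of them. Removing a $2k$-neighbourhood around each of them kills at most $4k+O(1)$ of the candidates $M$. Since $3G>4k+O(1)$, some candidate survives, and this gives $M_k$. If the bookkeeping is tight, we can instead enlarge the gap threshold to $100G$, which makes it comfortable.

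Finally, we verify the result. Set $A=\bigcup_k I_k$. By construction every sum $x+y$ with $x,y\in A$ lies in some $I_j+I_k$, and each of these intervals was arranged to avoid $\{a_n\}$. Hence $(A+A)\cap\{a_n\}=\varnothing$. Also $d^*(A)=1$, since $A$ contains the intervals $I_k$ of length $k\to\infty$.

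The only delicate point is the simultaneous avoidance for $M_k$ and $2M_k$ handled above. The doubling map only scales the gap structure, and the counting argument resolves it.
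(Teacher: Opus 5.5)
Your overall strategy matches the paper's. You take $A$ to be a union of blocks whose lengths tend to infinity, chosen recursively. The sums of a new block with the earlier blocks lie in a bounded translate of it, and its sums with itself lie in a short window starting at twice its left endpoint; the endpoint is chosen so that all of these avoid $S=\{a_n\}$.

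The only difference is how the new left endpoint is found. The paper notes that, given the earlier blocks, the bad left endpoints $k$ lie in finitely many translates of $S$ together with the sets $\{k:2k\in S-t\}$, $0\le t\le 2N$. This bad set therefore has density zero because $S$ does. That argument is shorter, and it shows the lemma needs only $d(S)=0$, not $a_{n+1}-a_n\to\infty$. You instead locate the endpoint by hand inside one long gap of the sequence.

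That counting step has a slip. The interval $J=(2a_n,2a_{n+1}-2G)$ has length $2(a_{n+1}-a_n)-2G$. This exceeds $6G$, but it can be arbitrarily larger, since you only know the chosen gap exceeds $4G$. So the claim that $J$ contains at most two sequence points does not follow, and neither does the bound $4k+O(1)$ on excluded candidates. Enlarging the threshold to $100G$ does not help, because the chosen gap can still be far longer than the threshold.

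The repair is immediate. Use only the candidates $M\in(a_n,a_n+3G]$; these still satisfy $M+G<a_{n+1}$. All windows $[2M,2M+2k]$ then lie in one interval of length $6G+2k<8G$. Since consecutive terms beyond the threshold are more than $4G$ apart, this interval contains at most two terms of the sequence. Each term excludes at most $k+1$ values of $M$. There are $3G\ge 6k>2k+2$ candidates, so one survives. With this fix your proof is complete.
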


\begin{proof}
Set $S:=\{a_n:n\in\N\}$. We will recursively choose natural numbers $k_N$ so that, for every $N\in\N$, the set
\[
A_N:=\bigcup_{j=1}^{N}\{k_j,k_j+1,\dots,k_j+j\}
\]
satisfies $(A_N+A_N)\cap S=\varnothing$. Then, letting $A=\cup_NA_N$, we are done.

Set $A_0:=\varnothing$, and suppose that $k_1,\dots,k_{N-1}$ have already been chosen. As $S$ has density $0$, the set of $k\in\N$ for which
\[
\bigl(A_{N-1}\cup\{k,k+1,\dots,k+N\}\bigr)
+\bigl(A_{N-1}\cup\{k,k+1,\dots,k+N\}\bigr)
\]
intersects $S$ has density zero. We may therefore choose $k_N$, completing the recursive step.
\end{proof}

We conclude this section by showing that \Cref{LinearDifferenceEquationCorollary}, which deals with equations of the form $
kx-\ell y=\lfloor\alpha(n)\rfloor$,
may fail when $k<\ell$.

\begin{lemma}
\label{LinearDifferenceCounterexample}
Let $k,\ell\in\N$ satisfy $k<\ell$, and let $\alpha(x)$ be an eventually positive Hardy function such that $\alpha(x)\succ x$. Then there exists $A\subseteq\N$ with $\overline d(A)>0$ such that there are no $x,y\in A$ and $n\in\N$ satisfying
\[
kx-\ell y=\lfloor\alpha(n)\rfloor.
\]
\end{lemma}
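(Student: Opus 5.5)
The plan is to build $A$ as a union of intervals of the form $[(1-\eta)M_j, M_j]$, with $M_j$ growing very fast, where the small parameter $\eta>0$ depends only on $k<\ell$. This mirrors the counterexample mentioned after \Cref{PositiveAverageLinearDifferences}. Any such $A$ has $\overline d(A)\geq\eta>0$. We must rule out solutions of $kx-\ell y=\lfloor\alpha(n)\rfloor\geq 1$ (for large $n$, so that $\alpha(n)>0$). There are two kinds of pairs $(x,y)$: those with $x,y$ in the same block, and those in different blocks.

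\emph{Same block.} Suppose $x,y\in[(1-\eta)M,M]$. Then
\[
kx-\ell y\leq kM-\ell(1-\eta)M=\bigl(k-\ell+\ell\eta\bigr)M<0
\]
once $\eta<(\ell-k)/\ell$. Since $\lfloor\alpha(n)\rfloor$ is eventually positive, there is no solution with large $n$. The finitely many small values of $n$ are handled by requiring all blocks to lie beyond a threshold. If $x>y$ is forced for a solution, this also takes care of it.

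\emph{Different blocks.} Suppose first that $y$ lies in the later block $M_j$ and $x$ in an earlier one. Then $kx\leq kM_{j-1}<\ell(1-\eta)M_j\leq \ell y$ once $M_j$ is large, so $kx-\ell y<0$ and there is no solution. So the only dangerous case is $x\in[(1-\eta)M_j,M_j]$ with $y\leq M_{j-1}$. Here $kx-\ell y$ ranges over a set contained in
\[
\bigcup_{y\in A,\ y\leq M_{j-1}}\bigl(k\,[(1-\eta)M_j,M_j]-\ell y\bigr).
\]
We need this to avoid $S:=\{\lfloor\alpha(n)\rfloor\}$. Since $\alpha(x)\succ x$, the set $S$ has density zero. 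This is the same observation used in \Cref{RemarkMaind1}, following from $\alpha^{-1}(t)=o(t)$.

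\emph{Choosing the blocks.} We do not take full intervals; instead we thin the $j$-th block. We let
\[
A_j:=\Bigl\{x\in[(1-\eta)M_j,M_j]: kx\notin \textstyle\bigcup_{y\leq M_{j-1}}(S+\ell y)\Bigr\}.
\]
The excluded set is a union of at most $M_{j-1}$ translates of the density-zero set $S$. Its intersection with $[(1-\eta)kM_j,kM_j]$ therefore has size $o(M_j)$ once $M_j$ is large compared with $M_{j-1}$. Hence $|A_j|\geq \tfrac{\eta}{2}M_j$ for $M_j$ chosen recursively large enough, giving $\overline d(A)\geq\eta/2$. The earlier elements $y\leq M_{j-1}$ are already fixed when $A_j$ is chosen, so the recursion is consistent. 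The same-block argument is unaffected by the thinning.

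The main point to check carefully is this recursive density-zero estimate for finitely many translates of $S$, which is routine. The only genuine input is that $\alpha\succ x$ forces $d(S)=0$, and for a Hardy function this is immediate from monotonicity.
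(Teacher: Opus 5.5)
Your proposal is correct and matches the paper's proof essentially step for step: the same blocks $[(1-\eta)M_j,M_j]$ with $k<(1-\eta)\ell$, the same recursive thinning of the $j$-th block by removing those $x$ with $kx-\ell y\in S$ for earlier $y$, the same count $M_{j-1}\cdot|S\cap[1,kM_j]|=o(M_j)$ based on $S$ having density zero, and the same sign argument for the remaining cases. You remove translates for all $y\leq M_{j-1}$ rather than only $y\in\bigcup_{i<j}A_i$, which is harmless, and your explicit threshold for the finitely many small $n$ is a detail the paper leaves implicit.
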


\begin{proof}
The proof is similar to that of \Cref{RemarkMaind1}.
Set $
S:=\{\lfloor\alpha(n)\rfloor:n\in\N\}\cap\N$.
Since $\alpha(x)\succ x$, the set $S$ has density zero. Choose $\eta>0$ so small that $k<(1-\eta)\ell$.
Since $S$ has density zero, we can recursively choose a sufficiently fast-growing sequence $(M_j)_{j\in\N}$ such that, writing
\[
\begin{aligned}
I_j&:=\N\cap[(1-\eta)M_j,M_j],
\qquad B_{j-1}:=\bigcup_{i<j}A_i,\\
A_j&:=I_j\setminus\{x\in I_j:kx-\ell y\in S\textup{ for some }y\in B_{j-1}\},
\end{aligned}
\]
we have $|A_j|\geq \eta M_j/2$ for every $j$, and $kM_{j-1}<\ell(1-\eta)M_j$ for every $j\geq2$. Indeed, once $A_1,\ldots,A_{j-1}$ have been chosen, the number of excluded elements of $I_j$ is at most $|B_{j-1}|\cdot |S\cap[1,kM_j]|=o(M_j)$.

Set $A:=\bigcup_{j\in\N}A_j$. Then $\overline d(A)>0$, since $|A_j|\geq\eta M_j/2$ for every $j$. We claim that $(kA-\ell A)\cap S=\varnothing$. Indeed, take $x\in A_i$ and $y\in A_j$. If $i\leq j$, then
\[
kx-\ell y\leq kM_j-\ell(1-\eta)M_j<0.
\]
Finally, if $i>j$, then $kx-\ell y\notin S$ by the definition of $A_i$. This proves the claim and completes the proof.
\end{proof}

\msection{Subpolynomials $\alpha(x)$ for which \Cref{maindc} fails}
\label{SecObstructionsDensityTheorem}

In this section, we prove \Cref{HardysCloseToPolysDensityResult} and \Cref{Alphaclosetolinear}, which together with \Cref{maindc} give an exact characterization of the sub-polynomials $\alpha(x)$ that do not satisfy the conclusion of \Cref{maindc}

The eventually positive sub-polynomials $\alpha(x)$ such that $\lim_{x\to+\infty}\alpha(x)=+\infty$ and which do not satisfy the hypotheses of \Cref{maindc} are those of the form
\begin{equation}
\label{HardyFunctionsCloseToRationalPolys}
\alpha(x)=c p(x)+r(x),
\end{equation}
where $c>0$, $r(x)\ll_r\log(x)$, and either $p=0$ or, for some $d\in\N$,
\[
p(x)=\a_dx^d+\a_{d-1}x^{d-1}+\cdots+\a_1x\in\mathbb{Z}[x],
\qquad \a_d>0.
\]
When $p=0$, the assumption that $\alpha(x)\to+\infty$ amounts to $r(x)\to+\infty$. By \Cref{HardysCloseToPolysDensityResult}, the conclusion of \Cref{maindc} fails whenever $\deg(p)\geq2$. The remaining case, in which $p=0$ or $\deg(p)=1$, is studied in \Cref{Alphaclosetolinear}.

\begin{HardysCloseIntro}
Let $\alpha(x)$ be an eventually positive sub-polynomial of the form 
\begin{equation*}
\alpha(x)=c p(x)+r(x),
\end{equation*}
where $c>0$, $p\in\mathbb{Z}[x]$, $\deg(p)\geq2$, and $r(x)\ll_r\log(x)$. Then there exists $A\subseteq\mathbb{N}$ with $\overline{d}(A)>0$ such that such that there are no \(x,y\in A\) and \(n\in\mathbb N\) satisfying \(x+y=\lfloor\alpha(n)\rfloor\).
\end{HardysCloseIntro}

The main ingredient in the proof of \Cref{HardysCloseToPolysDensityResult} is the following lemma, which we prove afterward.

\begin{lemma}
\label{PolysHaveSparseResidues}
Let $p\in\mathbb{Z}[x]$ have degree at least $2$. Then for every $\varepsilon>0$ there is $N\in\mathbb{N}$ such that $\{p(n);n\in\mathbb{Z}\}$ intersects at most $\varepsilon N$ residue classes modulo $N$.
\end{lemma}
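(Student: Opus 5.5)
The approach is to take $N=\prod_{p'\le K}p'^{e}$, a product of prime powers, use the Chinese remainder theorem to reduce to counting values of $p$ modulo each prime power, and show that the proportion of residues hit modulo a single well-chosen prime power is bounded away from $1$. Concretely, let $\rho(m):=|\{p(n)\bmod m : n\in\mathbb{Z}\}|/m$. Since $n\mapsto p(n)\bmod m$ factors through $\mathbb{Z}/m\mathbb{Z}$, the Chinese remainder theorem gives $\rho(m_1m_2)\le\rho(m_1)\rho(m_2)$ for coprime $m_1,m_2$. Indeed, the image modulo $m_1m_2$ embeds into the product of the images. It therefore suffices to find infinitely many pairwise coprime moduli $m_i$ with $\rho(m_i)\le 1-c$ for a fixed $c>0$. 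Alternatively, it suffices to find moduli with $\prod_i\rho(m_i)\to0$. Then $N=m_1\cdots m_k$ works for $k$ large.

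The key step is to bound $\rho(\ell)$ for primes $\ell$. Write $d=\deg p\ge2$. For a prime $\ell$ larger than the leading coefficient and not dividing the discriminant-type quantities, $p$ is a polynomial map $\mathbb{F}_\ell\to\mathbb{F}_\ell$ of degree $d\ge2$. One option is to use the count of collisions. The number of pairs $(x,y)\in\mathbb{F}_\ell^2$ with $p(x)=p(y)$ is at least $\ell^2/|\mathrm{image}|$ by Cauchy--Schwarz. This curve contains the diagonal, and the curve $(p(x)-p(y))/(x-y)=0$ of degree $d-1\ge1$ contributes about $\ell$ further points whenever it has an absolutely irreducible component over $\mathbb{F}_\ell$, by the Weil bound. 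That yields $|\mathrm{image}|\le \ell^2/(2\ell-O(\sqrt\ell))$, so $\rho(\ell)\le\frac12+o(1)$.

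A more elementary route avoids geometry. Replace $p$ by $ap(x)$ with $a$ the leading coefficient inverted modulo $\ell$, and complete the $d$-th power. For $d=2$, $p(x)=a(x+b)^2+c$ modulo odd $\ell\nmid 2a$, so the image has size $(\ell+1)/2$ and $\rho(\ell)\le\frac23$ for every such $\ell$. For general $d$, I would instead choose primes $\ell\equiv1\pmod{d}$. The key obstacle is showing that the image is not everything for a positive proportion of primes. Here the Chebotarev density theorem applied to the splitting field of $p(x)-t$ over $\mathbb{Q}(t)$ is the heavy tool: a polynomial of degree $\ge2$ is a permutation of $\mathbb{F}_\ell$ only for a density-$<1$ set of primes, in fact for finitely many in the non-exceptional case. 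When $p$ is not a permutation modulo $\ell$, at least one value is missed, giving only $\rho(\ell)\le1-1/\ell$, which is too weak. So I would actually use the collision argument above, whose output is a uniform $\rho(\ell)\le1-c_d$ for all large $\ell$. Note $\prod(1-1/\ell)\to0$ would also suffice by Mertens, which is a useful fallback: if $p$ misses at least one residue modulo $\ell$ for a set of primes $\ell$ of positive relative density, then $\prod_{\ell\le K}\rho(\ell)\le\prod(1-1/\ell)\to0$ along that set.

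To summarize the plan: (1) establish the submultiplicativity of $\rho$ over coprime moduli via the Chinese remainder theorem; (2) for all sufficiently large primes $\ell$, show $p$ is not surjective on $\mathbb{F}_\ell$, which in the simplest form follows from the collision count. Specifically, a surjective map on a finite set is injective, but $p(x)=p(y)$ with $x\ne y$ has solutions for large $\ell$ because $(p(x)-p(y))/(x-y)$ has a linear-in-$y$ or absolutely irreducible factor with $\gg\ell$ points, so $\rho(\ell)\le1-1/\ell$; (3) conclude via divergence of $\sum 1/\ell$, taking $N=\prod_{\ell_0<\ell\le K}\ell$ with $K$ large. The main obstacle is step (2) for general degree. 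My first attempt would be to factor $(p(x)-p(y))/(x-y)$ over $\overline{\mathbb{Q}}$, fix one absolutely irreducible factor defined over a number field, and restrict to primes splitting completely there, which form a positive-density set by Chebotarev. On those primes the factor has $\ell+O(\sqrt\ell)$ points by Weil, and off the diagonal this forces a collision.
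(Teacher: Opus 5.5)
Your final plan is correct, and its skeleton matches the paper's: multiplicativity of the residue count under the Chinese remainder theorem, non-surjectivity of $p$ modulo a set of primes $\ell$ with $\sum 1/\ell=\infty$, and then $\prod(1-1/\ell)\to0$. The difference is in how you prove the non-surjectivity step, \Cref{PolysNotSurjectiveModPrimes}.

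The paper quotes the classification of polynomials that permute $\mathbb{Z}/q\mathbb{Z}$ for infinitely many primes $q$ (Schur's conjecture): such polynomials have the form $aD(p_1(x))+b$ with $D$ a Dickson polynomial. It then uses that $D$ is not surjective modulo primes $q\equiv1\pmod{\deg D}$, and concludes with Dirichlet's theorem.

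You instead prove non-injectivity directly. Take an absolutely irreducible factor $g$ of $\Phi(x,y)=(p(x)-p(y))/(x-y)$; it is defined over a number field $K$, and primes splitting completely in $K$ have positive density. For those primes, Weil gives $\ell+O(\sqrt\ell)$ points on $\bar g=0$, and at most $d-1$ of them lie on the diagonal. To make this airtight you need two routine facts you did not state:
\begin{enumerate}
\item absolute irreducibility of $g$ survives reduction modulo all but finitely many primes of $K$;
\item $\bar g$ is not a multiple of $x-y$, since $\Phi(x,x)=p'(x)\not\equiv0\pmod{\ell}$ for large $\ell$.
\end{enumerate}
Your route avoids the deep classification, at the cost of Weil and Chebotarev-type input. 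In fact the geometry can be dropped by working on the single line $y=a$. Choose $a$ with $p'(a)\neq0$ and write $p(x)-p(a)=(x-a)q(x)$. For every prime $\ell\nmid p'(a)$ at which $q$ has a root $r$, you get $r\not\equiv a$ and $p(r)\equiv p(a)$. Such primes include all primes splitting completely in the splitting field of $q$, which have positive density.

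Two claims made along the way are false and must be discarded, not used.

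First, non-surjectivity, or a uniform bound $\rho(\ell)\le1-c_d$, does not hold for all large $\ell$. The polynomial $x^3$ permutes $\mathbb{F}_\ell$ for every $\ell\equiv2\pmod{3}$. There $x^2+xy+y^2=0$ has only the point $(0,0)$, because its absolutely irreducible factors $x-\omega y$ and $x-\omega^2y$ are not defined over $\mathbb{F}_\ell$. So restricting to completely split primes is essential, and step (2) of your summary must be stated for a positive-density set of primes only.

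Second, the Cauchy--Schwarz step runs the wrong way. The inequality $\sum_v n_v^2\ge\ell^2/|\mathrm{image}|$ gives a lower bound on $|\mathrm{image}|$. Indeed $\rho(\ell)\le\frac12+o(1)$ is false in general: a generic cubic hits about $2\ell/3$ residues. The correct conversion uses that each fibre has at most $d$ points, so
\[
\ell-|\mathrm{image}|=\sum_v(n_v-1)\ge\frac1d\,\#\{(x,y):p(x)=p(y),\ x\neq y\}.
\]
This gives $\rho(\ell)\le1-\frac1d+o(1)$ on split primes. As you noted, though, $\rho(\ell)\le1-1/\ell$ on a positive-density set of primes already suffices.
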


\begin{proof}[Proof of \Cref{HardysCloseToPolysDensityResult} from \Cref{PolysHaveSparseResidues}]
For each real number $C>0$, let $\mathbb{T}_C:=\mathbb{R}/C\mathbb{Z}$, and write $\overline{x}:=x+C\mathbb{Z}$ for the coset of $x\in\mathbb{R}$. We use the notation
$$d_{\mathbb{T}_C}(x,y)=
d_{\mathbb{T}_C}(\overline{x},\overline{y})=\min\{|x-y-nC|;n\in\mathbb{Z}\}$$
for either $x,y\in\mathbb{R}$ or $\overline{x},\overline{y}\in\mathbb{T}_C$. By \Cref{PolysHaveSparseResidues}, we can find a real number $C>100$, of the form $C=cN$ for some $N\in\mathbb{N}$, such that the set of residues
$$R(C,p):=\{c p(n)+C\mathbb{Z};n\in\mathbb{Z}\}\subseteq\mathbb{T}_C$$ has at most $\frac{C}{100}$ elements. We now consider two cases.
\begin{enumerate}[leftmargin=*]
    \item Suppose that $r(x)$ converges to a constant $\lambda$. By adding a constant term to $p$ if necessary, we may assume that $r(x)\prec1$; we can still apply \Cref{PolysHaveSparseResidues} to $p$ even if its constant term is not an integer. Choose $C>100$ such that $|R(C,p)|\leq\frac{C}{100}$. Then there exists $n_0\in\mathbb{N}$ such that
$$d_{\mathbb{T}_C}\left(\overline{2n_0},R(C,p)\right)>5.$$ 
Indeed, there exists $z\in\mathbb{T}_C$ such that $d_{\mathbb{T}_C}(z,R(C,p))>6$, because, for each $y\in R(C,p)$, the set
$$\{z\in\mathbb{T}_C;d_{\mathbb{T}_C}(z,y)\leq6\}$$
is an interval of length $12$, and at most $\frac{C}{100}$ such intervals cannot cover $\mathbb{T}_C$. Choosing an even integer $2n_0$ such that $|z-2n_0|\leq1$ gives the claim.

    Now define $A=\{n\in\mathbb{N};d_{\mathbb{T}_C}(n,n_0)<1\}$. For every $x,y\in A$, we have $d_{\mathbb{T}_C}(x+y,2n_0)<2$, and hence $d_{\mathbb{T}_C}(x+y,R(C,p))>3$. Consequently, $|x+y-cp(n)|>3$ for every $n\in\mathbb{Z}$. Since $\alpha(x)=cp(x)+o(1)$, we cannot have $x+y=\lfloor\alpha(n)\rfloor$ for any sufficiently large $n$.

    Moreover, $A$ is syndetic\footnote{We say that $E\subseteq\mathbb{N}$ is syndetic if there exists $L>0$ such that $E\cap[n,n+L]\neq\varnothing$ for every $n\in\mathbb{N}$.}, because $A-n_0$ is a Bohr neighborhood of $0$. Therefore, $\underline{d}(A)>0$.

\item Suppose that $1\prec |r(x)|$. Let $L:=\lim_{x\to+\infty}\frac{r(x)}{\log(x)}\in\mathbb{R}$, so that $r(x)=L\log(x)+r_1(x)$ for some $r_1(x)\prec\log(x)$. Since $r(n+1)-r(n)\to0$, the sequence $(r(n))$ comes arbitrarily close to multiples of $C$. Moreover,
\[
r(2x)-r(x)\longrightarrow L\log2.
\]
It follows that there exists an increasing sequence of natural numbers $k_j\to\infty$, with $k_{j+1}\geq2k_j$, such that $d_{\mathbb{T}_C}(r(n),0)<1+|L|$ for every $n\in[k_j,2k_j]$. Choose $C>100(1+|L|)$ such that $|R(C,p)|<\frac{C}{100(1+|L|)}$. As above, we can find $n_0$ such that $$d_{\mathbb{T}_C}\left(\overline{2n_0},R(C,p)\right)>4+|L|.$$
We then define
$$A=\{n\in\mathbb{N};d_{\mathbb{T}_C}(n,n_0)<1\}\cap\bigcup_{j\in\mathbb{N}}[\alpha(k_j),1.1\alpha(k_j)].
$$
Suppose that, for some sufficiently large $x,y\in A$ and some $n\in\mathbb{N}$, we have $x+y=\lfloor\alpha(n)\rfloor$. By the definition of $A$, and since $\alpha$ grows like a polynomial of degree at least $2$, we must have $n\in[k_j,2k_j]$ for some $j\in\mathbb{N}$, namely, for the value of $j$ such that $\max(x,y)\in[\alpha(k_j),1.1\alpha(k_j)]$. Hence, $d_{\mathbb{T}_C}(r(n),0)\leq1+|L|$. By the definition of $A$ and the triangle inequality,
\begin{equation*}
d_{\mathbb{T}_C}(2n_0,cp(n))
\leq 
d_{\mathbb{T}_C}(x+y,cp(n))+2
\leq 
d_{\mathbb{T}_C}(x+y,\alpha(n))+3+|L|
\leq 4+|L|.
\end{equation*}
This contradicts the definition of $n_0$. Finally, since $\{n\in\mathbb{N}:d_{\mathbb{T}_C}(n,n_0)<1\}$ is syndetic, the set $A$ has positive upper density. This completes the proof.\qedhere

\end{enumerate}
\end{proof}

We deduce \Cref{PolysHaveSparseResidues} from the following result. Note that any polynomial $p\in\mathbb{Z}[x]$ define for each $q\in\mathbb{N}$ a map 
\begin{align*}
p_q:\mathbb{Z}/q\mathbb{Z}&\to\mathbb{Z}
/q\mathbb{Z};\\
n+q\mathbb{Z}&\mapsto p(n)+q\mathbb{Z}.
\end{align*}

\begin{lemma}
\label{PolysNotSurjectiveModPrimes}
Let $p\in\mathbb{Z}[x]$ have degree at least $2$. Then the set $A_p$ of primes $q$ such that $p_q:\mathbb{Z}/q\mathbb{Z}\to\mathbb{Z}/q\mathbb{Z}$ is not 
surjective has positive relative lower density in the set of primes; that is,
\[
\underline{d}_{\mathcal{P}}(A_p)
:=
\liminf_{N\to\infty}
\frac{|A_p\cap[1,N]|}{|\mathcal{P}\cap[1,N]|}
>0.
\]
\end{lemma}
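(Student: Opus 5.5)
We need to prove that for $p\in\mathbb{Z}[x]$ of degree $d\geq2$, the set of primes $q$ for which $p_q$ is not surjective has positive relative lower density. Our plan is to use the Chebotarev density theorem. Since $\mathbb{Z}/q\mathbb{Z}$ is finite, $p_q$ fails to be surjective exactly when it fails to be injective. So it suffices to find a positive-density set of primes $q$ for which there exist $a\neq b$ modulo $q$ with $p(a)\equiv p(b)\pmod q$.

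To this end we consider the two-variable polynomial
\[
g(x,y):=\frac{p(x)-p(y)}{x-y}\in\mathbb{Z}[x,y],
\]
which has degree $d-1\geq1$. Its leading form is $\a_d(x^{d-1}+x^{d-2}y+\cdots+y^{d-1})$, where $\a_d\neq0$ is the leading coefficient of $p$. We specialize $y$ to reduce to one variable: for a fixed integer $b$, the polynomial $h_b(x):=g(x,b)\in\mathbb{Z}[x]$ has degree $d-1\geq1$ and leading coefficient $\a_d$. Suppose a prime $q\nmid \a_d$ is such that $h_b$ has a root $a$ modulo $q$ with $a\not\equiv b$. Then $p(a)\equiv p(b)$ with $a\neq b$ in $\mathbb{Z}/q\mathbb{Z}$, so $p_q$ is not injective.

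By the Chebotarev density theorem (or already Frobenius's density theorem), applied to the splitting field $K$ of an irreducible factor $f$ of $h_b$ over $\Q$, the set of primes $q$ for which $f$ has a root modulo $q$ has natural density at least $1/[K:\Q]>0$. This is the density of Frobenius classes fixing a root, which includes the identity. Indeed, the proportion of elements of the Galois group with a fixed point is at least $1/|G|$, and in fact at least $1/\deg f$ by Jordan's argument. Chebotarev gives an actual limit, so the lower relative density is positive.

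It remains to exclude the primes where the only roots are $a\equiv b$. We have $h_b(b)=p'(b)$. Choose $b$ with $p'(b)\neq0$, which is possible since $p'$ is nonconstant when $d\geq2$. Then for all primes $q\nmid p'(b)\a_d$, every root $a$ of $h_b$ mod $q$ satisfies $a\not\equiv b$. Only finitely many primes are excluded, which does not affect density.

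The only step requiring care is the density input. We must make sure that "has a root mod $q$" holds on a set of positive lower density, rather than just for infinitely many $q$. The elementary fact that a nonconstant polynomial has roots modulo infinitely many primes is not enough, so we invoke Chebotarev (or Frobenius) for the Galois closure of $f$. There, primes unramified in $K$ whose Frobenius is trivial split completely, so $f$ has a root modulo each of them. These primes have density $1/[K:\Q]>0$ by Chebotarev, which yields the lower density bound.

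Finally, note that the case $\deg h_b=1$ (when $d=2$) is trivial: then $h_b$ has a root modulo every prime $q\nmid\a_d$.
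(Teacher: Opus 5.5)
Your proof is correct, and it follows a genuinely different route from the paper's main argument.

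The paper relies on the classification in \cite{Tu}, which is essentially Schur's conjecture (Fried's theorem). If $p_q$ is surjective for infinitely many primes $q$, then $p=aD(p_1)+b$ with $D$ a Dickson polynomial of degree at least $2$. The paper then uses that $D$ does not permute $\mathbb{Z}/q\mathbb{Z}$ when $q\equiv 1 \pmod{\deg D}$, and concludes with Dirichlet's theorem. If $p_q$ is surjective for only finitely many $q$, the conclusion is trivial.

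You bypass the classification entirely. You exhibit collisions $p(a)\equiv p(b)$ for one fixed $b$ with $p'(b)\neq 0$, using the identity $h_b(b)=p'(b)$ to rule out $a\equiv b$. This reduces everything to one fact: a nonconstant integer polynomial has a root modulo every prime in a set of positive natural density. That fact needs only the natural density $1/[K:\mathbb{Q}]$ of primes splitting completely in a Galois number field $K$. Landau's prime ideal theorem already gives this, so the full Chebotarev theorem is not required. This is essentially the Chebotarev alternative mentioned in the paper's footnote.

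What each approach buys:
\begin{itemize}
\item Your route is more elementary and self-contained. It gives an explicit lower bound in terms of the splitting field of an irreducible factor of $(p(x)-p(b))/(x-b)$.
\item The paper's route, once the classification is granted, yields an explicit arithmetic progression of bad primes. But it rests on a much deeper theorem, whose proof itself uses Chebotarev-type input.
\end{itemize}

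Two minor points of hygiene:
\begin{itemize}
\item When $f$ is not monic, passing from ``$q$ splits completely in $K$'' to ``$f$ has a root mod $q$'' needs $q$ not to divide the leading coefficient of $f$. This makes a root $\alpha$ integral at a prime above $q$, and it excludes only finitely many primes.
\item The bound $\ge 1/\deg f$ for the proportion of Galois elements with a fixed point follows from the orbit-counting (Burnside) identity. You do not actually need it.
\end{itemize}
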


\begin{proof}
By \cite[Theorem 2]{Tu},\footnote{ MathOverflow user `so-called friend Don' pointed out that one can also prove \Cref{PolysNotSurjectiveModPrimes} using the better-known Chebotarev density theorem; see \url{https://mathoverflow.net/q/512856}.} all polynomials $p\in\mathbb{Z}[x]$ such that $\mathcal{P}\setminus A_p$ is infinite must be of the form $p(x)=aD(p_1(x))+b$, for some $a,b\in\mathbb{Q}$, $p_1\in\mathbb{Q}[x]$ and $D\in\mathbb{Z}[x]$ is a Dickson polynomial of degree $d\geq2$. Therefore, there exists $N\in\mathbb{N}$ such that all values of $p(n), n\in\mathbb{N}$ are of the form $aD\left(\frac{k}{N}\right)+b$, for some $k\in\mathbb{N}$. So, for any sufficiently large prime $q$ (which does not appear as a prime factor in $a,b,N$ or the coefficients of $D$, so that the formula $k\mapsto aD\left(\frac{k}{N}\right)+b$ is well defined in $\mathbb{Z}/q\mathbb{Z}$), the value of $p(n)$ modulo $q$ is of the form $aD\left(\frac{k}{N}\right)+b$ for some $k\in\mathbb{Z}/q\mathbb{Z}$. But if $q\equiv1$ mod $d$, then by \cite[Lemma 1.4]{Tu} the map $k\mapsto D(k)$ is not surjective as a map $\mathbb{Z}/q\mathbb{Z}\to\mathbb{Z}/q\mathbb{Z}$, so $p_q$ cannot be surjective either, and we are done by Dirichlet's theorem on arithmetic progressions.
\end{proof}

\begin{proof}[Proof of \Cref{PolysHaveSparseResidues} from \Cref{PolysNotSurjectiveModPrimes}]
Let $q_1<q_2<\dots$ be the increasing enumeration of all the primes $q$ such that $p_q:\mathbb{Z}/q\mathbb{Z}\to\mathbb{Z}/q\mathbb{Z}$ is not 
surjective. By \Cref{PolysNotSurjectiveModPrimes} and the prime number theorem, there is a constant $C>0$ such that $q_n<Cn\log(n)$ for big enough $n$. By the Chinese remainder theorem, the number of residues of $p$ modulo $N_n:=q_1\cdots q_n$ is at most
\begin{equation}
\label{tre9fdop9ofpdl}
N_n\cdot
\left(1-\frac{1}{q_1}\right)\cdots\left(1-\frac{1}{q_n}\right).
\end{equation}
Since $\sum_{n=1}^{\infty}1/q_n=+\infty$, for every $\varepsilon>0$ the expression in \eqref{tre9fdop9ofpdl} is less than $\varepsilon N_n$ for all sufficiently large $n$.
\end{proof}

\begin{theorem}
\label{Alphaclosetolinear}
Let $\alpha(x)\to+\infty$ be a sub-polynomial of the form
\begin{equation}
\label{re9fodslroepfl}
\alpha(x)=\gamma x+\lambda\log(x)+r_1(x),
\end{equation}
for some $\gamma\geq0$, $\lambda\in\mathbb{R}$, and $r_1(x)\prec\log(x)$. Then the following are equivalent:
\begin{enumerate}
    \item\label{Alphaclosetolinear1} For all $A\subseteq\mathbb{N}$ such that $\overline{d}(A)>0$, there exist $x,y\in A$ and $n\in\mathbb{N}$ such that $x+y=\ci{\alpha(n)}$.
    \item\label{Alphaclosetolinear2} One of the following is true:
    \begin{enumerate}[label=\alph*)]
        \item $\gamma\leq1$.
        \item For some odd $a\in\N$ and some $c\in\mathbb{Z}$, we have, for all sufficiently large $n\in\N$,
\begin{equation*}
\ci{\alpha(n)}=n+\ci{\frac{n+c}{a}},
\end{equation*}
and the set $\mathbb{N}\setminus\{\ci{\alpha(n)}:n\in\N\}$ contains only finitely many even integers.
\item We have
\[
\lim_{x\to+\infty}\left|\lambda\log(x)+r_1(x)\right|=+\infty,
\qquad
1<\gamma\leq2,
\]
and
\[
\left\|\frac{\lambda\log(2)}{\gamma}\right\|
\geq 1-\frac{1}{\gamma}.
\]
    \end{enumerate}
\end{enumerate}
\end{theorem}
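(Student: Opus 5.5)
The plan is to study the set $S:=\{\ci{\alpha(n)}:n\in\N\}$ and treat the range $\gamma\le1$ separately from the range $\gamma>1$. When $\gamma>1$, $S$ has density $1/\gamma<1$. The dichotomy in \ref{Alphaclosetolinear2} should then reflect whether $A+A$ can be confined to a region avoiding $S$. Since $\gamma x$ is linear, the natural model is a Bohr set or residue-class set in $\mathbb T_\gamma=\mathbb R/\gamma\mathbb Z$, as in the proof of \Cref{HardysCloseToPolysDensityResult}.

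\emph{Case a).} If $\gamma\le1$ then $\alpha(n+1)-\alpha(n)\to\gamma\le1$. Hence the consecutive gaps of $S$ are eventually at most $2$, and in fact $\N\setminus S$ has density $0$: it is eventually empty if $\gamma<1$, and of density $\lim(1-\gamma)=0$ when $\gamma=1$. Given $\overline d(A)>0$, choose $N$ with $|A\cap[1,N]|\ge\delta N$. This gives $\ge\delta^2N^2$ ordered pairs with sum in $[2,2N]$. Each value is represented at most $N$ times, so at most $o(N)\cdot N$ pairs have sum outside $S$, and some pair has $x+y\in S$.

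\emph{Case b).} Here $S$ eventually coincides with the Beatty-type set $\{n+\ci{(n+c)/a}\}$, which is periodic modulo $a+1$ and misses exactly one residue class modulo $a+1$. Since $a+1$ is even and the missed integers are eventually odd, the missed class is odd. By pigeonhole, $A$ contains two elements $x,y$ in a common class $k$ modulo $a+1$. Then $x+y\equiv2k$ is even modulo the even modulus, so $x+y\in S$ once $x,y$ are large.

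\emph{Case c).} Here the phase $\lambda\log x+r_1(x)$ drifts unboundedly but slowly. On a dyadic block $n\in[k,2k]$, $S$ looks like a Beatty set $\gamma n+\phi$ whose phase $\phi=\lambda\log k+O(1)$ moves by $\lambda\log 2$ across the block. I would take $x,y\in A$ near scales $X$ and $X'$. I would then use unique ergodicity of the flow on $\mathbb T_\gamma$, as in \Cref{IntegersInDifferentIntervals}, together with the fact that the phase takes every value modulo $\gamma$ at some scale. The goal is to show that the sums $x+y$ hit $S$ unless $A$ concentrates near a Bohr coset $\{d_{\mathbb T_\gamma}(x,t)<\eta\}$. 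For the contradiction, I would compare pairs with both summands at scale $X$ against pairs with one summand at scale $X$ and one at scale $2X$. Avoidance for both requires the two translates $2t$ and $2t+\lambda\log2$ (measured in units of $\gamma$) to lie in the gap $\mathbb T_\gamma\setminus(\text{image of }S)$, which has length $\gamma-1$. This is impossible exactly when $\|\lambda\log 2/\gamma\|\ge1-1/\gamma$, given $\gamma\le2$.

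\emph{Converse.} Suppose $\gamma>1$ and neither b) nor c) holds. I would construct $A$ in the spirit of \Cref{HardysCloseToPolysDensityResult}. If the drift is bounded, then $\alpha(n)=\gamma n+\mu+o(1)$. For irrational $\gamma$, take $A=\{n:d_{\mathbb T_\gamma}(n,n_0)<\eta\}$ with $2n_0$ inside the gap of length $\gamma-1$; this $A$ is syndetic. For rational $\gamma$, $S$ is eventually periodic, and failure of b) should give a residue class $r$ with $2r\notin S$ modulo the period. If the drift is unbounded but the inequality fails or $\gamma>2$, take the Bohr coset intersected with sparse blocks $[\alpha(k_j),1.1\alpha(k_j)]$. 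There the scale shift $\lambda\log2$ (or, for $\gamma>2$, the gap of length $>1$) leaves room for a $t$ with both $2t$ and $2t+\lambda\log 2$ in the gap.

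The main obstacle I expect is making case c) and its converse sharp. This requires tracking precisely which pairs of scales can occur in $x+y=\ci{\alpha(n)}$ when both summands are near $\alpha(k_j)$ and $\alpha(2k_j)$. It also requires extracting the exact threshold $1-1/\gamma$ from the interplay between the $\gamma$-periodic structure and the logarithmic phase shift.
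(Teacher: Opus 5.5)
Your treatment of a) and b), your bounded-drift constructions, and your overall plan for the unbounded-drift case are in line with the paper. However, the key comparison in the direction c) $\Rightarrow$ (1) is the wrong one.

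\emph{The wrong scales.} Pairs with both summands at scale $X$ have sum near $2X$. Pairs with summands at scales $X$ and $2X$ have sum near $3X$. So the phase of $S$ shifts between them by $\lambda\log(3/2)$, not by $\lambda\log 2$. Worse, a set of positive upper density need not contain any pair of the second kind. For instance, $A=\N\cap\bigcup_j[(1-\eta)M_j,M_j]$ with $M_{j+1}/M_j\to\infty$ has no two large elements whose ratio is close to $2$. Moreover, the picture of $S$ in $\mathbb T_\gamma$ rotates with the scale, so you must also freeze the logarithmic phase on $A$. Once you do, elements at scales $X$ and $2X$ cannot both survive.

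\emph{The comparison that works.} This is what the paper does. Put $\kappa=1/\gamma$, $r=\lambda\log x+r_1$ and $h(x)=\kappa r(\kappa x)$. Pass to a positive-upper-density subset of $A$ lying in sparse short blocks as above, on which $\kappa x \bmod 1$ and $h(x)\bmod 1$ are within $\eta$ of constants $u,v$. Then two kinds of sums always occur.
\begin{itemize}
\item Two elements of the same block give a sum $\approx 2M_j$, with relevant phase $\approx v+\Delta$, where $\Delta=\kappa\lambda\log 2$.
\item A block element plus a negligible element of an earlier block gives a sum $\approx M_j$, with phase $\approx v$.
\end{itemize}
Avoiding $S$ forces $w=2u-v$ to satisfy approximately $\|w\|\geq\kappa/2$ and $\|w-\Delta\|\geq\kappa/2$. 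This is where the $\log 2$ and the threshold $1-\kappa$ come from.

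\emph{The equality case.} Even with that fixed, the gap heuristic only yields a contradiction when $\|\Delta\|>1-\kappa$ strictly. At equality, the closed arcs $\{\|w\|\geq\kappa/2\}$ and $\{\|w-\Delta\|\geq\kappa/2\}$ touch at one or two points. All conditions are known only up to errors $\eta,\delta\to0$, so compactness gives no contradiction: a set whose $w$-value sits at a touching point is not excluded. This boundary case belongs to c) and needs an idea your proposal lacks. The paper supplies it in \Cref{NonAtomicLinearPhase}: $F(n)=2\kappa n-h(n)\bmod 1$ gives small upper density to short intervals. Hence $A$ splits into three positive-upper-density pieces whose $w$-values lie in three disjoint intervals, and three distinct points cannot all be touching points.

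\emph{Two smaller gaps in the converse.}
\begin{itemize}
\item For rational $\gamma=b/a$ with bounded drift, your statement that failure of b) ``should give'' a residue class needs an argument. The set $\{2m/b\bmod 1\}$ has gaps $1/b$ for odd $b$ and $2/b$ for even $b$, and in the even case $b-a$ is odd. So this set meets the open avoiding arc of length $(b-a)/b$ whenever $b-a\geq 2$, leaving only $\gamma=(a+1)/a$.
\item Your blocks $[\alpha(k_j),1.1\alpha(k_j)]$ are too long, because the phase moves by $\kappa\lambda\log 1.1$ across them. Their relative width must be chosen small in terms of the slack $1-\kappa-\|\Delta\|$.
\end{itemize}
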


\begin{remark}
\label{AlphaclosetolinearTwoSets}
For sub-polynomials $\alpha(x)\to+\infty$ of the form $\alpha(x)=\gamma x+\lambda\log(x)+r_1(x)$ we have the following: for every $A_1,A_2\subseteq\N$ with $\overline d(A_1,A_2)>0$, there exist $x\in A_1$, $y\in A_2$, and $n\in\N$ such that
\[
x+y=\ci{\alpha(n)}
\]
if and only if $\gamma\leq1$. The proof is similar to that of \Cref{Alphaclosetolinear}, and we omit it.
\end{remark}

\begin{proof}[Proof of \Cref{Alphaclosetolinear}]
Set $S:=\{\ci{\alpha(n)}:n\in\N\}$.  If $\gamma\leq1$, then the set $S$ has density $1$, so condition (\ref{Alphaclosetolinear1}) holds. Therefore, we may assume $\gamma>1$. We consider two cases.
\begin{enumerate}[label=\roman*)]
\item Suppose that $
\lim_{x\to+\infty}\bigl(\lambda\log(x)+r_1(x)\bigr)$ is a constant $C\in\mathbb{R}$.
Then $\lambda=0$, and we may write
\[
\alpha(x)=\gamma x+C+r_2(x),\qquad r_2(x)\to0.
\]
We then have 
\begin{equation*}
\limsup_n\left|\ci{\alpha(n)}-\gamma n-C\right|
\leq\limsup_n\left(\left|\ci{\alpha(n)}-\alpha(n)\right|+|r_2(n)|\right)
\leq\frac{1}{2},
\end{equation*}
so multiplying by $\kappa:=1/\gamma$ and taking distance in the torus, we obtain
\begin{align}
\label{ApproxAlpha5teorid}
\limsup_{n\to\infty}\left\|\kappa\ci{\alpha(n)}-\kappa C\right\|
\leq\frac{\kappa}{2}.
\end{align}
Suppose that there is $m_0\in\N$ such that
\begin{equation}
\label{er9fdoloreitdfkl}
u:=\left\|2\kappa m_0-\kappa C\right\|>\frac{\kappa}{2}.
\end{equation}
Choose $\delta>0$ so that $u-3\delta>\kappa/2$. For all sufficiently large $M$, the set
\[
A:=\{m\in\N:m\geq M\textup{ and }\|\kappa(m-m_0)\|<\delta\}
\]
has positive density. Moreover, if $x,y\in A$, then
\[
\left\|\kappa(x+y)-\kappa C\right\|>u-2\delta>\frac{\kappa}{2}+\delta.
\]
By \Cref{ApproxAlpha5teorid}, after increasing $M$ if necessary, we may ensure that $A+A$ is disjoint from $S$. Thus, whenever \eqref{er9fdoloreitdfkl} holds for some $m_0$, condition (\ref{Alphaclosetolinear1}) fails.

If $\kappa$ is irrational, the sequence $(\{2\kappa m\})_{m\in\N}$ is dense in $[0,1)$, and hence such an $m_0$ always exists. Now suppose that $\kappa=a/b$, where $a<b$ are coprime natural numbers. Condition \eqref{er9fdoloreitdfkl} is equivalent to
\begin{equation}
\label{er9fdoloreitdfkl2}
\left\|\frac{2a}{b}m_0-\kappa C-\frac12\right\|<\frac{b-a}{2b}.
\end{equation}
That is, there exists $m_0\in\N$ satisfying \Cref{er9fdoloreitdfkl2} if and only if $U\cap V\neq\varnothing$, where 
\[
U:=\left\{\frac{2a}{b}m\textup{ mod }1;m\in\N\right\}=\left\{\frac{2}{b}m\textup{ mod }1;m\in\N\right\}\subseteq\mathbb{R}/\mathbb{Z}
\]
and $V\subseteq\mathbb{R}/\mathbb{Z}$ is the interval of length $\frac{b-a}{b}$ determined by the inequality in \eqref{er9fdoloreitdfkl2}. Note that if $b$ is odd, the set $U$ has gaps of length $1/b$; if $b$ is even, then $b-a$ is odd and $U$ has gaps of length $2/b$. It follows that \eqref{er9fdoloreitdfkl2} has a solution whenever $b-a\geq2$. Therefore, the only remaining possibility is
\[
b=a+1,
\qquad
\gamma=\frac{a+1}{a}.
\]
In this case, since $r_2(n)\to0$, there exists $c\in\mathbb{Z}$ such that, for every sufficiently large $n$,
\[
\ci{\alpha(n)}
=n+\ci{\frac{n}{a}+C+r_2(n)}
=n+\ci{\frac{n+c}{a}}.
\]
Writing $s_n:=n+\ci{(n+c)/a}$, we have $s_{n+a}=s_n+a+1$. Hence, up to finitely many elements, $S$ consists of all residue classes modulo $a+1$ except one. If $\mathbb{N}\setminus S$ contains infinitely many even integers, then the omitted residue class contains $A+A$ for some arithmetic progression $A$, and hence condition (\ref{Alphaclosetolinear1}) fails. Otherwise, $a$ is odd and $S$ contains every sufficiently large even integer, and hence condition (\ref{Alphaclosetolinear1}) holds.

\item Suppose that $
\lim_{x\to+\infty}|r(x)|=+\infty$, where $
r(x):=\lambda\log(x)+r_1(x)$.
Put
\[
\kappa:=\frac1\gamma\in(0,1),
\qquad
h(t):=\kappa r(\kappa t).
\]
As $h(x)$ does not grow faster than $\log(x)$, the following limit exists:
\[
\Delta:=\lim_{t\to+\infty} h(2t)-h(t)
=\kappa\lambda\log(2).
\]

Let $A\subseteq\N$ have positive upper density; we now find conditions on $\alpha$ that make $(A+A)\cap S=\varnothing$ impossible. Fix a sufficiently small $\delta>0$, and choose $\eta>0$ sufficiently small in terms of $\alpha$ and $\delta$, with $\eta<\delta/4$. By taking a positive density subset of $A$ if necessary, we may assume that there exist $u,v\in\mathbb{R}/\mathbb{Z}$ such that
\[
\|\kappa x-u\|<\eta,
\qquad
\|h(x)-v\|<\eta
\]
for every $x\in A$.

We may also assume that, whenever $x>y$ belong to $A$, 
\begin{equation}
\label{refd9cxoilkkl}
y<\eta x\textup{ or }y>(1-\eta)x;
\end{equation}
otherwise, take a positive density subset of $A$ by intersecting $A$ with an adequate sequence or exponentially growing intervals.

We next translate the condition $x+y=\ci{\alpha(n)}$, $x>y$, into conditions on $u$ and $v$. If $x+y=\ci{\alpha(n)}$, then $
|x+y-\alpha(n)|\leq\frac12$,
and hence
\[
\bigl|\kappa(x+y)-n-\kappa r(n)\bigr|
\leq\frac{\kappa}{2}.
\]
Therefore, 
\begin{equation}
\label{rwe9iodspdsk}
\big\|\kappa(x+y)-\kappa r(n)\bigr\|
\leq\frac{\kappa}{2},
\qquad
\big\|2u-\kappa r(n)\bigr\|
\leq\frac{\kappa}{2}+2\eta.
\end{equation}
If $y<\eta x$, then $x+y=\ci{\alpha(n)}$ implies, for all sufficiently large $x$, that
\[
\frac{x}{n}\in\bigl((1-2\eta)\gamma,(1+2\eta)\gamma\bigr).
\]
By choosing $\eta$ sufficiently small and then taking $x$ sufficiently large, we obtain
\[
\bigl|r(n)-r(x)+\lambda\log(\gamma)\bigr|<\delta
\]
and, using $h(x)=\kappa r(\kappa x)$ and $\|h(x)-v\|<\eta$,
\[
\|\kappa r(n)-v\|<\frac{\delta}{2}.
\]
Therefore, \Cref{rwe9iodspdsk} becomes
\[
\big\|2u-v\bigr\|
\leq\frac{\kappa}{2}+\delta.
\]
If on the other hand, $y>(1-\eta)x$, then similarly we obtain the equation
\[
\big\|2u-v-\Delta\bigr\|
\leq\frac{\kappa}{2}+\delta.
\]
Thus, writing $w:=2u-v$, the necessary conditions for sums of the two respective types are
\begin{equation}
\label{NecessaryConditionsForLinearSums}
\big\|w\bigr\|
\leq\frac{\kappa}{2}+\delta,
\qquad
\big\|w-\Delta\bigr\|
\leq\frac{\kappa}{2}+\delta.
\end{equation}

So, as $\delta$ may be taken arbitrarily small, if we want to find $A$ such that $(A+A)\cap S=\varnothing$ we are led to the following question: for which values of $\Delta$ and $\kappa$ does there exist $w\in\mathbb{T}$ such that
\[
\big\|w\bigr\|
>\frac{\kappa}{2}\textup{ and }\big\|w-\Delta\bigr\|
>\frac{\kappa}{2}?
\]
Such a $w$ exists if and only if
\[
\|\Delta\|<1-\kappa.
\]
Indeed, the sets $\{w\in\mathbb{T}:\|w\|>\kappa/2\}$ and $\{w\in\mathbb{T}:\|w-\Delta\|>\kappa/2\}$ are open intervals of length $1-\kappa$ whose centers are at distance $\|\Delta\|$, and they intersect precisely under the displayed condition. Notice that this is exactly the opposite of the inequality in part~(c) of condition (\ref{Alphaclosetolinear2}).

Therefore, we consider three cases:
\begin{enumerate}
\item Suppose that $\|\Delta\|>1-\kappa$, and assume for contradiction that there is some $A\subseteq\N$ such that $\overline{d}(A)>0$ and $(A+A)\cap S=\varnothing$. The arguments above show that, for every $\delta>0$, there exists $w_\delta\in\mathbb{T}$ such that
\begin{equation}
\label{NecessaryConditionsForAvoidingLinearSums}
\|w_\delta\|\geq\frac{\kappa}{2}-\delta,
\qquad
\|w_\delta-\Delta\|\geq\frac{\kappa}{2}-\delta.
\end{equation}
Therefore, by compactness, there must exist $w\in\mathbb{T}$ such that
\[
\|w\|\geq\frac{\kappa}{2},
\qquad
\|w-\Delta\|\geq\frac{\kappa}{2},
\]
which contradicts $\|\Delta\|>1-\kappa$.

\item Suppose that $\|\Delta\|<1-\kappa$. Choose $w\in\mathbb{T}$ such that
\[
\|w\|>\frac{\kappa}{2},
\qquad
\|w-\Delta\|>\frac{\kappa}{2}.
\]
Then, for every $u\in\mathbb{T}$, setting $v:=2u-w$ gives
\[
\|2u-v\|>\frac{\kappa}{2},
\qquad
\|2u-v-\Delta\|>\frac{\kappa}{2}.
\]
Choose $\delta$, and hence $\eta$, sufficiently small that
\[
\|w\|>\frac{\kappa}{2}+\delta,
\qquad
\|w-\Delta\|>\frac{\kappa}{2}+\delta,
\]
and choose $u$ in the subgroup $G_\kappa:=\overline{\{\kappa n:n\in\N\}}$ of the torus. The set
\[
B:=\left\{x\in\N:\|\kappa x-u\|<\eta,
\ \|h(x)-v\|<\eta\right\}
\]
has positive upper density. Indeed, since $h(x)\to\pm\infty$ and $h'(x)=O_\alpha(1/x)$, the condition $\|h(x)-v\|<\eta$ holds for all $x$ in some set of the form
\begin{equation*}
\N\cap\bigcup_{j}[(1-\delta_2)k_j,k_j],
\end{equation*}
for some fixed $\delta_2>0$ and $k_j\to+\infty$. And then, using the fact that the rotation $t\mapsto t+\kappa$ is uniquely ergodic on $G_\kappa$, we deduce that $\overline{d}(B)>0$. Then take a positive upper density subset $A$ of $B$ which satisfies \Cref{refd9cxoilkkl}. It then follows from the discussion following \Cref{refd9cxoilkkl} that $(A+A)\cap S=\varnothing$, so we are done.

\item Suppose that $\|\Delta\|=1-\kappa$, and assume for contradiction that there exists $A\subseteq\N$ of positive upper density such that $(A+A)\cap S=\varnothing$. Since $\kappa<1$, we have $\|\Delta\|>0$, and hence $\lambda\neq0$. Define
\[
F(n):=2\kappa n-h(n)\pmod1.
\]
Since
\[
h'(x)=\frac{\kappa\lambda}{x}+o\left(\frac1x\right),
\]
Now, by \Cref{NonAtomicLinearPhase}, there is some $\delta>0$ such that for all intervals $I\subseteq\mathbb{T}$ of length $<2\delta$, the set $\{n\in\N;F(n)\in I\}$ has upper density $<\frac{\overline{d}(A)}{3}$. Therefore, there must be
three pairwise disjoint compact intervals $I_1,I_2,I_3\subseteq\mathbb{T}$ of length $\delta$ such that, for each $i\in\{1,2,3\}$, the set
\[
 A_i:=A\cap\{n\in\N:F(n)\in I_i\}
\]
has positive upper density. Applying the arguments above to $A_1,A_2,A_3$, we obtain three distinct values $w_i\in I_i$ satisfying
\begin{equation}
\label{NecessaryConditionsForAvoidingLinearSumsEquality}
\|w_i\|\geq\frac{\kappa}{2},
\qquad
\|w_i-\Delta\|\geq\frac{\kappa}{2}.
\end{equation}
However, since $\|\Delta\|=1-\kappa$, the two inequalities in \eqref{NecessaryConditionsForAvoidingLinearSumsEquality} have at most two simultaneous solutions in $\mathbb{T}$. This contradiction completes the equality case.\qedhere
\end{enumerate}
\end{enumerate}
\end{proof}

\begin{lemma}
\label{NonAtomicLinearPhase}
Let $\kappa\in(0,1)$, let $h$ be an eventually differentiable real-valued function satisfying
\[
h'(x)=\frac{c}{x}+o\left(\frac1x\right)
\]
for some $c\neq0$, and define
\[
F(n):=2\kappa n-h(n)\pmod1,
\qquad n\in\N.
\]
For every $\varepsilon>0$, there exists $\delta>0$ such that, for every interval $J\subseteq\mathbb{T}$ of length at most $\delta$,
\[
\overline d\bigl(\{n\in\N:F(n)\in J\}\bigr)<\varepsilon.
\]
\end{lemma}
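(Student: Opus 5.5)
The approach is to show that on each dyadic block $[M,2M)$ the phases $F(n)$, $M\le n<2M$, are equidistributed up to an error $o(1)+O(|\mathrm{length}|)$. Concretely, the goal is
\[
\limsup_{M\to\infty}\frac{1}{M}\bigl|\{M\leq n<2M: F(n)\in J\}\bigr|\ll_{\kappa,c}|J|
\]
uniformly over intervals $J\subseteq\mathbb{T}$. Summing dyadic blocks then bounds $\frac1N|\{n\le N:F(n)\in J\}|$ by the same quantity up to $o(1)$. Choosing $\delta$ small in terms of $\varepsilon$ and the implied constant gives the lemma.

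\textbf{Splitting into residue classes.} If $2\kappa$ is irrational, the linear part already equidistributes. To treat both cases at once, I fix a large parameter $q$. By Dirichlet, choose $q'\le q$ with $\|2\kappa q'\|\le 1/q$. I then split $[M,2M)$ into arithmetic progressions $n=r+q'm$ with $0\le r<q'$, further cut into short runs of $m$.

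Along such a progression, $F(r+q'm)=F(r)+m\beta+(h(r)-h(r+q'm))$, where $\beta:=2\kappa q'\bmod 1$ satisfies $\|\beta\|\le 1/q$. The increment of $h$ over a run of length $\ell$ is $\approx c\,q'\ell/n$, using $h'(x)=c/x+o(1/x)$. So on a run of length $\ell\asymp M/q'$ the phase moves by
\[
\ell\beta - c\log\bigl((n+q'\ell)/n\bigr)+o(1).
\]

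\textbf{Main obstacle.} The key difficulty is the possibility that $\beta$ cancels the drift of $-h$, so that the phase stands still and concentrates on an atom. I would handle this with the nonzero curvature of the logarithm. Writing $n=Mt$ with $t\in[1,2)$, the phase along the progression becomes
\[
G(t)=\mathrm{const}+\frac{M\beta}{q'}\,t - c\log t+o(1),
\]
where I use $\beta$ as a real number in $(-1/2,1/2]$.

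First suppose $M\beta/q'$ is unbounded, namely $|M\beta/q'|\geq K$ for a large constant $K$. Then $G$ has derivative of size $\gg K$ except on a $t$-set of measure $O(1/K)$ near the critical point $t=cq'/(M\beta)$. Away from that set, $G$ winds around the torus with slowly varying speed, and the standard van der Corput/Koksma argument for monotone phases gives occupation of $J$ in proportion $|J|+O(1/K)$.

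Otherwise $|M\beta/q'|\leq K$. Then $G'(t)=M\beta/q' - c/t$ has $|G''(t)|=|c|/t^2\ge |c|/4$. A function with second derivative bounded below spends time at most $\ll\sqrt{|J|/|c|}$ in any interval of length $|J|$, since the preimage of $J$ consists of at most two intervals each of length $\ll\sqrt{|J|/|c|}$.

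Here the $t$-parametrisation must be justified. The discreteness of $m$ contributes an error only of order $q'/M$, and the $o(1/x)$ error in $h'$ contributes $o(1)$ uniformly over $t\in[1,2]$.

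\textbf{Conclusion.} Combining both regimes gives proportion $\ll|J|+\sqrt{|J|/|c|}+1/K+o(1)$ on each block. I take $K$ large, then $\delta$ small so that $\delta+\sqrt{\delta/|c|}+1/K<\varepsilon/2$. This gives upper density $<\varepsilon$ for every interval of length $\le\delta$.

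The subtle step I expect to need care is that the $o(1/x)$ term in $h'$ does not spoil the curvature estimate. It only perturbs $G$ by $o(1)$ in $C^0$, not $G''$, so I would use the integrated form: the sublevel estimate $|\{t: G(t)\in J\}|\ll\sqrt{|J|+o(1)}$ holds for any $G$ within $o(1)$ in sup-norm of a function with $G''\ge |c|/4$. This is enough because the $o(1)$ vanishes as $M\to\infty$.
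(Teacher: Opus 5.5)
Your argument is essentially correct, but it takes a different route from the paper.

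\textbf{The paper's route.} The paper splits on whether $2\kappa$ is rational.
\begin{itemize}
\item If $2\kappa$ is irrational, partial summation gives $\sum_{n\le N}e(2k\kappa n)e(-kh(n))=O_{k,\kappa}(\log N)$. This uses that the partial sums of $e(2k\kappa n)$ are bounded and that $|e(-kh(n+1))-e(-kh(n))|\ll|k|/n$. So $(F(n))$ is uniformly distributed by Weyl's criterion, and the density is exactly $|J|$.
\item If $2\kappa$ is rational, the paper restricts to progressions on which $2\kappa n$ is constant mod $1$. Then $F$ is a pure logarithmic phase, and the preimage of $J$ is a union of exponentially growing intervals, giving $\overline d\ll_c|J|$.
\end{itemize}

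\textbf{Your route.} You fix one Dirichlet denominator $q'$ and work on dyadic blocks. Along each progression you replace $F$ by the explicit curve $G_0(t)=\mathrm{const}+At-c\log t$, up to an $o(1)$ sup-norm error. You then count hits geometrically. This treats both cases uniformly and needs no monotonicity of $h'$. Your uniform $o(1)$ control on dyadic scales is also exactly what makes the paper's ``exponentially growing intervals'' step rigorous. On the other hand, it is longer, and as written it only gives a $\sqrt{|J|}$-type bound. The paper gets the linear bound $C|J|$ with almost no work in the irrational case.

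\textbf{Three points to correct or simplify; none is fatal.}
\begin{itemize}
\item \emph{Slow regime.} $J$ is an arc of $\mathbb{T}$, and $G_0$ ranges over an interval of length up to about $K+|c|$. So $\{t:G_0(t)\in J \bmod 1\}$ can have $O(K+|c|+1)$ components, not two. The bound becomes $\ll(K+|c|+1)\sqrt{(|J|+o(1))/|c|}$. This is still fine, because you choose $\delta$ after $K$.
\item \emph{Fast regime.} Discretisation costs $O(1)$ hits per winding. That is a relative error $O(|\beta|+q'/M)=O(1/q)+o(1)$, so you also need $q$ large. This is harmless, since $q$ is free.
\item \emph{Your ``main obstacle'' never arises asymptotically.} $q'$ and $\beta$ do not depend on $M$, so there are only two possibilities.
\begin{itemize}
\item If $\beta=0$, then $|G_0'(t)|=|c|/t\ge|c|/2$ on $[1,2]$. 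This gives a bound linear in $|J|$ with no curvature argument.
\item If $\beta\neq0$, then $|A|=M|\beta|/q'\to\infty$, so you are eventually always in the fast regime.
\end{itemize}
The sublevel-set estimate therefore only concerns finitely many blocks. This is precisely why the paper's rational/irrational dichotomy suffices. Dropping that regime also upgrades your bound to $\overline d\ll_c|J|+O(1/q)$.
\end{itemize}
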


\begin{proof}[Proof of \Cref{NonAtomicLinearPhase}]
We prove the stronger estimate
\begin{equation}
\label{SmallIntervalsForLinearPhase}
\overline d\bigl(\{n\in\N:F(n)\in J\}\bigr)\leq C|J|
\end{equation}
for some $C>0$ and every interval $J\subseteq\mathbb{T}$.
If $2\kappa$ is irrational, then, for every nonzero $k\in\mathbb{Z}$, partial summation gives
\[
\sum_{n\leq N}e\bigl(kF(n)\bigr)
=\sum_{n\leq N}e(2k\kappa n)e\bigl(-kh(n)\bigr)
=O_{k,\kappa}(\log N)=o(N).
\]
By Weyl's criterion, $(F(n))_{n\in\N}$ is uniformly distributed in $\mathbb{T}$, and hence \eqref{SmallIntervalsForLinearPhase} holds with $C=1$.

If $2\kappa$ is rational, then $\kappa N=0$ for some $N\in\N$, so we can assume $\kappa=0$ by considering the intersection of $\{n\in\N:F(n)\in J\}$ with the arithmetic progressions of gap $N$. Therefore, we have
\[
F'(x)=-\frac{c}{x}+o\left(\frac1x\right)
\qquad\text{and}\qquad
F(x)=-c\log x+o(\log x)\textup{ mod }1.
\]
Therefore, the preimage $F^{-1}(J)\subseteq\mathbb{N}$ is the union of a sequence of exponentially growing intervals, with upper density 
$$\overline{d}\left(F^{-1}(J)\right)=\frac{e^{1/c}-e^{(1-|J|)/c}}{e^{1/c}-1}\ll_c|J|,$$ 
concluding the proof.
\end{proof}

\begin{remark}
In some cases, when a sub-polynomial $\alpha(x)$ does not satisfy the hypotheses of \Cref{maind}, and even if $\alpha$ is not a polynomial, we can still obtain a set $A$ with positive asymptotic density $d(A)>0$ and there are no $x,y\in A,n\in\mathbb{N}$ such that $x+y=\lfloor\alpha(n)\rfloor$. We present such an example with special part $r_\alpha(x)\succ1$: define 
\begin{align*}
\alpha(n)&:=6n^2+\log\log\left(n\right),\\
S&:=\bigcup_{N\geq1000}[6N^2+N,6N^2+2N],\\
T_{i,j}&:=\{n\in\mathbb{N};\lfloor\log\log n\rfloor\equiv i\textup{ mod }6\textup{ or }\lfloor\log\log n\rfloor\equiv j\textup{ mod }6\}\\
A&:=S\cap\left(
\left(T_{0,1}\cap(6\mathbb{Z}+4)\right)
\cup
\left(T_{2,3}\cap(6\mathbb{Z}+2)\right)
\cup
\left(T_{4,5}\cap6\mathbb{Z}\right)
\right).
\end{align*}
Then one can check that $A$ has density $d(A)=\frac{1}{6}\cdot\frac{1}{12}$, but as we now prove there are no $x,y\in A,n\in\mathbb{N}$ such that $x+y=\lfloor\alpha(n)\rfloor$. 
Indeed, assume $x<y$, and suppose for example $y\in T_{0,1}$ (the cases $y\in T_{2,3},y\in T_{4,5}$ are similar). As $y\in S$, we must have $y\in[6N^2+N,6N^2+N]$, where $N=\left\lfloor\sqrt{\frac{y}{6}}\right\rfloor$. 
If $x<N$, then $\lfloor\alpha(N)\rfloor<x+y<\lfloor\alpha(N+1)\rfloor$, a contradiction. So we must have $x\geq N$. 
But then $$\lfloor\log\log(x)\rfloor
\geq
\left\lfloor\log\log\left(N\right)\right\rfloor
\geq
\lfloor\log\log(y)\rfloor-1,$$ 
so $x$ is either in $T_{0,1}$ or $T_{4,5}$. So by definition of $A$, $y\equiv 4$ and $x\in\{4,0\}$ mod $6$, so $x+y\in\{2,4\}$ mod $6$. 
But the number $n$ such that $x+y=\lfloor\alpha(n)\rfloor$ satisfies $N\leq n\leq 2N$, so modulo $6$ we have $\lfloor\alpha(n)\rfloor\equiv\lfloor\log\log n\rfloor\in\{5,0,1\}$, a contradiction.
\end{remark}

\begin{acknowledgment}
We thank Professor Trevor D. Wooley for bringing to our attention the transference principle used in the proof of \Cref{weyl}. The large language models GPT-5.5 and GPT-5.6 were used to help with phrasing and the identification of typographical errors, as well as a slip in the proof of \Cref{Claim3eowi}. They also helped shorten the proof of \Cref{NonAtomicLinearPhase} and suggested the reference \cite{Tu}.
\end{acknowledgment}

\bibliographystyle{alpha}
\bibliography{bibliography}

\end{document}